\documentclass[reqno,11pt]{amsart}
\usepackage{amsmath,amssymb,amsthm,amscd,amsfonts,amsxtra,mathrsfs,latexsym}
\usepackage{indentfirst}
\usepackage[top=3.5cm, bottom=4.0cm, right=2.5cm, left=2.5cm]{geometry}

\usepackage[title]{appendix}
\usepackage{bbold}
\usepackage{bbm}
\usepackage{dsfont}

\usepackage[hyphens]{url}
\usepackage{mathtools}
\usepackage[msc-links]{amsrefs}

\usepackage{hyperref}
\usepackage{bookmark}

\allowdisplaybreaks
\mathtoolsset{showonlyrefs=true}
\AtBeginDocument{\def\MR#1{}}

\newcounter{mparcnt}

\usepackage{fancyhdr}
\usepackage{esint}
\usepackage{enumerate}
\usepackage{xcolor}

\usepackage{pictexwd,dcpic}
\usepackage{graphicx}
\usepackage{caption}
\usepackage{graphicx}
\usepackage{slashed}
\usepackage{epsfig,here}
\usepackage{subfigure,here}

\usepackage[hyphens]{url}
\usepackage{hyperref}
\usepackage{bookmark}
\usepackage{amsmath,thmtools,mathtools}
\allowdisplaybreaks 
\mathtoolsset{showonlyrefs=true}

\numberwithin{equation}{section}
\title{\textbf{Sub-Finslerian interpolation inequalities}}
\author{Haowei Lin}
\address{Graduate School of Science, the University of Osaka, Osaka 560-0043, Japan}
\email{u466248a@ecs.osaka-u.ac.jp}
 \date{\today}
\subjclass[2020]{53C17, 49Q22, 53C23, 49J15}
\keywords{sub-Finslerian geometry, optimal transport, optimal control, interpolation inequality.}
\theoremstyle{plain}
\newtheorem{theorem}{Theorem}[section]
\newtheorem{lemma}[theorem]{Lemma}
\newtheorem{proposition}[theorem]{Proposition}
\newtheorem{corollary}[theorem]{Corollary}

\theoremstyle{definition}
\newtheorem{definition}[theorem]{Definition}
\newtheorem{example}[theorem]{Example}

\newtheorem{remark}[theorem]{Remark}

\newcommand \ep {\epsilon}

\newcommand \supp {\text{Supp}}
\newcommand \ann {\text{Ann}}
\newcommand \mj {\mathcal{J}}

\newcommand \m {\mathfrak{m}}
\newcommand \Cut[1] {\mathrm{Cut}(#1)}

\begin{document}
\bibliographystyle{plain}
\nocite{*}

\maketitle

\setcounter{page}{0}
\maketitle
\thispagestyle{empty}
\begin{abstract}

In this paper, 
we prove that forward ideal sub-Finslerian manifolds support interpolation inequalities for optimal transport, 
extending the results of Barilari and Rizzi \cite{BR} from the sub-Riemannian to the sub-Finslerian setting. 
A key role is played by the introduction of sub-Finslerian Jacobi fields and the establishment of optimal transport 
theory on sub-Finslerian manifolds. By combining this transport framework with sub-Finslerian Jacobian estimates, we characterize the 
generalized distortion coefficients. As an application, we deduce several fundamental geometric inequalities, 
including the Brunn-Minkowski and Borell-Brascamp-Lieb inequalities. Finally, for the case of the Randers 
sub-Finslerian Heisenberg group, whose metric is defined by a sub-Riemannian metric perturbed by a drift term,
we explicitly show that it satisfies the measure contraction property.

\end{abstract}
\pagenumbering{Roman}
\setcounter{page}{1}
\tableofcontents
\setcounter{page}{1}
\pagenumbering{arabic}

\section{Introduction}
With the rapid development of metric measure geometry, nonholonomic geometry has attracted considerable attention 
in recent years. Nonholonomic geometry refers to geometric structures where the metric is only defined on a totally 
non-integrable distribution. If the metric is given by a positive definite quadratic form, it is called sub-Riemannian geometry; 
sub-Finslerian or sub-Lorentzian geometries can be defined in an analogous way. We refer to the comprehensive 
textbook \cite{Agr} for a systematic foundation and to \cite{Rf} for connections with optimal transport theory 
in the sub-Riemannian setting.

One of the most elusive and intriguing features of nonholonomic geometry is the general failure of the classical 
curvature-dimension (CD) conditions. These conditions were originally introduced independently by Lott-Villani 
\cite{LV} and Sturm \cites{St1,St2} for metric measure spaces, but they typically break down in nonholonomic 
settings (we refer to \cites{Jui,RS23,Borza,MR} for related discussions on this failure in various contexts). 
Motivated by the close connection between the Brunn-Minkowski 
inequality and curvature-dimension conditions, we use the former to obtain preliminary insights into the failure 
of the CD condition. To this end, let us introduce the model coefficient $\tau^{(t)}_{K,N}(\theta)$. 
For every $K \in \mathbb{R}$, $N \in (1, \infty)$ and $t\in (0,1)$, it is defined as:
\begin{equation}\label{eq:model}
    \tau^{(t)}_{K,N}(\theta) := \begin{cases}
        t^{\frac{1}{N}} \left( \frac{\sin(t\theta\sqrt{K/(N-1)})}{\sin(\theta\sqrt{K/(N-1)})} \right)^{\frac{N-1}{N}} & \text{if } (N-1)\pi^2 > K\theta^2 > 0, \\
        t & \text{if } K = 0, \\
        t^{\frac{1}{N}} \left( \frac{\sinh(t\theta\sqrt{-K/(N-1)})}{\sinh(\theta\sqrt{-K/(N-1)})} \right)^{\frac{N-1}{N}} & \text{if } K < 0.
    \end{cases}
\end{equation}

We say that a Finslerian manifold $M$ equipped with a smooth measure $\m$ satisfies the Brunn-Minkowski inequality $\text{BM}(K,N)$ if, for any nonempty Borel subsets $A, B \subset M$ and $t \in (0,1)$, we have
\begin{equation}\label{bm0}
    \m(Z_t(A,B))^{\frac{1}{N}} \ge \tau^{(1-t)}_{K,N}(\Theta(A,B))\cdot \m(A)^{\frac{1}{N}} + \tau^{(t)}_{K,N}(\Theta(A,B))\cdot \m(B)^{\frac{1}{N}},
\end{equation}
where $d$ denotes the Finslerian distance on $M$, $Z_t(A,B)$ denotes the set of $t$-intermediate points of geodesics connecting $A$ and $B$, and
\begin{equation}
    \Theta(A, B) := \begin{cases}
        \inf\limits_{x\in A, y\in B} d(x,y) & \text{if } K \ge 0, \\
        \sup\limits_{x\in A, y\in B} d(x,y) & \text{if } K < 0.
    \end{cases}
\end{equation}
This property has been established by Sturm \cite{St1} for metric measure spaces satisfying CD$(K,N)$, and then Ohta \cite{Ohta} showed that CD$(K,N)$ holds for Finslerian manifolds such that the weighted Ricci curvature $\mathrm{Ric}_N$ is bounded below by $K$.

The situation changes drastically, however, in the sub-Finslerian setting, 
where many effective arguments from the classical holonomic case (i.e., Riemannian or Finslerian geometry) break down entirely,
 and $\mathrm{BM}(K,N)$ 
fails for any choice of $K$ and $N$ (see \cite[Theorem 4.26]{MR}). This failure and the associated technical 
difficulties stem from two main obstructions. The first obstruction lies in the model coefficients
$\tau_{K,N}^{(t)}$ appearing in inequality \eqref{bm0}. These functions are constructed from space forms, which are 
questionable comparison objects in our setting since neither a Chern connection nor a satisfactory curvature theory 
is available. Moreover, there is a fundamental discrepancy between the topological dimension and 
the geodesic dimension, the latter describing the infinitesimal rate of volume distortion; 
in the sub-Finslerian setting, geodesic dimension is strictly larger than the topological dimension (see Section 5.1 for a detailed discussion). The second major complication arises from the regularity of the sub-Finslerian distance function. Due to the existence of abnormal geodesics, the distance function lacks the necessary regularity, which poses a severe technical hurdle for constructing optimal transport maps.

Inspired by the work of Barilari and Rizzi \cite{BR} on sub-Riemannian interpolation inequalities, 
we restrict our focus to forward ideal sub-Finslerian manifolds---namely, forward complete manifolds that do not 
admit non-trivial abnormal minimizing geodesics. Furthermore, by replacing the model coefficients with a generalized 
definition of distortion coefficients, we successfully establish a sub-Finslerian version of the interpolation 
inequality.

\begin{theorem}[Interpolation Inequality]\label{interpolation}
    Let $(M,\mathcal{D},F)$ be a forward ideal sub-Finslerian manifold and let $\m$ be a fixed smooth measure on $M$. Take $\mu_0,\mu_1\in \mathcal{P}^{ac}_c(M)$. Let $T$ be the unique optimal transport map from $\mu_0$ to $\mu_1$, and let $T_t$ denote the $t$-intermediate optimal transport map. Let $\mu_t=\rho_t \m=(T_t)_\sharp \mu_0$, for $t\in[0,1]$, be the unique Wasserstein geodesic joining $\mu_0$ and $\mu_1$. Then for all $t\in[0,1]$, it holds that 
    \begin{equation}
        \frac{1}{\rho_t(T_t(x))^{1/n}}\ge \frac{\beta^{>}_t(x,T(x))^{1/n}}{\rho_0(x)^{1/n}}+\frac{\beta^{<}_t(x,T(x))^{1/n}}{\rho_1(T(x))^{1/n}}\quad \text{ for }\mu_0\text{-a.e. } x\in M.
    \end{equation}
\end{theorem}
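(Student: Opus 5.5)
We follow the scheme of Barilari and Rizzi \cite{BR} and adapt each step to the non-reversible, non-quadratic sub-Finslerian Hamiltonian. The starting point is the optimal transport theory established earlier for forward ideal manifolds. Since there are no non-trivial abnormal minimizers, the squared forward distance $d^2(\cdot,y)$ is locally semiconcave away from the diagonal. This gives a $c$-concave potential $\varphi$ with $T_t(x)=\pi\circ e^{(1-t)\vec H}$ applied to $-d_x\varphi$; here $H$ is the (only $C^1$ at the zero section, smooth elsewhere) sub-Finslerian Hamiltonian. For $\mu_0$-a.e.\ $x$, $\varphi$ is twice differentiable at $x$ in the Alexandrov sense, and $T(x)\notin\Cut{x}$ except on a $\mu_0$-null set. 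The first step is therefore to restrict to a full-measure set of such points. On the stationary set $\{T(x)=x\}$ the inequality is trivial, because $T_t(x)=x$ and $\beta^{>}_t+\beta^{<}_t\le 1$ there (by the normalization of the distortion coefficients). So we may assume $x\neq T(x)$.

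The second step is the Monge–Ampère (Jacobian) equation. By the change-of-variables formula for the injective-a.e.\ map $T_t$,
\[
\rho_0(x)=\rho_t(T_t(x))\,\det\bigl(d_xT_t\bigr),
\]
with the determinant taken with respect to $\m$. This holds for all $t$, including $t=1$ with $\rho_1$. It then suffices to prove the concavity-type inequality
\[
\det(d_xT_t)^{1/n}\ \ge\ \beta^{>}_t(x,T(x))^{1/n}+\beta^{<}_t(x,T(x))^{1/n}\,\det(d_xT)^{1/n},
\]
because dividing by $\rho_0(x)^{1/n}$ and substituting the Jacobian equation gives exactly the claim.

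The third step represents $d_xT_t$ through sub-Finslerian Jacobi fields. We write
\[
d_xT_t=\pi_*\circ e^{(1-t)\vec H}_*\circ\bigl(\mathrm{graph\ of\ } -d^2_x\varphi\bigr)
\]
on the Lagrangian subspace $\mathcal L_0=T_{\lambda}(T^*_xM)$ and its graph $\mathcal L_\varphi$. Along the normal extremal $\lambda(s)$ we take a Darboux moving frame, splitting each fiber $T_{\lambda(s)}(T^*M)$ into the vertical subspace and the transported fiber $\mathcal L_1$ of $T^*_{T(x)}M$. Symplectic linear algebra then writes $d_xT_t$ as $M_t+N_t A$, where $A$ is the (symmetric) Hessian datum. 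Here $M_t$ is the Jacobi matrix for the fields vanishing at time $1$ ($\det$ giving $\beta^{>}_t$), and $N_t$ is the one for the fields vanishing at time $0$, normalized by time $1$ ($\beta^{<}_t$).

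The $c$-concavity of $\varphi$ gives a sign condition: $\varphi$ is touched from below by $-d^2(\cdot,T(x))/2$. This makes $N_t^{-1}M_t+A$ and $M_1^{-1}N_1$-type compositions positive semidefinite symmetric matrices. The Minkowski determinant inequality $\det(P+Q)^{1/n}\ge\det P^{1/n}+\det Q^{1/n}$ for such matrices then gives the displayed inequality, using the definitions $\beta^{>}_t=\det M_t/\det M_0$ and $\beta^{<}_t=\det N_t/\det N_1$ (normalizations relative to $\m$ in the moving frame).

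The main obstacle is this symmetry/positivity step. In the sub-Riemannian case $\vec H$ is the Hamiltonian of a quadratic form, and the relevant matrices are symmetric by self-adjointness of the Jacobi equation. Here the Jacobi equation is the linearization of a non-quadratic, $C^\infty$-off-zero Hamiltonian, which only involves the fiberwise Hessian $\partial^2_{pp}H$ at $\lambda(s)\ne0$. We would first check that $e^{s\vec H}_*$ is still symplectic and maps Lagrangians to Lagrangians. This gives symmetry of $N_t^{-1}M_t$ abstractly, and positivity comes from the no-conjugate-points property on $[0,1)$, which holds since $T(x)\notin\Cut{x}$. Non-reversibility only forces consistent use of forward distance and the ordering $x\to T(x)$.
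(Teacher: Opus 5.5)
Your architecture is the paper's: the Brenier--McCann map, the Monge--Amp\`ere equation (Theorem~\ref{maeq}), a Jacobian estimate from a two-piece Jacobi-matrix decomposition plus Minkowski's determinant inequality (Theorem~\ref{jest} with $s=1$), identification of the determinants with $\beta^{>}_t,\beta^{<}_t$ (Lemma~\ref{lm:dc}), and a separate static-set argument. However, two steps do not close as you wrote them.

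The first is the static set. Once $T_t(x)=x$ and $\rho_t(x)=\rho_0(x)=\rho_1(x)$ (Theorem~\ref{maeq}), what is needed is $\beta^{>}_t(x,x)^{1/n}+\beta^{<}_t(x,x)^{1/n}\le 1$. Your condition $\beta^{>}_t+\beta^{<}_t\le 1$ is strictly weaker, because $a^{1/n}\ge a$ on $[0,1]$ (take $a=b=\tfrac12$, $n=2$), so it does not suffice. No ``normalization'' gives the right bound; it is a genuine geometric input. One needs $\beta^{<}_t(x,x)\le t^{k(x)}$ and $\beta^{>}_t(x,x)\le(1-t)^{k(x)}$ with $k(x)\ge n$ (Proposition~\ref{geodim}: the geodesic dimension is at least the topological one). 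This yields $(1-t)^{k(x)/n}+t^{k(x)/n}\le 1$.

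The second is the positivity feeding Minkowski's inequality. In the frame of Theorem~\ref{jest}, set $S(t):=N^{\mathrm{V}}_0(t)^{-1}N^{\mathrm{H}}_0(t)$ and let $M(0)$ be the Hessian datum. Then
\[
N^{\mathrm{V}}_0(t)^{-1}N(t)=M(0)+S(t)=[S(t)-S(1)]+[M(0)+S(1)],
\]
with $S(t)-S(1)=N^{\mathrm{V}}_0(t)^{-1}N^{\mathrm{V}}_1(t)N^{\mathrm{V}}_1(0)^{-1}$ and $M(0)+S(1)=N^{\mathrm{V}}_0(1)^{-1}N(1)$; this is \eqref{eqn} with $s=1$. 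So the coefficient multiplying the $\beta^{<}$-piece is $N(1)=d_xT_1$, not the Hessian. Also, ``$M_1^{-1}N_1$'' is meaningless, since fields vanishing at time $1$ have $M_1=0$.

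Symmetry does follow from the Lagrangian property, as you say. Likewise, $M(0)+S(1)\ge 0$ does come from $c$-concavity (proof of Proposition~\ref{propB1}(c)). But $S(t)-S(1)\ge 0$ is \emph{not} a consequence of the absence of conjugate points: non-conjugacy only makes these matrices invertible (no eigenvalue crosses $0$) and provides no sign. The missing ingredient is the monotonicity $\dot S=-N^{\mathrm{V}}_0(t)^{-1}B(t)\,N^{\mathrm{V}}_0(t)^{-T}\le 0$. This uses $B=\partial^2_{pp}H\ge 0$, i.e.\ the fiberwise convexity of $H=\tfrac12(F^*\circ\sigma^*)^2$; what matters about $\partial^2_{pp}H$ is its sign, not its regularity. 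You also need $\det N^{\mathrm{V}}_0(t)>0$ (Proposition~\ref{propB1}(a)) to multiply back.

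Two smaller corrections. First, $T_t(x)=\pi\circ e^{t\vec H}(-\tfrac12 d_x\varphi)$, not $e^{(1-t)\vec H}$; your formula would give $d_xT_0=d_xT_1$. Second, $T(x)\notin\mathrm{Cut}(x)$ is not a by-product of semiconcavity alone. Theorem~\ref{conjugatepoint} only excludes conjugate pairs with $|s-s'|<1$; the endpoint requires the twice differentiability of $\varphi$ at $x$ and the $s\uparrow 1$ blow-up argument in the proof of Theorem~\ref{jest}.
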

Here, $\beta^{>}_t(x,y)$ and $\beta^{<}_t(x,y)$ denote the generalized backward and forward distortion coefficients, respectively. Their rigorous definitions, which involve the limits of measure ratios along sub-Finslerian geodesic flows, are deferred to Section 5.1.

Our generalization mainly is twofold. First, as an essential step, we generalize the work of Figalli and Rifford \cite{FR} on optimal transport from the sub-Riemannian to the sub-Finslerian setting. Second, we develop the notions of Jacobi fields and distortion coefficients for sub-Finslerian manifolds to conclude our main result, Theorem \ref{interpolation}. Because of the asymmetry of the sub-Finslerian distance $d_{SF}(x,y)$, this generalization is non-trivial, and we must carefully track the order of the pair $(x,y)$. Several necessary ingredients are also developed in this article, such as the analysis of conjugate points, which is well-understood in sub-Riemannian geometry but requires careful extension to the sub-Finslerian case.

The organization of the paper is as follows. 
In Section 2, we introduce the basic notions of sub-Finslerian geometry. We define the sub-Finslerian distance, naturally introduce sub-Finslerian geodesics, and discuss related geometric concepts such as the sub-Finslerian exponential map. We also address the regularity of the sub-Finslerian distance function. 
Section 3 is devoted to the study of Jacobi fields on sub-Finslerian manifolds, where we establish a Jacobian estimate (Theorem \ref{jest}) that plays an essential role in proving our main interpolation inequalities. 
In Section 4, we develop the sub-Finslerian optimal transport theory and prove a Brenier--McCann type theorem (Theorem \ref{ot}). 
In Section 5, we provide a proof of Theorem \ref{interpolation} and derive the Borell--Brascamp--Lieb and Brunn--Minkowski inequalities as direct corollaries. 
In Section 6, we analyze a specific example, namely a Randers sub-Finslerian Heisenberg group. By explicitly computing its distortion coefficients and establishing its measure contraction property, we provide a concrete, illustrative example that empirically corroborates the recent findings of Borza, Magnabosco, Rossi, and Tashiro \cite{BMRT1} on general sub-Finslerian Heisenberg groups. 
Finally, in Section 7, we briefly discuss the manifestation of two synthetic curvature conditions in the sub-Finslerian setting: Barilari--Mondino--Rizzi's unified theory \cite{BMR} and Milman's Quasi Curvature-Dimension (QCD) condition \cite{Mil}.

\section{Sub-Finslerian Geometry}
Sub-Finslerian manifolds are generalizations of Finslerian manifolds, where metric is defined only on a non-integrable subbundle of the tangent bundle. 
In this section, we recall some basic settings and definitions of sub-Finslerian geometry. We refer to \cite{MR} and \cite{YAD} for the sub-Finslerian context, and to \cite{Agr} for a comprehensive introduction to sub-Riemannian geometry. We also refer to \cite{BCS} and \cite{Oh0} for background on Finsler geometry.

\subsection{Sub-Finslerian structure}
\begin{definition}
    Let $M$ be an $n$-dimensional smooth manifold, and let $k\in \mathbb{N}$. We call $(M,\sigma,F)$ a \textit{sub-Finslerian manifold} if the pair $(\sigma,F)$ satisfies the following conditions:
    \begin{itemize}
        \item[(1)] $\sigma:M\times \mathbb{R}^k\to TM$ is a smooth morphism of vector bundles over the identity map of $M$;       
        \item[(2)] $F:M\times \mathbb{R}^k\to [0,\infty)$ is a continuous function and for any fixed $p\in M$, $F_p := F(p,\cdot)$ is a Minkowski norm on $\mathbb{R}^k$; namely,\begin{itemize}
            \item[(i)] $F_p:\mathbb{R}^k\setminus \{0\}\to [0,\infty)$ is smooth;
            \item[(ii)] $F_p(\lambda u)=\lambda F_p(u)$ for $\lambda>0,u\in \mathbb{R}^k$;
            \item[(iii)]  $\frac{1}{2}F_p^2$ is strongly convex;
        \end{itemize} 
        \item[(3)] The distribution $\mathcal{D}=\{\sigma(s)\mid s\in\Gamma(M\times\mathbb{R}^k)\}$ is bracket-generating.
    \end{itemize} 
\end{definition}

\begin{remark}
    This definition generalizes sub-Riemannian geometry and is usually referred to as a free sub-Finslerian structure. A completely general definition can be found in \cite[Definition 4.2]{YAD}. The key distinction is that their norm is defined on a general vector bundle, whereas ours is defined on a trivial bundle $M \times \mathbb{R}^k$. However, any completely general sub-Finslerian structure is equivalent to a free sub-Finslerian structure \cite[Theorem 4.5]{YAD}. Therefore, for the sake of simplicity in exposition, we adopt the present definition.
    An alternative way to generalize sub-Riemannian structures was introduced by Magnabosco and Rossi \cite[Definition 2.10]{MR}. In their setting, the norm is required not to vary on the fibers. This constitutes the main difference from our definition. Fortunately, this distinction does not affect the applicability of their results in our setting; we will provide the necessary clarifications when their work is invoked.
\end{remark}

Note that the Minkowski norm on the distribution is not required to be reversible, namely, $F(-v)$ is not necessarily equal to $F(v)$ with $v\neq0$, which fundamentally distinguishes it from the sub-Riemannian setting. By identifying the dual bundle $(M\times \mathbb{R}^k)^*$ with $M\times \mathbb{R}^k$ via the standard Euclidean inner product, the dual norm $F^*$ on $M\times \mathbb{R}^k$ is defined as 
\begin{equation}
    F^*(\xi):=F(\ell^{-1}(\xi)),
\end{equation}
where $\ell:M\times (\mathbb{R}^k\setminus\{0\})\to \Big(M\times (\mathbb{R}^k\setminus\{0\})\Big)^*=M\times \Big(\mathbb{R}^n\setminus \{0\}\Big)$ is the \textit{Legendre transformation} of $F$, explicitly defined via the fundamental tensor $g$ as
\begin{equation}
    \ell(u)(v) = g_u(u,v) = \left. \frac{1}{2}\frac{\partial^2}{\partial s \partial t}\right|_{s=t=0} F^2(u+su+tv).
\end{equation}

Let $\{e_i\}_{i=1}^k$ be a standard basis of $\mathbb{R}^k$. We define a \textit{generating frame} $\{X_i\}_{i=1}^k$ on $TM$ by 
\begin{equation}\label{frame}
    X_i(p)=\sigma(p,e_i).
\end{equation}
The dimension of the distribution $\mathcal{D}_p=\text{span}\{X_i(p)\}$ at $p\in M$ is called the \textit{rank} of $p$. Here, we do not require the rank of the distribution to be constant over the entire manifold. On the distribution, the induced sub-Finslerian norm is defined as follows.

\begin{definition}[Sub-Finslerian norm]
    Let $(M,\sigma,F)$ be a sub-Finslerian manifold. For $v\in \mathcal{D}_p\subset T_pM$, we define the \textit{sub-Finslerian norm} of $v$ (also denoted by $F$) as
    \begin{equation}\label{eq:2.3}
        F(v):=\min\limits_u\{F_p(u)\mid u\in \mathbb{R}^k, \sigma(p,u) = v\}.
    \end{equation}
\end{definition}
Since $\sigma$ is linear with respect to $u$ and $F_p$ is strongly convex with respect to $u$, the minimizer attaining \eqref{eq:2.3} exists and is unique. In what follows, to emphasize the role of the distribution, we will denote the sub-Finslerian manifold as $(M,\mathcal{D},F)$, omitting the process of using the morphism $\sigma$ to construct the distribution $\mathcal{D}$; $F$ should be understood as defined on the distribution in the way described above.
\begin{definition}
    A curve $\gamma:[0,T]\to M$ is called an \textit{admissible curve} if it is absolutely continuous and $\dot{\gamma}(t)\in \mathcal{D}_{\gamma(t)}$ for almost every $t\in [0,T]$. In this case, there exists $u=(u_1,...,u_k)\in L^2([0,T];\mathbb{R}^k)$ such that 
    \begin{equation}
        \dot{\gamma}(t)=\sigma(\gamma(t), u(t))=\sum_{i=1}^k u_i(t)X_i(\gamma(t)),\quad \text{and} \quad F(\dot{\gamma}(t))=F(u(t)) \text{ for a.e. }t\in[0,T].
    \end{equation}
    The function $u$ is called the \textit{minimal control}. The \textit{length} of $\gamma$ is defined as 
    \begin{equation}
        L(\gamma):=\int^T_0 F(\dot{\gamma}(t))dt.
    \end{equation}
    We say that $\gamma$ is parameterized by arc length if $F(\dot{\gamma}(t))=1$ for almost every $t\in[0,T]$.
\end{definition}

The \textit{sub-Finslerian distance} is defined as
\begin{equation}
    d_{SF}(p,q):=\inf\{L(\gamma)\mid \gamma:[0,1]\to M\text{ is an admissible curve from $p$ to $q$} \}.
\end{equation}
An admissible curve that minimizes the length among all admissible curves with the same endpoints is called a \textit{minimizing geodesic}, i.e., $L(\gamma)=d_{SF}(\gamma(0),\gamma(1))$. An analogous local existence result for minimizing geodesics can be obtained as in the sub-Riemannian setting (see \cite[Theorem 5.5]{YAD}).

We introduce the sub-Finslerian \textit{forward ball} by
$B^+_r(x)=\{p\in M \mid d_{SF}(x,p)< r\}$, and the \textit{backward ball} is defined by $B^-_r(x)=\{p\in M \mid d_{SF}(p,x)< r\}$. A sequence $(x_n)_{n\in\mathbb{N}}$ in $M$ is called \textit{forward Cauchy} if for every $\varepsilon>0$ there exists $N\in\mathbb{N}$ such that $d_{SF}(x_n,x_m)<\varepsilon$ for all $m\ge n\ge N$. A sub-Finslerian manifold $M$ is called \textit{forward complete} if every forward Cauchy sequence converges. A Hopf--Rinow type theorem holds in the sub-Finslerian setting: forward completeness is equivalent to the compactness of $\overline{B^+_r(x)}$ for every $x\in M$ and $r>0$, and, under either of these equivalent conditions, every pair of points in $M$ can be joined by a minimizing geodesic. We refer the reader to \cite[Theorem 5.8]{YAD} and \cite[Theorem 11]{AK}.

To characterize these minimizing geodesics, instead of the length functional, it is often more convenient to consider the energy functional
\begin{equation}
    J(\gamma) := \frac{1}{2}\int^1_0 F(\dot{\gamma}(t))^2 dt.
\end{equation}
It is a standard fact that an admissible curve minimizes the energy functional if and only if it minimizes the length functional and is parameterized with constant speed.

Equipped with this distance, $(M,d_{SF})$ becomes an asymmetric metric space, and $d_{SF}:M\times M\to \mathbb{R}$ is finite and continuous (see \cite[Theorem 4.12]{YAD}). Moreover, since the distance is induced from a Minkowski norm on the trivial vector bundle $M\times \mathbb{R}^k$, it is locally equivalent to any reference sub-Riemannian metric $d_{SR}$ induced by an arbitrary smooth inner product on the same distribution $\mathcal{D}$; namely, for any compact subset $K \subset M$, there exists a constant $C>0$ such that for all $p, q \in K$,
\begin{equation}\label{sfsr}
    \frac{1}{C}d_{SR}(p,q)\le d_{SF}(p,q)\le Cd_{SR}(p,q).
\end{equation}
Since the metric is generally irreversible, we introduce the reversed metric structure. 
Set $\bar{F}(v):=F(-v)$ and $\bar{d}_{SF}(x,y):=d_{SF}(y,x)$. 
Throughout the paper, objects associated with the reversed metric structure, 
such as the Hamiltonian and the exponential map, will be denoted by either an overbar or a left arrow above the symbol; the specific notation will be clarified when needed.

\subsection{Exponential Map}

Let $\gamma:[0,1]\to M$ be an admissible curve. Letting $u = (u_1, \dots, u_k) \in L^2([0,1];\mathbb{R}^k)$ be its minimal control, the curve satisfies 
\begin{equation}
    \dot{\gamma}(t) = \sum_{i=1}^k u_i(t)X_i(\gamma(t)) \quad \text{for a.e. } t\in[0,1].
\end{equation}
In this setting, the energy functional $J(\gamma)$ can be reformulated directly on the control space $L^2([0,1];\mathbb{R}^k)$ as 
\begin{equation}
    J(u) = \frac{1}{2}\int^1_0 F(u(t))^2 dt.
\end{equation}
By a slight abuse of notation, we use the same symbol $J$ for both functionals. In classical Riemannian geometry, geodesics are derived via standard calculus of variations. However, in the sub-Finslerian setting, the non-holonomic constraint restricts admissible variations, rendering classical variational methods insufficient. Therefore, the search for minimizing geodesics is formulated as an optimal control problem, where the Pontryagin Maximum Principle (PMP) provides the precise first-order necessary conditions.

\begin{theorem}[see {\cite[Theorem 6.4]{YAD}}]
    Let $\gamma:[0,1]\to M$ be a minimizing geodesic parameterized by constant speed, and let $u$ be the corresponding minimal control. Denote by $P_{0,t}$ the flow of the non-autonomous vector field $X_u(t, \cdot) := \sum_{i=1}^k u_i(t)X_i(\cdot)$. 
    Then there exists a covector $\lambda_0 \in T^*_{\gamma(0)}M$ such that 
    \[\lambda(t) := (P^{-1}_{0,t})^*\lambda_0, \quad \lambda(t) \in T^*_{\gamma(t)}M\] 
    satisfies one of the following condition:
    \begin{itemize}
        \item[(N)] $u(t) = \ell^{-1}(\bar{\lambda}(t))$, where $\bar{\lambda}(t) = \sigma^*(\lambda(t))$;
        \item[(A)] $0 \equiv \langle \lambda(t), X_i(\gamma(t)) \rangle$ for all $i=1,\dots,k$, where $\langle \cdot,\cdot \rangle$ denotes the canonical paring.
    \end{itemize}
    Moreover, in the second case, one has $\lambda_0 \neq 0$. 
\end{theorem}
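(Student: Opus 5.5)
The plan is to follow the proof of the sub-Riemannian Pontryagin Maximum Principle. The idea is to view the minimizing geodesic as a constrained critical point of the energy on the control space, and to extract a Lagrange multiplier through the endpoint map. Fix $x=\gamma(0)$ and consider the endpoint map
\[
E_x: \mathcal{U}\subset L^2([0,1];\mathbb{R}^k)\to M,\qquad E_x(v)=\text{the time-one point of the trajectory of } \dot q=\textstyle\sum_i v_iX_i(q),\ q(0)=x,
\]
defined on the open set $\mathcal{U}$ of controls whose trajectories exist up to time $1$. Standard ODE theory shows that $E_x$ is smooth (Fr\'echet), with differential
\[
D_uE_x(w)=\int_0^1 (P_{t,1})_*\,\sigma(\gamma(t),w(t))\,dt ,
\]
where $P_{t,1}=P_{0,1}\circ P_{0,t}^{-1}$. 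Since $\gamma$ has constant speed and minimizes length, the minimal control $u$ minimizes $J$ on $E_x^{-1}(\gamma(1))$. Indeed, any control $v$ with the same endpoint satisfies $J(v)\ge \tfrac12 L^2\ge J(u)$, using that $F(v(t))\ge F(\dot q(t))$ pointwise.

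The next step is the Lagrange multiplier rule. Because $F_p$ is $C^1$ away from $0$ and $\tfrac12F_p^2$ is $C^1$ everywhere (its gradient at $0$ vanishes by homogeneity), $J$ is $C^1$ on $L^2$. There are two cases.

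If $D_uE_x$ is not surjective, choose a nonzero $\lambda_1\in T^*_{\gamma(1)}M$ annihilating its image, and set $\lambda_0=P_{0,1}^*\lambda_1$, so that $\lambda(t)=(P_{0,t}^{-1})^*\lambda_0=P_{t,1}^*\lambda_1$. Then $\langle\lambda(t),\sigma(\gamma(t),w(t))\rangle$ integrates to zero for every $w\in L^2$. Hence $\langle \lambda(t),X_i(\gamma(t))\rangle=0$ for a.e. $t$, and by continuity for all $t$. This is case (A), with $\lambda_0\ne 0$.

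If $D_uE_x$ is surjective, then $E_x^{-1}(\gamma(1))$ is locally a $C^1$ Banach submanifold with tangent space $\ker D_uE_x$, which has finite codimension. The standard multiplier rule then gives $\lambda_1$ with $D_uJ=\lambda_1\circ D_uE_x$. Writing this out pointwise gives, for a.e. $t$ and every $w\in\mathbb{R}^k$,
\[
g_{u(t)}(u(t),w)=\langle \lambda(t),\sigma(\gamma(t),w)\rangle=\big(\sigma^*\lambda(t)\big)(w).
\]
Here the left-hand side is the derivative of $\tfrac12F^2$ at $u(t)$ in direction $w$, i.e. $\ell(u(t))(w)$. So $\ell(u(t))=\bar\lambda(t)$. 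When $u(t)\ne0$ this inverts to $u(t)=\ell^{-1}(\bar\lambda(t))$; when $u(t)=0$ we get $\bar\lambda(t)=0$, consistent with extending $\ell^{-1}(0)=0$. By constant speed, either $u\equiv0$ (a trivial curve) or $u(t)\ne0$ a.e. This is case (N).

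There are a few technical points to handle.

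\emph{Minimal control.} One must check that replacing $u$ by the minimal control is legitimate. This holds because $F(\dot\gamma)=F_{\gamma}(u)$ pointwise by the definition of the minimal control.

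\emph{Representing a.e. identities.} The identity $\ell(u(t))=\bar\lambda(t)$ holds a.e. Since $\bar\lambda$ is continuous and $\ell^{-1}$ is a homeomorphism, $u$ admits a continuous representative, and the stated equality holds for that representative.

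\emph{Main obstacle.} I expect the hardest part to be justifying the multiplier rule for a functional that is only $C^1$, not $C^2$, at $u=0$. The cleanest route is to avoid second-order information entirely. Apply the Lyusternik/implicit function argument to the combined map $\Phi=(J,E_x):\mathcal{U}\to\mathbb{R}\times M$. If $D_u\Phi$ were surjective, $\Phi$ would be locally open near $u$, so it would attain values $(J(u)-\varepsilon,\gamma(1))$, contradicting minimality. Hence there is a nonzero covector $(\nu,\lambda_1)$ with $\nu D_uJ=\lambda_1\circ D_uE_x$. The two cases are then $\nu=0$, giving (A), and $\nu\ne0$, where we normalize to $\nu=1$ to get (N). The open-mapping step only needs $\Phi$ to be $C^1$ with $D_u\Phi$ onto a finite-dimensional space, so $C^1$ regularity of $J$ suffices.
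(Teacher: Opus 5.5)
The paper does not prove this statement; it quotes it from \cite{YAD}. Your endpoint-map/Lagrange-multiplier route is the standard one from \cite{Agr} and is consistent with Proposition~\ref{lagmul}. Three parts of it are fine: the abnormal branch, the $C^1$ regularity of the energy at $u=0$, and the open-mapping argument for $\Phi=(J,E_x)$.

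\textbf{The gap.} The step that fails is the pointwise identification in the normal case. There you use $D_uJ(w)=\int_0^1 g_{u(t)}(u(t),w(t))\,dt$, which treats $J$ as a function of the control alone. In this paper the norm $F_p$ may depend on the base point; this is the point of the remark contrasting the definition with \cite{MR}. So on controls, $J(v)=\frac12\int_0^1F_{\gamma_v(t)}(v(t))^2\,dt$ also depends on $v$ through its trajectory. Set $L(q,u)=\frac12F_q(u)^2$ and $\delta\gamma_w(t)=\int_0^t(P_{s,t})_*X_{w(s)}(\gamma(s))\,ds$. Then
\[
D_uJ(w)=\int_0^1 g_{u(t)}(u(t),w(t))\,dt+\int_0^1\big\langle \partial_qL(\gamma(t),u(t)),\,\delta\gamma_w(t)\big\rangle\,dt .
\]
After exchanging the order of integration, the multiplier rule gives $\ell(u(s))=\sigma^*\mu(s)$ with
\[
\mu(s)=P_{s,1}^*\lambda_1-\int_s^1P_{s,t}^*\,\partial_qL(\gamma(t),u(t))\,dt .
\]
This $\mu$ solves $\dot\mu=\vec H(\mu)$ (envelope argument). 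It is not $(P_{0,s}^{-1})^*\lambda_0$ unless $\partial_qL$ vanishes along $\gamma$. Moreover, the definition only makes $F$ continuous in $p$, so $J$ need not even be differentiable.

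\textbf{Why it cannot be patched.} For base-point dependent norms the statement itself is false. Take $M=\mathbb{R}$, $k=1$, $\sigma(p,u)=u\,\partial_x$ and $F_p(u)=e^{p}|u|$.
\begin{itemize}
\item The constant-speed minimizer from $0$ to $1$ satisfies $e^{\gamma}\dot\gamma\equiv e-1$, so $u(t)=(e-1)e^{-\gamma(t)}$.
\item The flow of $u(t)\partial_x$ is a translation, so $(P_{0,t}^{-1})^*\lambda_0\equiv\lambda_0$.
\item Condition (N) would require $(e-1)e^{-\gamma(t)}=\lambda_0e^{-2\gamma(t)}$ for all $t$, which is impossible.
\item Condition (A) forces $\lambda_0=0$, which it excludes.
\end{itemize}

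So your argument is a complete proof exactly when $F_p\equiv F$ does not depend on $p$. That is what the paper's formula $J(u)=\frac12\int_0^1F(u(t))^2dt$ tacitly assumes, and it holds for the constant-drift Heisenberg examples $F_a$. You should state this hypothesis explicitly. In the general case, the normal alternative has to be formulated with the corrected covector $\mu$ (equivalently, through $\dot\lambda=\vec H(\lambda)$), under a $C^1$ assumption on $F$ in $p$.
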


A covector curve $\lambda(t)$ satisfying the condition (N) is called a normal extremal, 
and one satisfying the condition (A) is called an abnormal extremal. A geodesic is called normal 
(resp. abnormal) if it is the projection of a normal (resp. abnormal) extremal. To 
characterize normal extremals, we introduce the sub-Finslerian Hamiltonian $H: T^*M \to \mathbb{R}$:

\begin{equation}\label{hm}
    H(\lambda) = \sup_{u \in \mathbb{R}^k} \left\{ \langle \lambda, \sigma(p, u) \rangle - \frac{1}{2}F_p^2(u) \right\}, \quad \text{where } p = \pi(\lambda).
\end{equation}
Unlike sub-Riemannian geometry, the sub-Finslerian Hamiltonian loses regularity on the 
\emph{annihilator} of the distribution, which is defined as 
\begin{equation}
    \text{Ann}(\mathcal{D}) = \{\lambda \in T^*M \mid \langle \lambda, w \rangle = 0 \ \forall w \in \mathcal{D}_{\pi(\lambda)}\}.
\end{equation}

\begin{proposition}\label{prop2.5}
    Let $(M,\mathcal{D},F)$ be a sub-Finslerian manifold. Then 
    \begin{equation}\label{hmt0}
        H(\lambda) = \frac{1}{2}(F^* \circ \sigma^*(\lambda))^2 \quad \forall \lambda \in T^*M.
    \end{equation}
    Moreover, $H^{-1}(0) = \text{Ann}(\mathcal{D})$, and $H$ is smooth on $T^*M \setminus \text{Ann}(\mathcal{D})$ and continuous on 
    $T^*M$. Additionally, in local coordinates, 
    \begin{equation}\label{f25}
        H(\lambda) = \frac{1}{2} \frac{\partial^2 H}{\partial p^i\partial p^j}(\lambda)p^i p^j \quad \forall \lambda = (x^1,\dots,x^n, p^1,\dots,p^n) \in T^*M \setminus \text{Ann}(\mathcal{D}).
    \end{equation}
\end{proposition}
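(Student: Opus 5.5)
The plan is to compute the supremum in \eqref{hm} by splitting it through the sub-Finslerian norm of Definition (Sub-Finslerian norm) and then use standard Minkowski-norm duality. Fix $p$ and $\lambda\in T_p^*M$. Since $\langle\lambda,\sigma(p,u)\rangle=\langle\sigma^*\lambda,u\rangle$ with $\sigma^*\lambda\in(\mathbb{R}^k)^*\cong\mathbb{R}^k$, we get
\begin{equation}
H(\lambda)=\sup_{u\in\mathbb{R}^k}\Big\{\langle\sigma^*\lambda,u\rangle-\tfrac12F_p^2(u)\Big\}.
\end{equation}
This is the Legendre--Fenchel conjugate of $\tfrac12F_p^2$ evaluated at $\xi=\sigma^*\lambda$. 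For a Minkowski norm the conjugate of $\tfrac12F_p^2$ is $\tfrac12(F_p^*)^2$, where $F_p^*(\xi)=\sup_{F_p(u)=1}\langle\xi,u\rangle$. To see this, write $u=r w$ with $r\ge0$ and $F_p(w)=1$, optimize $r\langle\xi,w\rangle-\tfrac12r^2$ in $r$, and then take the sup over $w$. I then check that this agrees with the paper's definition $F^*=F\circ\ell^{-1}$. For $\xi\ne0$ the supremum is attained at $u=\ell^{-1}(\xi)$, because the first-order condition is $\xi=g_u(u,\cdot)=\ell(u)$. At that point the value is $\langle\ell(u),u\rangle-\tfrac12F^2(u)=\tfrac12F^2(u)$ by Euler's identity $g_u(u,u)=F^2(u)$. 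So $H(\lambda)=\tfrac12F(\ell^{-1}(\sigma^*\lambda))^2$, which is \eqref{hmt0}. The case $\xi=0$ gives $H=0$ trivially.

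Next, $H^{-1}(0)$. By \eqref{hmt0} and positivity of $F$ away from $0$, $H(\lambda)=0$ if and only if $\sigma^*\lambda=0$. That means $\langle\lambda,X_i(p)\rangle=0$ for all $i$, which is exactly $\lambda\in\mathrm{Ann}(\mathcal{D})$ because the $X_i(p)$ span $\mathcal{D}_p$.

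For regularity, the map $\lambda\mapsto\sigma^*\lambda$ is smooth, as it is linear in fibers and smooth in $p$. The function $(p,\xi)\mapsto F^*_p(\xi)$ is smooth on $\xi\neq0$: the Legendre map $\ell$ is a diffeomorphism of $M\times(\mathbb{R}^k\setminus\{0\})$, by the inverse function theorem using strong convexity (its differential in $u$ is the positive definite $g_u$), and $F$ is smooth there. Here I am implicitly assuming $F$ depends smoothly on $p$ as well. Continuity on all of $T^*M$ follows since $F_p^*$ is a norm that depends continuously on $p$, so $\tfrac12(F^*)^2\to0$ as $\xi\to0$ locally uniformly.

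Finally, \eqref{f25} is Euler's theorem applied twice. On $T^*M\setminus\mathrm{Ann}(\mathcal{D})$, $H$ is smooth and positively $2$-homogeneous in the fiber variables $p$. Hence $\partial_{p^i}H$ is $1$-homogeneous, and $p^j\partial_{p^j}\partial_{p^i}H=\partial_{p^i}H$ and $p^i\partial_{p^i}H=2H$. Combining these gives $H=\tfrac12\,\partial^2_{p^ip^j}H\,p^ip^j$. Homogeneity holds for $\lambda\mapsto c\lambda$ with $c>0$, and this map preserves the complement of the annihilator.

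The main obstacle is the careful identification of the Fenchel dual with $F\circ\ell^{-1}$. This requires attainment of the supremum and injectivity of $\ell$. Both follow from strong convexity and coercivity, since $\tfrac12F_p^2(u)\ge c|u|^2$.
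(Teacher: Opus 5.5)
Your proposal is correct and follows the same route as the paper. Both arguments identify the maximizer of $u\mapsto\langle\sigma^*\lambda,u\rangle-\frac12F_p^2(u)$ as $\ell_p^{-1}(\sigma^*\lambda)$ from the first-order condition, read off $H=\frac12(F^*\circ\sigma^*)^2$, characterize $H^{-1}(0)$ by $\sigma^*\lambda=0$, and obtain \eqref{f25} from Euler's theorem for the fiberwise $2$-homogeneous $H$. You also supply details the paper leaves implicit: coercivity giving attainment and bijectivity of $\ell$, the continuity argument at $\mathrm{Ann}(\mathcal{D})$, and the smooth dependence of $F$ on $p$ (the paper assumes only continuity but tacitly needs smoothness for its claim).
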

\begin{proof}
    Let $p = \pi(\lambda)$. By definition, $\langle \lambda, \sigma(p, u) \rangle = \langle \sigma^*(\lambda), u \rangle$. For $\lambda \notin \text{Ann}(\mathcal{D})$, meaning $\sigma^*(\lambda) \neq 0$, differentiating \eqref{hm} with respect to $u$ shows that the supremum is attained for $u = \ell_p^{-1}(\sigma^*(\lambda))$, where $\ell_p: \mathbb{R}^k \setminus \{0\} \to (\mathbb{R}^k)^* \setminus \{0\}$ is the Legendre transformation of $F_p$. Substituting this back yields $H(\lambda) = \frac{1}{2}(F^* \circ \sigma^*(\lambda))^2$. If $\lambda \in \text{Ann}(\mathcal{D})$, then $\sigma^*(\lambda) = 0$, the supremum is trivially attained at $u=0$, and $H(\lambda) = 0$, which is consistent with the formula.
    
    The smoothness of $H(\lambda)$ outside the annihilator follows directly from \eqref{hmt0}, 
    since $F^*$ is smooth outside the zero section. Moreover, $\lambda \in \text{Ann}(\mathcal{D})$ 
    if and only if $H(\lambda) = 0$. For $\lambda \notin \text{Ann}(\mathcal{D})$, $H$ is positively 
    2-homogeneous with respect to the fiber coordinates; namely, for any $c > 0$, 
    $H(c\lambda) = c^2 H(\lambda)$. Applying Euler's homogeneous function theorem
     gives the Hessian representation in \eqref{f25}.
\end{proof}

For $\lambda \in T^*M \setminus \text{Ann}(\mathcal{D})$, the Hamiltonian vector field $\vec{H}$ is defined via the canonical symplectic form $\omega$ on $T^*M$ by 
\begin{equation}
    d_{\lambda}H = \omega(\cdot, \vec{H}(\lambda)).
\end{equation}
In local Darboux coordinates $(x^i, p_i)$, $\vec{H}$ can be expressed as 
\begin{equation}
    \vec{H}(\lambda) = \sum_{i=1}^n \left( \frac{\partial H}{\partial p_i}(\lambda)\frac{\partial }{\partial x^i} - \frac{\partial H}{\partial x^i}(\lambda)\frac{\partial }{\partial p_i} \right).
\end{equation}

\begin{proposition}[see {\cite[Theorem 7.2]{YAD}}]
    Let $(M,\mathcal{D},F)$ be a sub-Finslerian manifold. If a curve $\lambda(t)$ is a normal extremal and $\lambda(0) \in \text{Ann}(\mathcal{D})$, then $\lambda(t) \equiv \lambda(0)$. If $\lambda(0) \in T^*M \setminus \text{Ann}(\mathcal{D})$, then 
    \begin{equation}
        \dot{\lambda}(t) = \vec{H}(\lambda(t)).
    \end{equation}
\end{proposition}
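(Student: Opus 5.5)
The first claim follows from the PMP characterization (N), and the second is a computation in local coordinates comparing the normal-extremal dynamics with the Hamiltonian system built from \eqref{hmt0}.

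\emph{The annihilator case.} Suppose $\lambda(0)\in\ann(\mathcal{D})$. Then $\bar\lambda(0)=\sigma^*(\lambda(0))=0$, so condition (N) gives $u(0)=0$; we interpret $\ell^{-1}(0)=0$, the minimizer of \eqref{hm} found in Proposition \ref{prop2.5}. More generally, the quantity $H(\lambda(t))=\frac12 F(u(t))^2$ is constant along a normal extremal, by the constant-speed parametrization. Since $H(\lambda(0))=0$, we get $u\equiv 0$. The flow $P_{0,t}$ of $X_u\equiv 0$ is then the identity, and $\lambda(t)=(P_{0,t}^{-1})^*\lambda_0=\lambda_0$. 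If one prefers not to use constancy of speed, an alternative argument is this: if $u$ were nonzero on a set of positive measure, then $\gamma$ would have positive length. However, a normal extremal starting at a covector with $\sigma^*\lambda_0=0$ has $F(u(t))=F^*(\sigma^*\lambda(t))$. A Gronwall estimate on the linear ODE satisfied by $\sigma^*\lambda(t)$, whose right-hand side vanishes when $u=0$, then forces $\sigma^*\lambda(t)\equiv 0$.

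\emph{The smooth case.} Now suppose $\lambda(0)\notin\ann(\mathcal{D})$. By continuity, $\lambda(t)$ stays off $\ann(\mathcal{D})$ on a maximal interval, and $H$ is smooth there by Proposition \ref{prop2.5}. Work in Darboux coordinates $(x,p)$. Define $h_i(\lambda)=\langle\lambda,X_i\rangle$, so $\sigma^*\lambda=(h_1,\dots,h_k)$ and $H=\frac12 F^*(h)^2$. We need two derivatives.
\begin{itemize}
\item Using the duality $\partial_\xi\bigl(\tfrac12 F^{*2}\bigr)(\xi)=\ell^{-1}(\xi)$, the momentum derivative is
\[
\partial_p H=\sum_i \partial_{\xi_i}\bigl(\tfrac12F^{*2}\bigr)(h)\,X_i=\sum_i\ell^{-1}(h)_i X_i=\sum_i u_iX_i=\dot\gamma,
\]
by (N).
\item The base derivative has two contributions, one through $h$ and one through the $x$-dependence of $F^*_x$:
\[
\partial_x H=\sum_i u_i\,\partial_x h_i+\partial_x\bigl(\tfrac12F^{*2}_x\bigr)(h).
\]
\end{itemize}

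\emph{Comparing with the extremal.} The curve $\lambda(t)=(P_{0,t}^{-1})^*\lambda_0$ is the flow of the time-dependent Hamiltonian $\langle\lambda,X_u\rangle=\sum u_i h_i$. Hence $\dot p=-\sum u_i\partial_x h_i$ and $\dot x=\sum u_iX_i$. Matching this with $\vec H$ therefore requires the extra term $\partial_x\bigl(\tfrac12F^{*2}_x\bigr)(h)$ to vanish, or be accounted for. This is the main obstacle. Here I would use that the pair $(x,u)$ maximizes $\langle\lambda,\sigma(x,u)\rangle-\frac12F_x^2(u)$, with the $x$-dependence of $F$ entering the cost. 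In the PMP of \cite[Theorem 6.4]{YAD}, the adjoint equation must then include $-\partial_x(\tfrac12F_x^2)(u)$. By the envelope theorem,
\[
\partial_x\bigl(\tfrac12F^{*2}_x\bigr)(h)=-\partial_x\bigl(\tfrac12F_x^2\bigr)(u).
\]
This is exactly the term needed, giving $\dot p=-\partial_x H$.

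\emph{Conclusion.} The two curves satisfy the same equations, so $\dot\lambda=\vec H(\lambda)$ on the maximal interval. Finally, $H$ is constant along the smooth Hamiltonian flow. Since $H>0$ at $t=0$, the curve can never reach $\ann(\mathcal{D})=H^{-1}(0)$, so the interval is all of $[0,1]$.
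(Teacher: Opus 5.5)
The paper gives no proof of this proposition; it only cites \cite[Theorem 7.2]{YAD}, so there is no internal argument to compare with. Your proof is correct and is the standard derivation. In the smooth case you check, component by component, that $\vec H(\lambda)=\vec h_u(\lambda)|_{u=\ell^{-1}(\sigma^*\lambda)}$, where $h_u(\lambda)=\langle\lambda,X_u\rangle-\frac12F^2_{\pi(\lambda)}(u)$; this is the envelope theorem for $H=\max_u h_u$. Conservation of $H$ then keeps the curve off $\mathrm{Ann}(\mathcal{D})=H^{-1}(0)$. In the annihilator case, the continuous representative $u=\ell^{-1}(\sigma^*\lambda)$ (with $\ell^{-1}(0)=0$) together with constant speed forces $u\equiv 0$.

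The step you call the main obstacle should be stated as a correction, not hedged. The term $\partial_x(\frac12F^{*2}_x)(h)$ does not vanish in general. Under the paper's literal definition $\lambda(t)=(P_{0,t}^{-1})^*\lambda_0$, the conclusion fails as soon as $F_p$ genuinely depends on $p$.
\begin{itemize}
\item Take $\{y>0\}\subset\mathbb{R}^2$ with $X_1=\partial_x$, $X_2=\partial_y$ and $F_{(x,y)}(u)=|u|/y$, so $H=\frac12y^2|p|^2$.
\item The curve $\gamma(t)=(y_0^2t,y_0)$ with $u\equiv(y_0^2,0)$ and constant momentum $p\equiv(1,0)$ satisfies $\lambda(t)=(P_{0,t}^{-1})^*\lambda_0$ and (N).
\item However, $\vec H$ would require $\dot p_2=-y_0\neq 0$.
\item If one admits only lifts of minimizers, the PMP as transcribed breaks down instead: no hyperbolic semicircle admits a lift of that form.
\end{itemize}

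What you actually prove is the statement for normal extremals defined by the full adjoint equation $\dot\lambda=\vec h_{u(t)}(\lambda)$. Its $\dot p$-equation carries the extra term $+\partial_x(\frac12F_x^2)(u)$; mind the sign, which your phrasing leaves ambiguous. That is the correct notion, and you should say explicitly that you adopt it in place of the paper's. It also tacitly needs $F$ to be smooth on $M\times(\mathbb{R}^k\setminus\{0\})$, as Proposition~\ref{prop2.5} already assumes. Your annihilator case is unaffected, since $\partial_x(\frac12F_x^2)(0)=0$.

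Finally, in your Gronwall alternative, $\sigma^*\lambda$ does not satisfy a closed linear ODE. What you have is $\dot h_i=\sum_j u_j\langle\lambda,[X_j,X_i]\rangle+O(|u|^2)$, together with $|u|\le C|h|$ on compact sets. Hence $|\dot h|\le C'|h|$, which is all Gronwall needs. This version has the advantage of using neither minimality nor constant speed.
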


This Hamiltonian flow allows us to define the exponential map on the cotangent bundle.

\begin{definition}
    Let $(M,\mathcal{D},F)$ be a forward complete smooth sub-Finslerian manifold, and let $q \in M$. The \textit{sub-Finslerian exponential map} $\exp_q: T_q^*M \to M$ is defined by 
    \begin{equation}
        \exp_q(\lambda) := \begin{cases}
            \pi \circ e^{\vec{H}}(\lambda) \quad &\text{if } \lambda \in T_q^*M \setminus \text{Ann}(\mathcal{D}_q);\\
            q \quad &\text{if } \lambda \in \text{Ann}(\mathcal{D}_q),
        \end{cases}
    \end{equation}
    where $\pi$ denotes the canonical projection, and $e^{\vec{H}}$ denotes the flow of $\vec{H}$ evaluated at time $t=1$.
\end{definition}

Another way to describe extremals is via the endpoint map and Lagrange multipliers.

\begin{definition}[Endpoint map]
 Let $(M,\mathcal{D},F)$ be a sub-Finslerian manifold and let $q \in M$. Let $\mathcal{U}_q \subset L^2([0,1]; \mathbb{R}^k)$ be the open set of all controls for which the corresponding trajectory $\gamma_u$ starting at $q$ is defined on $[0,1]$. We define the \textit{endpoint map} based at $q$ as 
    \begin{equation}
        E_q: \mathcal{U}_q \to M, \quad E_q(u) = \gamma_u(1).
    \end{equation} 
\end{definition}

The differentials of the endpoint map can be computed via the Volterra series expansion. For a detailed proof in the sub-Riemannian setting---which is adapted straightforwardly to the sub-Finslerian case---we refer to \cite[Chapter 8]{Agr}.

\begin{proposition}[Variation formulas for endpoint maps]\label{var_endpoint}
    Let $\gamma_u: [0,1] \to M$ be a normal geodesic, joining $x$ to $y$. Let $P_{t,1}$ denote the flow generated by the non-autonomous vector field $X_{u(t)} = \sum_{i=1}^k u_i(t)X_i$. Then for any $v \in \ker D_u E_x \subset L^2([0,1]; \mathbb{R}^k)$, we have 
    \begin{equation}
        \begin{aligned}
            D_u E_x(v) &= \int^1_0 (P_{t,1})_* X_{v(t)}(\gamma_u(t)) \, dt, \\
            D^2_u E_x(v,v) &= \iint\limits_{0 \le s \le t \le 1} \left[ (P_{s,1})_* X_{v(s)}(\gamma_u(s)), (P_{t,1})_* X_{v(t)}(\gamma_u(t)) \right](y) \, ds dt,
        \end{aligned}
    \end{equation}
    where $X_{v(t)} = \sum_{i=1}^k v_i(t)X_i$.
\end{proposition}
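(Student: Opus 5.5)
The statement to be proved is Proposition~\ref{var_endpoint}. I would follow the chronological calculus of \cite[Chapter 8]{Agr}, checking that nothing there uses the quadratic form of $F$. The endpoint map $E_x(u)=\gamma_u(1)$ depends only on the control system $\dot\gamma=\sum_i u_iX_i(\gamma)$, which is linear in $u$ and smooth in the state. The norm $F$ enters neither the definition of $E_x$ nor its derivatives. The sub-Finslerian structure therefore matters only through the hypothesis that $\gamma_u$ is a normal geodesic, and in fact the formulas hold for any $u\in\mathcal U_x$ with $\gamma_u$ defined on $[0,1]$.

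\textbf{Perturbed flow.} For $v\in L^2([0,1];\mathbb R^k)$ and small $\varepsilon$, I write the perturbed flow via the variation formula. Let $P^\varepsilon_{0,1}$ be the flow of $X_{u+\varepsilon v}$. Then
\[
P^{\varepsilon}_{0,1}=\overrightarrow{\exp}\int_0^1 \varepsilon\,(P_{t,1})_*X_{v(t)}\,dt\circ P_{0,1},
\]
where $\overrightarrow{\exp}$ is the chronological exponential of the time-dependent field $W_t:=(P_{t,1})_*X_{v(t)}$. The point is that $X_{u+\varepsilon v}=X_u+\varepsilon X_v$ by linearity of $\sigma$.

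\textbf{Volterra expansion.} Evaluating at $x$ and acting on test functions $a\in C^\infty(M)$, the Volterra expansion gives, to second order,
\[
a(E_x(u+\varepsilon v))=a(y)+\varepsilon\int_0^1 (W_t a)(y)\,dt+\varepsilon^2\iint_{0\le s\le t\le1}(W_s\circ W_t\, a)(y)\,ds\,dt+O(\varepsilon^3).
\]
The first-order term is exactly the formula for $D_uE_x(v)$. The second-order term is not intrinsic in general. When $v\in\ker D_uE_x$, i.e.\ $\int_0^1W_t(y)\,dt=0$, I symmetrize:
\[
\iint_{s\le t}W_s\circ W_t=\tfrac12\Big(\int_0^1W_t\Big)^2+\tfrac12\iint_{s\le t}[W_s,W_t].
\]
The square term has vanishing first-order part at $y$. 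Its second-order contribution is removed exactly as in \cite{Agr}: $D^2_uE_x$ is defined on $\ker D_uE_x$ modulo $\operatorname{Im}D_uE_x$, or equivalently as the Hessian in coordinates adapted to the kernel. This yields the bracket formula, with the normalization matching the paper's convention $D^2_uE_x(v,v)=\iint_{s\le t}[W_s,W_t](y)$.

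\textbf{Main obstacle and remaining checks.} The main obstacle is justifying the expansion rigorously for $L^2$ controls rather than smooth ones, namely the $C^2$-smoothness of $E_x$ on $\mathcal U_x$ and the uniform remainder bound. I would invoke the standard estimates of \cite[Chapter 8]{Agr} (Gronwall bounds on a compact neighborhood of $\gamma_u([0,1])$), which apply verbatim since the $X_i$ are smooth and $F$ plays no role. Beyond this, only the bookkeeping of the second-order convention remains to be checked against \cite{Agr}.
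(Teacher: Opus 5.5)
Your route is the one the paper takes (it gives no argument beyond deferring to the Volterra-series computation of \cite[Chapter 8]{Agr}), and most of it is right. $F$ never enters $E_x$. The expansion with $W_s\circ W_t$ for $s\le t$ (acting on functions as $W_s(W_t a)$) is correctly ordered, and your symmetrization identity holds, so you land on the paper's sign and normalization.

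The gap is in how you dispose of $\tfrac12\bigl(\int_0^1W_t\,dt\bigr)^2$. You say only its first-order part vanishes at $y$, and you remove the rest by regarding $D^2_uE_x$ as defined on $\ker D_uE_x$ modulo $\operatorname{Im}D_uE_x$. Taken literally, that proves the bracket formula only in $T_yM/\operatorname{Im}D_uE_x$. Wherever the paper uses the proposition (Lemma~\ref{lmA1}, Lemma~\ref{lm:1}), the geodesic has no abnormal segments, so $D_uE_x$ is surjective and this quotient is $\{0\}$. Your statement would be empty precisely there, whereas Lemma~\ref{lmA1} needs the actual vector $D^2_uE_x(v,v)\in T_yM$ in order to pair it with the normal multiplier $\lambda$.

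No quotient is needed. Set $V:=\int_0^1W_t\,dt$, a genuine vector field; $v\in\ker D_uE_x$ says exactly that $V(y)=0$. Then for every $a\in C^\infty(M)$ one has $(V\circ V\,a)(y)=d_y(Va)\bigl(V(y)\bigr)=0$. So the entire square term vanishes at $y$, not just its first-order part; in coordinates, both the $V^iV^j\partial_i\partial_j a$ part and the $(VV^j)\partial_j a$ part vanish at $y$. Applying the expansion to coordinate functions then gives $\frac{d^2}{d\varepsilon^2}\big|_{\varepsilon=0}E_x(u+\varepsilon v)=\iint_{0\le s\le t\le1}[W_s,W_t](y)\,ds\,dt$ exactly. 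This vector is chart-independent, because the chain-rule correction $D^2\phi(D_uE_xv,D_uE_xv)$ vanishes for $v$ in the kernel. With that line in place of the appeal to the quotient, your argument is complete, up to the $L^2$ regularity estimates that both you and the paper take from \cite{Agr}.
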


The Lagrange multiplier rule provides an equivalent characterization of extremals.

\begin{proposition}\label{lagmul}
    Let $(M,\mathcal{D},F)$ be a sub-Finslerian manifold and let $\gamma_u$ be a minimizing geodesic
     connecting $x$ to $y$ associated with the control $u$. Then there exists a non-zero 
     covector $\lambda \in T_y^*M$ such that for any $v \in L^2([0,1]; \mathbb{R}^k)$, 
     one of the following conditions is satisfied:
    \begin{itemize}
        \item[(N)] $\langle \lambda, D_u E_x(v) \rangle = D_u J(v)$;
        \item[(A)] $\langle \lambda, D_u E_x(v) \rangle = 0$.
    \end{itemize}
    Moreover, the conditions (N) and (A) provide characterizations of 
    normal and abnormal extremals, respectively.
\end{proposition}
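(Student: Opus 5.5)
We would prove Proposition \ref{lagmul} by treating minimality as a constrained minimization problem on the Hilbert manifold $\mathcal{U}_x$ and applying the Lagrange multiplier rule in its version for possibly non-submersive constraints. First, reduce to energy minimizers. Reparametrizing $\gamma_u$ by constant speed does not change its length. Since length minimizers with constant speed are exactly energy minimizers, $u$ minimizes $J$ on the fiber $E_x^{-1}(y)\subset\mathcal{U}_x$. Here $u$ should be the minimal control, so that $J(u)=\frac12\int F(\dot\gamma)^2$. This requires a preliminary remark: for any control $w$ with $E_x(w)=y$, one has $\frac12\int F_p(w)^2\ge J(\gamma_w)$, by the minimality in \eqref{eq:2.3}. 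Hence minimizing over controls and minimizing over curves give the same infimum, and $u$ minimizes $J$ over all controls in $E_x^{-1}(y)$.

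Second, record regularity. $E_x$ is smooth (in fact $C^\infty$) on $\mathcal{U}_x$, by smooth dependence of ODE solutions on $L^2$ controls; this is the same computation as in Proposition \ref{var_endpoint}. The map $u\mapsto\frac12 F_p^2(u)$ is $C^1$ on $\mathbb{R}^k$, since it is $2$-homogeneous and smooth away from $0$, with derivative $\ell_p(u)$ extended by $0$ at $u=0$. Moreover $|\ell_p(u)|\le C|u|$ locally uniformly in $p$. Hence $J$ is $C^1$ on $L^2$ (as a Nemytskii operator) with $D_uJ(v)=\int_0^1\langle \ell(u(t)),v(t)\rangle\,dt$. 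The explicit $p$-dependence does not matter here, because the control norm $F_p$ is evaluated along $\gamma_u$; we would handle it by writing $J(w)=\frac12\int F_{\gamma_w(t)}(w)^2$ and differentiating through $\gamma_w$ as well. Alternatively, and more simply, we would note that the paper's $F$ in the control formulation should be read as applied at the base point, and verify differentiability by the chain rule.

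Third, consider the extended map $\Phi=(J,E_x):\mathcal{U}_x\to\mathbb{R}\times M$. If $D_u\Phi$ were surjective, the local surjectivity theorem would give controls near $u$ with endpoint $y$ and strictly smaller energy, contradicting minimality. So $\operatorname{Im}D_u\Phi$ is a proper subspace of the finite-dimensional space $\mathbb{R}\times T_yM$. Hence there is a nonzero $(\nu,\lambda)$ with
\begin{equation}
\langle\lambda,D_uE_x(v)\rangle=\nu\, D_uJ(v)\quad\forall v.
\end{equation}
If $\nu\ne0$, we normalize $\nu=1$ to get (N). If $\nu=0$, then $\lambda\neq0$ and we get (A).

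Finally, we would identify these conditions with extremals. We transport $\lambda$ back to $\lambda(t)=(P_{t,1})^*\lambda\in T^*_{\gamma(t)}M$. By Proposition \ref{var_endpoint}, $\langle\lambda,D_uE_x(v)\rangle=\int_0^1\langle\sigma^*\lambda(t),v(t)\rangle dt$. So (N) says $\sigma^*\lambda(t)=\ell(u(t))$ for a.e. $t$, i.e. $u=\ell^{-1}(\bar\lambda(t))$, which is the normal condition. Condition (A) says $\langle\lambda(t),X_i\rangle\equiv0$, which is the abnormal condition. Conversely, the same computation read backwards shows that extremals yield the multiplier identities. The main obstacle is the second step: establishing $C^1$ regularity of $J$ when $F_p^2$ is only $C^1$ at the origin and depends on the base point, which is exactly what the local surjectivity argument needs. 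I would handle it by the uniform local bounds on $\ell_p$ and dominated convergence.
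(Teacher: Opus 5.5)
The paper states this proposition without proof, as the standard Lagrange multiplier rule carried over from the sub-Riemannian setting of \cite{Agr}. Your argument is exactly that standard proof: at a minimizer $D_u(J,E_x)$ cannot be onto $\mathbb{R}\times T_yM$, so some nonzero $(\nu,\lambda)$ annihilates its image, and Proposition~\ref{var_endpoint} then turns the multiplier identity into a condition on a covector curve. The multiplier step is sound. The $C^1$ regularity of $J$, which you single out as the main obstacle, is routine given your growth bounds. The real problem is elsewhere.

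\textbf{Main gap.} Your claim that the base-point dependence of $F_p$ ``does not matter'' is false, and your final identification depends on it. Write $J(w)=\frac12\int_0^1F^2(\gamma_w(t),w(t))\,dt$ and differentiate through $\gamma_w$ using the first-order formula of Proposition~\ref{var_endpoint}. This gives
\[
D_uJ(v)=\int_0^1\Big\langle \ell(u(s))+\sigma^*\!\int_s^1(P_{s,t})^*\eta(t)\,dt,\;v(s)\Big\rangle\,ds,\qquad \eta(t):=\tfrac12\,\partial_pF^2(\gamma(t),u(t))\in T^*_{\gamma(t)}M.
\]
So (N) is equivalent to $\ell(u(s))=\sigma^*\lambda(s)$ for the corrected costate $\lambda(s):=(P_{s,1})^*\lambda-\int_s^1(P_{s,t})^*\eta(t)\,dt$, not for $(P_{s,1})^*\lambda$. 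This corrected curve solves the Hamiltonian system of $\xi\mapsto\langle\xi,X_{u(t)}\rangle-\frac12F^2_{\pi(\xi)}(u(t))$. Since (N) says $u(t)$ is the maximizer in the definition \eqref{hm} of $H$, the curve is an integral curve of $\vec H$, which is the notion of normal extremal used in the rest of the paper. Your formula for $D_uJ$ and your identification are correct only when $F_p$ does not depend on $p$, as in the Heisenberg examples. The paper's own PMP formula $\lambda(t)=(P^{-1}_{0,t})^*\lambda_0$ has the same restriction. The abnormal case is unaffected, because $J$ does not enter (A).

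\textbf{Two smaller points.}

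First, your constant-speed ``reduction'' is not one, because reparametrizing $\gamma_u$ changes the control. For a strictly normal minimizer not parametrized with constant speed, the statement actually fails for the original $u$. Condition (A) is excluded, and (N) forces $F(u(t))^2=2H(\lambda(t))$ to be constant. So constant speed must be taken as a hypothesis, as in the paper's PMP. Under it, your minimal-control remark correctly shows that $u$ minimizes $J$ on $E_x^{-1}(y)$.

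Second, in the normal case you still need $\lambda\neq0$, and this uses $x\neq y$. If $\lambda=0$, then $|\lambda(s)|\le C\int_s^1|u(t)|^2\,dt\le C'\int_s^1|\lambda(t)|\,dt$. Gr\"onwall then gives $\lambda\equiv0$, hence $u\equiv0$ and $x=y$.
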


We end this subsection with the following important result, which is a key ingredient for discussing the regularity of the sub-Finslerian distance.
\begin{proposition}\label{prop1}
    Let $x, y \in M$ be two distinct points and assume that there is a function 
    $\phi: M \to \mathbb{R}$ differentiable at $x$ such that 
    \begin{equation}
        d^2_{SF}(x,y) = \phi(x) \quad \text{and} \quad d^2_{SF}(z,y) \ge \phi(z) \quad
         \forall z \in M.
    \end{equation}
    Then, the geodesic joining $x$ and $y$ is unique, has a normal lift to $T^*M$ and is given by 
    $\gamma : [0,1] \to M$; $\gamma(t) = \exp_x(-\frac{1}{2}t d_x\phi)$. In particular, 
    $y = \exp_x(-\frac{1}{2}d_x\phi)$.
\end{proposition}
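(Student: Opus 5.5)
Take a minimizing geodesic $\gamma_u:[0,1]\to M$ from $x$ to $y$. It exists by the Hopf--Rinow type theorem; if the manifold is not assumed forward complete, a local existence result suffices, because $d_{SF}(x,y)$ is finite and we may restrict to a compact neighbourhood. We may parametrize $\gamma_u$ with constant speed, so that $u$ minimizes the energy $J$ among controls with $E_x(u)=y$, and $J(u)=\tfrac12 d^2_{SF}(x,y)$. The idea is to turn the touching condition on $\phi$ into a first-order optimality condition at the \emph{initial} point. By the symmetry-of-time trick, this yields a Lagrange multiplier of normal type for the reversed problem, and hence a normal lift.

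\textbf{Step 1 (variational inequality).} Work with the reversed structure, or equivalently perturb the starting point. For a control $v$ and small $\varepsilon$, let $z_\varepsilon$ be the starting point such that the trajectory of $u+\varepsilon v$ ending at $y$ starts from $z_\varepsilon$. In other words, use the backward endpoint map $\overleftarrow{E}_y$, obtained by flowing backwards from $y$, so that $z_\varepsilon=\overleftarrow{E}_y(u+\varepsilon v)$. Then
\[
\phi(z_\varepsilon)\le d^2_{SF}(z_\varepsilon,y)\le 2J(u+\varepsilon v),
\]
with equality at $\varepsilon=0$. Hence the function $\varepsilon\mapsto 2J(u+\varepsilon v)-\phi(\overleftarrow{E}_y(u+\varepsilon v))$ attains a minimum at $\varepsilon=0$. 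Here $J$ is differentiable at $u$ when $u\neq 0$ a.e. on a set where $F$ is smooth; this is the main technical point, since $F$ is only $C^1$ away from $0$. The quantity $F^2$ is $C^1$ on all of $\mathbb R^k$ by homogeneity and strong convexity, so $J$ is $C^1$ on $L^2$. The function $\phi$ is differentiable at $x$ and $\overleftarrow{E}_y$ is smooth. Differentiating therefore gives, for all $v$,
\[
2D_uJ(v)=\langle d_x\phi, D_u\overleftarrow{E}_y(v)\rangle .
\]

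\textbf{Step 2 (normality and the formula).} The identity above is exactly condition (N) of Proposition \ref{lagmul}, transported to the initial point, with multiplier $\lambda_0=\tfrac12 d_x\phi\in T^*_xM$. More precisely, translating by the flow $P_{0,1}$, we get $\lambda_1=(P_{0,1}^{-1})^*$ applied to the corresponding covector at $y$, and the sign change comes from the backward endpoint map. Hence $\gamma$ admits a normal extremal $\lambda(t)$ with $\lambda(0)=-\tfrac12 d_x\phi$. The sign follows from the convention $\dot\gamma=\sigma(\gamma,u)$ with $u=\ell^{-1}(\sigma^*\lambda)$ and from $\phi$ decreasing along $\gamma$ towards $y$.

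Since $x\neq y$, we have $u\neq 0$, and therefore $\lambda(0)\notin\mathrm{Ann}(\mathcal D)$. The normal extremal then satisfies $\dot\lambda=\vec H(\lambda)$, so $\gamma(t)=\pi\circ e^{t\vec H}(\lambda(0))$. By 2-homogeneity of $H$, this equals $\exp_x(-\tfrac t2 d_x\phi)$. Setting $t=1$ gives $y=\exp_x(-\tfrac12 d_x\phi)$.

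\textbf{Step 3 (uniqueness).} Any other minimizing geodesic from $x$ to $y$ is subject to the same argument. It therefore has initial covector $-\tfrac12 d_x\phi$, which is uniquely determined by $\phi$. By uniqueness of solutions of the Hamiltonian ODE, the two geodesics coincide.

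The main obstacle I expect is Step 1, specifically the careful derivation of the first-order condition with the correct sign and orientation in the asymmetric setting. One must use the endpoint map based at $y$ for the reversed structure (controls $-u$ and $\bar F$). One must also check that the $C^1$ regularity of $F^2$ at $0$ is enough to differentiate $J$ even if $u$ vanishes on some set.
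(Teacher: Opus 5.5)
Your proposal is correct and follows the standard argument the paper defers to (\cite[Lemma 2.15]{Rf}). The backward endpoint map gives $2D_uJ(v)=\langle d_x\phi,D_u\overleftarrow{E}_y(v)\rangle$, and differentiating $E_{\overleftarrow{E}_y(w)}(w)=y$ yields $D_u\overleftarrow{E}_y(v)=-(P_{0,1})_*^{-1}D_uE_x(v)$. So this is condition (N) with multiplier $-\tfrac12(P_{0,1}^{-1})^*d_x\phi\in T^*_yM$, i.e.\ initial covector $-\tfrac12 d_x\phi$, and normality, the formula and uniqueness then follow as you say.

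Drop the aside about non-complete manifolds. Local existence does not give minimizers between arbitrary points: in $\mathbb{R}^2\setminus\{0\}$ with $y=-x$ and $\phi=|\cdot-y|^2$, the hypotheses hold but no minimizer exists. The statement therefore genuinely relies on the forward completeness the paper assumes when defining $\exp_x$.
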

\noindent The proof follows standard arguments from the sub-Riemannian context; 
see, for instance, \cite[Lemma 2.15]{Rf}.

\subsection{Regularity of the sub-Finslerian distance}
In this subsection, we recall the regularity of the sub-Finslerian distance function; 
the results presented here have been established in \cites{MR,YAD}. We also refer to 
\cite[Chapter 11]{Agr} for the corresponding discussion in the sub-Riemannian setting, 
which inspires the related research in sub-Finslerian geometry.

\begin{definition}[Conjugate point]
    Let $(M,\mathcal{D},F)$ be a sub-Finslerian manifold and let $\gamma:[0,1]\to M$ be a normal 
    geodesic with $\gamma(t)=\exp_p(t\lambda)$.
    We say that $q=\exp_p(\bar{t}\lambda)$ is a \textit{conjugate point} to $p$ along $\gamma$ if $\bar{t}\lambda$
    is a critical point for $\exp_p$.
\end{definition}

The properties of conjugate points in sub-Finslerian geometry are elusive. For instance, unlike the Riemannian or Finslerian settings, conjugate points may accumulate. This peculiar fact arises from the presence of abnormal segments. However, if we restrict our discussion to normal geodesics containing no abnormal segments, the behavior of conjugate points resembles that in Riemannian or Finsler geometry.

\begin{definition}
    We say that a normal geodesic $\gamma:[0,1]\to M$ contains \textit{no abnormal segments} if for every 
    $0\le s_1 < s_2\le 1$, the restriction $\gamma|_{[s_1,s_2]}$ is not abnormal.
\end{definition}

\begin{theorem}\label{conjugatepoint}
    Let $\gamma:[0,1]\to M$ be a minimizing geodesic that does not contain abnormal segments. Then 
    $\gamma(s)$ is not conjugate to $\gamma(s')$ for every $s,s'\in[0,1]$ with $|s-s'|<1$.
\end{theorem}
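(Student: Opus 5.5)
We argue by contradiction, following the sub-Riemannian argument in \cite[Chapter 8]{Agr}. Fix $s<s'$ with $s'-s<1$ and reparametrize $\gamma|_{[s,s']}$ to a minimizing geodesic $\eta:[0,1]\to M$. We want to show $\eta(1)$ is not conjugate to $\eta(0)$; the case $s>s'$ is the same statement for the reversed structure $\bar F$, since reversed minimizers of $\bar F$ are minimizers of $F$ and abnormality is unaffected by reversal. Because $|s-s'|<1$, the segment $\eta$ extends, inside $\gamma$, strictly past at least one endpoint. Suppose it extends past $\eta(1)$, say to $\gamma|_{[s,s'+\epsilon]}$. The other case is handled symmetrically using $\bar F$.

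First we record that $\eta$ is strictly normal. By hypothesis $\gamma$ has no abnormal segments, so $\eta$ is not abnormal. Proposition \ref{lagmul} then gives a multiplier $\lambda\in T^*_{\eta(1)}M$, unique up to the normal normalization, with $\langle\lambda,D_uE_x(v)\rangle=D_uJ(v)$. The control $u$ is nonzero a.e., so $H$ is smooth along the extremal, and the Hessian of $J$ restricted to $\ker D_uE_x$, namely $D^2_uJ-\langle\lambda,D^2_uE_x\rangle$, makes sense. Since $F^2$ is strongly convex, $D^2_uJ$ is equivalent to the $L^2$ norm. By the formula of Proposition \ref{var_endpoint}, $\langle\lambda,D^2_uE_x\rangle$ is a compact quadratic form. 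The second variation is therefore Legendre and "positive definite up to compact", and its index and nullity are finite.

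Next we relate conjugacy to the kernel. The standard computation shows that $\eta(1)$ is conjugate to $\eta(0)$ along $\eta$ if and only if the second variation $Q_1$ on $\ker D_uE_x$ is degenerate. Concretely, a Jacobi field with $J(0)=0$ and $\pi_*J(1)=0$ produces an element of the kernel, and conversely. This uses only smoothness of $\vec H$ near the extremal and the Volterra formulas.

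Now introduce the family $Q_t$ for the restriction to $[0,t]$ and follow the Morse-type argument. Minimality of $\gamma$ up to $s'+\epsilon$ forces $Q_{t}\ge 0$ for all $t\le 1+\epsilon'$ after rescaling; otherwise a second-order variation would shorten the curve. Here we need strict normality on $[0,t]$: with no abnormal multiplier, the second-order necessary condition for the constrained minimum reads $Q_t\ge0$ on $\ker D_uE$. The hard part is to show that a nontrivial kernel at $t=1$ forces negativity for $t>1$. Take $v\in\ker Q_1$, extend it by zero to $[0,t]$, and perturb. Since $v$ is annihilated by the Hessian and $\eta$ is a Jacobi-type solution, one can modify $v$ near $t=1$ to lower the quadratic form strictly. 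This is where the absence of abnormal segments on $[1,t]$ is used: if the perturbation could not be made, the kernel element would give an abnormal multiplier on a subsegment, contradicting the hypothesis.

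The main obstacle is exactly this strict monotonicity: $\ker Q_1\ne 0$ together with strict normality of all short subsegments implies $\mathrm{ind}\,Q_t>0$ for $t>1$. I would import the sub-Riemannian proof of \cite[Theorem 8.47-type]{Agr}. The only Finsler-specific check is that the Legendre condition holds uniformly, which follows from strong convexity of $\frac12F_p^2$ on the compact range of the control away from zero.
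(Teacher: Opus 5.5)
Your proposal is correct. It shares the paper's architecture (second variation of $J$ restricted to $\ker D_uE$, conjugacy $\Leftrightarrow$ degeneracy, zero-extension of variations, use of the no-abnormal-segments hypothesis, contradiction with minimality), but you organize the decisive step differently.

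The paper works with the bottom eigenvalue $\alpha(s)$ of the restricted Hessian. It shows $\alpha$ is continuous (via Kato), attained and non-increasing (Lemma \ref{lm:1}). It then gets $\alpha<0$ past a zero from isolation of conjugate points (Proposition \ref{prop:1}). That in turn rests on Theorem \ref{thm:A1}: accumulating conjugate times force, via monotonicity of the Lagrangian graph $S_t$ (from $B\ge 0$), a vertical Jacobi field with $B(t)p(t)=0$, i.e.\ an abnormal lift.

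Your direct route needs neither a ``perturbation'' nor the deferred citation, once stated precisely. Minimality gives $Q_t\ge 0$ on the extended segment, and the zero extension $\hat v$ of $v\in\ker Q_1\setminus\{0\}$ has $Q_t(\hat v)=0$, so $\hat v\in\ker Q_t$. Its Jacobi field has $\pi_*\mathcal{J}(t)=0$ and zero control on $[1,t]$, so the linearized state equation makes it vertical there. Then $0=\hat v=D\ell^{-1}_{\sigma^*\lambda}(\sigma^*p)$ forces $\sigma^*p=0$, with $p\neq 0$ solving the adjoint equation. This is an abnormal lift of $\gamma|_{[1,t]}$, the same contradiction the paper reaches in Theorem \ref{thm:A1}. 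Note that strong convexity of $F$ enters this step too (invertibility of $D\ell^{-1}$, equivalently $\ker B=\mathrm{Ann}(\mathcal{D})$), not only through the Legendre condition.

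Your argument is shorter and dispenses with continuity and attainment of $\alpha$. The paper's argument yields isolation of conjugate points as a by-product.

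Your explicit reduction of the cases $s>s'$ and $s'=1$ via $\bar F$ covers what the paper's final paragraph leaves implicit. It does rely on forward and backward conjugacy coinciding, which follows from Lemma \ref{cjlm} and $e^{t\overleftarrow{H}}_*=\iota_*\circ e^{-t\vec{H}}_*\circ\iota_*$; alternatively, zero-extend on the left instead.

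Finally, ``a Jacobi field with $J(0)=0$'' should read $\pi_*\mathcal{J}(0)=0$, since a Jacobi field vanishing at $0$ vanishes identically.
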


To the best of our knowledge, this result has not been explicitly established in the sub-Finslerian literature, although it is a well-known fact in the sub-Riemannian setting (see \cite[Appendix A]{BR} and \cite[Chapter 8]{Agr} for detailed discussions). 
However, the standard sub-Riemannian techniques apply straightforwardly to the sub-Finslerian framework. 
The core idea relies on the fact that the existence of conjugate points implies that the Hessian of the energy 
functional $J$ admits a negative eigenvalue. Consequently, one can construct a strictly shorter variation, contradicting
 the minimality of the geodesic. 
The rigorous proof of Theorem \ref{conjugatepoint} is deferred to Appendix A, 
as it heavily relies on the discussion concerning conjugate points presented therein.

Now that we have established that normal geodesics admitting no abnormal segments 
behave regularly concerning conjugate points, we can apply these properties 
to study the regularity of the sub-Finslerian distance function. Let us introduce the following set of smooth points, on which, as the name suggests, 
the squared distance function is smooth.
\begin{definition}
    Let $(M,\mathcal{D},F)$ be a sub-Finslerian manifold and fix $p\in M.$ The \textit{set of smooth points from $p$} is the subset $\Sigma_p\subset M$ of points $q\in M$ such that there exists a unique length-minimizer $\gamma$ joining $p$ and $q$ which admits no abnormal extremals and such that $q$ is not conjugate to $p$ along $\gamma$. Define the \textit{cut locus of $p\in M$} as $\text{Cut}(p):=M\setminus \Sigma_p$. Finally, the \textit{cut locus of $M$} is the set 
    \begin{equation}
        \text{Cut}(M):=\{(p,q)\in M\times M \mid q\in \text{Cut}(p)\}\subset M\times M.
    \end{equation}
\end{definition}

Carrying out the argument adapted from sub-Riemannian geometry (see \cite[Chapter 11]{Agr}), 
the absence of conjugate points guarantees the maximal rank of the endpoint map, which via the Implicit 
Function Theorem yields the following smoothness result. 

\begin{theorem}
    The set of smooth points $\Sigma_p$ is an open subset of $M$, and $d^2_{SF}(p,\cdot)$
     is $C^{\infty}$ on $\Sigma_p$ for any fixed point $p\in M.$
\end{theorem}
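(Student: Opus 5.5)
We prove openness of $\Sigma_p$ and smoothness of $f:=d^2_{SF}(p,\cdot)$ simultaneously, by showing that $\exp_p$ is a local diffeomorphism near the covector generating the geodesic and that this local inverse produces the minimizers nearby. Fix $q\in\Sigma_p$, let $\gamma$ be the unique minimizer from $p$ to $q$, and note that it is strictly normal. Hence $\gamma(t)=\exp_p(t\lambda_0)$ for a unique $\lambda_0\in T_p^*M$, and $\lambda_0\notin\mathrm{Ann}(\mathcal{D}_p)$ because $q\neq p$; the case $q=p$ is excluded since $p\notin\Sigma_p$ by convention, the trivial curve being abnormal. Uniqueness of $\lambda_0$ follows from strict normality: if two covectors gave the same trajectory, their difference would annihilate the image of $D_uE_p$ by Proposition \ref{lagmul}, producing an abnormal multiplier. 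Because $H$ is smooth on $T^*M\setminus\mathrm{Ann}(\mathcal{D})$, the flow $e^{\vec H}$ is smooth there, so $\exp_p$ is smooth on the open cone $T_p^*M\setminus \mathrm{Ann}(\mathcal{D}_p)$. Since $q$ is not conjugate, $D_{\lambda_0}\exp_p$ is invertible, and the inverse function theorem gives neighbourhoods $W\ni\lambda_0$ and $O\ni q$ with $\exp_p:W\to O$ a diffeomorphism.

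The next step is the main obstacle: showing that for $q'$ near $q$ \emph{every} minimizer from $p$ to $q'$ is the normal geodesic $t\mapsto \exp_p(t\,\exp_p^{-1}(q'))$, and that it is unique, strictly normal and non-conjugate. We argue by contradiction with $q_n\to q$ and minimizers $\gamma_n$ violating one of these properties. By forward completeness and the local equivalence \eqref{sfsr}, the controls $u_n$ are bounded in $L^2$, and we use the standard compactness of minimizers (as in \cite[Chapter 11]{Agr}, adapted via \cite{YAD}) to pass to a subsequence converging uniformly to a minimizer from $p$ to $q$. That limit must be $\gamma$, and the controls converge strongly because the energies converge. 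Strict normality is an open condition in the control: $D_uE_p$ surjective persists under $L^2$-perturbation. Therefore $\gamma_n$ is eventually strictly normal, with covectors $\lambda_n$ determined by Proposition \ref{lagmul}. Those multipliers are obtained by inverting $D_{u_n}E_p$ on a complement of its kernel, so they depend continuously on $u_n$ and $\lambda_n\to\lambda_0$. Then eventually $\lambda_n\in W$, so $\lambda_n=\exp_p^{-1}(q_n)$. This shows the minimizer is unique and, since $W$ contains no critical points, non-conjugate, which is a contradiction. Hence $O\subset\Sigma_p$ after shrinking, and $\Sigma_p$ is open. The delicate point is continuity of the multipliers; we would handle it through the surjectivity of $D_uE_p$ rather than through the Hamiltonian, avoiding the nonsmooth set $\mathrm{Ann}(\mathcal{D})$.

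Finally, on $O$ we have $f(q')=d^2_{SF}(p,q')=2J(u_{q'})=2H(\exp_p^{-1}(q'))$. This holds because along a normal extremal $F(\dot\gamma)^2=(F^*(\sigma^*\lambda))^2=2H$ is constant, by Proposition \ref{prop2.5} and conservation of $H$. Since $\exp_p^{-1}$ is smooth on $O$ and its image avoids $\mathrm{Ann}(\mathcal{D}_p)$, where $H$ is smooth, $f$ is $C^\infty$ on $O$. As $q\in\Sigma_p$ was arbitrary, $f$ is $C^\infty$ on $\Sigma_p$.
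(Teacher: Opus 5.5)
Your proposal is correct and is essentially the argument the paper defers to in \cite[Chapter 11]{Agr}; the paper itself gives only a one-line sketch. The argument runs as follows: invert $\exp_p$ at the non-critical covector $\lambda_0$, then use compactness of minimizers, openness of surjectivity of $D_uE_p$, and continuity of the Lagrange multipliers to force every nearby minimizer into that chart, and finally read off $d^2_{SF}(p,\cdot)=2H\circ(\exp_p|_W)^{-1}$ on $O$.

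One small quibble concerns how you exclude $p$ from $\Sigma_p$. Justifying $p\notin\Sigma_p$ by abnormality of the trivial curve only works when $\mathcal{D}_p\neq T_pM$. At a full-rank point, which the paper allows since the rank may vary, the trivial curve is strictly normal and $\exp_p$ is not differentiable at $0$. There, $p$ must be excluded purely by convention, and this is genuinely needed, because $d^2_{SF}(p,\cdot)$ is not $C^2$ at $p$ unless the induced norm on $T_pM$ is Euclidean.
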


As we have seen, the sub-Finslerian distance function behaves quite similarly to the sub-Riemannian setting provided that abnormal geodesics are excluded from consideration. Therefore, we introduce the following fundamental assumption for the rest of the paper.

\begin{definition}
    Let $(M,\mathcal{D},F)$ be a sub-Finslerian manifold. We call it \textit{forward ideal} if it is forward complete and admits no nontrivial abnormal minimizing geodesics. 
\end{definition}
This terminology was first introduced by Rifford in the sub-Riemannian setting (in which forward completeness is equivalent to completeness; we therefore simply call it \textit{ideal}) (see \cite{Rf0,Rf}). The ideal structure has been shown to be generic in sub-Riemannian geometry; see the related discussion in \cite{CJT}. In the sub-Finslerian setting, although this property has not been extensively investigated, the sub-Finslerian Heisenberg group provides a nontrivial example of a forward ideal sub-Finslerian manifold.
On forward ideal sub-Finslerian manifolds, although the global smoothness of the distance 
function still fails to hold (for local smoothness, we refer to \cite[Section 4.2]{MR}), 
we can prove that the squared distance function is locally semiconcave outside the diagonal. This regularity is precisely what is needed to establish the Brenier--McCann theorem for optimal transport, which we will address in Section 4.

\begin{theorem}[cf. {\cite[Theorem 3.14]{Rf}}]\label{lip}
    Let $(M,\mathcal{D},F)$ be a forward ideal sub-Finslerian manifold. Then the squared sub-Finslerian distance $d^2_{SF}$ is locally 
    semiconcave on $M\times M\setminus \Delta$, where $\Delta$ denotes the diagonal. In particular, $d_{SF}$ is 
    locally Lipschitz on $M\times M\setminus \Delta$.
\end{theorem}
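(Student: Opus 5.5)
To prove that $d^2_{SF}$ is locally semiconcave on $M\times M\setminus\Delta$, we follow Rifford's strategy in the sub-Riemannian case: build smooth upper support functions from a compactness argument on the set of minimizing controls and the endpoint map. The key fact is that, without abnormal minimizers, the endpoint map is a submersion at every minimizing control.

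Fix a compact set $\mathcal{K}\subset M\times M\setminus\Delta$. First we show that the set of minimal controls $u$ of minimizing geodesics joining $(x,y)\in\mathcal{K}$ is compact in $L^2([0,1];\mathbb{R}^k)$.
\begin{itemize}
\item By forward completeness, the Hopf--Rinow type theorem and \eqref{sfsr}, these geodesics stay in a fixed compact set.
\item Their controls are bounded in $L^2$, since $J(u)=\tfrac12 d_{SF}^2(x,y)$ is bounded on $\mathcal{K}$.
\item Weak $L^2$ limits of such controls are again minimizing, by lower semicontinuity of $J$ (convexity of $F_p$) and continuity of $d_{SF}$.
\item The energies converge, so by uniform convexity of $\tfrac12F_p^2$ the convergence is strong.
\end{itemize}

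Next, since $M$ is forward ideal, each such $u$ is not abnormal. By Proposition \ref{lagmul}, condition (A) cannot hold, so $D_uE_x$ is surjective. Choose controls $v_1,\dots,v_n$ with $D_uE_x(v_i)$ spanning $T_yM$. By continuity of $(x,u)\mapsto D_uE_x$, the same $v_i$ work in a neighborhood of $(x,u)$. Compactness then gives finitely many such neighborhoods, with uniform bounds on the controls and on the inverse of the restricted differentials.

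For a given pair $(x,y)$ with minimizer $u$, we build a local competitor. Consider the map
\[
\Phi(z,w)=E_z\Big(u+\sum_i w_iv_i\Big),\qquad w\in\mathbb{R}^n .
\]
It is smooth in $(z,w)$, and $\partial_w\Phi$ is invertible at $(x,0)$. To reach pairs $(z,y')$ near $(x,y)$, use the implicit function theorem to solve $\Phi(z,w(z,y'))=y'$; this $w$ has $C^2$ bounds that are uniform over $\mathcal{K}$. Then
\[
d_{SF}^2(z,y')\le 2J\Big(u+\sum_i w_i(z,y')v_i\Big)=:\psi_{x,y}(z,y'),
\]
with equality at $(x,y)$. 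Here $J$ must be $C^2$ near $u$. Since $F_p^2$ is only $C^1$ at the origin, this needs care. We handle it by replacing $\tfrac12 F^2$ with a quadratic majorant, using $F_p^2(a+b)\le F_p^2(a)+D F_p^2(a)b+C|b|^2$, which follows from the uniform Hessian bound of $\tfrac12F_p^2$ on compacts. This keeps the upper bound while giving uniform $C^2$ control of $\psi_{x,y}$.

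With a family of $C^2$ upper support functions, touching at every point and with uniformly bounded Hessians, the standard criterion gives local semiconcavity of $d^2_{SF}$. Since $d_{SF}$ is bounded below by a positive constant on $\mathcal{K}$, the Lipschitz property of $d_{SF}=\sqrt{d^2_{SF}}$ follows from that of semiconcave functions.

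The main obstacle is the uniformity of the implicit-function step across all minimizers, which is exactly where the compactness of minimizing controls is used. A secondary difficulty is the lack of $C^2$ regularity of $F^2$ at the zero section: the control $u(t)$ may vanish on sets of positive measure only if $\gamma$ is trivial, but we still avoid relying on that by using the quadratic majorant above. We also have to track the asymmetry, perturbing $x$ through the initial point of $E_z$ and $y$ through the endpoint, without ever using the reversed metric.
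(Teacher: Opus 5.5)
Your proposal is correct and follows essentially the same route as the paper, which is Rifford's argument: surjectivity of $D_uE_x$ from the absence of nontrivial abnormal minimizers, and an implicit/inverse function argument on $(z,w)\mapsto E_z(u+\sum_i w_iv_i)$ to produce admissible perturbed controls. The energy of the perturbed control then serves as an upper support function touching $d^2_{SF}$ at $(x,y)$, and compactness of minimizers gives uniform $C^2$ bounds. Your extra care (strong $L^2$ compactness of the minimal controls rather than only uniform compactness of the curves, and the quadratic majorant handling the mere $C^{1,1}$ regularity of $F^2$ at the zero section) fills in details that the paper's proof only asserts.
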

\begin{proof}
    It suffices to construct a $C^2$ upper support function for the squared distance function locally around any pair of distinct points. Fix $(x,y)\in M\times M\setminus \Delta$ and let $\gamma_u$ be a minimizing normal geodesic with control $u$ from $x$ to $y$. Because $M$ is forward ideal, $\gamma_u$ admits no abnormal segments; hence, the differential of the endpoint map $D_u E_x : L^2([0,1];\mathbb{R}^k) \to T_y M$ has full rank. We can therefore select $n$ control variations $v^1,...,v^n\in L^2([0,1];\mathbb{R}^k)$ such that the restricted linear map $\alpha=(\alpha_1,...,\alpha_n)\mapsto \sum_{i=1}^n \alpha_iD_{u}E_x(v^i)$ is an isomorphism onto $T_y M$.

    By working in local coordinates, we locally identify small neighborhoods of $x$ and $y$ with open subsets of $\mathbb{R}^n$. Under this identification, we can define the operator:
    \begin{align}
        \mathcal{F}&: \mathbb{R}^n \times \mathbb{R}^n \to \mathbb{R}^n\times \mathbb{R}^n\\
        &(z,\alpha)\mapsto \left(z,E_z\left(u+\sum_{i=1}^n \alpha_iv^i\right)\right). \nonumber
    \end{align}
    This map $\mathcal{F}$ is well-defined and of class $C^2$ in a neighborhood of $(x,0) \in \mathbb{R}^n \times \mathbb{R}^n$, with $\mathcal{F}(x,0)=(x,y)$. Since the differential of the endpoint map is an isomorphism on the span of $\{v^i\}$, the differential $d\mathcal{F}$ is invertible at $(x,0)$. By the Implicit Function Theorem, there exists an open neighborhood $\mathcal{B} \subset \mathbb{R}^n \times \mathbb{R}^n$ centered at $(x,y)$ and a $C^2$ inverse map $\mathcal{G}: \mathcal{B} \to \mathbb{R}^n \times \mathbb{R}^n$ such that 
    \begin{equation}
        \mathcal{F}(\mathcal{G}(z,w))=(z,w), \quad \forall (z,w)\in \mathcal{B}.
    \end{equation}
    Let $\alpha^{-1}(z,w)=\pi_2\circ\mathcal{G}(z,w)$, where $\pi_2$ denotes the projection onto the second $\mathbb{R}^n$ component, and define the perturbed control $\tilde{u}_{z,w} := u + \sum_{i=1}^n (\alpha^{-1}(z,w))_i v^i$. By definition, $\tilde{u}_{z,w}$ is a valid control from $z$ to $w$. Therefore, defining the function
    \begin{equation}
        \phi_{x,y}(z,w) := \int^1_0 F^2\left(\dot{\gamma}_{z,\tilde{u}_{z,w}}\right)dt,
    \end{equation}
    where $\gamma_{z, \tilde{u}_{z,w}}$ denotes the trajectory 
    starting at $z$ with control $\tilde{u}_{z,w}$, we have $d^2_{SF}(z,w)\le \phi_{x,y}(z,w)$ for 
    every $(z,w)\in \mathcal{B}$, with equality $d^2_{SF}(x,y)=\phi_{x,y}(x,y)$ holding at the center.
     By a standard compactness argument (cf. \cite[Theorem 3.14]{Rf} and \cite[Corollary 5.4]{YAD}), 
     the set of minimizing geodesics connecting any pair of points in a given compact subset 
     $\mathcal{B} \subset M \times M \setminus \Delta$ is compact with respect to the uniform topology. Therefore
     the $C^2$ norms of the family of functions $\phi_{x,y}(\cdot,\cdot)$ are uniformly bounded locally with respect to $x,y\in M\times M\setminus \Delta$. Hence, $d^2_{SF}$ is locally semiconcave.
\end{proof}
\subsection{Some examples}
As an end of this section, we introduce some examples.
\begin{example}
    The Heisenberg group $\mathbb{H}$ is $\mathbb{R}^3$ equipped with the group law
\begin{equation}
    (x,y,z)\cdot (x',y',z')=\left(x+x',y+y',z+z'+\frac{1}{2}(xy'-yx')\right).
\end{equation}
Let the vector bundle morphism $\sigma:\mathbb{R}^3\times \mathbb{R}^2\to T\mathbb{H}$ be given by 
\begin{equation}
    \sigma(x,y,z,u_1,u_2)=\left(x,y,z,\,u_1,\,u_2,\,\frac{x}{2}u_2-\frac{y}{2}u_1\right),
\end{equation} 
which induces the distribution $\mathcal{D}:=\{X:=\partial_x-\frac{y}{2}\partial_z,\,Y:=\partial_y+\frac{x}{2}\partial_z\}$. Equipped with an arbitrary Minkowski norm $F$ on $\mathbb{R}^2$, the triple $(\mathbb{H},\sigma,F)$ constitutes a sub-Finslerian manifold, which we call the sub-Finslerian Heisenberg group. 
Being one of the most important and simplest nonholonomic models, the sub-Finslerian Heisenberg group has been the subject of a wealth of results. We refer to the series of works by Borza, Magnabosco, Rossi, and Tashiro~\cites{BMR,BMRT1,BMRT24,MR}. We will also discuss its properties further in Section~6.
\end{example}
\begin{example}    
    Let $M$ be an $n$-dimensional manifold, and let $\mathcal{D}\subset TM$ be a bracket-generating sub-bundle of rank $k<n$. Take $F_{g,\beta}:\mathcal{D}\to [0,+\infty)$ defined by
    \begin{equation}\label{rnorm}
        F_{g,\beta}(v)=g(v,v)^{1/2}+\beta(v),
    \end{equation}
    where $g$ is a sub-Riemannian metric on $\mathcal{D}$, and $\beta\in \Gamma(\mathcal{D}^*)$ is a smooth horizontal $1$-form on $M$ with $\|\beta\|_g<1$;
    here $\mathcal{D}^*$ denotes the dual bundle of $\mathcal{D}$, given by $\mathcal{D}^*\simeq T^*M\slash \ann(\mathcal{D})$. We call $(M,\mathcal{D},F_{g,\beta})$ a Randers sub-Finslerian manifold. We refer to the recent work of Alabdulsada \cite{Ab} on this, wherein he calls it a sub-Randers manifold.
\end{example}

\section{Sub-Finslerian Jacobi equation}

Let $M$ be a smooth $n$-dimensional manifold and let $(x^i,p_i)$ be the canonical local Darboux coordinates on the cotangent bundle $T^*M$, induced by a local coordinate chart $(U,x^i)$ on $M$. Let $\lambda(t)=e^{t\vec{H}}(\lambda_0)=(x(t),p(t))$ be an integral curve of the Hamiltonian flow, which satisfies Hamiltonian equations: 
\begin{equation}
    \begin{cases}
        \dot{x}^i=\frac{\partial H}{\partial p_i}(x(t),p(t));\\
        \dot{p}_i=-\frac{\partial H}{\partial x^i}(x(t),p(t)).
    \end{cases}
\end{equation}
For any smooth vector field $\xi$ along $\lambda(t)$, its Lie derivative in the direction of the Hamiltonian vector field $\vec{H}$ is defined by 
\begin{equation}
    \dot{\xi}(t):=\frac{d }{d \ep}\bigg|_{\ep=0}\left(e^{-\ep\vec{H}}_*\xi(t+\ep)\right).
\end{equation}
We say that a vector field $\mj$ along $\lambda(t)$ is a \textit{Jacobi field} if $\dot{\mj}(t)=0$. Given an initial condition $\mj(0)\in T_{\lambda_0}(T^*M)$, the corresponding unique Jacobi field is given by $\mj(t)=e^{t\vec{H}}_*\mj(0)$.

The following result is a generalization to the sub-Finslerian setting of the classical fact that, in Riemannian geometry, Jacobi fields arise from variations of geodesics.

\begin{proposition}
    A vector field $\mj(t)$ along a Hamiltonian trajectory $\lambda(t)$ is a Jacobi field if and only if it is the variational vector field of a one-parameter family of normal extremals.
\end{proposition}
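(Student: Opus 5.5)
We argue both directions directly from the definition $\mj(t)=e^{t\vec{H}}_*\mj(0)$ and the flow property of $e^{t\vec{H}}$. One point must be handled throughout. $H$ is smooth only on $T^*M\setminus\ann(\mathcal{D})$, so $\vec{H}$ and its flow are smooth only there. We therefore first note that the trajectory $\lambda(t)$ stays in this open set. $H$ is a first integral of $\vec{H}$, so $H(\lambda(t))=H(\lambda_0)>0$, and by Proposition \ref{prop2.5} we have $H^{-1}(0)=\ann(\mathcal{D})$. Since $T^*M\setminus\ann(\mathcal{D})$ is open, any one-parameter family of covectors close enough to $\lambda_0$ also avoids the annihilator. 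So the flow $e^{t\vec{H}}$ is smooth in $(t,\lambda)$ on a neighbourhood of $\{\lambda(t)\}$, for $t$ in a compact interval; this uses that the flow exists there, e.g. by forward completeness.

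($\Leftarrow$) Let $\lambda_s(t)$, $s\in(-\ep_0,\ep_0)$, be a smooth family of normal extremals with $\lambda_0(t)=\lambda(t)$. Here smooth means that $s\mapsto\lambda_s(0)$ is a smooth curve in $T^*M\setminus\ann(\mathcal{D})$. By the proposition from \cite[Theorem 7.2]{YAD} recalled above, each $\lambda_s$ solves $\dot\lambda_s=\vec H(\lambda_s)$, hence $\lambda_s(t)=e^{t\vec H}(\lambda_s(0))$. Differentiating in $s$ at $s=0$, and using smooth dependence of the flow on initial data, gives
\[
\mj(t):=\partial_s|_{s=0}\lambda_s(t)=e^{t\vec H}_*\,\partial_s|_{s=0}\lambda_s(0)=e^{t\vec H}_*\mj(0).
\]
This is the Jacobi field with initial datum $\mj(0)$.

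($\Rightarrow$) Let $\mj$ be a Jacobi field. First we show $\mj(t)=e^{t\vec H}_*\mj(0)$. Set $\eta(t):=e^{-t\vec H}_*\mj(t)\in T_{\lambda_0}(T^*M)$. By the group property $e^{-(t+\ep)\vec H}=e^{-t\vec H}\circ e^{-\ep\vec H}$,
\[
\frac{d}{dt}\eta(t)=e^{-t\vec H}_*\,\frac{d}{d\ep}\Big|_{\ep=0}e^{-\ep\vec H}_*\mj(t+\ep)=e^{-t\vec H}_*\dot\mj(t)=0.
\]
Hence $\eta$ is constant, equal to $\mj(0)$. Next choose any smooth curve $c(s)$ in $T^*M$ with $c(0)=\lambda_0$ and $c'(0)=\mj(0)$; for instance, take $c(s)=(x_0+s\xi,p_0+s\zeta)$ in the Darboux chart, where $\mj(0)=(\xi,\zeta)$. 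For small $s$ this curve stays outside $\ann(\mathcal{D})$. Define $\lambda_s(t):=e^{t\vec H}(c(s))$. Each $\lambda_s$ is an integral curve of $\vec H$ starting off the annihilator, so it is a normal extremal: its projection has control $u_s(t)=\ell^{-1}(\sigma^*\lambda_s(t))$, which is exactly condition (N). Its variation field is $e^{t\vec H}_*c'(0)=\mj(t)$.

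The main obstacle is not the computation itself but justifying that the family stays in the smooth region on the whole time interval. One must also check that an integral curve of $\vec H$ is indeed a normal extremal in the sense of (N). For the latter, we verify $\dot x=\partial_p H=\sigma(x,\ell^{-1}(\sigma^*p))$ from \eqref{hmt0}, and that $p(t)$ evolves by $(P_{0,t}^{-1})^*$. I would handle the former by compactness of $\{\lambda(t):t\in[0,T]\}$, which stays at positive distance from $\ann(\mathcal{D})$ since $H\equiv H(\lambda_0)>0$ there. Continuity of the flow then gives a uniform $\ep_0$ for $s$.
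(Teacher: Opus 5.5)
Your proof is correct and follows the paper's argument: both directions reduce to $\mj(t)=e^{t\vec H}_*\mj(0)$, together with realizing $\mj(0)$ as the velocity of a curve of initial covectors. You also supply two points the paper leaves implicit, namely that $\dot{\mj}\equiv 0$ forces this form (via constancy of $e^{-t\vec H}_*\mj(t)$), and that the family stays in $T^*M\setminus\ann(\mathcal{D})$ by conservation of $H$.

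One caution on your closing remark: when the Minkowski norm $F_x$ varies with the base point $x$, the envelope theorem applied to the supremum defining $H$ gives $\dot p=-\partial_x H=-\langle p,\partial_x\sigma(x,u)\rangle+\tfrac12\partial_x\bigl(F_x(u)^2\bigr)$, so the costate of an integral curve of $\vec H$ is in general not transported by $(P_{0,t}^{-1})^*$. You should therefore simply take normal extremals to be integral curves of $\vec H$ off $\ann(\mathcal{D})$, which is exactly what the paper's proof does.
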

\begin{proof}
    Consider a one-parameter family of normal extremals,
    \begin{equation}
        \lambda(t,s)=e^{t\vec{H}}(\eta(s)),
    \end{equation}
    where $s \mapsto \eta(s)$ is a smooth curve in $T^*M$ satisfying $\eta(0) = \lambda_0$. The variation field is given by $\frac{\partial}{\partial s}\big|_{s=0}\lambda(t,s)=e^{t\vec{H}}_*\left(\frac{d}{ds}\big|_{s=0}\eta(s)\right)$. Letting $\mj(0) = \frac{d}{ds}\big|_{s=0}\eta(s)$, we have $\mj(t) = e^{t\vec{H}}_*\mj(0)$. Therefore, 
    \begin{equation}
        \frac{d}{d\ep}\bigg|_{\ep=0}\left(e^{-\ep\vec{H}}_*\mj(t+\ep)\right) = \frac{d}{d\ep}\bigg|_{\ep=0}\left(e^{-\ep\vec{H}}_*e^{(t+\ep)\vec{H}}_*\mj(0)\right) = \frac{d}{d\ep}\bigg|_{\ep=0}\left(e^{t\vec{H}}_*\mj(0)\right) = 0,
    \end{equation}
    which implies that the variation field is a Jacobi field.

    Conversely, let $\mj(t)=e^{t\vec{H}}_*\mj(0)$ be a Jacobi field, with $\mj(0)\in T_{\lambda_0}(T^*M)$. Let $s \mapsto \eta(s)$ be any smooth curve in $T^*M$ passing through $\lambda_0$ at $s=0$ with tangent vector $\frac{d}{ds}\big|_{s=0}\eta(s) = \mj(0)$. Then $\lambda(t,s)=e^{t\vec{H}}(\eta(s))$ is a one-parameter family of Hamiltonian trajectories, and its variational vector field is precisely
    \begin{equation}
        \frac{\partial}{\partial s}\bigg|_{s=0}\lambda(t,s)=e^{t\vec{H}}_*\mj(0)=\mj(t).
    \end{equation} 
\end{proof}

On the cotangent bundle $T^*M$, the vertical subspace is canonically defined as
\begin{equation}
    \mathcal{V}_\lambda = \ker(d\pi_\lambda) = T_{\lambda}\big(T^*_{\pi(\lambda)}M\big) = \operatorname{span}\{\partial_{p_i}\}.
\end{equation}
The horizontal subspace $\mathcal{H}_\lambda$ is defined as a complementary subspace transverse to the vertical subspace $\mathcal{V}_\lambda$. Once a local coordinate chart $(x^i, p_i)$ is fixed, $\mathcal{H}_\lambda$ is typically chosen as 
$\mathcal{H}_\lambda = \operatorname{span}\{\partial_{x^i}\}.$
Jacobi fields have a deep connection with conjugate points of $\gamma(t)$ as stated in the following lemma.
\begin{lemma}\label{cjlm}
    Let $\gamma(t)=\exp_x(t\lambda_0)$, $\lambda_0\in T^*_{x}M$, be a normal geodesic on $M$, and let $\lambda(t) = e^{t\vec{H}}(\lambda_0)$ be its corresponding normal extremal. Then for any $s\in(0,1]$, $\gamma(s)$ is not conjugate to $\gamma(0)$ along $\gamma(t)$ if and only if
    \begin{equation}
        e^{s\vec{H}}_*\mathcal{V}_{\lambda_0}\cap \mathcal{V}_{\lambda_s}=\{0\},
    \end{equation}
    where $\lambda_i:=\lambda(i),i=0,s.$
    Moreover, let $\mathcal{H}_{\lambda_i} \subset T_{\lambda_i}(T^*M)$ be a subspace 
    transverse to $\mathcal{V}_{\lambda_i}$ for $i \in \{0, s\}$. Then for any 
    pair $(J_0,J_s)\in \mathcal{H}_{\lambda_0}\times \mathcal{H}_{\lambda_s}$, there exists 
    a unique Jacobi field $\mj(t)$ along $\lambda(t)$, $t\in [0,1]$, such that the 
    projection of $\mj(i)$ onto $\mathcal{H}_{\lambda_i}$ along $\mathcal{V}_{\lambda_i}$ is 
    equal to $J_i$ for $i \in \{0, s\}$.
\end{lemma}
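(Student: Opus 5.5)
The proof splits into two parts: the conjugate-point characterization, which amounts to identifying the differential of $\exp_x$ at $s\lambda_0$, and the boundary value problem, which then reduces to linear algebra in $T_{\lambda_0}(T^*M)$.

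\emph{First part.} Since $\gamma$ is normal and nontrivial, $\lambda_0\notin\ann(\mathcal{D})$. The annihilator is closed, so $\vec H$ is smooth near the compact orbit $\{\lambda(t)\}_{t\in[0,1]}$, and the flow $e^{t\vec H}$ is a diffeomorphism near it.

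By the positive $2$-homogeneity of $H$ in Proposition \ref{prop2.5}, the flow satisfies $e^{t\vec H}(c\lambda)=c\cdot e^{ct\vec H}(\lambda)$ up to fibre dilation, with $c>0$. Hence
\begin{equation}
\exp_x(s\eta)=\pi\circ e^{s\vec H}(\eta)\qquad\text{for }\eta\in T^*_xM\text{ near }\lambda_0 .
\end{equation}
Differentiating at $\eta=\lambda_0$, and using $T_{\lambda_0}(T^*_xM)=\mathcal{V}_{\lambda_0}$, gives
\begin{equation}
D_{s\lambda_0}\exp_x\circ (\text{scaling by } s)=d\pi_{\lambda_s}\circ e^{s\vec H}_*\big|_{\mathcal{V}_{\lambda_0}} .
\end{equation}
The scaling is an isomorphism, so $s\lambda_0$ is a critical point of $\exp_x$ exactly when some nonzero $v\in\mathcal{V}_{\lambda_0}$ satisfies $e^{s\vec H}_*v\in\ker d\pi_{\lambda_s}=\mathcal{V}_{\lambda_s}$. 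Since $e^{s\vec H}_*$ is injective, this says precisely that $e^{s\vec H}_*\mathcal{V}_{\lambda_0}\cap\mathcal{V}_{\lambda_s}\neq\{0\}$, which proves the equivalence.

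\emph{Second part.} A Jacobi field is determined by $\mj(0)=\xi\in T_{\lambda_0}(T^*M)$, which has dimension $2n$. Decompose
\begin{equation}
T_{\lambda_0}(T^*M)=\mathcal{H}_{\lambda_0}\oplus\mathcal{V}_{\lambda_0},\qquad T_{\lambda_s}(T^*M)=\mathcal{H}_{\lambda_s}\oplus\mathcal{V}_{\lambda_s},
\end{equation}
and let $p_0$, $p_s$ be the projections onto $\mathcal{H}_{\lambda_0}$, $\mathcal{H}_{\lambda_s}$ along the vertical spaces. Consider the linear map
\begin{equation}
\Phi:T_{\lambda_0}(T^*M)\to\mathcal{H}_{\lambda_0}\times\mathcal{H}_{\lambda_s},\qquad \xi\mapsto\big(p_0\xi,\;p_s e^{s\vec H}_*\xi\big).
\end{equation}
Both spaces have dimension $2n$, so it suffices to show $\Phi$ is injective.

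Suppose $\Phi(\xi)=0$. Then $p_0\xi=0$ forces $\xi\in\mathcal{V}_{\lambda_0}$, and $p_s e^{s\vec H}_*\xi=0$ forces $e^{s\vec H}_*\xi\in\mathcal{V}_{\lambda_s}$. By the first part, the non-conjugacy hypothesis gives $\xi=0$.

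Therefore $\Phi$ is an isomorphism. The unique preimage $\xi$ of $(J_0,J_s)$ yields the required Jacobi field $\mj(t)=e^{t\vec H}_*\xi$, defined on all of $[0,1]$ because the flow exists along $\lambda(t)$.

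The only delicate step is the identification of the differential of $\exp_x$, specifically the homogeneity/rescaling identity and the need to stay away from $\ann(\mathcal{D})$, where $H$ is not smooth. Both are handled by the openness of the complement of the annihilator and by Proposition \ref{prop2.5}; the rest is dimension counting.
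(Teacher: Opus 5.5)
Your proof is correct and takes essentially the same approach as the paper. The first part identifies the differential of $\exp_x$ at $s\lambda_0$ with $d\pi_{\lambda_s}\circ e^{s\vec H}_*|_{\mathcal{V}_{\lambda_0}}$; you justify this via the $2$-homogeneity rescaling and by staying away from the annihilator, where the paper glosses over it. The second part is the same linear-algebra argument, which you phrase as injectivity of a single map $\Phi$ between $2n$-dimensional spaces instead of solving for the vertical component $V_0$ of $\mathcal{J}(0)$.
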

\begin{proof}
    By definition, the point $\gamma(s)$ is not conjugate to $\gamma(0)$ if $s\lambda_0$ is not a critical point of the exponential map $\exp_x$. This is equivalent to saying that the differential of exponential map satisfies
    \begin{equation}
        d\exp|_{\lambda_s} v=d\pi|_{\lambda_s}\cdot e^{s\vec{H}}_*|_{\lambda_0}v\neq 0,\quad \forall v\in T_{\lambda_0}\big(T^*_{x}M\big)\setminus\{0\}=\mathcal{V}_{\lambda_0}\setminus\{0\}.
    \end{equation}
     This exactly means that the pushed-forward vertical space intersects the kernel of $d\pi|_{\lambda_s}$
    trivially, i.e.,
    \begin{equation}
        e^{s\vec{H}}_*\mathcal{V}_{\lambda_0}\cap \mathcal{V}_{\lambda_s}=\{0\}.
    \end{equation}
    
    For the second statement, an arbitrary Jacobi field is determined by its initial condition $\mj(0)$. Since $J_0\in\mathcal{H}_{\lambda_0}$, we must have $\mj(0) = J_0 + V_0$ for some $V_0\in\mathcal{V}_{\lambda_0}$. By the definition of the sub-Finslerian Jacobi field, $\mj(s)=e^{s\vec{H}}_*J_0+e^{s\vec{H}}_*V_0$. We want the horizontal projection of $\mj(s)$ to be $J_s$, which means we require
\begin{equation}
    \pi_{\mathcal{H}}(e^{s\vec{H}}_*V_0) = \pi_{\mathcal{H}}(J_s - e^{s\vec{H}}_*J_0),
\end{equation}
where $\pi_{\mathcal{H}}:T^*M\to \mathcal{H}$ is the projection onto horizontal subbundle.
Since $e^{s\vec{H}}_*\mathcal{V}_{\lambda_0}\cap \mathcal{V}_{\lambda_s}=\{0\}$, the horizontal projection restricts to an isomorphism on $e^{s\vec{H}}_*\mathcal{V}_{\lambda_0}$. Therefore, for the given target horizontal data $J_s$, the vertical component $V_0$ at $\lambda_0$ is uniquely determined. This completes the proof.
\end{proof}

\subsection{Riccati-type equation}
Let $\omega$ be the canonical symplectic form on $T^*M$ and let $\gamma(t)$ be a normal geodesic with the corresponding 
normal extremal $\lambda(t)$. 
Let $E_1(t), \dots, E_n(t), F_1(t), \dots, F_n(t) \in T_{\lambda(t)}(T^*M)$ be a Darboux moving frame along $\lambda(t)$ such that
\begin{equation}
    \omega(E_i, E_j) = \omega(F_i, F_j) = 0, \quad \omega(E_i, F_j) = \delta_{ij},
\end{equation}
with $\pi_*E_i=0,X_i:=\pi_* F_i\in TM, \forall i,j=1,\dots n$. Here $X_i$ denotes the corresponding moving frame along $\gamma(t).$ In this case, $E_1, \dots, E_n$ span the vertical subbundle $\mathcal{V}_{\lambda(t)}$, and the vectors $F_1, \dots, F_n$ span a horizontal subspace $\mathcal{H}_{\lambda(t)}$ transverse to the vertical subspace. We also say that the frame $\{E_i, F_i\}$ is a \emph{Darboux lift} of $X_i$.

Consider an $n$-dimensional subspace generated by a set of linearly independent Jacobi fields $\mj_1(t), 
\dots, \mj_n(t)$ along $\lambda(t)$, that is,
\begin{equation}
    \mathcal{L}_t = \operatorname{span}\{\mj_1(t), \dots, \mj_n(t)\}.
\end{equation}
We identify it with a smooth family of $2n \times n$ matrices, called \emph{Jacobian matrices},
\begin{equation}
    \mathbf{J}(t) = \begin{pmatrix} M(t)\\ N(t) \end{pmatrix}, \quad t\in[0,1],
\end{equation}
such that with respect to the Darboux frame, we have
\begin{equation}
    \mj_i(t) = \sum_{j=1}^n M_{ji}(t)E_j(t) + N_{ji}(t)F_j(t) \quad\forall i=1,...,n.
\end{equation}

By the condition $\dot{\mj}_i(t) = 0$, the derivatives of $M(t)$ and $N(t)$ are completely determined by the derivatives of the Darboux frame vectors $\dot{E}_i$ and $\dot{F}_i$ along the Hamiltonian flow. Expanding these frame derivatives using the symplectic form (we refer to \cite{BR} for details, whose derivations apply verbatim to our setting), we obtain the Jacobi equation:
\begin{equation}\label{jeq}
    \frac{d}{dt}\mathbf{J}(t) = \begin{pmatrix}
        -A(t) & -R(t)\\
        B(t) & A^T(t)\\
    \end{pmatrix}\mathbf{J}(t),
\end{equation}
where $R(t)$ and $B(t)$ are symmetric matrices and $B(t) \ge 0$. More precisely, extracting the coefficients via the symplectic form $\omega$ yields:
\begin{align}\label{abr}
    A_{ij} &= -\omega_{\lambda(t)}(F_i, \dot{E}_j); \\
    R_{ij} &= -\omega_{\lambda(t)}(F_i, \dot{F}_j); \\
    B_{ij} &= \omega_{\lambda(t)}(\dot{E}_i, E_j).
\end{align}
In particular, choosing a local coordinate system $(x^i,p^i), i=1,\dots,n$, on $T^*M$, we have $B_{ij}= \frac{\partial^2 H}{\partial p_i \partial p_j}(\lambda).$
\begin{remark}
    In fact, the derivative $\dot{\mathbf{J}}(0)$ corresponds to the Hessian $d^2_\lambda H$ evaluated at the initial point (see the proof in \cite[Proposition 15.2]{Agr}). When the setting reduces to the classical Riemannian case, we can find a canonical moving frame such that $A$ in the Jacobi equation \eqref{jeq} vanishes, $B$ equals the identity matrix representing the metric, and $R$ is exactly the sectional curvature matrix. This suggests a new way to establish curvature theory based entirely on the Hamiltonian. We refer to some interesting work in this direction. The first one is \cite{Ohta14}, where the author studies the Hamiltonian system introduced by a Lagrangian defined on the whole tangent bundle and generalizes the Bochner--Weitzenb\"ock formula, Laplacian comparison theorem, and heat flow, which cover the results in both Riemannian and Finslerian settings.
    We also refer to \cite{BR2}, where the authors extend comparison theorems under lower curvature bounds to sub-Riemannian geometry. As pointed out in their work, all these constructions depend purely on the sub-Riemannian Hamiltonian flow; thus, their arguments remain applicable in sub-Finslerian settings, although this extension was not explicitly stated in their paper.
\end{remark}

Setting $S(t) = M(t)N(t)^{-1}$ (whenever $N(t)$ is invertible), we derive the matrix Riccati equation:
\begin{equation}\label{riccati_eq}
    \dot{S} + AS + SA^T + SBS + R = 0.
\end{equation}

We now state the matrix Riccati comparison theorem.

\begin{theorem}[see \cite{BR2}, Theorem B.1]\label{RcComthm}
    Let $S_1(t)$ and $S_2(t)$ be solutions to the Riccati equations
    \begin{equation}
        \dot{S}_i + A_i S_i + S_i A_i^T + S_i B_i S_i + R_i = 0, \quad i=1,2,
    \end{equation}
    defined on a common interval $I \subseteq \mathbb{R}$, where $B_i,R_i$ are symmetric matrices, and let
    \[M_i=\begin{pmatrix}
        R_i & A_i^T\\ A_i & B_i
    \end{pmatrix},\quad i=1,2.\]
    Suppose $S_1(t_0) \ge S_2(t_0)$ at some $t_0 \in I$, and $M_1(t)\le M_2(t)$ for all $t \in I\cap[t_0,+\infty)$. Then $S_1(t) \ge S_2(t)$ for all $t \in I\cap[t_0,+\infty)$.
\end{theorem}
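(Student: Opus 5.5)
The plan is to reduce the comparison to a monotonicity statement for the difference $D(t):=S_1(t)-S_2(t)$. We set $D(t_0)\ge 0$ and show that $D$ stays positive semidefinite on $I\cap[t_0,+\infty)$. Subtracting the two Riccati equations and regrouping gives a linear-in-$D$ equation with an inhomogeneous term:
\begin{equation}
\dot D + \tilde A D + D\tilde A^T + D\tilde B D \;=\; -\,\mathcal{Q}(t),
\end{equation}
where we take $\tilde A := A_1 + S_2B_2$, up to symmetrization, and $\tilde B := B_1$. The term $\mathcal{Q}(t)$ collects the contributions of the coefficient differences. The natural way to organize $\mathcal{Q}$ is through the block matrices: for any vector $v$,
\begin{equation}
v^T\mathcal{Q}v = \begin{pmatrix} v\\ S_2 v\end{pmatrix}^T (M_1-M_2)\begin{pmatrix} v\\ S_2 v\end{pmatrix}.
\end{equation}
This is obtained by writing $R_i + A_i^TS_2+S_2A_i + S_2B_iS_2$ as the quadratic form of $M_i$ evaluated on $(v,S_2v)$. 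Since $M_1\le M_2$, we get $\mathcal{Q}\le 0$, so $-\mathcal{Q}\ge 0$ acts as a nonnegative source.

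Next we remove the quadratic term. Let $\Phi$ solve $\dot\Phi = -\Phi(\tilde A + \tfrac12 D\tilde B)$ (or a similar choice), with $\Phi(t_0)=I$. Then $\Phi D\Phi^T$ satisfies $\tfrac{d}{dt}(\Phi D\Phi^T) = -\Phi\mathcal{Q}\Phi^T \ge 0$; the quadratic term $D\tilde BD$ is absorbed because the coefficient is allowed to depend on $D$ itself, which is legitimate once $D$ is a known continuous function on $I$. Integrating gives
\begin{equation}
\Phi(t)D(t)\Phi(t)^T \ge D(t_0)\ge 0 .
\end{equation}
Since $\Phi(t)$ is invertible, being a fundamental matrix of a linear ODE with continuous coefficients, we conclude $D(t)\ge 0$.

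The main obstacle is choosing $\tilde A$ so that the identity holds exactly, including the cross terms $S_1B_1S_1 - S_2B_2S_2$. The approach is to expand
\begin{equation}
S_1B_1S_1 = (D+S_2)B_1(D+S_2) = DB_1D + DB_1S_2 + S_2B_1D + S_2B_1S_2 .
\end{equation}
This dictates $\tilde A = A_1 + S_2B_1$. It leaves exactly $R_1-R_2 + (A_1-A_2)^TS_2 + S_2(A_1-A_2)$ plus $S_2(B_1-B_2)S_2$ in the source, which is the $M_1-M_2$ quadratic form above, with the sign convention of the equation $\dot S + AS+SA^T$ matched to the off-diagonal placement $A_i^T$ in $M_i$. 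One has to check this transpose convention carefully. If it mismatches, we evaluate instead on $(v, S_2^Tv)$, using the symmetry of $S_2$; $S_i$ is symmetric when it arises from Lagrangian subspaces, and otherwise we symmetrize.

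Alternatively, if the integrating-factor argument becomes delicate, we can use a first-crossing argument. Perturb $S_1(t_0)$ to $S_1(t_0)+\varepsilon I$ so that $D(t_0)>0$. At the first time $t_*$ where $D$ has a null vector $v$, we get $v^T\dot Dv = -v^T\mathcal{Q}v \ge 0$, since the terms $\tilde AD$, $D\tilde A^T$ and $D\tilde BD$ all vanish on $v$. Strictness is restored by adding a small multiple of $e^{Ct}$, and letting $\varepsilon\to0$ via continuous dependence on initial data finishes the proof.
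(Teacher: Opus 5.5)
The step that fails is your identity $v^T\mathcal{Q}v=(v,S_2v)^T(M_1-M_2)(v,S_2v)$, and your fallback does not repair it. Expand the difference of the equations exactly as written, $\dot S_i+A_iS_i+S_iA_i^T+S_iB_iS_i+R_i=0$, with $\tilde A=A_1+S_2B_1$. The source term is then
\begin{equation*}
\mathcal{Q}=R_1-R_2+(A_1-A_2)S_2+S_2(A_1-A_2)^T+S_2(B_1-B_2)S_2
=\begin{pmatrix} I & S_2\end{pmatrix}\begin{pmatrix} R_1-R_2 & A_1-A_2\\ (A_1-A_2)^T & B_1-B_2\end{pmatrix}\begin{pmatrix} I\\ S_2\end{pmatrix}.
\end{equation*}
It is not $R_1-R_2+(A_1-A_2)^TS_2+S_2(A_1-A_2)+\dots$ as you claimed. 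The cross term in $v^T\mathcal{Q}v$ is $2v^T(A_1-A_2)S_2v$. The printed $M_1-M_2$, with $A_i^T$ in the upper-right block, instead produces $2v^TS_2(A_1-A_2)v=2v^T(A_1-A_2)^TS_2v$. These differ in general. Evaluating on $(v,S_2^Tv)$ changes nothing, since $S_2^T=S_2$.

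No argument can close this gap, because with the conventions exactly as printed the statement is false. Take $n=2$ with standard basis $e_1,e_2$, constant coefficients $R_1=A_1=B_1=0$, and $R_2=\mathrm{diag}(1,\epsilon)$, $A_2=e_2e_1^T$, $B_2=\mathrm{diag}(\epsilon,1)$ with $0<\epsilon<1$. Then $(v,w)^TM_2(v,w)=(v_1+w_2)^2+\epsilon(v_2^2+w_1^2)\ge 0$, so $M_1\le M_2$. Now take $S_1(0)=S_2(0)=\begin{pmatrix}0&-1/\epsilon\\-1/\epsilon&0\end{pmatrix}$. A direct computation gives $e_2^T(\dot S_1-\dot S_2)(0)e_2=\epsilon-1/\epsilon<0$, so $S_1(t)\not\ge S_2(t)$ for small $t>0$. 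Your first-crossing alternative relies on the same sign of $\mathcal{Q}$ and fails for the same reason.

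What you need is to recognize the transposition slip and correct the hypothesis. For the equation as written, the comparison matrix must be $M_i=\begin{pmatrix}R_i&A_i\\A_i^T&B_i\end{pmatrix}$. Equivalently, keep the printed $M_i$ and write the equation as $\dot S_i+A_i^TS_i+S_iA_i+S_iB_iS_i+R_i=0$. With that correction, the display above gives $\mathcal{Q}\le 0$ exactly, and your integrating-factor argument works once the sign is fixed to $\dot\Phi=\Phi\bigl(\tilde A+\tfrac12 DB_1\bigr)$. This choice gives
\begin{equation*}
\tfrac{d}{dt}(\Phi D\Phi^T)=\Phi\bigl(\dot D+\tilde AD+D\tilde A^T+DB_1D\bigr)\Phi^T=-\Phi\mathcal{Q}\Phi^T\ge 0.
\end{equation*}
With your minus sign, the linear and quadratic terms double instead of cancelling. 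Also, your first display has $S_2B_2$ in $\tilde A$; the correct term is $S_2B_1$, as you found later. The paper gives no proof of its own here, only the citation to \cite{BR2}; the corrected difference-plus-integrating-factor argument is the standard proof of this comparison.
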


\subsection{Jacobian estimate}
Let $\gamma:[0,1]\to M$ be a normal geodesic from $x$ to $y$. Fix a Darboux moving frame $E_i(t), F_i(t)$ ($i=1,\dots,n$) along the corresponding extremal $\lambda(t)=e^{t\vec{H}}(\lambda_0)$. We introduce two special Jacobi matrices $\mathbf{J}^{\mathrm{V}}_s(t)$ and $\mathbf{J}^{\mathrm{H}}_s(t)$ defined by their initial conditions at a given time $s \in [0,1]$. 
Let $\mathbf{J}^{\mathrm{V}}_s(t) = \begin{pmatrix} M^{\mathrm{V}}_s(t)\\ N^{\mathrm{V}}_s(t) \end{pmatrix}$ be the Jacobi matrix satisfying the initial condition $\mathbf{J}^{\mathrm{V}}_s(s) = \begin{pmatrix} I_n\\ 0 \end{pmatrix}$. It geometrically represents the evolution of the family of Lagrangian vertical subspaces along the flow, i.e., $e^{(t-s)\vec{H}}_*\mathcal{V}_{\lambda(s)}$, where $\mathcal{V}_{\lambda(s)}=\operatorname{span}\{E_1(s),\dots,E_n(s)\}$.
Similarly, let $\mathbf{J}^{\mathrm{H}}_s(t) = \begin{pmatrix} M^{\mathrm{H}}_s(t)\\ N^{\mathrm{H}}_s(t) \end{pmatrix}$ be the Jacobi matrix satisfying the initial condition $\mathbf{J}^{\mathrm{H}}_s(s) = \begin{pmatrix} 0\\ I_n \end{pmatrix}$.

\begin{remark}\label{rm25}
    For $s_2,s_1\in [0,1]$ with $s_1<s_2$, if $\gamma(s_2)$ is not conjugate to $\gamma(s_1)$, by Lemma~\ref{cjlm}, 
    we have $\mathcal{V}_{\lambda_{s_2}}\cap e^{(s_2-s_1)\vec{H}}_*\mathcal{V}_{\lambda_{s_1}} = \{0\}$. 
    Consequently, the projection of $e^{(s_2-s_1)\vec{H}}_*\mathcal{V}_{\lambda_{s_1}}$ onto the horizontal 
    subspace $\mathcal{H}_{\lambda_{s_2}} := \operatorname{span}\{F_1(s_2), \dots, F_n(s_2)\}$ along $\mathcal{V}_{\lambda_{s_2}}$ is a linear isomorphism. Hence, $N^{\mathrm{V}}_{s_1}(s_2)$ is invertible. When it comes to the reverse direction, since the sub-Finslerian structure is irreversible in general, the curve $\exp_{\gamma(s_2)}(-t):=\pi \circ e^{-t\vec{H}}(\lambda_{s_2})$ for $t\in [0,s_2-s_1]$ is not necessarily a geodesic from $\gamma(s_2)$ to $\gamma(s_1)$. Thus, it is not trivial to determine whether $\gamma(s_1)$ is conjugate to $\gamma(s_2)$. However, thanks to the symplectomorphism property of the Hamiltonian flow, we still have $\mathcal{V}_{\lambda_{s_1}}\cap e^{(s_1-s_2)\vec{H}}_*\mathcal{V}_{\lambda_{s_2}} = \{0\}$, which ensures that the corresponding projection is still a linear isomorphism. The same argument implies that $N^{\mathrm{V}}_{s_2}(s_1)$ is also invertible.
\end{remark}

Using the notations introduced above, we obtain the following explicit representations for the Jacobi matrix $\mathbf{J}(t)$.

\begin{lemma}\label{jcblemma}
    Assume that for an intermediate time $s\in(0,1]$, $\gamma(s)$ is not conjugate to $\gamma(0)$. Then for all $t\in[0,1]$, any Jacobi matrix $\mathbf{J}(t)=\begin{pmatrix} M(t)\\ N(t) \end{pmatrix}$ satisfies the following equations:
    \begin{align}
        \mathbf{J}(t) &= \mathbf{J}^{\mathrm{V}}_s(t)N^{\mathrm{V}}_s(0)^{-1}N(0) + \mathbf{J}^{\mathrm{V}}_0(t)N^{\mathrm{V}}_0(s)^{-1}N(s),\\
        \mathbf{J}(t) &= \mathbf{J}^{\mathrm{V}}_0(t)M(0) + \mathbf{J}^{\mathrm{H}}_0(t)N(0),\\
        \mathbf{J}^{\mathrm{V}}_s(t) &= -\mathbf{J}^{\mathrm{V}}_0(t)N^{\mathrm{V}}_0(s)^{-1}N^{\mathrm{H}}_0(s)N^{\mathrm{V}}_s(0) + \mathbf{J}^{\mathrm{H}}_0(t)N^{\mathrm{V}}_s(0).
    \end{align}
\end{lemma}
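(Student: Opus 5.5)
The plan rests on two facts. First, Jacobi matrices form a $2n$-dimensional linear solution space of the linear ODE \eqref{jeq}, and every column is $e^{t\vec{H}}_*$ of its initial value. Second, a Jacobi matrix is therefore uniquely determined by its value at any single time $t_0$. Each identity will be proved by checking that both sides are Jacobi matrices (right multiplication by a constant $n\times n$ matrix preserves \eqref{jeq}) and that they agree at one time. For the first identity, where the data are given at two times, I will instead use the two-point uniqueness of Lemma~\ref{cjlm}.

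For the second identity, both sides solve \eqref{jeq}. At $t=0$ the right-hand side equals
\[
\begin{pmatrix} I\\0\end{pmatrix}M(0)+\begin{pmatrix}0\\I\end{pmatrix}N(0)=\mathbf{J}(0),
\]
so uniqueness gives equality for all $t$.

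For the third identity, I apply the second identity to $\mathbf{J}=\mathbf{J}^{\mathrm{V}}_s$, which gives
\[
\mathbf{J}^{\mathrm{V}}_s(t)=\mathbf{J}^{\mathrm{V}}_0(t)M^{\mathrm{V}}_s(0)+\mathbf{J}^{\mathrm{H}}_0(t)N^{\mathrm{V}}_s(0).
\]
It then remains to identify $M^{\mathrm{V}}_s(0)$. I evaluate at $t=s$ and read off the lower block, using $N^{\mathrm{V}}_s(s)=0$:
\[
0=N^{\mathrm{V}}_0(s)M^{\mathrm{V}}_s(0)+N^{\mathrm{H}}_0(s)N^{\mathrm{V}}_s(0).
\]
Since $\gamma(s)$ is not conjugate to $\gamma(0)$, Remark~\ref{rm25} makes $N^{\mathrm{V}}_0(s)$ invertible. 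Hence $M^{\mathrm{V}}_s(0)=-N^{\mathrm{V}}_0(s)^{-1}N^{\mathrm{H}}_0(s)N^{\mathrm{V}}_s(0)$, which is exactly the claimed formula.

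For the first identity, call the right-hand side $\mathbf{K}(t)$. It is a Jacobi matrix, and I will compare the lower blocks at $t=0$ and $t=s$.
\begin{itemize}
\item At $t=0$: $N^{\mathrm{V}}_0(0)=0$, so the lower block of $\mathbf{K}(0)$ is $N^{\mathrm{V}}_s(0)N^{\mathrm{V}}_s(0)^{-1}N(0)=N(0)$. This step needs $N^{\mathrm{V}}_s(0)$ invertible, which is the reverse-direction statement of Remark~\ref{rm25} coming from the symplectic property of the flow.
\item At $t=s$: $N^{\mathrm{V}}_s(s)=0$, so the lower block is $N^{\mathrm{V}}_0(s)N^{\mathrm{V}}_0(s)^{-1}N(s)=N(s)$.
\end{itemize}
Thus $\mathbf{J}$ and $\mathbf{K}$ have the same horizontal projections (the $F$-components) at times $0$ and $s$, and $\mathbf{J}-\mathbf{K}$ is a Jacobi matrix whose horizontal parts vanish at both times. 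By the uniqueness part of Lemma~\ref{cjlm}, applied columnwise with horizontal data $(0,0)$, $\mathbf{J}-\mathbf{K}\equiv 0$. To make that uniqueness explicit: a Jacobi field with vanishing $N$-part at $0$ is $\mathbf{J}^{\mathrm{V}}_0(t)c$, and vanishing at $s$ then forces $N^{\mathrm{V}}_0(s)c=0$, so $c=0$.

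The main obstacle is justifying the invertibility of $N^{\mathrm{V}}_s(0)$, the backward statement that is not directly a non-conjugacy condition in the irreversible setting. I plan to cite Remark~\ref{rm25}. If that citation needs support, I will argue directly. Suppose $e^{-s\vec{H}}_*V\in\mathcal{V}_{\lambda_0}$ for some nonzero $V\in\mathcal{V}_{\lambda_s}$. Applying $e^{s\vec{H}}_*$ gives $V\in e^{s\vec{H}}_*\mathcal{V}_{\lambda_0}\cap\mathcal{V}_{\lambda_s}$, and this intersection is $\{0\}$ by Lemma~\ref{cjlm}, a contradiction. Hence $N^{\mathrm{V}}_s(0)$ is injective, and therefore invertible.
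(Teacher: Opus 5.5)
Your proposal is correct and follows essentially the same route as the paper. You get the second identity by matching data at $t=0$, the third by applying the second to $\mathbf{J}^{\mathrm{V}}_s$ and reading the horizontal block at $t=s$, and the first by two-point horizontal uniqueness from Lemma~\ref{cjlm} — verifying a candidate where the paper posits $\mathbf{J}^{\mathrm{V}}_s A+\mathbf{J}^{\mathrm{V}}_0 B$ and solves for $A,B$ — and your direct argument for invertibility of $N^{\mathrm{V}}_s(0)$, which uses only bijectivity of $e^{s\vec{H}}_*$, is a cleaner justification than the appeal to the symplectic property in Remark~\ref{rm25}.
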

\begin{proof}
    By Lemma~\ref{cjlm}, since $\gamma(s)$ is not conjugate to $\gamma(0)$, any Jacobi matrix is uniquely determined by its horizontal components $N(0)$ and $N(s)$ at two distinct times $t=0$ and $t=s$. This allows us to express $\mathbf{J}(t)$ as a linear combination of $\mathbf{J}^{\mathrm{V}}_s(t)$ and $\mathbf{J}^{\mathrm{V}}_0(t)$. We can therefore assume that $\mathbf{J}(t) = \mathbf{J}^{\mathrm{V}}_s(t)A + \mathbf{J}^{\mathrm{V}}_0(t)B$ for some constant matrices $A$ and $B$. 
    
    Evaluating the horizontal component $N(t)$ at $t=0$ and $t=s$, and noting that $N^{\mathrm{V}}_0(0)=0$ and $N^{\mathrm{V}}_s(s)=0$, we solve for $A$ and $B$:
    \begin{align}
        A &= N^{\mathrm{V}}_s(0)^{-1}N(0),\\
        B &= N^{\mathrm{V}}_0(s)^{-1}N(s),
    \end{align}
    where the invertibility of the matrices $N^{\mathrm{V}}_s(0)$ and $N^{\mathrm{V}}_0(s)$ is guaranteed by Remark~\ref{rm25}. This yields the first equation.

    Furthermore, any Jacobi matrix $\mathbf{J}(t)$ can also be uniquely expanded using the basis matrices initialized at $t=0$. By simply evaluating the initial conditions of $\mathbf{J}(t)$ at $t=0$, we straightforwardly obtain the second equation:
    \begin{equation*}
        \mathbf{J}(t) = \mathbf{J}^{\mathrm{V}}_0(t)M(0) + \mathbf{J}^{\mathrm{H}}_0(t)N(0).
    \end{equation*}

    The third equation can be deduced in the same way by applying the second equation to expand $\mathbf{J}^{\mathrm{V}}_s(t)$. 
    That is, $\mathbf{J}^\mathrm{V}_s(t)=\mathbf{J}^{\mathrm{V}}_0(t)M^\mathrm{V}_s(0)+\mathbf{J}^\mathrm{H}_0(t)N^\mathrm{V}_s(0).$
    Evaluating its horizontal component at $t=s$ and using $N^{\mathrm{V}}_s(s)=0$, we derive that $M^\mathrm{V}_s(0)=-N^\mathrm{V}_0(s)^{-1}N^\mathrm{H}_0(s)N^\mathrm{V}_s(0)$ and then complete the proof.
\end{proof}

The main result of this section is the following Jacobian estimate.

\begin{theorem}[Jacobian estimate]\label{jest}   
    Suppose that a normal geodesic $\gamma(t)=\exp_x(t d_x\phi)$ for $t\in[0,1]$ is a unique minimizing curve connecting $x$ to $y$ and contains no abnormal segments, where $\phi$ is a twice differentiable function at $x\in M$ satisfying 
    \begin{equation}
        \frac{1}{2}d^2_{SF}(x,y)=-\phi(x),\quad \text{and}\quad \frac{1}{2}d_{SF}^2(z,y)\ge-\phi(z), \quad \forall z\in M.
    \end{equation}
    Then, $y\notin \mathrm{Cut}(x)$, and the tangent map $d_xT_t:T_xM\to T_{\gamma(t)}M$ defined by $d_xT_t:=\pi_*\circ e^{t\vec{H}}_*\circ d^2_x\phi$ satisfies the following estimate for every fixed $s\in(0,1]$:
    \begin{equation}\label{eq:jest}
        \det(d_xT_t)^{1/n} \ge \left(\frac{\det N^{\mathrm{V}}_s(t)}{\det N^{\mathrm{V}}_s(0)}\right)^{1/n} + \left(\frac{\det N^{\mathrm{V}}_0(t)}{\det N^{\mathrm{V}}_0(s)}\right)^{1/n}\det(d_xT_s)^{1/n},\quad \forall t\in[0,s],
    \end{equation}
    where the determinants are computed with respect to a smooth moving frame along $\gamma$. Both terms on the right-hand side are nonnegative for $t\in[0,s]$, and the first term is strictly positive for $t\in[0,s)$. In particular, $\det(d_xT_t)>0$ for all $t\in[0,1)$.
\end{theorem}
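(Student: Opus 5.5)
We follow the strategy of Barilari--Rizzi \cite{BR}, adapted to the Hamiltonian $H$ of Proposition \ref{prop2.5}. The proof has four steps: show $y\notin\mathrm{Cut}(x)$, encode $d_xT_t$ as the horizontal block of a Jacobi matrix, decompose that matrix via Lemma \ref{jcblemma}, and conclude with a Minkowski-type determinant inequality.

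\textbf{Step 1: $y\notin\mathrm{Cut}(x)$.} Since $\phi$ is differentiable at $x$ and $-2\phi$ touches $d^2_{SF}(\cdot,y)$ from below at $x$, Proposition \ref{prop1} applies. It shows that the minimizer is unique and normal, with covector $-d_x\phi$ up to the sign convention of the statement; abnormality is excluded by hypothesis.

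It remains to exclude conjugacy. Since $\gamma$ contains no abnormal segments, Theorem \ref{conjugatepoint} gives non-conjugacy of $\gamma(s)$ to $\gamma(0)$ for every $s<1$, but not at the endpoint $s=1$. For $s=1$ we use second-order information. The function $z\mapsto \frac12 d^2_{SF}(z,y)+\phi(z)$ attains its minimum $0$ at $x$. Twice differentiability of $\phi$ then gives a quadratic lower bound for the energy of perturbed curves from $z$ to $y$.

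Suppose $y$ were conjugate. Then there is a nontrivial Jacobi field with vertical initial data whose projection vanishes at $t=1$. Following the appendix argument behind Theorem \ref{conjugatepoint}, this field produces a variation of $\gamma$ with fixed endpoint $y$ along which the second variation of $J$ degenerates. Combined with the lower support, this forces a strict decrease in some direction, contradicting the support inequality.

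This is the step we expect to be the main obstacle. We would try first to transcribe the sub-Riemannian argument (as in \cite{BR} or \cite{Rf}) verbatim. It only uses $\vec H$, the Lagrange multiplier rule (Proposition \ref{lagmul}), and positivity of $B=\partial^2_pH$ off $\mathrm{Ann}(\mathcal D)$. The latter holds because $\lambda(t)\notin\mathrm{Ann}(\mathcal{D})$ along a nontrivial normal extremal.

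\textbf{Step 2: $d_xT_t$ as a Jacobi matrix.} Let $\mathcal L_0$ be the graph of $d^2_x\phi$ in $T_{\lambda_0}(T^*M)$. It is a Lagrangian subspace transverse to $\mathcal V_{\lambda_0}$. Choose Jacobi fields $\mj_1,\dots,\mj_n$ spanning $e^{t\vec H}_*\mathcal L_0$, with initial horizontal part $N(0)=I$ and $M(0)$ equal to the symmetric matrix of $d^2_x\phi$ in the frame. Then $d_xT_t$ has matrix $N(t)$, so $\det d_xT_t=\det N(t)$ in the chosen moving frame.

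\textbf{Step 3: decomposition.} By Step 1 and Theorem \ref{conjugatepoint}, $\gamma(s)$ is not conjugate to $\gamma(0)$ for every $s\in(0,1]$. The first identity of Lemma \ref{jcblemma} then gives
\begin{equation}
N(t)=N^{\mathrm V}_s(t)N^{\mathrm V}_s(0)^{-1}+N^{\mathrm V}_0(t)N^{\mathrm V}_0(s)^{-1}N(s).
\end{equation}
Write this as $N(t)=P(t)+Q(t)$.

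\textbf{Step 4: Minkowski determinant inequality.} We apply $\det(P+Q)^{1/n}\ge\det P^{1/n}+\det Q^{1/n}$, which holds when $P$ and $Q$ are products of a common invertible matrix with positive semidefinite symmetric matrices.

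To reach that form, factor $N(t)=N^{\mathrm V}_0(t)\bigl[N^{\mathrm V}_0(t)^{-1}P(t)+N^{\mathrm V}_0(s)^{-1}N(s)\bigr]$ for $t\in(0,s)$. We then show three things:
\begin{itemize}
\item $N^{\mathrm V}_0(t)^{-1}N^{\mathrm V}_s(t)N^{\mathrm V}_s(0)^{-1}$ is symmetric and positive. This follows from the Lagrangian property, which gives symmetry of $M N^{-1}$-type quantities, together with the Riccati comparison Theorem \ref{RcComthm}, using $B\ge0$.
\item $N^{\mathrm V}_0(s)^{-1}N(s)$ is symmetric and nonnegative. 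Here we use that $\mathcal L$ lies above the vertical family: the support condition gives $d^2_x\phi\ge$ the Hessian of $-\frac12d^2_{SF}(\cdot,y)$ at $x$, which is a comparison of initial Riccati data, and Theorem \ref{RcComthm} propagates it.
\item The signs of the determinants are correct: $\det N^{\mathrm V}_0(t)$ and $\det N^{\mathrm V}_s(t)/\det N^{\mathrm V}_s(0)$ are positive on $(0,s)$, by continuity and non-vanishing from Remark \ref{rm25}.
\end{itemize}
The Minkowski inequality for positive semidefinite symmetric matrices then yields \eqref{eq:jest}.

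The endpoint $t=0$ is trivial, and $t=s$ follows by continuity. Strict positivity of the first term on $[0,s)$ follows from non-conjugacy. Taking $s$ close to $1$ gives $\det(d_xT_t)>0$ for all $t<1$.
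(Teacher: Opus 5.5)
\textbf{The gap is in Step 1, and everything downstream depends on it.} At $s=1$ the appendix machinery only gives $\alpha(1)=0$. That is, the restricted Hessian of $J$ on $E_x^{-1}(y)$ is degenerate but still positive semidefinite. This is perfectly compatible with $\gamma$ being minimizing, since minimizers can end at a conjugate point. Lemma \ref{lm:1}(4) produces a negative direction only for $s>\bar s$, i.e.\ past the endpoint, so the proof of Theorem \ref{conjugatepoint} does not transcribe to $s=1$.

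To turn degeneracy plus the second-order support into a contradiction, you would have to write the second-order necessary condition for the constrained minimum of $(z,v)\mapsto J(v)+\phi(z)$ on $\{E_z(v)=y\}$, with a \emph{free initial point}. You would then need to show that the null direction $u'$ has a nonzero mixed term with some displacement $w\in T_xM$. Morally this mixed term is the pairing of the nonzero vertical datum $\mj(0)\in\mathcal{V}_{\lambda_0}$ with $w$, and its nonvanishing makes Cauchy--Schwarz for the semidefinite form fail. Your phrase ``forces a strict decrease in some direction'' stands in for exactly this computation, which is not done.

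Your later steps also presuppose Step 1. Step 3 uses non-conjugacy at $s=1$. The second bullet of Step 4 compares $d^2_x\phi$ with the Hessian of $-\frac12 d^2_{SF}(\cdot,y)$ at $x$, which is not available unless $y\notin\mathrm{Cut}(x)$ is already known.

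\textbf{The missing idea is that the estimate itself proves $y\notin\mathrm{Cut}(x)$, so the order should be reversed.}
\begin{enumerate}
\item[(i)] For $s\in(0,1)$, only interior non-conjugacy is needed, and Theorem \ref{conjugatepoint} supplies it.
\item[(ii)] $N^{\mathrm{V}}_0(t)^{-1}N^{\mathrm{V}}_s(t)N^{\mathrm{V}}_s(0)^{-1}=S(t)-S(s)\ge 0$, by monotonicity of $S=N^{\mathrm{V}}_0{}^{-1}N^{\mathrm{H}}_0$.
\item[(iii)] $N^{\mathrm{V}}_0(s)^{-1}N(s)=M(0)+S(s)\ge 0$. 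This comes from comparing $\phi$ with $\frac{1}{2s}d^2_{SF}(\cdot,\gamma(s))$, which is smooth near $x$, via \eqref{eq:B1}.
\item[(iv)] Together these give \eqref{eq:jest} for all $s<1$. Dropping the second term yields $\det(d_xT_t)^{1/n}\ge \bigl(\det N^{\mathrm{V}}_s(t)/\det N^{\mathrm{V}}_s(0)\bigr)^{1/n}$ for fixed $t\in(0,1)$.
\item[(v)] If $\gamma(1)$ were conjugate to $\gamma(0)$, then as $s\uparrow 1$ the denominator tends to $\det N^{\mathrm{V}}_1(0)=0$. The numerator tends to $\det N^{\mathrm{V}}_1(t)\neq 0$. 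So the right side blows up against a fixed left side, a contradiction.
\item[(vi)] Only after this does the estimate extend to $s=1$.
\end{enumerate}

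A smaller point: ``continuity and non-vanishing'' does not fix the sign of $\det N^{\mathrm{V}}_0(t)$, because $N^{\mathrm{V}}_0(0)=0$. You need a small-time argument. One option is the paper's Riccati argument for $W=N^{\mathrm{V}}_0(M^{\mathrm{V}}_0)^{-1}$. Another is to note that $\det N(t)$ equals $\det N^{\mathrm{V}}_0(t)$ times the determinant of a positive semidefinite matrix and tends to $1$ as $t\to 0$.
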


\begin{proof}
   Let $\lambda(t)=e^{t\vec{H}}(\lambda_0)$ be the normal extremal associated with $\gamma(t)$.
    Let $E_i(t), F_i(t)$, $i=1,\dots,n$, be a Darboux moving frame along $\lambda(t)$, and denote by $X_i(t)$ the projected moving frame $\pi_*(F_i(t))$ along $\gamma$. For each $i=1,\dots,n$, let $\mathcal{J}_i(t)=e^{t\vec{H}}_*\mathcal{J}_i(0)$ be the Jacobi field initialized by $\mathcal{J}_i(0)=d^2_x\phi(X_i(0))$. In this frame, the horizontal matrix $N(t)$ represents the tangent map $d_xT_t$, namely,
    \begin{equation}
        d_xT_t(X_i(0)) = \sum_{j=1}^n N_{ji}(t)X_j(t),
    \end{equation}
    and $N(0)$ is the identity matrix $I_n$.

    For $s\in(0,1)$, Theorem~\ref{conjugatepoint} implies that $\gamma(s)$ is not conjugate to $\gamma(0)$. 
    By Remark~\ref{rm25} and Lemma~\ref{jcblemma}, we have the horizontal matrix identity:
    \begin{equation}\label{eqn}
        N(t) = N^{\mathrm{V}}_s(t)N^{\mathrm{V}}_s(0)^{-1} + N^{\mathrm{V}}_0(t)N^{\mathrm{V}}_0(s)^{-1}N(s), \quad \forall t\in[0,1].
    \end{equation}

    To proceed, we observe that for $s\in(0,1)$, $N^{\mathrm{V}}_0(s)^{-1}N(s)$ is symmetric and nonnegative definite;
    in addition, the matrix
    $N^{\mathrm{V}}_0(t)^{-1}N^{\mathrm{V}}_s(t)N^{\mathrm{V}}_s(0)^{-1}$ is symmetric and nonnegative definite for 
    $t\in(0,s]$, and $\det N^{\mathrm{V}}_0(t)>0$ 
    for $t\in(0,1)$. If $\gamma(1)$ is not conjugate to $\gamma(0)$, then the conclusion can be strengthened to 
    hold for all $s\in(0,1]$ and $t\in(0,1]$. The proof is identical to the sub-Riemannian setting; 
    see \cite[Lemma 29]{BR}, and a sketch is provided in Appendix B.

    This positivity allows us to apply the Minkowski determinant theorem to $N^{\mathrm{V}}_0(t)^{-1}N(t)\mapsto \det\left(N^{\mathrm{V}}_0(t)^{-1}N(t)\right)$, using \eqref{eqn}, yielding:
    \begin{equation}\label{je}
        \det(d_xT_t)^{1/n} \ge \left(\frac{\det N^{\mathrm{V}}_s(t)}{\det N^{\mathrm{V}}_s(0)}\right)^{1/n} + \left(\frac{\det N^{\mathrm{V}}_0(t)}{\det N^{\mathrm{V}}_0(s)}\right)^{1/n}\det(d_xT_s)^{1/n}, \quad \forall t\in(0,s].
    \end{equation}
    Both terms on the right-hand side are nonnegative. For $t=0$, since $d_xT_0=I_n$ and $N^{\mathrm{V}}_0(0)=0$, \eqref{je} also holds trivially.

    We now show that \eqref{je} is also valid for $s=1$. It suffices to prove that $\gamma(1)$ is not conjugate to $\gamma(0)$. Suppose, by contradiction, that $\gamma(1)$ is conjugate to $\gamma(0)$. Therefore applying Lemma~\ref{cjlm} and Remark~\ref{rm25}, we have $\det N^{\mathrm{V}}_0(1)=\det N^{\mathrm{V}}_1(0)=0$, while $\det N^{\mathrm{V}}_s(0)\neq 0$ for $s\in(0,1)$. Since the terms on the right-hand side of \eqref{je} are nonnegative, for fixed $t\in(0,1)$ we may drop the second term and obtain
    \begin{equation}\label{je2}
        \det(d_xT_t)^{1/n} \ge \left(\frac{\det N^{\mathrm{V}}_s(t)}{\det N^{\mathrm{V}}_s(0)}\right)^{1/n}.
    \end{equation}
Letting $s\uparrow 1$, the denominator $\det N^{\mathrm{V}}_s(0)$ tends to $\det N^{\mathrm{V}}_1(0)=0$, while $\det N^{\mathrm{V}}_s(t)$ remains nonzero for fixed $t<s$. Hence the right-hand side diverges to $+\infty$, whereas the left-hand side remains finite and independent of $s$. This contradiction shows that $\gamma(1)$ is not conjugate to $\gamma(0)$.
Therefore, $y\notin \mathrm{Cut}(x)$, and \eqref{je} holds for all $s\in(0,1]$ and $t\in[0,s]$. Finally, since both terms on the right-hand side of \eqref{je} are nonnegative for $t\in[0,s]$, and the first term is strictly positive whenever $0\le t<s\le 1$, it follows that $\det(d_xT_t)>0$ for all $t\in[0,1)$.
\end{proof}
\subsection{Failure of semiconvexity at the cut locus}
We say a function $f:M\to \mathbb{R}$ is \emph{semiconvex} at $x$ if in any set of local coordinates around $x$, the infimum 
\begin{equation}\label{radio1}
    \inf_{0<|v|<1}\frac{f(x+v)+f(x-v)-2f(x)}{|v|^2}
\end{equation}
is finite, and $f$ fails to be semiconvex if the infimum \eqref{radio1} is $-\infty$. Similarly, we say $f$ is \emph{semiconcave} if the supremum 
\begin{equation}\label{radio2}
    \sup_{0<|v|<1}\frac{f(x+v)+f(x-v)-2f(x)}{|v|^2}
\end{equation}
is finite, and $f$ fails to be semiconcave if the supremum \eqref{radio2} diverges to $+\infty$.
As a byproduct of the Jacobian estimate, we find another way to characterize the cut locus, which is closely related to the semiconvexity of the distance function, as stated below.

\begin{theorem}\label{thm_conv}
    Let $(M,\mathcal{D},F)$ be a forward ideal sub-Finslerian manifold. Then $y\in \mathrm{Cut}(x)$ if and only if the squared sub-Finslerian distance to $y$ fails to be semiconvex at $x$.
\end{theorem}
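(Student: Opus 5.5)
We prove the two implications separately. Set $f:=\frac12 d^2_{SF}(\cdot,y)$.

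\emph{If $y\notin\mathrm{Cut}(x)$, then $f$ is semiconvex at $x$.} Since $y\notin \mathrm{Cut}(x)$, there is a unique minimizer from $x$ to $y$, it is normal, and $y$ is not conjugate to $x$. We will show that $f$ is in fact $C^\infty$ near $x$, which is more than semiconvexity. Note that the smoothness theorem of Section 2.3 gives smoothness of $d^2_{SF}(p,\cdot)$ in the \emph{second} variable on $\Sigma_p$, while here we need smoothness in the first variable, and $d_{SF}$ is asymmetric. We handle this with the reversed structure $\bar F$. The reversed geodesic from $y$ to $x$ is $\bar\gamma(t)=\gamma(1-t)$; its extremal is $t\mapsto -\lambda(1-t)$, generated by $\bar H(\xi)=H(-\xi)$. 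This reversed geodesic is again the unique minimizer and is normal.

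Non-conjugacy for $\bar\gamma$ amounts to $e^{-\vec H}_*\mathcal V_{\lambda_1}\cap\mathcal V_{\lambda_0}=\{0\}$. This follows from $e^{\vec H}_*\mathcal V_{\lambda_0}\cap \mathcal V_{\lambda_1}=\{0\}$ by applying the diffeomorphism $e^{-\vec H}_*$, exactly as in Remark~\ref{rm25}. Hence $x\in\bar\Sigma_y$, and the smoothness theorem for the reversed structure shows that $\bar d^2_{SF}(y,\cdot)=d^2_{SF}(\cdot,y)$ is $C^\infty$ near $x$. Therefore the infimum \eqref{radio1} is finite.

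\emph{If $f$ is semiconvex at $x$, then $y\notin\mathrm{Cut}(x)$.} Since $M$ is forward ideal, $d^2_{SF}$ is locally semiconcave near $(x,y)$ by Theorem~\ref{lip} (here $x\neq y$; the case $x=y$ is excluded or trivial). A function that is both semiconvex and semiconcave at $x$ is differentiable at $x$. To apply Proposition~\ref{prop1} and the Jacobian estimate we need a support function that is twice differentiable. Semiconvexity gives a lower quadratic bound
\[
f(z)\ge f(x)+d_xf(z-x)-C|z-x|^2
\]
in coordinates. Set $\phi(z):=-f(x)-d_xf(z-x)+C|z-x|^2$, which is smooth. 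Then $-\phi(x)=f(x)=\frac12 d^2_{SF}(x,y)$ and $f\ge-\phi$ near $x$.

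To obtain the global inequality $\frac12 d^2_{SF}(z,y)\ge-\phi(z)$ for all $z\in M$, we modify $\phi$ outside a small neighbourhood of $x$. This can be done because $d_{SF}(\cdot,y)$ is bounded below away from $x$ only relative to $f$, so we glue $\phi$ with a very negative function outside a compact set. With this $\phi$, Proposition~\ref{prop1} gives a unique normal minimizer $\gamma(t)=\exp_x(t\,d_x\phi)$; we adjust the sign conventions so that this matches the theorem's normalization. Since $M$ is ideal, $\gamma$ contains no abnormal segments. Theorem~\ref{jest} then applies directly and yields $y\notin\mathrm{Cut}(x)$.

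\emph{Main obstacle.} The delicate part is this second implication. First, one must make sure that the lower quadratic support really globalizes to a support function on all of $M$. Second, one must check that the sign conventions between Proposition~\ref{prop1}, where the minimizer is $\exp_x(-\frac12 d_x\phi)$ for a support of $d^2$, and Theorem~\ref{jest}, where $\frac12 d^2\ge-\phi$, are consistent. The key mechanism is the argument inside Theorem~\ref{jest}: if $\gamma(1)$ were conjugate to $\gamma(0)$, then $\det N^{\mathrm V}_s(0)\to0$ as $s\uparrow1$, which forces $\det(d_xT_t)=\infty$. This is impossible because $d^2_x\phi$ is finite, and that finiteness is exactly what semiconvexity supplies.
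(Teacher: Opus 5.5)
Your proof is correct and follows the same skeleton as the paper's. For one direction you show that $d^2_{SF}(\cdot,y)$ is smooth near $x$. For the converse you derive a two-sided quadratic bound at $x$, which gives a smooth $\phi$ with $\frac12 d^2_{SF}(\cdot,y)\ge-\phi$, and then you apply Theorem~\ref{jest}.

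The ingredients differ in two places.
\begin{itemize}
\item For the upper bound \eqref{eq1}, the paper uses an auxiliary ideal sub-Riemannian metric on $\mathcal{D}$ (via \cite{CJT}), the Cannarsa--Rifford semiconcavity of $d^2_{SR}$, and the comparison \eqref{sfsr}. You invoke Theorem~\ref{lip} directly. Your route is more self-contained. It also avoids a weak point: a bi-Lipschitz comparison between $d_{SF}$ and $d_{SR}$ carries no second-order information, so it cannot by itself transfer semiconcavity.
\item For the forward direction, the paper just asserts that $d^2_{SF}(\cdot,y)$ is smooth near $x$, although the smoothness theorem for $\Sigma_p$ concerns the second variable. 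Your passage to the reversed structure $\bar F$ supplies the justification the paper omits. It uses that non-conjugacy is preserved under $e^{-\vec{H}}_*$ and under the involution $\lambda\mapsto-\lambda$.
\end{itemize}

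Three small points need fixing.
\begin{itemize}
\item The lower bound $f(x+v)\ge f(x)+p\cdot v-C'|v|^2$ does not follow from semiconvexity alone; the symmetric second-difference condition does not even give differentiability. It follows by combining semiconvexity with the upper bound from Theorem~\ref{lip}: $f(x+v)\ge 2f(x)-f(x-v)+K|v|^2\ge f(x)+p\cdot v+(K-C)|v|^2$.
\item Globalizing is immediate because $f\ge 0$. Replace $\phi$ by $\chi\phi$, where $\chi$ is a cutoff equal to $1$ near $x$ and supported in the region where $-\phi\le f$ holds. Gluing in a ``very negative'' $\phi$ has the wrong sign, since what you need is $-\phi\le f$.
\item The case $x=y$ is not trivial; it must be excluded. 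When $\mathcal{D}_x\neq T_xM$, the constant curve admits abnormal lifts, so $x\in\mathrm{Cut}(x)$. Yet $d^2_{SF}(\cdot,x)$ attains its minimum at $x$ and is therefore semiconvex there. The paper's statement tacitly assumes $x\neq y$ as well.
\end{itemize}
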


\begin{proof}
    Denote $f(z):=d^2_{SF}(z,y)$ for $z \in M$.
    First, note that for any $y\notin \mathrm{Cut}(x),$ $f$ is smooth in a neighborhood of $x$. Thus, the infimum in \eqref{radio1} is finite, which implies the semiconvexity of $f$ at $x$. 

    To prove the converse, we fix an auxiliary sub-Riemannian metric $g$ on the distribution $\mathcal{D}$ such that $(M, \mathcal{D}, g)$ 
    forms an  ideal sub-Riemannian manifold. The existence of such a metric is guaranteed by \cite[Theorem 2.8]{CJT} (see also 
    \cite[Proposition 14]{BR}). Denote $d_{SR}$ as the corresponding sub-Riemannian distance and $\tilde{f}(z):=d^2_{SR}(z,y)$. 
    Following the strategy in \cite[Section 4.1]{BR}, by \cite[Theorems 1 and 5]{CR}, $\tilde{f}$ is locally semiconcave near $x$. Given that $d_{SF}$ is locally equivalent to $d_{SR}$ (as implied by \eqref{sfsr}) and utilizing \cite[Proposition 3.3.1]{CR}, there exist local coordinates around $x$ such that 
    \begin{equation}\label{eq1}
        f(x+v)-f(x)\le p\cdot v +C|v|^2,\quad \forall |v|<1,
    \end{equation}
    for some $p\in \mathbb{R}^n, C\in \mathbb{R}.$ Suppose $f$ is semiconvex at $x$; then for some $K\in \mathbb{R}$, 
    \begin{equation}\label{eq2}
        f(x+v)+f(x-v)-2f(x)\ge K|v|^2,\quad \forall |v|<1.
    \end{equation}
    The inequalities \eqref{eq1} and \eqref{eq2} allow us to construct a function $\phi:M\to\mathbb{R}$ that is twice differentiable at $x$, satisfying $\frac{1}{2}f(z)\ge \phi(z)$ for $z\in M$ and $\frac{1}{2}f(x)=\phi(x)$. Finally, via Theorem~\ref{jest}, we conclude $y\notin \mathrm{Cut}(x)$, which completes the proof.
\end{proof}

\section{Optimal transport theory}
Let us fix a smooth positive measure $\mathfrak{m}$ on $M$. In local
coordinates $(x^1,\dots,x^n)$, it can be written as
\[
d\mathfrak{m}=f(x)\,dx^1\dots dx^n,
\]
where $f$ is a smooth positive function. For linearly
independent vector fields $X_1,\dots,X_n$, written locally as
\[
X_i=\sum_{j=1}^n X_i^j\frac{\partial}{\partial x^j},
\]
we define
\[
\mathfrak{m}(X_1,\dots,X_n)(x)
:=
f(x)\left|\det\bigl(X_i^j(x)\bigr)\right|.
\]
This quantity is independent of the choice of local coordinates and
represents the infinitesimal $\mathfrak{m}$-volume of the parallelepiped
spanned by $X_1(x),\dots,X_n(x)$.

Let $\mathcal{P}_c(M)$ represent the space of probability 
measures on $M$ with compact supports, and let $\mathcal{P}^{ac}_c(M)$ be the subspace of those 
absolutely continuous ones w.r.t. $\m.$ Let $\mu_0,\mu_1$ be two probability measures on $M$. 
In this and following sections, we denote the forward and backward quadratic cost functions as 
$c(x,y)=d^2_{SF}(x,y)$ and $\bar{c}(x,y)=d^2_{SF}(y,x)$, respectively. 

The classical Monge problem asks us to find a transport map $T:M\to M$ pushing $\mu_0$ forward to 
$\mu_1$ (i.e., $T_{\sharp}\mu_0=\mu_1$) that minimizes the transport cost:
\begin{equation}
    \int_M c(x,T(x))d\mu_0(x).
\end{equation}
However, as we know, the Monge problem is not always well-posed. A relaxed version proposed by 
Kantorovich suggests searching for optimal transport plans instead of optimal transport maps. 
A plan is a probability measure $\pi$ on $M\times M,$ whose marginals are $\mu_0$ and $\mu_1,$ 
and it seeks to attain the minimal transport cost:
\begin{equation}\label{ckp}
    C(\mu_0,\mu_1):=\min\limits_{\pi\in \Pi(\mu_0,\mu_1)}\left\{\int_{M\times M}c(x,y)d\pi(x,y)\right\},
\end{equation}
where $\Pi(\mu_0,\mu_1)$ denotes the set of all transport plans with marginals $\mu_0$ and $\mu_1$.
Unlike the Monge problem, the Kantorovich problem always admits a solution provided that 
$C(\mu_0,\mu_1)$ is finite (see \cite[Theorem 5.10]{Vi}). Hence, the next goal is to explore when, on
a sub-Finslerian manifold, the optimal transport plan can be expressed by a transport map.
For this target, we introduce the following notions.

\begin{definition}[$c$-transform]
    Let $X,Y\subset M$ be two compact sets. For any $\varphi:X\to \mathbb{R}\cup\{-\infty\}$, we define the \textit{$c$-transform} $\varphi^c:Y\to \mathbb{R}\cup \{-\infty\}$ relative to $(X,Y)$ by 
    \begin{equation}
            \varphi^c(y):=\inf\limits_{x\in X}\{c(x,y)-\varphi(x)\}.
    \end{equation}
    Similarly, the $\bar c$-transform of $\psi:Y\to \mathbb{R}\cup\{-\infty\}$ is defined by 
    \begin{equation}
        \psi^{\bar c}(x):=\inf\limits_{y\in Y}\{\bar{c}(y,x)-\psi(y)\}=\inf\limits_{y\in Y}\{c(x,y)-\psi(y)\}.
    \end{equation}
\end{definition}

\begin{definition}[$c$-concavity]
    A function $\varphi:X\to \mathbb{R}\cup\{-\infty\}$ is said to be \textit{$c$-concave} if it is not identically $-\infty$ and if there exists a function $\psi:Y\to \mathbb{R}\cup\{-\infty\}$ such that $\psi^{\bar c}=\varphi.$ Namely,
    \begin{equation}
        \varphi(x) = \inf\limits_{y\in Y}\{c(x,y)-\psi(y)\}.
    \end{equation}
\end{definition}

\begin{definition}[$c$-superdifferential]
    Suppose $\varphi$ is $c$-concave. The \textit{$c$-superdifferential} of $\varphi$ at a point $x\in X$ is:
    \begin{equation}
        \partial^c\varphi(x):=\{y\in Y \mid \varphi(x)=c(x,y)-\varphi^c(y)\}.
    \end{equation}
    Similarly,
    \begin{equation}
        \partial^{\bar{c}}\psi(y):=\{x\in X \mid \psi(y)=c(x,y)-\psi^{\bar c}(x)\}.
    \end{equation}
\end{definition}

The famous Kantorovich duality, 
which builds a bridge between optimal transport plans and certain $c$-concave functions, 
called Kantorovich potentials, is stated below.

\begin{theorem}[see Theorem 5.10 in \cite{Vi}]\label{bm1}
    Let $(M,\mathcal{D},F)$ be a sub-Finslerian manifold, and let $\mu_0,\mu_1\in \mathcal{P}_c(M).$ Then the following duality holds:
    \begin{equation}
            \min\limits_{\pi\in \Pi(\mu_0,\mu_1)}\int_{M\times M}c(x,y)d\pi(x,y)=\sup\limits_{\varphi}\left\{\int_M \varphi(x)d\mu_0(x) + \int_M \varphi^c(y)d\mu_1(y)\right\}.
    \end{equation} 
    Moreover, there exists a continuous $c$-concave function $\varphi$ such that for any $\pi\in \Pi(\mu_0,\mu_1)$, $\pi$ is an optimal plan if and only if $\supp(\pi)\subset \{(x,y) \in M \times M \mid y \in \partial^c\varphi(x)\}.$
\end{theorem}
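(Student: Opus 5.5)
The statement is a specialization of the general Kantorovich duality theorem \cite[Theorem 5.10]{Vi}. The plan is to check that the hypotheses of that theorem hold in our setting, and then to convert its conclusions (stated for $c$-convex functions and $c$-subdifferentials) into the $c$-concave language used above.

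\emph{Step 1: hypotheses of \cite[Theorem 5.10]{Vi}.} Since $M$ is a smooth, second countable manifold, it is a Polish space. By \cite[Theorem 4.12]{YAD}, the cost $c(x,y)=d^2_{SF}(x,y)$ is finite, nonnegative and continuous on $M\times M$, hence lower semicontinuous and bounded below. Symmetry of $c$ is never used in Villani's theorem. So the asymmetry of $d_{SF}$ only forces us to keep the order of the arguments $(x,y)$ fixed, with $x$ the source and $y$ the target variable. This is why the paper distinguishes $c$ from $\bar c$. Set $X=\supp\mu_0$ and $Y=\supp\mu_1$, both compact. By continuity, $c$ is bounded on $X\times Y$, say $c\le C_0$. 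Taking $a\equiv C_0$ and $b\equiv 0$ gives the integrable bound $c(x,y)\le a(x)+b(y)$, and the optimal cost $C(\mu_0,\mu_1)$ is finite. Villani's part (i) then gives the duality formula. Writing his $c$-convex potential $\psi$ as $\varphi=-\psi$ turns suprema into infima, and yields exactly the $\sup$ over pairs $(\varphi,\varphi^c)$ displayed in the statement.

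\emph{Step 2: existence and continuity of the potential.} Villani's part (iii) applies because $c$ is real-valued and the optimal cost is finite. It provides a closed $c$-cyclically monotone set $\Gamma\subset X\times Y$ and a $c$-concave $\varphi$ (obtained via the Rockafellar-type construction) such that $\Gamma\subset\{(x,y) : y\in\partial^c\varphi(x)\}$. It also gives the equivalence: a plan $\pi$ is optimal if and only if it is concentrated on $\Gamma$, if and only if it is concentrated on the $c$-superdifferential graph of $\varphi$.

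It remains to show that $\varphi$ is continuous. We may take $\varphi=\psi^{\bar c}$ with $\psi=\varphi^c$, where $\varphi$ is defined on $X$ and $\psi$ on $Y$. Since $c$ is uniformly continuous on the compact set $X\times Y$, each function $x\mapsto c(x,y)-\psi(y)$ has a modulus of continuity $\omega$ independent of $y$. The infimum of such a family is either identically $-\infty$ or real-valued with the same modulus $\omega$. It cannot be identically $-\infty$, because it is finite on the projection of $\Gamma$, which is nonempty. Hence $\varphi$ is continuous, and in fact uniformly continuous. The same argument shows that $\varphi^c$ is continuous.

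\emph{Step 3: from ``concentrated on'' to ``support contained in''.} This is the main subtle point. Villani's characterization is stated in terms of concentration, while the statement asserts $\supp(\pi)\subset\{y\in\partial^c\varphi(x)\}$. The set
\[
\{(x,y)\in X\times Y : \varphi(x)+\varphi^c(y)=c(x,y)\}
\]
is closed, since it is the zero set of a continuous function by Step 2. A Borel probability measure concentrated on a closed set has its support inside that set, so the two formulations agree.

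For the converse, suppose $\supp\pi$ lies in the graph of $\partial^c\varphi$. Then $\varphi(x)+\varphi^c(y)=c(x,y)$ holds $\pi$-a.e., so integrating gives $\int c\,d\pi=\int\varphi\,d\mu_0+\int\varphi^c\,d\mu_1$. By the duality of Step 1 this equals the optimal cost, so $\pi$ is optimal. This is the usual one-line argument, and here it needs no further integrability check because every function involved is bounded on the compact sets $X$ and $Y$.
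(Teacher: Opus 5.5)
Your proposal is correct and follows the same route as the paper, which gives no argument beyond invoking \cite[Theorem 5.10]{Vi}. Your checks are exactly the details that citation leaves implicit: the Polish setting, the continuous bounded cost on $\supp(\mu_0)\times\supp(\mu_1)$ so that part (iii) applies, the sign change $\varphi=-\psi$ from Villani's $c$-convex language, continuity of the potential via a common modulus of continuity, and the upgrade from ``concentrated on'' to ``support contained in'' using closedness of the contact set.

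The only blemish is notational. In Villani's statement $a,b$ denote the lower bound $c\ge a+b$, which here is $a=b=0$. Your constant bound $c\le C_0$ plays the role of the separate upper bound $c\le c_X+c_Y$ required in part (iii).
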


In Finsler geometry (and Riemannian geometry as a special example), the $c$-superdifferential of Kantorovich potential $\varphi$ can be represented via the gradient and exponential map, i.e., 
$\partial^c\varphi(x)=\{\exp_x(-\frac{1}{2}d_x\varphi)\}$ provided that $\mu_0\in\mathcal{P}^{ac}_c(M)$. However, this good property fails to hold in general 
sub-Finslerian setting since the regularity of the distance function along abnormal geodesics is delicate to study. 
Hence, here we only study optimal transport on forward ideal sub-Finslerian manifolds, since the squared distance function in this context 
is locally semi-concave outside the diagonal as we showed in Theorem~\ref{lip}. For general regularity cases, we refer to \cite{FR}. 

 Before we state sub-Finslerian Brenier-McCann theorem, let us define the moving set $\mathcal{M}^{\varphi}$ and the static set $\mathcal{S}^\varphi$ respectively as:
\begin{equation}
    \begin{aligned}
        \mathcal{M}^{\varphi}&=\{x \in M \mid x\notin \partial^c\varphi(x)\};\\
        \mathcal{S}^\varphi&=\{x \in M \mid x\in \partial^c\varphi(x)\}.
    \end{aligned}
\end{equation}

\begin{theorem}[Brenier-McCann Theorem]\label{ot}
    Let $(M,\mathcal{D},F)$ be a forward ideal sub-Finslerian manifold and let $\mu_0\in \mathcal{P}^{ac}_c(M)$ and $\mu_1\in\mathcal{P}_c(M)$ be two probability measures on $M$. Let $\varphi$ be the potential provided in Theorem \ref{bm1}. Then:
    \begin{itemize}
        \item[(1)] $\mathcal{M}^{\varphi}$ is open and $\varphi$ is locally semiconcave in a neighborhood of 
        $\mathcal{M}^{\varphi}\cap \supp(\mu_0).$ Moreover, for $\mu_0$-a.e. $x\in \mathcal{M}^{\varphi}\cap \supp(\mu_0)$, $\partial^c\varphi(x)$ is a singleton.
        \item[(2)] For $\mu_0$-a.e. $x\in \mathcal{S}^{\varphi}\cap \supp(\mu_0)$, $\partial^c\varphi(x)=\{x\}.$
    \end{itemize}
    In particular, there exists a unique optimal transport map $T$ defined $\mu_0$-a.e. by 
    \begin{equation}\label{opte}
        T(x):=\begin{cases}
            \exp_x(-\frac{1}{2}d_x\varphi), &x\in \mathcal{M}^{\varphi}\cap \supp(\mu_0),\\
            x,& x\in \mathcal{S}^{\varphi}\cap \supp(\mu_0).
        \end{cases}
    \end{equation}
\end{theorem}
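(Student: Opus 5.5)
The proof adapts the Figalli--Rifford strategy \cite{FR} to the asymmetric setting, with forward ideality replacing their general regularity assumptions. Fix compact sets $X\supset\supp(\mu_0)$ and $Y\supset\supp(\mu_1)$, and write $\varphi(x)=\inf_{y\in Y}\{c(x,y)-\varphi^c(y)\}$ with $c=d^2_{SF}$. Continuity of $\varphi$ and compactness of $Y$ ensure that $\partial^c\varphi(x)\neq\emptyset$ for every $x$. The graph $\{(x,y): y\in\partial^c\varphi(x)\}$ is closed, since it is the zero set of the continuous function $c(x,y)-\varphi(x)-\varphi^c(y)\ge 0$.

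For the openness of $\mathcal M^\varphi$, argue by contradiction. Take $x\in\mathcal M^\varphi$ and a sequence $x_k\to x$ with $x_k\in\partial^c\varphi(x_k)$. Closedness of the graph gives $x\in\partial^c\varphi(x)$, a contradiction. A compactness refinement gives more: for $x_0\in\mathcal M^\varphi\cap\supp(\mu_0)$ there is a neighborhood $U$ and $\delta>0$ with $d_{SF}(x,y)\ge\delta$ for all $x\in U$ and $y\in\partial^c\varphi(x)$. On $U$ we may therefore write
\[
\varphi(x)=\inf_{y\in Y,\ d_{SF}(x,y)\ge\delta/2}\{c(x,y)-\varphi^c(y)\},
\]
and the relevant pairs $(x,y)$ stay in a compact subset of $M\times M\setminus\Delta$. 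Theorem~\ref{lip} provides $C^2$ upper supports $\phi_{x,y}$ with uniformly bounded $C^2$ norms. An infimum of uniformly semiconcave functions is semiconcave, so $\varphi$ is locally semiconcave near $\mathcal M^\varphi\cap\supp(\mu_0)$.

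Alexandrov/Rademacher then shows that $\varphi$ is differentiable at $\mathfrak m$-a.e.\ point of this set, hence $\mu_0$-a.e.\ since $\mu_0\ll\mathfrak m$. Fix such a point $x$ and any $y\in\partial^c\varphi(x)$. Then $d^2_{SF}(z,y)\ge \varphi(z)+\varphi^c(y)$ for all $z$, with equality at $z=x$. Apply Proposition~\ref{prop1} with the function $\phi=\varphi+\varphi^c(y)$, which is differentiable at $x$. It yields that the minimizer is unique and normal, and that $y=\exp_x(-\tfrac12 d_x\varphi)$, so $\partial^c\varphi(x)$ is a singleton.

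\textbf{Static part (2); main obstacle.} Here $x\in\partial^c\varphi(x)$ and we must rule out a second $y\neq x$ in $\partial^c\varphi(x)$. The cost is not semiconcave near the diagonal, so the argument above does not apply. I would use the device of \cite{FR}. If $x,y\in\partial^c\varphi(x)$, then $\varphi(z)-\varphi(x)\le c(z,x)$ for all $z$, so $\varphi$ is touched from above at $x$ by $d^2_{SF}(\cdot,x)$. It is also touched from below, since $\varphi(z)-\varphi(x)\ge c(z,y)\,-$ (in the form) $\varphi(z)\le c(z,y)-\varphi^c(y)$ with equality at $x$. We then show that the set of such $x$ has $\mathfrak m$-measure zero. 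The difference $c(\cdot,y)-c(\cdot,x)$ attains a local minimum at $x$ in an appropriate sense, and $d^2_{SF}(\cdot,y)$ is locally semiconcave near $x\neq y$ by Theorem~\ref{lip}. The sandwich then bounds $d^2_{SF}(\cdot,x)$ from below near $x$ by a semiconcave function vanishing at $x$. I would compare with $d^2_{SR}$ via \eqref{sfsr} and use the lower bound $d_{SR}(z,x)\ge c|z-x|^{1/r}$ (Ball--Box) in the form used in \cite{FR}. This forces $\varphi$ to be differentiable at $x$ with $d_x\varphi=0$ on a full-measure subset. On the other hand, the semiconcave function $c(\cdot,y)-\varphi^c(y)$ touches $\varphi$ from above at $x$, so by Proposition~\ref{prop1} we get $y=\exp_x(0)=x$, a contradiction. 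Establishing a.e.\ differentiability of $\varphi$ on $\mathcal S^\varphi$ (e.g.\ via $\varphi$ being locally Lipschitz from $\varphi(z)-\varphi(x)\le d^2_{SF}(z,x)$ plus the symmetric bound, and Rademacher) is the delicate step. Particular care is needed because $d_{SF}$ is asymmetric, so $d_{SF}(z,x)$ and $d_{SF}(x,z)$ must be controlled separately through \eqref{sfsr}.

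Finally, combining (1) and (2), every optimal plan is concentrated on the graph of the map $T$ in \eqref{opte}, so $\pi=(\mathrm{Id}\times T)_\sharp\mu_0$. Uniqueness follows by the standard argument: if $\pi_1$ and $\pi_2$ are optimal, then $\tfrac12(\pi_1+\pi_2)$ is optimal and hence also induced by a map. This forces $T_1=T_2$ $\mu_0$-a.e.
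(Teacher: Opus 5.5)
Your part (1) and the final uniqueness step match the paper's argument: the closed contact set, the restricted infimum over $d_{SF}(x,y)\ge\delta/2$ made uniformly semiconcave by Theorem~\ref{lip}, then Rademacher and Proposition~\ref{prop1}. The gap is in part (2), where several of your steps fail.

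\textbf{The steps that fail in part (2).}
\begin{enumerate}
\item There is no ``sandwich''. The condition $y\in\partial^c\varphi(x)$ gives $\varphi(z)\le d^2_{SF}(z,y)-\varphi^c(y)$ with equality at $z=x$. This is a second touching from \emph{above}, just like $\varphi(z)-\varphi(x)\le d^2_{SF}(z,x)$. Two upper supports give no reason for $d^2_{SF}(\cdot,y)-d^2_{SF}(\cdot,x)$ to have a local minimum at $x$.
\item The Ball--Box estimate runs the other way. Locally $c|z-x|\le d_{SR}(z,x)\le C|z-x|^{1/r}$, and the bound $d_{SR}(z,x)\ge c|z-x|^{1/r}$ fails along horizontal directions.
\item Most importantly, you cannot expect $\varphi$ to be differentiable with $d_x\varphi=0$ a.e.\ on $\mathcal{S}^\varphi$. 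The potential $\varphi$ is only locally Lipschitz with respect to the symmetrized $d_{SF}$, equivalently $d_{SR}$ by \eqref{sfsr}. That is merely H\"older regularity in Euclidean coordinates, so the classical Rademacher theorem does not apply. Moreover, the one-sided bound $\varphi(z)-\varphi(x)\le d^2_{SF}(z,x)$ gives no first-order information in non-horizontal directions: in the Heisenberg group, $d^2_{SF}(x,x\cdot(0,0,\varepsilon))$ is of order $|\varepsilon|$.
\end{enumerate}
Your endgame via Proposition~\ref{prop1} would work if you knew $\varphi$ were differentiable at $x$ with $d_x\varphi\in\mathrm{Ann}(\mathcal{D}_x)$. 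Nothing in your sketch provides this.

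\textbf{The missing idea: horizontal differentiability suffices.} This is the paper's route, following Figalli--Rifford.
\begin{enumerate}
\item Since $\varphi$ is locally $d_{SR}$-Lipschitz, the Rademacher-type theorem of Monti--Serra Cassano \cite[Theorem 3.2]{MC} applies. It gives, at a.e.\ $x$, the expansion $\varphi(z)-\varphi(x)-\sum_i(z_i-x_i)X_i\varphi(x)=o(d_{SF}(x,z))$.
\item Take such an $x$ in the static set. Along the integral curves of $X_i$ you have $\varphi(\gamma^x_i(t))\le\varphi(x)+d^2_{SF}(\gamma^x_i(t),x)\le\varphi(x)+Ct^2$ for both signs of $t$. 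This forces $X_i\varphi(x)=0$, hence $\varphi(z)-\varphi(x)=o(d_{SF}(x,z))$.
\item Now suppose $y\in\partial^c\varphi(x)\setminus\{x\}$. Evaluate $\varphi(z)-\varphi(x)\le d^2_{SF}(z,y)-d^2_{SF}(x,y)$ along a minimizing geodesic $\gamma$ from $x$ to $y$. Since $d_{SF}(x,\gamma(t))=t\,d_{SF}(x,y)$ and $d_{SF}(\gamma(t),y)=(1-t)\,d_{SF}(x,y)$, you get $o(t)\le(t^2-2t)\,d^2_{SF}(x,y)$, which forces $x=y$.
\end{enumerate}
This test only moves along a horizontal curve, so it needs neither full differentiability nor Proposition~\ref{prop1}. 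The asymmetry of $d_{SF}$ is also harmless, because only forward distances along $\gamma$ enter.
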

\begin{proof}
    Let us prove (1) first. Since $\mathcal{M}^{\varphi}=\{x \mid x\notin \partial^c\varphi(x)\}=\{x \mid \varphi(x)+\varphi^c(x)<0\}$, by the continuity of $\varphi$ and $\varphi^c$, $\mathcal{M}^{\varphi}$ is open. Now we need to show $\varphi$ is locally semiconcave. Let $x\in \mathcal{M}^{\varphi}\cap \supp(\mu_0)$; then for any $y\in \partial^c\varphi(x)$, there exists $r>0$ such that $d_{SF}(x,y)>r.$
    Furthermore, since $\partial^c\varphi$ is closed in $M\times M,$ there exists a neighborhood $\mathcal{V}_x\subset \mathcal{M}^{\varphi}\cap \supp(\mu_0)$ of $x$ and $r>0$ such that  
    \begin{equation}
        d_{SF}(z,w)>r,\quad \forall z\in \mathcal{V}_x,\ \forall w\in\partial^c\varphi(z).
    \end{equation}
    Define
    \begin{equation}
        \varphi_{x,r}(z):=\inf\{c(z,y)-\varphi^c(y) \mid y\in\supp(\mu_1), d_{SF}(z,y)>r\}.
    \end{equation}
    Via Theorem \ref{lip}, $c(z,y) = d^2_{SF}(z,y)$ is locally semiconcave outside the 
    diagonal; hence $\varphi_{x,r}(z)$ is locally semiconcave and a fortiori locally Lipschitz. 
    Since $\varphi=\varphi_{x,r}$ in $\mathcal{V}_x$, the same is true for $\varphi$.
    Moreover, $\partial^c\varphi(x)$ is a singleton $\mu_0$-a.e. In fact, for any $y\in\partial^c\varphi(x)$ and $z\in M$, 
    \begin{equation}
        \varphi^c(y)=c(x,y)-\varphi(x)\le c(z,y)-\varphi(z),
    \end{equation}
    which implies 
    \begin{equation}
        d^2_{SF}(z,y)\ge \varphi(z)-\varphi(x)+d^2_{SF}(x,y).
    \end{equation}
    Hence, by Proposition \ref{prop1} and Rademacher's theorem, we obtain $y=\exp_{x}(-\frac{1}{2}d_x\varphi)$ for $\mu_0$-a.e. $x\in \mathcal{M}^{\varphi}\cap \supp(\mu_0)$, which means $y$ is uniquely specified.

    Now let us consider the static set. It is sufficient to show the result for points lying in an open set $\mathcal{V}\subset M$.
    Choose local coordinates on $\mathcal{V}$ such that the generating frame $X_1,\dots, X_n$ on $\mathcal{V}$ can be expressed by
    \begin{equation}
        X_i=\frac{\partial}{\partial x^i}+\sum_{j} a_{ij}\frac{\partial}{\partial x^j}, \quad \text{where } a_{ij}\in C^{\infty}.
    \end{equation}
    Using \cite[Theorem 3.2]{MC} and the equivalence \eqref{sfsr}, we have the local expansion:
    \begin{equation}\label{est}
        \varphi(z)-\varphi(x)-\sum_{i=1}^k (z_i-x_i) X_i\varphi(x)=o(d_{SF}(x,z)),\quad z\in\mathcal{V}.
    \end{equation}
    Fix $x\in \mathcal{S}^\varphi\cap \mathcal{V}$ at where $\varphi$ is differentiable, then $X_i\varphi(x)=0$. As a matter of fact, denote the integral curves of the vector fields $X_i$ by $\gamma^x_i:(-\epsilon,\epsilon)\to M$ such that 
    \begin{equation}
        \begin{cases}
            \dot{\gamma}^x_i(t)=X_i(\gamma^x_i(t)) \quad \forall t\in(-\epsilon,\epsilon),\\
            \gamma^x_i(0)=x.
        \end{cases}
    \end{equation}
    Since $x\in\partial^c\varphi(x),$ we have $\varphi(\gamma^x_i(t))\le \varphi(x)+c(\gamma^x_i(t),x)\le \varphi(x)+C t^2$, which implies the horizontal derivative $X_i\varphi(x)=0.$

    Suppose there is a distinct point $y\in \partial^c\varphi(x)\setminus\{x\}.$ Then the function $z\mapsto \varphi(z)-d^2_{SF}(z,y)$ attains its maximum at $x$.
    Set $\gamma_{x,y}$ as a minimizing normal geodesic connecting $x$ to $y$. Then 
    \begin{equation}
        \varphi(\gamma_{x,y}(t))-d^2_{SF}(\gamma_{x,y}(t),y)\le \varphi(x)-d^2_{SF}(x,y).
    \end{equation}
    Then by \eqref{est} and $d_{SF}(\gamma_{x,y}(t),y) = (1-t)d_{SF}(x,y)$, we deduce that 
    \begin{equation}
        \begin{aligned}
            o(t d_{SF}(x,y)) &= o(d_{SF}(x,\gamma_{x,y}(t))) = \varphi(\gamma_{x,y}(t))-\varphi(x)\\
            &\le d^2_{SF}(\gamma_{x,y}(t),y)-d^2_{SF}(x,y)\\
            &= (1-t)^2 d^2_{SF}(x,y) - d^2_{SF}(x,y)\\
            &= -2t d^2_{SF}(x,y)+t^2 d^2_{SF}(x,y).
        \end{aligned}
    \end{equation}
    Letting $t \to 0$, we get $0 \le -2 d^2_{SF}(x,y)$, which implies $x=y$, leading a contradiction.

    By Theorem \ref{bm1}, for any optimal plan $\pi\in\Pi(\mu_0,\mu_1),$ its support is contained in $\partial^c\varphi$, and we have shown that for $\mu_0$-a.e. $x$, $\partial^c\varphi(x)$ is a singleton. The existence and uniqueness of the optimal transport map can be immediately obtained. Moreover, for $\mu_0$-a.e. $x\in \mathcal{M}^{\varphi}\cap \supp(\mu_0),$ $T(x)=\exp_x(-\frac{1}{2}d_x\varphi)$ and for $\mu_0$-a.e. $x\in \mathcal{S}^\varphi\cap \supp(\mu_0)$, $T(x)=x.$ The uniqueness follows from the construction.
\end{proof}

The sub-Finslerian Brenier--McCann theorem provides a way to construct a Wasserstein geodesic connecting two smooth measures. 
For $t\in[0,1]$, let $T_t(x)$ be defined by
\begin{equation}\label{opte2}
        T_t(x):=\begin{cases}
            \exp_x\left(-\frac{t}{2}d_x\varphi\right), &x\in \mathcal{M}^{\varphi}\cap \supp(\mu_0),\\
            x,& x\in \mathcal{S}^{\varphi}\cap \supp(\mu_0),
        \end{cases}
    \end{equation}
so that $t\mapsto T_t(x)$ traces out the minimizing geodesic from $x$ to $T_1(x)=T(x)$, where $T$ is the map constructed in \eqref{opte}. We now show that the intermediate measures along this transport are absolutely continuous.
\begin{theorem}\label{ot2}
    Under the assumptions of Theorem \ref{ot}, there exists a unique Wasserstein geodesic joining $\mu_0$ to $\mu_1$, given by $\mu_t=(T_t)_{\sharp}\mu_0$ for $t\in[0,1],$ where $T_t$ is the optimal transport given by \eqref{opte2}. Moreover, $\mu_t\in \mathcal{P}^{ac}_c(M)$ for all $t\in[0,1).$ 
\end{theorem}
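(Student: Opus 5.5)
The proof splits into three steps: show that $(\mu_t)$ is a Wasserstein geodesic, show that it is the only one, and show that $\mu_t$ is absolutely continuous for $t<1$.

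\textbf{Geodesic property.} By Theorem \ref{ot}, $T=T_1$ is the unique optimal map. For $\mu_0$-a.e.\ $x$, the curve $t\mapsto T_t(x)$ is the constant-speed minimizing geodesic from $x$ to $T(x)$; by Theorem \ref{jest} and Proposition \ref{prop1} it is the unique one. Hence $d_{SF}(T_s(x),T_t(x))=(t-s)\,d_{SF}(x,T(x))$ for $s\le t$. Using the coupling $(T_s,T_t)_\sharp\mu_0$ we get
\[
W_2(\mu_s,\mu_t)\le (t-s)\,W_2(\mu_0,\mu_1).
\]
Combining this with the triangle inequality for the (asymmetric) Wasserstein distance upgrades it to equality, so $(\mu_t)$ is a Wasserstein geodesic.

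\textbf{Uniqueness.} Standard arguments (Villani, Ch.~7) show that any Wasserstein geodesic is $(e_t)_\sharp\Pi$ for a dynamical optimal plan $\Pi$ supported on minimizing geodesics. Then $(e_0,e_1)_\sharp\Pi$ is an optimal plan, so by Theorem \ref{ot} it equals $(\mathrm{id},T)_\sharp\mu_0$. For $\mu_0$-a.e.\ $x$ the geodesic from $x$ to $T(x)$ is unique, since the manifold is forward ideal and Proposition \ref{prop1} applies at differentiability points of $\varphi$. Therefore $\Pi$ is concentrated on the curves $t\mapsto T_t(x)$, and the geodesic is unique.

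\textbf{Absolute continuity for $t\in[0,1)$.} Split $\supp(\mu_0)$ into the moving set $\mathcal{M}^\varphi$ and the static set $\mathcal{S}^\varphi$. On $\mathcal{S}^\varphi$, $T_t=\mathrm{id}$, so that part of $\mu_t$ is trivially absolutely continuous. On $\mathcal{M}^\varphi$, $\varphi$ is locally semiconcave by Theorem \ref{ot}(1). By Alexandrov's theorem, $\varphi$ is twice differentiable at a.e.\ $x$. At such $x$ the function $-\tfrac12\varphi$ plays the role of $\phi$ in Theorem \ref{jest}, after rescaling $c=d^2_{SF}$ by the factor $\tfrac12$ and absorbing the constant $\varphi^c(T(x))$. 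Theorem \ref{jest} then gives $T(x)\notin\mathrm{Cut}(x)$ and $\det(d_xT_t)>0$ for $t<1$, where $d_xT_t$ is the approximate differential of $T_t$.

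Next, use a countable Lusin-type decomposition. Write $\mathcal{M}^\varphi\cap\supp\mu_0$, up to a null set, as $\bigcup_k K_k$, where on each $K_k$ the map $\varphi$ coincides with a $C^{1,1}$ (or $C^2$) function. This follows from semiconcavity via Alberti/Whitney-type approximation. On each $K_k$, $T_t$ is then Lipschitz. Injectivity of $T_t$ on a full-measure set for $t<1$ follows from the standard no-crossing argument (cyclical monotonicity plus uniqueness of geodesics and non-branching, as in Figalli--Rifford). The area formula for Lipschitz maps then shows that $(T_t)_\sharp(\mu_0|_{K_k})$ is absolutely continuous, with density
\[
\rho_0(x)\,/\,\bigl(\det(d_xT_t)\cdot \mathrm{vol\ factor}\bigr)\quad\text{at } T_t(x).
\]
The positivity $\det(d_xT_t)>0$ from Theorem \ref{jest} is exactly what rules out concentration. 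Compact support of $\mu_t$ follows because the geodesics stay in a bounded forward ball, so forward completeness gives compactness.

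\textbf{Main obstacle.} The hard step is injectivity of $T_t$ and the passage from pointwise Alexandrov differentiability to the area formula. I would follow \cite{FR}: derive an a.e.\ Lipschitz inverse of $T_t$ restricted to sets $K_k$ where $\varphi$ has a uniform semiconcavity constant and $d(x,T(x))>r$, using the local semiconcavity of $d^2_{SF}$ from Theorem \ref{lip}. Care is needed with asymmetry: the backward cost $\bar c$ must be used when inverting $T_t$.
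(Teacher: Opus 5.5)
Your proposal is correct. The uniqueness step matches the paper: Villani's representation of Wasserstein geodesics, combined with Theorem \ref{ot} and the a.e. uniqueness of the geodesic from $x$ to $T(x)$. For the absolute continuity of $\mu_t$, however, you take a genuinely different route.

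\textbf{The paper's route.} The paper never uses the Jacobian estimate at this point. On $A_k=\{x\in\mathcal{M}^\varphi\cap\supp(\mu_0) : d_{SF}(x,T(x))>1/k\}$ it introduces two semiconcave functions, each an infimum taken away from the diagonal:
\[
\varphi_{k,t}(z)=\inf\{d^2_{SF}(x,z)/t-\varphi(x)\},\qquad \tilde\varphi_{k,t}(z)=\inf\{d^2_{SF}(z,y)/(1-t)-\varphi^c(y)\}.
\]
It then proceeds as follows.
\begin{enumerate}
\item It shows $\varphi_{k,t}+\tilde\varphi_{k,t}\ge 0$, with equality on $T_t(A_k)$.
\item From \cite[Theorem A.19]{FF} it deduces that $d\varphi_{k,t}$ is locally Lipschitz on $T_t(A_k)$.
\item Applying Proposition \ref{prop1} to the reversed metric, it obtains a locally Lipschitz left inverse $\Psi_{t,k}(z)=\overline{\exp}_z(-\frac{t}{2}d_z\varphi_{k,t})$ of $T_t$.
\item Since locally Lipschitz maps send $\mathfrak{m}$-null sets to $\mathfrak{m}$-null sets, the identity $(\Psi_{t,k})_\sharp(\mu_t|_{T_t(A_k)})=\mu_0|_{A_k}$ forces $\mu_t|_{T_t(A_k)}\ll\mathfrak{m}$.
\end{enumerate}
This uses only Theorem \ref{lip} and Proposition \ref{prop1}, and gives injectivity of $T_t$ at no extra cost.

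\textbf{Your route.} You need the full strength of Theorem \ref{jest}, and hence the conjugate-point and positivity results of Appendices A and B. On top of that you use Alexandrov's theorem, a Lusin-type $C^{1,1}$ decomposition and the area formula. What this buys is that it directly produces the Jacobian that later enters Theorem \ref{maeq}.

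\textbf{Injectivity is not needed in your argument.} If $N$ is $\mathfrak{m}$-null, the area formula gives $\int_{K_k\cap T_t^{-1}(N)}|\det DT_t|\,dx=\int_N \#(K_k\cap T_t^{-1}(z))\,dz=0$. Together with $\det DT_t>0$ a.e., this forces $\mu_0(K_k\cap T_t^{-1}(N))=0$. So the step you single out as the main obstacle is not an obstacle for this theorem; injectivity matters only for the density formula. Conversely, if you carry out the Lipschitz-inverse construction with $\bar c$ that you sketch at the end, that alone already proves absolute continuity. That construction is essentially the paper's proof, and it would make your Alexandrov/Jacobian part superfluous.

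\textbf{Build the distance cutoff into the pieces.} The condition $d_{SF}(x,T(x))>r$ should be part of the definition of the pieces $K_k$ from the start, not mentioned only in passing. The reason is that $H$ fails to be smooth on $\mathrm{Ann}(\mathcal{D})$. Only away from it is $\lambda\mapsto\pi\circ e^{t\vec{H}}(\lambda)$ smooth, and that smoothness is what makes $T_t|_{K_k}$ Lipschitz.
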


\begin{proof}
    We adapt the standard argument (see e.g., \cite[Section 6.3]{FR}). By the basic representation theorem (see \cite[Corollary 7.22]{Vi}) and Theorem \ref{ot}, the Wasserstein geodesic must take the form $\mu_t=(T_t)_\sharp\mu_0,$ and then uniqueness follows.

    Next we deal with the absolute continuity of $\mu_t$. Since the sub-Finslerian distance may fail to be locally semiconcave on the diagonal, we split $\mathcal{M}^\varphi\cap \supp(\mu_0)$ into subsets bounded away from the diagonal:
    \begin{equation}
        A_k:=\left\{x\in \mathcal{M}^{\varphi}\cap \supp(\mu_0) \mid d_{SF}(x,T(x))>\frac{1}{k}\right\}.
    \end{equation}
    It is sufficient to see the absolute continuity of $\mu^k_t:=\mu_t|_{T_t(A_k)},$ since the situation on the static set is trivial.

    Define
    \begin{equation}
        \varphi_{k,t}(z):=\inf\left\{\frac{d^2_{SF}(x,z)}{t}-\varphi(x) \mid x\in \supp(\mu_0), d_{SF}(x,z)>\frac{t}{k}\right\},
    \end{equation}
    and then local semiconcavity of $\varphi_{k,t}(z)$ is an immediate consequence of the local semiconcavity of $d^2_{SF}(x,z)$ outside the diagonal.
    In addition, by definition,
    \begin{equation}\label{ineq:0}
        \varphi_{k,t}(z)\le\frac{d^2_{SF}(x,z)}{t}-\varphi(x)\quad \text{on } \left\{z \mid d_{SF}(x,z)>\frac{t}{k}\right\},  
    \end{equation}
    where we claim the equality holds at $z=T_t(x)$ for $x\in A_k.$ 
    
    If $\varphi_{k,t}(T_t(x))<\frac{d^2_{SF}(x,T_t(x))}{t}-\varphi(x),$ this strict inequality means there exists an $x'$ such that 
    \begin{equation}\label{ineq:1}
        \frac{d^2_{SF}(x',T_t(x))}{t}-\varphi(x')<\frac{d^2_{SF}(x,T_t(x))}{t}-\varphi(x).
    \end{equation}
    On the other hand, squaring the triangle inequality $d_{SF}(x',T(x)) \le d_{SF}(x',T_t(x)) + d_{SF}(T_t(x),T(x))$ and using the fundamental inequality, we derive that 
    \begin{equation}\label{ineq:2}
        d^2_{SF}(x',T(x))\le \frac{1}{t}d^2_{SF}(x',T_t(x))+\frac{1}{1-t}d^2_{SF}(T_t(x),T(x)),
    \end{equation}
    with exact equality holding when $x'$ is replaced by $x$. Subtracting this exact equality for $x$ from \eqref{ineq:2}, we have
    \begin{equation}
        d^2_{SF}(x',T(x))-d_{SF}^2(x,T(x))\le \frac{d^2_{SF}(x',T_t(x))}{t}-\frac{d^2_{SF}(x,T_t(x))}{t} < \varphi(x')-\varphi(x),
    \end{equation}
    which implies $c(x',T(x))-\varphi(x') < c(x,T(x))-\varphi(x) = \varphi^c(T(x))$, contradicting the definition of $\varphi^c(T(x))$. 

    To apply Proposition \ref{prop1}, we need to prove $\varphi_{k,t}(z)$ is differentiable at $z=T_t(x).$ For this purpose, we construct another auxiliary function 
    \begin{equation}
        \tilde{\varphi}_{k,t}(z):=\inf\left\{\frac{d^2_{SF}(z,y)}{1-t}-\varphi^c(y) \mid y\in\supp(\mu_1), d_{SF}(z,y)>\frac{1-t}{k}\right\}.
    \end{equation} 
    Using \eqref{ineq:2} again, one can check that $\tilde{\varphi}_{k,t}(z)$ is also a locally semiconcave 
    function, and moreover $\varphi_{k,t}(z)+\tilde{\varphi}_{k,t}(z)\ge 0$ for any $z\in M$, with exact 
    equality holding $\mu_t$-a.e. on $T_t(A_k).$ Applying \cite[Theorem A.19]{FF}, the differential 
    $d_z\varphi_{k,t}$ exists for $\mu_t$-a.e. $z \in T_t(A_k)$, and moreover the map 
    $z\mapsto d_z\varphi_{k,t}$ is locally Lipschitz on $T_t(A_k).$ Hence applying Proposition \ref{prop1} 
    to the reverse sub-Finslerian distance $\bar{d}_{SF}(x,y):=d_{SF}(y,x)$ and using \eqref{ineq:0}, we have 
    \begin{equation}
        x=\overline{\exp}_{T_t(x)}\left(-\frac{t}{2}d_{T_t(x)}\varphi_{k,t}\right)
    \end{equation}
    for $\mu_0$-a.e. $x\in A_k,$ where $\overline{\exp}$ denotes the exponential map of the reverse sub-Finslerian metric $\bar{F}(v):=F(-v).$ Set $\Psi_{t,k}(z)=\overline{\exp}_z(-\frac{t}{2}d_z\varphi_{k,t})$, which is locally Lipschitz on $\supp(\mu_t)\cap T_t(A_k).$ We then deduce that $(\Psi_{t,k})_\sharp(\mu^k_t)=\mu_0|_{A_k}.$ Due to the absolute continuity of $\mu_0$ and the local Lipschitz regularity of $\Psi_{t,k},$ the measure $\mu^k_t$ cannot contain any singular parts, completing the proof.
\end{proof}

Concerning the regularity of transport, we refer to \cite[Section 3.3]{FR} and \cite[Section 5.1]{BR} for a detailed discussion of the sub-Riemannian setting, where the conclusions can be paralleled in the sub-Finslerian setting presented here. Although we do not pursue this here, we still give some basic discussion.
For $\mu_0$-a.e. $x$ in the moving set, from the construction in Theorem~\ref{ot}, the transport is given by the $c$-superdifferential of a semiconcave function $\varphi$; therefore it is differentiable for 
$\mu_0$-a.e. $x\in \mathcal{M}^{\varphi}$, with differential given by
\begin{equation}
    dT_t(x)=d\exp\left(-\frac{t}{2}d_x\varphi\right)=\pi_*\circ e^{t\vec{H}}_*\circ \left(-\frac{1}{2}d^2_x\varphi\right).
\end{equation}
Furthermore, by Theorem~\ref{je}, for $\mu_0$-a.e. $x\in \mathcal{M}^{\varphi}$, $T(x)\notin\Cut{x}$ and $\det(d_xT_t)>0$ for all $t\in [0,1)$. For points in the static set, although the regularity of the transport is worse, $T_t$ fixes $x$ for $\mu_0$-a.e. $x\in \mathcal{S}^\varphi$, and this case is straightforward to handle.

We have the following Monge-Amp\`ere equation.
\begin{theorem}\label{maeq}
    Under the assumptions of Theorem~\ref{ot}, assume further that $\mu_1\in\mathcal{P}^{ac}_c(M).$
    Let $\mu_t$ be the Wasserstein geodesic joining $\mu_0$ and $\mu_1$, and let $T_t$ be the optimal transport map from $\mu_0$ to $\mu_t$ for $t\in [0,1]$, so that $\mu_t=(T_t)_{\sharp}\mu_0 \in \mathcal{P}^{ac}_c(M).$ Setting $\mu_t=\rho_t \m$, we have for $\mu_0$-a.e. $x\in M,$
    \begin{equation}
        \frac{\rho_0(x)}{\rho_t(T_t(x))}=\begin{cases}
            \det(d_xT_t)\frac{\m(X_1(t),\dots,X_n(t))}{\m(X_1(0),\dots,X_n(0))},\quad & x\in\mathcal{M}^\varphi\cap \supp(\mu_0)\\
            1, &x\in \mathcal{S}^\varphi\cap \supp(\mu_0),
        \end{cases}
    \end{equation}
    where $\{X_1(t),\dots,X_n(t)\}$ is a smooth moving frame along the geodesic $t\mapsto T_t(x)$, and the determinant of the linear tangent map $d_xT_t:T_xM\to T_{T_t(x)}M$ is computed with respect to this frame, i.e.,
    \begin{equation}
        d_xT_t(X_i(0))=\sum_{j=1}^n N_{ji}(t)X_j(t),\quad \det(d_xT_t):=\det N(t).
    \end{equation}
\end{theorem}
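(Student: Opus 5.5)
The plan is to split $\supp(\mu_0)$ into the static part and the moving part, as in Theorem~\ref{ot}, and prove the Jacobian (Monge--Amp\`ere) identity on each by a change of variables.

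\emph{Static set.} For $\mu_0$-a.e.\ $x\in\mathcal{S}^\varphi\cap\supp(\mu_0)$ we have $T_t(x)=x$. Then $\mu_t$ restricted to $T_t(\mathcal{S}^\varphi)$ coincides with $\mu_0$ restricted to $\mathcal{S}^\varphi$. The point to check is that no mass coming from the moving set lands on $\mathcal{S}^\varphi$ with positive measure and overlaps. For this I will use injectivity of $T_t$ on a full-measure set, proved below, so that $(T_t)_\sharp\mu_0$ decomposes as a sum of mutually singular pieces. A Lebesgue density argument at density points then gives $\rho_t(x)=\rho_0(x)$ $\mu_0$-a.e.\ on $\mathcal{S}^\varphi$.

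\emph{Moving set.} Work on the pieces $A_k$ from Theorem~\ref{ot2}. There $\varphi$ is locally semiconcave, so by Alexandrov's theorem it is twice differentiable $\mu_0$-a.e. At such points, the argument in Theorem~\ref{ot} gives $y=T(x)$ with $-\tfrac12\varphi$ as the support function required in Theorem~\ref{jest}. Hence $T(x)\notin\Cut{x}$, $T_t$ is differentiable at $x$ with $d_xT_t=\pi_*\circ e^{t\vec H}_*\circ(-\tfrac12 d^2_x\varphi)$, and $\det(d_xT_t)>0$ for $t<1$.

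Next comes injectivity. The inverse map $\Psi_{t,k}$ from the proof of Theorem~\ref{ot2} satisfies $\Psi_{t,k}\circ T_t=\mathrm{id}$ $\mu_0$-a.e.\ on $A_k$. So $T_t$ is injective on a full-measure subset of $A_k$, and since the $A_k$ exhaust the moving set, on a full-measure subset of it. Injectivity between static and moving points also holds, because $\Psi_{t,k}$ recovers $x$ from $T_t(x)$.

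With injectivity, approximate differentiability and $T_\sharp\mu_0=\mu_t$ in hand, apply the area formula for a.e.-differentiable injective maps (in the form of \cite[Theorem 11.1]{Vi} or the McCann-type lemma). It gives $\rho_0(x)=\rho_t(T_t(x))\,|\mathrm{Jac}_{\m}T_t(x)|$, where the Jacobian is taken relative to $\m$. Writing $\m$-Jacobians in the moving frame $X_i(t)$ yields $\mathrm{Jac}_\m T_t(x)=\det N(t)\,\m(X_1(t),\dots,X_n(t))/\m(X_1(0),\dots,X_n(0))$, and positivity of $\det N(t)$ removes the absolute value.

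The case $t=1$ needs separate care, since Theorem~\ref{jest} gives only $\det\ge0$ there. I will use the assumption $\mu_1\in\mathcal{P}^{ac}_c$: apply the same argument to the reverse transport from $\mu_1$ to $\mu_0$ with the reversed cost $\bar c$, whose map inverts $T$ a.e. This gives injectivity of $T$ and shows $\det(d_xT)\neq0$ a.e., since otherwise the image set would carry positive $\mu_1$-mass on an $\m$-null set.

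\textbf{Main obstacle.} The hard step is justifying the change of variables when $T_t$ is only a.e.\ differentiable (via Alexandrov) and not Lipschitz. I would first try to show $T_t$ is Lipschitz on suitable pieces: cover by Lusin-type compact sets where $d\varphi$ is Lipschitz (from semiconcavity), and note that $\exp$ is smooth away from $\mathrm{Ann}(\mathcal{D})$. On such pieces the Euclidean area formula in charts applies directly, and the approximate derivative equals the Alexandrov derivative at density points.
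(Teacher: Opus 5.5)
Your proposal is correct and follows the same route as the paper. Both obtain $\mu_0$-a.e.\ injectivity of $T_t$ from a left inverse, apply the Jacobian equation, and read off the $\m$-Jacobian in the moving frame. For $t<1$ you take the left inverse to be the map $\Psi_{t,k}$ from the proof of Theorem~\ref{ot2}, whereas the paper uses a reverse optimal map $S_t$ for all $t$. You are also more explicit about the approximate differentiability that the area formula needs.

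One justification needs repair. $\Psi_{t,k}$ is only known to invert $T_t$ on $A_k$, so it cannot rule out a moving point $x'$ landing on a static point $x$. For $t\in(0,1)$, use $c$-cyclical monotonicity of $\partial^c\varphi$ on the pairs $(x',T(x'))$ and $(x,x)$ instead. If $T_t(x')=x\in\mathcal{S}^\varphi$, it gives $d^2_{SF}(x',T(x'))\le d^2_{SF}(x',x)+d^2_{SF}(x,T(x'))=(t^2+(1-t)^2)\,d^2_{SF}(x',T(x'))$, which is impossible since $x'\neq T(x')$. At $t=1$, your reverse-transport left inverse already covers this case.
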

\begin{proof}

We adapt the standard argument in \cite[Section 6.4]{FR}.
As in the proof of Theorem~\ref{ot}, since $\mu_t\in\mathcal{P}^{ac}_c(M)$, there exists an optimal transport map $S_t$ minimizing the reversed cost $\int_M \bar{c}(x,S_t(x))\,d\mu_t(x)$, such that $(S_t)_\sharp \mu_t=\mu_0$. Indeed, one can check that for $\mu_t$-a.e.\ $x\in \supp(\mu_t)$,
    \begin{equation}
        S_t(x):=\begin{cases}
            \overline{\exp}_x\left(-\frac{t}{2}d_x\varphi^c\right), &x\in \mathcal{M}^{\varphi^c}\cap \supp(\mu_t),\\
            x,& x\in \mathcal{S}^{\varphi^c}\cap \supp(\mu_t),
        \end{cases}
    \end{equation}
    where $\overline{\exp}$ denotes the exponential map of the reverse sub-Finslerian metric $\bar{F}$. 
    Moreover, by the construction of the optimal transport maps $T_t$ and $S_t$, and noting that $x\in \partial^{\bar c}\varphi^c(y)$ if and only if $y\in \partial^c\varphi(x)$ (see \cite[Lemma 4.4]{Ohta}), we have $S_t\circ T_t(x)=x$ for $\mu_0$-a.e. $x\in M$.
    Therefore, $T_t$ is $\mu_0$-a.e. injective. 
    Applying the change-of-variables formula, for $\mu_0$-a.e. $x \in \mathcal{M}^\varphi$ we have 
     \begin{equation}
\begin{aligned}
\rho_0(x)\m(X_1(0),...,X_n(0))&=\rho_t(T_t(x))\m(d_xT_t(X_1(0)),...,d_xT_t(X_n(0)))\\
&=\rho_t(T_t(x))\m\left(\sum_{j=1}^n N_{j1}(t)X_j(t),...,\sum_{j=1}^n N_{jn}(t)X_j(t)\right)\\
&=\rho_t(T_t(x))\det(d_xT_t)\m(X_1(t),...,X_n(t)).
\end{aligned}
\end{equation}
    For $x \in \mathcal{S}^\varphi$, the transport map is the identity, yielding a ratio of $1$. Thus, the conclusion follows.
\end{proof}

\section{Interpolation inequality}
In this section, we first introduce the sub-Finslerian distortion coefficients, and utilize the Jacobian matrices to give a precise expression for the distortion coefficients. Then we conclude the proof of Theorem~\ref{interpolation}.
\subsection{Sub-Finslerian distortion coefficients}
We define the sub-Finslerian distortion coefficients as follows.
\begin{equation}\label{defdis}
    \begin{aligned}
        \beta^<_t(x,y)&:=\lim\limits_{r\to 0}\frac{\m(Z_t(x,B^+_r(y)))}{\m(B^+_r(y))},\\
        \beta^>_t(x,y)&:=\lim\limits_{r\to 0}\frac{\m(Z_t(B^-_r(x),y))}{\m(B^-_r(x))},
    \end{aligned}
\end{equation}
where $Z_t(x,y)=\{\gamma(t) \mid \gamma:[0,1]\to M \text{ is a minimizing geodesic connecting } x \text{ and } y \}$.

\begin{definition}[Forward/backward geodesic dimension]
    Let $(M,\mathcal{D},F)$ be a sub-Finslerian manifold equipped with a smooth measure $\m$. For any $p\in M$ and $s>0$, define
    \begin{equation}
        \begin{aligned}
            C^+_s(p)&:=\sup\left\{\limsup\limits_{t\to 0}\frac{1}{t^s}\frac{\m(Z_t(p,\Omega))}{\m(\Omega)} : \text{ $\Omega\subset M\setminus \mathrm{Cut}(p)$ is a Borel bounded set, } \m(\Omega)\in(0,\infty)\right\},\\
            C^-_s(p)&:=\sup\left\{\limsup\limits_{t\to 0}\frac{1}{t^s}\frac{\m(Z_t(\Omega,p))}{\m(\Omega)} : \text{ $\Omega\subset M\setminus \mathrm{Cut}(p)$ is a Borel bounded set, } \m(\Omega)\in(0,\infty)\right\}.
        \end{aligned}
    \end{equation}   
    We define the \textit{forward/backward geodesic dimension} of $(M,d_{SF},\m)$ as the non-negative real numbers:
    \begin{equation}
        \begin{aligned}
            \mathcal{N}^+(p)&:=\inf\{s>0 \mid C^+_s(p)=+\infty\},\\
            \mathcal{N}^-(p)&:=\inf\{s>0 \mid C^-_s(p)=+\infty\}.
        \end{aligned}
    \end{equation} 
\end{definition}

Although the definition of distortion coefficients is formally similar to that in the Riemannian and Finslerian settings, their geometric properties differ substantially. In sub-Riemannian geometry, the Ball--Box theorem \cite[Corollary 2.1]{Jea} implies that the geodesic dimension is generally strictly larger than the topological dimension. In view of the local equivalence \eqref{sfsr} between the sub-Riemannian and sub-Finslerian distances, the same conclusion remains valid in the sub-Finslerian setting.

\begin{proposition}[see {\cite[Proposition 4.20]{MR}}]\label{geodim} 
Let $(M,\mathcal{D},F)$ be an $n$-dimensional sub-Finslerian manifold. Then we have 
$\mathcal{N}^+(p)\ge n$ and $\mathcal{N}^-(p)\ge n$. 
Equality holds if and only if the distribution has full rank at $p.$ 
In particular, 
\begin{align*} 
\beta^<_t(x,x)&\le t^{k(x)},\quad \forall t\in[0,1],\\ 
\beta^>_t(x,x)&\le (1-t)^{k(x)},\quad \forall t\in[0,1], 
\end{align*} 
where $k(x)\ge n$. 
\end{proposition}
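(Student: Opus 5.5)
The plan is to reduce the statement to the sub-Riemannian result \cite[Proposition 4.20]{MR} (or its Ball--Box proof) using the local bi-Lipschitz equivalence \eqref{sfsr}. Then I would read the bounds on $\beta^{<}_t(x,x)$, $\beta^{>}_t(x,x)$ off the scaling of small balls. Fix $p$ and a reference sub-Riemannian metric $g$ on $\mathcal{D}$. By \eqref{sfsr}, on a compact neighbourhood $K$ of $p$ we have
\[
B^{SR}_{r/C}(p)\subset B^{\pm}_r(p)\subset B^{SR}_{Cr}(p)
\]
for all small $r$. By the Ball--Box theorem \cite[Corollary 2.1]{Jea} in privileged coordinates,
\[
\m\bigl(B^{SR}_r(p)\bigr)\asymp r^{\mathcal{Q}(p)},
\]
where $\mathcal{Q}(p)=\sum_i i\,(\dim\mathcal{D}^i_p-\dim\mathcal{D}^{i-1}_p)$ is the homogeneous dimension at $p$. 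Hence $\m(B^{\pm}_r(p))\asymp r^{\mathcal{Q}(p)}$ as well, with constants depending on $C$. Moreover $\mathcal{Q}(p)\ge n$, with equality if and only if $\mathcal{D}_p=T_pM$.

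For the geodesic dimension, I would follow the proof of \cite[Proposition 4.20]{MR}, taking care to track the direction of minimizers. Take $\Omega=B^+_r(p)$ (minus the cut locus, which is $\m$-negligible in the forward ideal case) for $C^+_s$. Every point of $Z_t(p,\Omega)$ lies at forward distance $<tr$ from $p$, so
\[
\m(Z_t(p,\Omega))\le\m(B^+_{tr}(p)).
\]
A lower bound of the same order comes from contracting $\Omega$ toward $p$ via $\exp_p$ and the Jacobian positivity of Theorem~\ref{jest}. Combining this with the ball asymptotics should give that $C^+_s(p)$ is finite for $s<\mathcal{N}^+(p)$ and infinite beyond it, where $\mathcal{N}^+(p)\ge\mathcal{Q}(p)\ge n$. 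The case $C^-_s$ is identical after replacing $F$ by $\bar F$, using backward balls and the reversed exponential $\overline{\exp}$. Since \cite{MR} uses norms that are constant on fibres, I expect the only change needed is that their arguments use just distance comparisons and homogeneity of small balls, which \eqref{sfsr} supplies. For equality in the full-rank case, $M$ is Finslerian near $p$, so $\m(Z_t(p,\Omega))/\m(\Omega)\sim t^n$ and $\mathcal{N}^\pm(p)=n$. Conversely, if the rank is not full then $\mathcal{Q}(p)>n$, and the contraction estimate forces $\mathcal{N}^\pm(p)>n$.

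For the distortion coefficients at coincident points, $Z_t(x,B^+_r(x))\subset B^+_{tr}(x)$, so
\[
\beta^{<}_t(x,x)\le\limsup_{r\to0}\frac{\m(B^+_{tr}(x))}{\m(B^+_r(x))}.
\]
Likewise, $Z_t(B^-_r(x),x)$ consists of points $\gamma(t)$ with $d_{SF}(\gamma(t),x)=(1-t)d_{SF}(\gamma(0),x)<(1-t)r$, so it lies in $B^-_{(1-t)r}(x)$. This gives the bound $(1-t)^{k(x)}$ once the ratio of balls scales exactly like the power $k(x)$.

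The main obstacle is that \eqref{sfsr} only gives $\m(B^+_{tr})/\m(B^+_r)\le C'\,t^{\mathcal{Q}}$ with a constant $C'>1$, not the sharp power. To remove the constant, I would use the dilations $\delta_\varepsilon$ in privileged coordinates. The rescaled forward balls $\delta_{1/r}B^+_r(x)$ converge (Hausdorff) to the unit ball $\hat B$ of the sub-Finsler tangent cone, a nilpotent group with Haar measure homogeneous of degree $\mathcal{Q}$, and $\m$ rescales to a constant multiple of Haar measure. Hence
\[
\frac{\m(B^+_{tr})}{\m(B^+_r)}\to\frac{|t\hat B|}{|\hat B|}=t^{\mathcal{Q}(x)}.
\]
Setting $k(x):=\mathcal{Q}(x)\ge n$ gives both inequalities. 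I expect this Mitchell-type convergence for nonconstant Minkowski norms to be the delicate step; it should follow from the uniform convergence $\varepsilon\,\delta_{1/\varepsilon}^*F\to\hat F$ on the rescaled frame.
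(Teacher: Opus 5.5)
The paper does not prove this statement itself. It cites \cite[Proposition 4.20]{MR} and remarks that the sub-Riemannian Ball--Box conclusion carries over through the comparison \eqref{sfsr}. For the claims about $\mathcal{N}^\pm(p)$ your reduction is that same route. For the diagonal bounds, however, you add an ingredient the paper passes over, and it is genuinely needed. On its own, \eqref{sfsr} only gives $\beta^<_t(x,x)\le C'\,t^{\mathcal{Q}(x)}$ with some $C'>1$. The clean power, which the static-set step of Theorem~\ref{interpolation} requires, needs the exact asymptotics $\m(B^\pm_r(x))\sim c_\pm\, r^{\mathcal{Q}(x)}$ coming from the nilpotent approximation.

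Your choice $k(x)=\mathcal{Q}(x)$ is the correct one. The ``in particular'' cannot be read off from $\mathcal{N}^\pm(x)\ge n$ with $k=\mathcal{N}$. In the sub-Riemannian Heisenberg group, dilations give $\beta_t(0,0)=\m(Z_t(0,B_1))/\m(B_1)$, and this satisfies $t^5<\beta_t(0,0)\le t^4$, because $\beta_t(0,y)>t^5$ off $\{\theta=0\}$. So the bound fails with $t^{\mathcal{N}}=t^5$.

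The step you flag as delicate is true, and it can be simplified. Continuity of $F$ gives $(1-\eta)F_x\le F_q\le(1+\eta)F_x$ for $q$ near $x$, with $\eta\to0$. Hence $B^+_r(x)$ is sandwiched between forward balls of radii $r/(1\pm\eta(r))$ for the structure $(\sigma,F_x)$, whose norm does not vary on fibres. The usual rescaling argument in privileged coordinates applies to that structure. At non-equiregular points the tangent cone is only a homogeneous space, not a group. What you actually use, though, is only that Lebesgue measure in privileged coordinates is $\delta_\lambda$-homogeneous of degree $\mathcal{Q}(x)$, and that $\partial\hat B$ is null.

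Two sentences in your geodesic-dimension paragraph need repair.

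\emph{First, the supremum.} $C^+_s(p)$ is a supremum over all bounded $\Omega\subset M\setminus\mathrm{Cut}(p)$, so testing the single set $\Omega=B^+_r(p)$ cannot show $C^+_s(p)<\infty$. Use instead $Z_t(p,\Omega)\subset B^+_{tR}(p)$ with $R=\sup_{y\in\Omega}d_{SF}(p,y)$. This gives $t^{-s}\m(Z_t(p,\Omega))/\m(\Omega)\le C R^{\mathcal{Q}(p)}t^{\mathcal{Q}(p)-s}/\m(\Omega)\to0$ for every $s<\mathcal{Q}(p)$, hence $\mathcal{N}^+(p)\ge\mathcal{Q}(p)\ge n$. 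No forward-ideal hypothesis is needed. That matters, because the proposition concerns arbitrary sub-Finslerian manifolds, so negligibility of the cut locus and Theorem~\ref{jest} are not available anyway.

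\emph{Second, the lower bound.} The asserted lower bound \emph{of the same order} obtained via $\exp_p$ is false in general. In the Heisenberg group, $\m(Z_t(p,\Omega))\asymp t^5$ for compact $\Omega\subset\mathbb{H}\setminus\mathrm{Cut}(p)$, while $\mathcal{Q}=4$. It is also not needed. A lower bound is used only in the full-rank case. There the structure is Finsler near $p$, and a small $\Omega$ near $p$ gives $\m(Z_t(p,\Omega))\gtrsim t^n\m(\Omega)$, hence $C^+_s(p)=\infty$ for $s>n$.
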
 

Recalling the notation introduced at the beginning of Section 3.3, 
we now give an explicit expression for the distortion coefficients.
\begin{lemma}[Computation of the distortion coefficients]\label{lm:dc} 
Let $x,y\in M$, with $y\notin \mathrm{Cut}(x).$ Let $X_1,...,X_n$ be a smooth moving frame along the unique geodesic from $x$ to $y$. Then we have 
\begin{equation} 
\begin{aligned} 
\beta^<_t(x,y)&=\frac{\det N^{\mathrm{V}}_0(t)}{\det N^{\mathrm{V}}_0(1)}\frac{\m(X_1(t),...,X_n(t))}{\m(X_1(1),...,X_n(1))},\quad\forall t\in[0,1],\\ 
\beta^>_t(x,y)&=\frac{\det N^{\mathrm{V}}_1(t)}{\det N^{\mathrm{V}}_1(0)}\frac{\m(X_1(t),...,X_n(t))}{\m(X_1(0),...,X_n(0))},\quad\forall t\in[0,1]. 
\end{aligned} 
\end{equation} 
Moreover, $\beta^<_{t}(x,y)$ and $\beta^>_{t}(x,y)$ are both strictly positive for $t\in(0,1]$. 
\end{lemma}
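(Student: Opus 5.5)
The plan is to express $Z_t(x,\cdot)$ near $y$ through the exponential map, and then compute the limit of the measure ratio by a change of variables. Since $y\notin\mathrm{Cut}(x)$, there is a unique minimizer, it is normal, and $y$ is not conjugate to $x$. Write $y=\exp_x(\lambda_0)$. Because $\Sigma_x$ is open and $d^2_{SF}(x,\cdot)$ is smooth there, the map $\exp_x$ is a local diffeomorphism from a neighborhood $U$ of $\lambda_0$ in $T_x^*M$ onto a neighborhood $W\subset\Sigma_x$ of $y$. Using the uniqueness of minimizers for points of $\Sigma_x$ (and a compactness argument on minimizers, as in the proof of Theorem~\ref{lip}), for $r$ small the minimizer from $x$ to each $w\in B^+_r(y)$ is $t\mapsto\exp_x(t\Lambda(w))$, where $\Lambda:=(\exp_x|_U)^{-1}$. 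Hence
\[
Z_t(x,B^+_r(y))=\Phi_t(B^+_r(y)),\qquad \Phi_t:=\exp_x(t\,\Lambda(\cdot)).
\]
For $t\in(0,1]$, $\Phi_t$ is a diffeomorphism near $y$, since $\exp_x$ has no critical points at $t\lambda_0$ by Theorem~\ref{conjugatepoint}; at $t=0$ the map is constant and both sides vanish. The change-of-variables formula then gives
\[
\frac{\m(\Phi_t(B^+_r(y)))}{\m(B^+_r(y))}\longrightarrow \frac{|\det d_y\Phi_t|\,\m(X_1(t),\dots,X_n(t))}{\m(X_1(1),\dots,X_n(1))},
\]
where $\det d_y\Phi_t$ is computed in the moving frame. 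The convergence holds because $B^+_r(y)$ shrinks to $y$ and the Jacobian is continuous; equivalently, this is a Lebesgue-point argument for continuous densities, which does not require the balls to be nice.

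Next I compute $d_y\Phi_t$ via Jacobi fields. We have $d_y\Phi_t=\pi_*e^{t\vec H}_*\circ(d\exp_x|_{\lambda_0})^{-1}$ restricted to vertical vectors. The vertical Jacobi fields starting at $0$ form exactly the family $\mathbf J^{\mathrm V}_0$, whose horizontal components are $N^{\mathrm V}_0(t)$. Thus $d\exp_x$ at $t\lambda_0$ is represented by $N^{\mathrm V}_0(t)$, after accounting for the time rescaling $\exp_x(t\lambda)=\pi e^{t\vec H}(\lambda)$. This gives $d_y\Phi_t=N^{\mathrm V}_0(t)N^{\mathrm V}_0(1)^{-1}$ in the frame, and hence the formula for $\beta^<_t$, up to sign.

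The formula for $\beta^>_t$ is obtained symmetrically. Using the reversed structure $\bar F$, the curve $s\mapsto\gamma(1-s)$ is the unique $\bar F$-minimizer from $y$ to $x$, and $Z_t(B^-_r(x),y)$ is the image of $B^-_r(x)$ under $w\mapsto$ (the point at parameter $t$ on the minimizer from $w$ to $y$). The same argument works with the family of extremals ending at $\lambda_1=\lambda(1)$, i.e. with $\mathbf J^{\mathrm V}_1$ flowed backward. Non-degeneracy of this backward family is exactly the symplectic argument of Remark~\ref{rm25}, which gives that $N^{\mathrm V}_1(0)$ is invertible. One must also check that $y$ remains a smooth point for $w$ near $x$. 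I would get this from openness of the non-cut set $\{(w,y')\}$ near $(x,y)$ via the implicit function theorem in the proof of Theorem~\ref{lip}, or more simply by noting that the backward cut-free condition follows from the invertibility of $N^{\mathrm V}_1(0)$ together with uniqueness of minimizers. This step yields $\det N^{\mathrm V}_1(t)/\det N^{\mathrm V}_1(0)$.

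For positivity, the limits are nonnegative ratios of measures. Strict positivity for $t\in(0,1]$ amounts to $\det N^{\mathrm V}_0(t)\neq0$ and $\det N^{\mathrm V}_1(t)\neq0$. The first follows from Theorem~\ref{conjugatepoint}. The second follows from the same theorem applied to $\gamma(t)$ versus $\gamma(1)$, together with the symplectic transfer in Remark~\ref{rm25}. The sign issue (absolute values versus the signed ratio in the statement) is resolved by continuity in $t$: the determinants do not vanish on $(0,1]$ and the ratio equals $1$ at $t=1$, so it stays positive. I expect the main obstacle to be the backward coefficient, namely justifying that nearby initial points $w$ have unique normal non-conjugate minimizers to $y$ whose $t$-points depend smoothly on $w$, given the irreversibility of the metric.
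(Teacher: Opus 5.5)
Your route is essentially the paper's. You realize $Z_t(x,B^+_r(y))$ as the diffeomorphic image of a small neighborhood under $\exp_x(t\,\cdot)$, and you pass to the limit of the measure ratio by continuity of the Jacobian density. You then read off the differential on vertical vectors as $N^{\mathrm V}_0(t)$ in a Darboux lift of the frame. Finally, you handle $\beta^>_t$ through the reversed structure $\bar F$ and the extremal ending at $\lambda(1)$; the paper does the same with the involution $\iota(\lambda)=-\lambda$ and $e^{t\overleftarrow{H}}_*=\iota_*\circ e^{-t\vec H}_*\circ\iota_*$.

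Your continuity argument for the sign is a genuine addition. Since $\m(X_1,\dots,X_n)$ is defined with $|\det|$, the paper's identity $(\exp^t_x)^*\m(E_1(0),\dots,E_n(0))=\det N^{\mathrm V}_0(t)\,\m(X_1(t),\dots,X_n(t))$ is only correct with $|\det N^{\mathrm V}_0(t)|$. The paper never justifies dropping the absolute values; your argument does.

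The step that fails is positivity of $\beta^>_t$ at $t=1$, and it fails because the claim itself is false there. By the initial condition defining $\mathbf J^{\mathrm V}_1$, we have $N^{\mathrm V}_1(1)=0$, so the formula gives $\beta^>_1(x,y)=0$. This agrees with the definition, since $Z_1(B^-_r(x),y)=\{y\}$ has measure zero. Hence your assertion that $\det N^{\mathrm V}_1(t)\neq 0$ on $(0,1]$ is wrong at $t=1$.

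For $\beta^>$, the sign argument must instead be anchored at $t=0$, where $\det N^{\mathrm V}_1(t)/\det N^{\mathrm V}_1(0)=1$, and run over $[0,1)$. It uses two invertibility facts:
\begin{itemize}
\item $N^{\mathrm V}_1(0)$ is invertible, by non-conjugacy of $y$ to $x$ together with Remark~\ref{rm25};
\item $N^{\mathrm V}_1(t)$ is invertible for $t\in(0,1)$, by Theorem~\ref{conjugatepoint} applied to $\gamma(t),\gamma(1)$ together with Remark~\ref{rm25}.
\end{itemize}
The correct conclusion is $\beta^<_t(x,y)>0$ for $t\in(0,1]$ and $\beta^>_t(x,y)>0$ for $t\in[0,1)$. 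The paper's own proof does not argue positivity at all, and it writes its backward computation only for $t\in[0,1)$.
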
 
\begin{proof} 
For $t=0,$ both sides of the first equation vanish; hence, it suffices to consider $t>0.$ 
Since $y\notin \mathrm{Cut}(x),$ there exists a small open set $A_r\subset T^*_xM$ such that 
\begin{equation} 
\exp^t_x:=\exp_x(t\cdot): A_r \to Z_t(x,B^+_r(y)) 
\end{equation} 
is a diffeomorphism. In particular, we have 
\begin{equation} 
\beta^<_t(x,y)=\lim\limits_{r\to 0}\frac{\m(Z_t(x,B^+_r(y)))}{\m(B^+_r(y))}=\lim\limits_{r\to 0}\frac{\int_{A_r}(\exp^t_x)^* \m}{\int_{A_r}(\exp^1_x)^* \m}=\frac{(\exp^t_x)^* \m|_{\lambda_0}}{(\exp^1_x)^* \m|_{\lambda_0}}. 
\end{equation} 
To compute this, we lift the frame $X_1,...,X_n$ to a Darboux moving frame  $E_1,...,E_n,$  $F_1,...,F_n$.
Evaluating $(\exp^t_x)^*\m$ on $E_1,...,E_n$, 
we obtain
\begin{equation} 
\begin{aligned} 
(\exp^t_x)^*\m(E_1(0),...,E_n(0))&=\m(\pi_*\circ e^{t\vec{H}}_*E_1(0),...,\pi_*\circ e^{t\vec{H}}_*E_n(0))\\ 
&=\m\left(\sum_{j=1}^n (N^{\mathrm{V}}_0)_{j1}(t)X_j(t),...,\sum_{j=1}^n (N^{\mathrm{V}}_0)_{jn}(t)X_j(t)\right)\\ 
&=\det N^{\mathrm{V}}_0(t)\m(X_1(t),...,X_n(t)), 
\end{aligned} 
\end{equation} 
where the second equality follows from the definition of $N^{\mathrm{V}}_0(t)$. 
Hence, 
\begin{equation} 
\beta^<_t(x,y)=\frac{\det N^{\mathrm{V}}_0(t)\m(X_1(t),...,X_n(t))}{\det N^{\mathrm{V}}_0(1)\m(X_1(1),...,X_n(1))}. 
\end{equation} 
By the same argument, let $\overline{\exp}$ be the exponential map with respect to the reverse sub-Finslerian metric $\bar{F}.$ Then there exists an open set $\bar{A}_r\subset T^*M$ such that 
\begin{equation} 
\overline{\exp}^t_y: \bar{A}_r \to Z_t(B^-_r(x),y) 
\end{equation} 
is a diffeomorphism, where $\overline{\exp}^t_y=\pi\circ e^{t\overleftarrow{H}}|_y$, and $\overleftarrow{H}$ is the Hamiltonian vector field with respect to the reverse Hamiltonian $\bar{H}(\lambda):=H(-\lambda)$. 
Now, let $\bar{\gamma}(t)$ be the geodesic connecting $y$ and $x$ with respect to the reverse sub-Finslerian metric $\bar F.$ Then $\bar{\gamma}(t)=\gamma(1-t)$ and the corresponding normal lift is $\bar{\lambda}(t)=-\lambda(1-t).$ Consider the involution map $\iota :T^*M\to T^*M$ given by $\iota(\lambda)=-\lambda$, and set $\bar{X}_i(t)=-\iota_*X_i(1-t)$ and $\bar{E}_i(t):=-\iota_*E_i(1-t).$ Since the reverse Hamiltonian is $\bar{H}=H\circ \iota,$ the corresponding Hamiltonian flow satisfies: 
\begin{equation} 
e^{t\overleftarrow{H}}_*=\iota_*\circ e^{-t\vec{H}}_*\circ\iota_*. 
\end{equation} 
It follows that 
\begin{equation} 
e^{t\overleftarrow{H}}_*\bar{E}_i(0) = \iota_*\circ e^{-t\vec{H}}_*(- E_i(1)) = -\iota_*\circ e^{-t\vec{H}}_* E_i(1), \quad \text{for } i=1,...,n, 
\end{equation} 
where we used the fact that $\iota^2=\mathrm{Id}.$ 

Because $\pi \circ \iota = \pi$, applying the differential $\pi_*$ gives 
$\pi_* \circ \iota_* = \pi_*$. Thus, we conclude
\begin{equation} 
\begin{aligned} 
\beta^>_t(x,y)&=\lim\limits_{r\to 0}\frac{\int_{\bar{A}_r}(\overline{\exp}^{1-t}_y)^*\m}{\int_{\bar{A}_r}(\overline{\exp}^{1}_y)^*\m}\\ 
&=\frac{\m\Big(\pi_*\circ e_*^{(1-t)\overleftarrow{H}}\bar{E}_1(0),...,\pi_*\circ e_*^{(1-t)\overleftarrow{H}}\bar{E}_n(0)\Big)}{\m\Big(\pi_*\circ e_*^{\overleftarrow{H}}\bar{E}_1(0),...,\pi_*\circ e_*^{\overleftarrow{H}}\bar{E}_n(0)\Big)}\\ 
&=\frac{\m\Big(-\pi_*\circ e_*^{(t-1)\vec{H}}E_1(1),...,-\pi_*\circ e_*^{(t-1)\vec{H}}E_n(1)\Big)}{\m\Big(-\pi_*\circ e_*^{-\vec{H}}E_1(1),...,-\pi_*\circ e_*^{-\vec{H}}E_n(1)\Big)}\\ 
&=\frac{\det N^{\mathrm{V}}_1(t)}{\det N^{\mathrm{V}}_1(0)}\frac{\m(X_1(t),...,X_n(t))}{\m(X_1(0),...,X_n(0))},\quad \forall t\in[0,1). 
\end{aligned} 
\end{equation} 
This completes the proof. 
\end{proof}

\subsection{Proof of Theorem \ref{interpolation}}

The interpolation inequality is now ready to be proved from Theorem \ref{jest} and the computation formula for the distortion coefficients in Lemma \ref{lm:dc}.
First of all, let $T$ be the optimal transport map from $\mu_0$ to $\mu_1$ constructed as in \eqref{opte}, via the function $\varphi$ provided by Theorem \ref{bm1}.
Since $\mu_0, \mu_1 \ll \m$, we have $\mu_i = \rho_i \m$ for $i=0,1.$ Furthermore, we have
\begin{equation}
    \mu_t = (T_t)_{\sharp}\mu_0 = \rho_t \m.
\end{equation}
For $\mu_0$-a.e. $x \in \mathcal{M}^{\varphi}$ and $y = T(x)$, 
applying Theorem \ref{maeq}, Theorem \ref{jest} and Lemma \ref{lm:dc}, and using the fact that $y \notin \mathrm{Cut}(x)$, we have
\begin{align*}
    \left(\frac{1}{\rho_t(T_t(x))}\right)^{1/n} &
    =\left(\frac{1}{\rho_0(x)}\frac{\m(X_1(1),...,X_n(1))}{\m(X_1(0),...,X_n(0))}\det(d_xT_1)\right)^{1/n}
\\&\ge \left(\frac{\beta^>_t(x,y)}{\rho_0(x)}\right)^{1/n} + \left(\frac{\det(d_xT_1)}{\rho_0(x)}\frac{\m(X_1(1),...,X_n(1))}{\m(X_1(0),...,X_n(0))}\right)^{1/n}(\beta^<_t(x,y))^{1/n}\\
    &= \left(\frac{\beta^>_t(x,y)}{\rho_0(x)}\right)^{1/n} + \left(\frac{\beta^<_{t}(x,y)}{\rho_1(y)}\right)^{1/n},
\end{align*}
where $X_i(t)$ is a fixed moving frame along the geodesic $T_t(x)$, and the last equality also follows
 directly from the Monge-Amp\`ere equation in Theorem \ref{maeq}.

As for $\mu_0$-a.e. $x \in \mathcal{S}^\varphi$, since $T_t(x) = x$ and $\rho_t(x) = \rho_0(x),$ by Proposition \ref{geodim}, we have
\begin{equation}\label{rhs}
    \beta^>_t(x,x)^{1/n} + \beta^<_t(x,x)^{1/n} \le (1-t)^{k(x)/n} + t^{k(x)/n} \le 1.
\end{equation}
Thus, dividing by $\rho_0(x)^{1/n}$, the interpolation inequality holds trivially on the static set. This concludes the proof.

\subsection{Applications}
As an application, the following Borell-Brascamp-Lieb inequality is a direct result of the interpolation inequality.

\begin{theorem}[Borell-Brascamp-Lieb inequality]\label{thm:bbl}
    Let $(M,\mathcal{D},F)$ be a forward ideal sub-Finslerian manifold equipped with a smooth measure $\m$. Fix $t \in (0,1)$. Let $f,g,h:M\to\mathbb{R}$ be non-negative functions and $A,B\subset M$ be two Borel subsets such that $\int_A f d\m = \int_B g d\m = 1.$ Assume that for all $(x,y)\in (A\times B)\setminus \mathrm{Cut}(M)$ and $z\in Z_t(x,y),$
    \begin{equation}
        \frac{1}{h(z)^{1/n}} \le \left(\frac{\beta^>_t(x,y)}{f(x)}\right)^{1/n} + \left(\frac{\beta^<_t(x,y)}{g(y)}\right)^{1/n},
    \end{equation}
    then we have $\int_M h d\m \ge 1.$
\end{theorem}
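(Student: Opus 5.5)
We would follow the standard route used by Barilari--Rizzi in the sub-Riemannian case: put $\mu_0 = f\,\m|_A$ and $\mu_1 = g\,\m|_B$, and transport one onto the other. Two reductions come first. We may assume $A,B$ are bounded, since otherwise we approximate by compact pieces and renormalize; monotone convergence then passes the inequality to the limit, after checking that the hypothesis survives the renormalization. We may also assume $f>0$ on $A$ and $g>0$ on $B$. Both $\mu_0$ and $\mu_1$ then lie in $\mathcal{P}^{ac}_c(M)$. Theorem \ref{ot} supplies the optimal map $T$, and Theorem \ref{ot2} supplies the Wasserstein geodesic $\mu_t=(T_t)_\sharp\mu_0=\rho_t\m$, which is absolutely continuous.

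Next we invoke Theorem \ref{interpolation}: for $\mu_0$-a.e. $x$,
\[
\rho_t(T_t(x))^{-1/n}\ge \Big(\frac{\beta^>_t(x,T(x))}{\rho_0(x)}\Big)^{1/n}+\Big(\frac{\beta^<_t(x,T(x))}{\rho_1(T(x))}\Big)^{1/n}.
\]
For $\mu_0$-a.e. $x$ we have $\rho_0(x)=f(x)$ with $x\in A$, and $\rho_1(T(x))=g(T(x))$ with $T(x)\in B$, because $T_\sharp\mu_0=\mu_1$ is concentrated on $B$ where $\rho_1=g$. The point $T_t(x)$ lies in $Z_t(x,T(x))$. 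We then split into the moving set and the static set.

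On the moving set, Theorem \ref{jest} gives $T(x)\notin\mathrm{Cut}(x)$, so $(x,T(x))\in(A\times B)\setminus\mathrm{Cut}(M)$ and the hypothesis applies. Combining the two inequalities yields $h(T_t(x))\ge\rho_t(T_t(x))$.

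The static set is the main obstacle, because there $(x,x)$ may lie in $\mathrm{Cut}(M)$ and the hypothesis gives nothing. We intend to discard it. First we would show that $\mu_0$-a.e. static point also lies in $B$ with $f(x)=g(x)$, so $\rho_t=\rho_0$ there. Then we would try to show that whenever $\mu_0(\mathcal{S}^\varphi)>0$ the inequality can still be salvaged: on that set $T_t=\mathrm{Id}$, and we need $h(x)\ge f(x)$ for a.e. such $x$. If the diagonal is simply excluded, the cleanest option is to note that when $A\cap B$ has positive measure, the static set carries only the part where $T=\mathrm{Id}$. In that case we would use the hypothesis at nearby non-cut pairs $(x,y)$ with $y\to x$, together with the bound \eqref{rhs} and some regularity of $h$ (for instance lower semicontinuity, or an approximation argument), to transfer the inequality to the diagonal. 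If this fails, we would assume $\m(A\cap B)=0$, as is standard in the stated corollaries; then the static set is $\mu_0$-null.

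With $h\ge\rho_t$ holding $\mu_t$-a.e. on $T_t(\operatorname{supp}\mu_0)$, the conclusion follows:
\[
\int_M h\,d\m\ \ge\ \int_{\{\rho_t>0\}} h\,d\m\ \ge\ \int_M \rho_t\,d\m=\mu_t(M)=1.
\]
The middle inequality uses that $\mu_t$ is concentrated on the image points $T_t(x)$, because $\mu_t=(T_t)_\sharp\mu_0$ is absolutely continuous. The inequality established for $\mu_0$-a.e. $x$ therefore becomes, after pushforward, an inequality $\rho_t\le h$ holding $\m$-a.e. on $\{\rho_t>0\}$.
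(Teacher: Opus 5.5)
Your route is the same as the paper's. You treat bounded $A,B$ first with $\mu_0=f\,\m|_A$ and $\mu_1=g\,\m|_B$, combine Theorem~\ref{interpolation} with the hypothesis to get $h(T_t(x))\ge\rho_t(T_t(x))$ for $\mu_0$-a.e.\ $x$, push forward by $T_t$, and remove boundedness by approximation. The moving-set step, the identification $\rho_0=f$ and $\rho_1\circ T=g\circ T$ $\mu_0$-a.e., and the final integration are all fine.

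\textbf{The gap} is the one you located, on the static set, and neither of your fixes closes it. For $\mu_0$-a.e.\ $x\in\mathcal{S}^\varphi$ you need $h(x)\ge f(x)$. But $(x,x)\in\mathrm{Cut}(M)$ as soon as $\mathcal{D}_x\neq T_xM$, because the constant curve has an abnormal lift, so the hypothesis says nothing at $(x,x)$. Your transfer sketch fails as stated, for three reasons.
\begin{itemize}
\item $h$ is an arbitrary non-negative function, so no semicontinuity of $h$ is available.
\item A nearby non-cut pair $(x',y')$ with $x\in Z_t(x',y')$ bounds $h(x)$ below in terms of $f(x')$, $g(y')$, $\beta^>_t(x',y')$ and $\beta^<_t(x',y')$, not in terms of quantities at $x$.
\item \eqref{rhs} controls $\beta$ only on the diagonal. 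In the Heisenberg group of Section~6 (already for $a=0$), $\beta$ is invariant under dilations and left translations. So pairs with $d_{SF}(x',y')<\varepsilon$ realize every value of $\beta$, and these values are unbounded near the cut directions.
\end{itemize}
A genuine proof along these lines would need, for a.e.\ static $x$, a selection of non-cut pairs in $A\times B$ through $x$ with $f(x')\to f(x)$, $g(y')\to g(x)$ and $\beta^>_t(x',y')^{1/n}+\beta^<_t(x',y')^{1/n}\le 1+o(1)$. That is a Lebesgue-point type argument you do not supply, and it is not obvious for a general forward ideal structure. Your fallback $\m(A\cap B)=0$ only proves a weaker statement.

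The paper's proof does not resolve this either. It asserts that $h(T_t(x))\ge\rho_t(T_t(x))$ on a set of full $\mu_0$-measure follows ``immediately'' from Theorem~\ref{interpolation} and the hypothesis. That is justified on $\mathcal{M}^\varphi$, where Theorem~\ref{jest} gives $T(x)\notin\mathrm{Cut}(x)$. On $\mathcal{S}^\varphi$, however, it amounts to using the hypothesis at $(x,x)$.

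If diagonal pairs are admitted, the static case takes one line. Theorem~\ref{maeq} gives $f(x)=g(x)=\rho_t(x)$ there, and \eqref{rhs} gives $h(x)^{-1/n}\le f(x)^{-1/n}\bigl(\beta^>_t(x,x)^{1/n}+\beta^<_t(x,x)^{1/n}\bigr)\le f(x)^{-1/n}$.

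To get a complete argument, you should make one of these choices explicit rather than leaving the static case as a conditional plan:
\begin{itemize}
\item require the hypothesis also for $x=y\in A\cap B$; or
\item assume $\m(A\cap B)=0$ outright.
\end{itemize}
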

\begin{proof}
    We will prove it using a standard argument. Assume first that $A, B$ are bounded Borel sets with compact closures. 
    Set $\mu_0 = f|_A \m$ and $\mu_1 = g|_B \m.$ Letting $T$ be the optimal transport map for 
    $c(x,y)=d_{SF}^2(x,y)$ such that $T_{\sharp}\mu_0 = \mu_1$, and using Theorem \ref{interpolation} alongside the 
    assumption on $h$, we immediately infer that $h(T_t(x)) \ge \rho_t(T_t(x))$ holds on some Borel set 
    $\Omega \subset A$ with full $\mu_0$-measure. Moreover, by Theorem~\ref{ot2} $\mu_t = \rho_t \m$ is absolutely continuous with respect to $\m$. Therefore we have:
    \begin{equation}
        \int_M h(z) d\m \ge \int_{T_t(\Omega)} h d\m \ge \int_{T_t(\Omega)} \rho_t d\m = \mu_t(T_t(\Omega)) = \mu_0(\Omega) = 1.
    \end{equation}

    When $A, B$ are unbounded, we prove it via an approximating argument. 
    Choose bounded subsets $A_{\epsilon} \subset A, B_{\epsilon} \subset B$ such that $\mu_0(A_{\epsilon}) = \mu_1(B_{\epsilon}) = 1-\epsilon$ for a fixed $\epsilon>0.$ Repeating the same process above, we obtain $\int_M h(z) d\m \ge 1-\epsilon.$ Since $\epsilon$ is arbitrary, we conclude the result.
\end{proof}

Let $t\in[0,1]$ and $a,b\ge 0.$ We introduce the $p$-mean for $p\in[-\infty,+\infty].$
When $p\neq 0,\pm \infty,$ it is defined as:
\begin{equation}
    \mathcal{M}^p_t(a,b) = \begin{cases}
        ((1-t)a^p+tb^p)^{1/p}, \quad &\text{if } ab\neq 0 \text{ or }p>0;\\
        0, & \text{if } ab= 0\text{ and }p<0.
    \end{cases}
\end{equation}
The limit cases are defined as follows:
\begin{equation}
    \mathcal{M}^0_t(a,b):=a^{1-t}b^t, \quad \mathcal{M}^{+\infty}_t(a,b):=\max\{a,b\}, \quad \mathcal{M}^{-\infty}_t(a,b):=\min\{a,b\}.
\end{equation}
By convention we also set $\mathcal{M}^p_t(0,\infty):=0$ if $p\in[-\infty,0],$
$\mathcal{M}^p_t(0,\infty):=\infty$ if $p\in (0,\infty],$ and $\mathcal{M}^p_t(\infty,\infty):=\infty$ for $p\in [-\infty,\infty].$
We will use the following H\"older's inequality later. For $a,b,c,d\ge 0,$ and $p^{-1}+q^{-1}=r^{-1}$,
\begin{equation}\label{holder}
    \mathcal{M}^r_t(ac,bd) \le \mathcal{M}^p_t(a,b)\mathcal{M}^q_t(c,d).
\end{equation}

\begin{corollary}[$p$-mean inequality]\label{pmean0}
    Let $(M,\mathcal{D},F)$ be a forward ideal sub-Finslerian manifold equipped with a smooth measure $\m$ and $f,g,h:M\to\mathbb{R}$ be non-negative functions. Fix $t \in (0,1)$ and $p\ge -\frac{1}{n}$, and let $A,B \subset M$ be two non-empty Borel sets such that $\int_A f d\m = \|f\|_{1}$ and $\int_B g d\m = \|g\|_{1}$. If for every $(x,y)\in (A\times B)\setminus \mathrm{Cut}(M)$ and $z\in Z_t(x,y),$
    \begin{equation}
        h(z) \ge \mathcal{M}^p_t\left(\frac{(1-t)^n f(x)}{\beta^>_t(x,y)}, \frac{t^n g(y)}{\beta^<_t(x,y)}\right),
    \end{equation}
    then we have 
    \begin{equation}\label{pmean}
            \int_M h d\m \ge \mathcal{M}^{p/(1+np)}_t\left(\int_M f d\m, \int_M g d\m\right),
    \end{equation}
    where, by convention, $\frac{p}{1+np}=1/n$ when $p=+\infty$, and 
$\frac{p}{1+np}=-\infty$ when $p=-1/n$.
\end{corollary}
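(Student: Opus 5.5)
The plan is to reduce the corollary to the Borell--Brascamp--Lieb inequality, Theorem~\ref{thm:bbl}, through normalization and H\"older's inequality \eqref{holder}, following the standard argument in the Riemannian and sub-Riemannian settings (cf.\ \cite{BR}).

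\emph{Trivial cases.} Set $\|f\|_1=\int_A f\,d\m$ and $\|g\|_1=\int_B g\,d\m$. If one of them vanishes and $p/(1+np)\le 0$, the right-hand side of \eqref{pmean} is $0$ and there is nothing to prove. If $\|g\|_1=0$ while $p>0$, I would apply the argument below with $B$ replaced by a small set, or use the convention on $p$-means directly. If a norm is infinite, I would approximate by truncations and use monotone convergence. So the main case is $0<\|f\|_1,\|g\|_1<\infty$.

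\emph{Normalization.} Put
\[
\tilde f=f/\|f\|_1,\qquad \tilde g=g/\|g\|_1,\qquad \tilde h=h/\mathcal{M}^{p/(1+np)}_t(\|f\|_1,\|g\|_1).
\]
By Theorem~\ref{thm:bbl} it suffices to show that, for $(x,y)\in(A\times B)\setminus\mathrm{Cut}(M)$ and $z\in Z_t(x,y)$,
\[
\tilde h(z)^{-1/n}\le \Bigl(\frac{\beta^>_t}{\tilde f(x)}\Bigr)^{1/n}+\Bigl(\frac{\beta^<_t}{\tilde g(y)}\Bigr)^{1/n}.
\]
Rewrite the right-hand side as $\mathcal{M}^{1/n}_t(a,b)^{-1}$, where
\[
a=\frac{(1-t)^n\tilde f}{\beta^>_t},\qquad b=\frac{t^n\tilde g}{\beta^<_t}.
\]
This uses
\[
\Bigl(\frac{(1-t)^n\tilde f}{\beta^>_t}\Bigr)^{-1/n}(1-t)=\Bigl(\frac{\beta^>_t}{\tilde f}\Bigr)^{1/n},
\]
and similarly for $b$, together with the identity $(\alpha^{-1/n}+\beta^{-1/n})^{-n}=\mathcal{M}^{-1/n}_t\bigl((1-t)^n\alpha',\,t^n\beta'\bigr)$ after rescaling. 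Concretely, the right-hand side equals $\mathcal{M}^{-1/n}_t(a,b)^{-1/n}$. Hence the target becomes
\[
\tilde h(z)\ge \mathcal{M}^{-1/n}_t(a,b).
\]

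\emph{Key step (H\"older).} Apply \eqref{holder} with exponents $p$ and $q=1/p\cdot$: choose $r=-1/n$, and $q$ defined by $1/p+1/q=-n$, so that $q=-p/(1+np)$. Writing
\[
a=\frac{(1-t)^nf}{\beta^>_t}\cdot\frac{1}{\|f\|_1},\qquad b=\frac{t^ng}{\beta^<_t}\cdot\frac{1}{\|g\|_1},
\]
we get
\[
\mathcal{M}^{-1/n}_t(a,b)\le \mathcal{M}^p_t\Bigl(\frac{(1-t)^nf}{\beta^>_t},\frac{t^ng}{\beta^<_t}\Bigr)\,\mathcal{M}^{q}_t\bigl(\|f\|_1^{-1},\|g\|_1^{-1}\bigr).
\]
Since $\mathcal{M}^{q}_t(u^{-1},v^{-1})=\mathcal{M}^{-q}_t(u,v)^{-1}$ and $-q=p/(1+np)$, the last factor is $\mathcal{M}^{p/(1+np)}_t(\|f\|_1,\|g\|_1)^{-1}$. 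Combining this with the hypothesis $h\ge \mathcal{M}^p_t(\cdots)$ gives exactly $\tilde h\ge\mathcal{M}^{-1/n}_t(a,b)$. Theorem~\ref{thm:bbl}, applied to $\tilde f,\tilde g,\tilde h$ on $A,B$, then yields $\int\tilde h\,d\m\ge1$, which is \eqref{pmean}.

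\emph{Main obstacle.} The algebra is routine; the delicate part is the bookkeeping of degenerate cases. These are the endpoint exponents $p=-1/n$ (where $q=\infty$, and one needs the convention $\mathcal{M}^{-\infty}$) and $p=+\infty$. There are also points where $f$, $g$ or $\beta$ vanish or are infinite; note that $\beta^{\lessgtr}_t>0$ off the cut locus by Lemma~\ref{lm:dc}, which handles division by $\beta$. I would treat each of these by checking that the conventions for $\mathcal{M}^p_t$ make H\"older's inequality \eqref{holder} still valid, or by a limiting argument in $p$.
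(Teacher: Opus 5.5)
Your main case $0<\|f\|_1,\|g\|_1<\infty$ is the paper's argument and is correct: normalize, apply \eqref{holder} with $r=-1/n$ and $q=-p/(1+np)$, and invoke Theorem~\ref{thm:bbl}. Your first rewriting of the right-hand side as $\mathcal{M}^{1/n}_t(a,b)^{-1}$ is a slip, but the corrected form $\mathcal{M}^{-1/n}_t(a,b)^{-1/n}$ that you then use is right.

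The genuine gap is the degenerate case where exactly one of $\|f\|_1,\|g\|_1$ vanishes and $p>0$, including $p=+\infty$, where $(1+np)/p$ is read as $n$. Take your case $\|g\|_1=0<\|f\|_1$. Then the right-hand side of \eqref{pmean} is $\mathcal{M}^{p/(1+np)}_t(\|f\|_1,0)=(1-t)^{(1+np)/p}\|f\|_1>0$, so nothing is trivial, and neither of your suggestions produces it:
\begin{itemize}
\item Normalizing $g$ divides by zero.
\item Theorem~\ref{thm:bbl} requires $\int_B g\,d\m=1$, which is impossible when $\m(B)=0$ (e.g.\ $B$ a single point, which the statement allows).
\item Replacing $B$ by a smaller set still leaves $\int_B g\,d\m=0$.
\item Enlarging $B$ or increasing $g$ destroys the hypothesis on $h$. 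That hypothesis is only available on $Z_t(x,y)$ with $y\in B$, and it becomes more demanding as $g$ grows, since $\mathcal{M}^p_t$ is increasing for $p>0$.
\item The $p$-mean convention only evaluates the right-hand side; it gives no lower bound on $\int_M h\,d\m$.
\end{itemize}
This case is not cosmetic. It is exactly what Theorem~\ref{BMineq} uses when $\m(A)=0$ or $\m(B)=0$, i.e.\ the measure-contraction-type content of the corollary.

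The missing idea is a direct change of variables that bypasses Theorem~\ref{thm:bbl}; this is what the paper does. Consider the mirror case $\|f\|_1=0<\|g\|_1$ with $p>0$. Since $\mathcal{M}^p_t(\alpha,\beta)\ge t^{1/p}\beta$, the hypothesis gives $h(z)\ge t^{(1+np)/p}g(y)/\beta^<_t(x,y)$ for $z\in Z_t(x,y)$. Fix any $x\in A$, and let $\psi(y)$ be the $t$-intermediate point of the unique geodesic from $x$ to $y\in B\setminus\mathrm{Cut}(x)$. The set $\mathrm{Cut}(x)$ is $\m$-null, $\psi$ is injective, and its Jacobian with respect to $\m$ is $\beta^<_t(x,y)$ by the computation in Lemma~\ref{lm:dc}. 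Hence
\[
\int_M h\,d\m\ \ge\ \int_{B\setminus\mathrm{Cut}(x)} h(\psi(y))\,\beta^<_t(x,y)\,d\m(y)\ \ge\ t^{(1+np)/p}\|g\|_1=\mathcal{M}^{p/(1+np)}_t(0,\|g\|_1).
\]
Your case $\|g\|_1=0<\|f\|_1$ is symmetric. Fix $y\in B$ and use $\mathcal{M}^p_t(\alpha,\beta)\ge(1-t)^{1/p}\alpha$. Then change variables along the map sending $x$ to the $t$-intermediate point of the geodesic from $x$ to $y$, whose Jacobian is $\beta^>_t(x,y)$.
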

\begin{proof}
    For $\|f\|_1\|g\|_1 > 0$, set 
    $$ \hat{h} = \mathcal{M}^{p/(1+np)}_t(\|f\|_{1},\|g\|_{1})^{-1} h = \left((1-t)\|f\|^{\frac{p}{1+np}}_1+t\|g\|_1^{\frac{p}{1+np}}\right)^{-\frac{1+np}{p}} h. $$
    By H\"older's inequality \eqref{holder}, we have for all $z\in Z_t(x,y)$,
    \begin{equation}
        \begin{aligned}
                \hat{h}(z) &\ge \mathcal{M}^{p/(1+np)}_t(\|f\|_1,\|g\|_1)^{-1}\mathcal{M}^p_t\left(\frac{(1-t)^n f(x)}{\beta^>_t(x,y)}, \frac{t^n g(y)}{\beta^<_t(x,y)}\right)\\
                &\ge \mathcal{M}^{1/n}_t\left(\frac{\|f\|_1\beta^>_t(x,y)}{f(x)(1-t)^n}, \frac{\|g\|_1\beta^<_t(x,y)}{g(y)t^n}\right)^{-1}.
        \end{aligned}
    \end{equation}
    Hence $(f/\|f\|_1, g/\|g\|_1, \hat{h})$ satisfies the condition of the Borell-Brascamp-Lieb inequality (Theorem \ref{thm:bbl}), yielding that 
    \begin{equation}
        \int_M \hat{h} d\m \ge 1 \implies \int_M h d\m \ge \mathcal{M}_t^{p/(1+np)}\left(\int_M f d\m, \int_M g d\m\right).
    \end{equation}

    In the case where $\|f\|_1\|g\|_1 = 0$, it suffices to consider the subcase $\|f\|_1 = 0$, $\|g\|_1 > 0$, and $p > 0$; in all other subcases, one can check directly that $\mathcal{M}^{p/(1+np)}(\|f\|_1,\|g\|_1)=0$, so there is nothing to prove.
    Since $A$ is assumed non-empty, we can fix an arbitrary $x \in A$ and let 
    $\psi: B\setminus \mathrm{Cut}(x) \to Z_t(x,B)$ be given by $\psi(y) = \gamma(t)$, where $\gamma$ 
    is the unique geodesic joining $x$ and $y.$ Notice that, since $M$ is forward ideal, $\mathrm{Cut}(x)$ has $\m$-measure zero. By the definition of the forward distortion coefficient, 
    the Jacobian of this map is exactly $\beta^<_t(x,y)$. Therefore, changing variables yields:
    \begin{align*}
        \int_M h d\m &\ge \int_{\psi(B\setminus\mathrm{Cut}(x))} h(z) d\m(z) \ge \int_{B\setminus\mathrm{Cut}(x)} h(\psi(y)) \beta^<_t(x,y) d\m(y)\\
        &\ge \int_B \frac{t^{(1+np)/p} g(y)}{\beta^<_t(x,y)} \beta^<_t(x,y) d\m(y) = t^{(1+np)/p} \|g\|_1 = \mathcal{M}^{p/(1+np)}_t(0, \|g\|_1).
    \end{align*}
     
    The case where $\|f\|_1 + \|g\|_1 = \infty$ can be handled with a standard approximation argument, thus concluding the proof.
\end{proof}

\begin{remark}
    Setting $p=-1/n$ in \eqref{pmean} recovers the Borell-Brascamp-Lieb inequality. For the limit case $p=0$, under the assumption
    \begin{equation}
        h(z) \ge \left(\frac{(1-t)^nf(x)}{\beta^>_t(x,y)}\right)^{1-t}\left(\frac{t^ng(y)}{\beta^<_t(x,y)}\right)^t,
    \end{equation}
    Corollary \ref{pmean0} yields the so-called Pr\'ekopa-Leindler inequality.
\end{remark}

\begin{theorem}[Brunn-Minkowski inequality]\label{BMineq}
    Let $(M,\mathcal{D},F)$ be a forward ideal sub-Finslerian manifold equipped with a smooth measure $\m$. 
    For any non-empty measurable sets $A,B\subset M,$ we have
    $$ \m(Z_t(A,B))^{1/n} \ge \beta^>_t(A,B)^{1/n}\m(A)^{1/n} + \beta^<_t(A,B)^{1/n}\m(B)^{1/n}, $$
    where $\beta^>_t(A,B) = \inf_{(x,y) \in A \times B} \beta^>_t(x,y)$ and 
    $\beta^<_t(A,B) = \inf_{(x,y) \in A \times B} \beta^<_t(x,y)$.
\end{theorem}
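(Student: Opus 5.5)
We derive the Brunn--Minkowski inequality from the Borell--Brascamp--Lieb inequality (Theorem~\ref{thm:bbl}) applied to normalized indicator functions. First we dispose of degenerate cases. If $\m(A)=0$ or $\m(B)=0$, the right-hand side reduces to one term. This case is handled either by the change-of-variables argument from the proof of Corollary~\ref{pmean0}, fixing one point of the null set and using that the Jacobian of $y\mapsto Z_t(x,y)$ is $\beta^<_t(x,y)$ off the $\m$-null set $\mathrm{Cut}(x)$, or by trivially dropping the term. If $\m(A)$ or $\m(B)$ is infinite, we approximate by bounded subsets of finite positive measure and pass to the limit, since the infima $\beta^{>}_t(A,B)$, $\beta^<_t(A,B)$ can only increase when $A,B$ shrink. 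If $\m(Z_t(A,B))=\infty$ there is nothing to prove. We may also assume $\beta^>_t(A,B)$ and $\beta^<_t(A,B)$ are not both zero; otherwise the statement is trivial.

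In the main case $0<\m(A),\m(B)<\infty$, we set
\[
f=\frac{\mathbb{1}_A}{\m(A)},\qquad g=\frac{\mathbb{1}_B}{\m(B)},\qquad h=\frac{\mathbb{1}_{Z_t(A,B)}}{\Lambda},\qquad \Lambda:=\Bigl(\beta^>_t(A,B)^{1/n}\m(A)^{1/n}+\beta^<_t(A,B)^{1/n}\m(B)^{1/n}\Bigr)^{n}.
\]
Then $\int_A f\,d\m=\int_B g\,d\m=1$. For $(x,y)\in(A\times B)\setminus\mathrm{Cut}(M)$ and $z\in Z_t(x,y)\subset Z_t(A,B)$, monotonicity of the infima gives
\[
\Bigl(\tfrac{\beta^>_t(x,y)}{f(x)}\Bigr)^{1/n}+\Bigl(\tfrac{\beta^<_t(x,y)}{g(y)}\Bigr)^{1/n}\ge \beta^>_t(A,B)^{1/n}\m(A)^{1/n}+\beta^<_t(A,B)^{1/n}\m(B)^{1/n}=\Lambda^{1/n}=h(z)^{-1/n}.
\]
This is exactly the hypothesis of Theorem~\ref{thm:bbl}, so $\int_M h\,d\m\ge1$, that is, $\m(Z_t(A,B))\ge\Lambda$. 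Taking $n$-th roots gives the claim.

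The main obstacle is measurability, since Theorem~\ref{thm:bbl} implicitly integrates $h$. The set $Z_t(A,B)$ need not be Borel for arbitrary measurable $A,B$. We would first reduce to compact $A,B$ by inner regularity of $\m$, using monotonicity of the infima once more. For compact $A,B$, the set $Z_t(A,B)$ is compact, because minimizing geodesics between points of a compact set form a compact family (forward completeness and the Hopf--Rinow type theorem). Alternatively, one can replace $\m$ by the outer measure and note that the proof of Theorem~\ref{thm:bbl} only uses $h\ge\rho_t$ on $T_t(\Omega)\subset Z_t(A,B)$, which bounds the outer measure. Finally, the endpoint cases $t\in\{0,1\}$ are immediate, because $Z_0(A,B)\supset A$ and $\beta^>_0\le1$; similarly for $t=1$.
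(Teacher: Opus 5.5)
Your proof is correct and is essentially the paper's argument. The paper applies Corollary~\ref{pmean0} with $p=\infty$ to $f=\frac{\beta^>_t(A,B)}{(1-t)^n}\chi_A$, $g=\frac{\beta^<_t(A,B)}{t^n}\chi_B$, $h=\chi_{Z_t(A,B)}$; that corollary normalizes to exactly your triple $\bigl(\chi_A/\m(A),\,\chi_B/\m(B),\,\chi_{Z_t(A,B)}/\Lambda\bigr)$, invokes Theorem~\ref{thm:bbl}, and treats the null case by the same change of variables you cite. Your compact-exhaustion treatment of measurability, which even bounds the inner measure, is a more careful version of the paper's outer-measure remark. The only loose ends are that ``trivially dropping the term'' does not by itself settle the case $\m(A)=0$ (the change-of-variables option does), and at $t=0$ you also need $\beta^<_0\equiv 0$.
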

\begin{proof}
    Suppose $Z_t(A,B)$ is measurable and put $f = \frac{\beta^>_t(A,B)}{(1-t)^n}\chi_A$, $g = \frac{\beta^<_t(A,B)}{t^n}\chi_B$, and $h = \chi_{Z_t(A,B)},$ where $\chi_S$ denotes the indicator function of the set $S.$
    Then for $(x,y)\in A\times B$ and $z\in Z_t(x,y),$
    \begin{equation}
        1 = h(z) \ge \max\left\{\frac{(1-t)^n f(x)}{\beta^>_t(x,y)}, \frac{t^n g(y)}{\beta^<_t(x,y)}\right\} = \max\left\{\frac{\beta^>_t(A,B)}{\beta^>_t(x,y)}, \frac{\beta^<_t(A,B)}{\beta^<_t(x,y)}\right\}.
    \end{equation}
    Hence, the assumption in Corollary \ref{pmean0} is satisfied for $p=\infty.$ Consequently,
    \begin{equation}
      \begin{aligned}
      \int_M \chi_{Z_t(A,B)} d\m = \m(Z_t(A,B)) &\ge \mathcal{M}^{1/n}_t\left(\int_M f d\m, \int_M g d\m\right)\\
      &= \left(\beta^>_t(A,B)^{1/n}\m(A)^{1/n} + \beta^<_t(A,B)^{1/n}\m(B)^{1/n}\right)^n.
      \end{aligned}
    \end{equation}

    In the case that $Z_t(A,B)$ is not measurable, we can replace it with a measurable set containing it with 
    the same outer measure, and the conclusion follows by the same argument.
\end{proof}
\subsection{Measure contraction properties}
As we mentioned in the introduction, if the distortion coefficients $\beta^<_t(x,y)$ and 
$\beta^>_t(x,y)$ are bounded below by the model coefficients \eqref{eq:model}, Theorem \ref{BMineq} implies the classical Brunn-Minkowski inequality
 $\mathrm{BM}(K,N)$. 
Here we consider the power function lower bound, that is, for some $N\ge n$, for all 
$t\in[0,1]$ 
and $(x,y)\not\in \mathrm{Cut}(M)$, 
$\beta^<_t(x,y)\ge t^N$ and $\beta^>_t(x,y)\ge (1-t)^N.$ 
This condition implies what is known as the measure contraction property $\mathrm{MCP}(0,N).$ 
As a weaker condition than the curvature-dimension condition, this concept was first introduced
 by Ohta and Sturm (\cite{Oh2,St1}). 

The measure contraction property has been extensively studied in sub-Riemannian geometry, 
and several examples satisfying the MCP have been identified, such as Carnot groups \cite{Jui2,Rizzi2}
and the Grushin plane \cite{BR}. As for the sub-Finslerian case, the MCP has been primarily investigated for 
the sub-Finslerian Heisenberg group (see \cite{BT23, BMRT24, BMRT1,MR}), which we will discuss further in 
the next section. 

The following theorem provides the exact equivalence between the MCP and the Brunn-Minkowski inequality we obtained above.

\begin{theorem}
    Let $(M,\mathcal{D},F)$ be an $n$-dimensional forward ideal sub-Finslerian manifold equipped with a smooth measure $\m.$ Let $N\ge n.$ Then the following properties are equivalent:
    \begin{itemize}
        \item[(i)] $\beta^<_t(x,y)\ge t^N$ and $\beta^>_t(x,y)\ge (1-t)^N$, for all $(x,y)\not\in \mathrm{Cut}(M)$ and $t\in[0,1];$
        \item[(ii)] the following Brunn-Minkowski inequality holds: for all non-empty Borel sets $A,B \subset M$ and $\forall t\in[0,1],$
        $$ \m(Z_t(A,B))^{1/n}\ge (1-t)^{N/n}\m(A)^{1/n}+t^{N/n}\m(B)^{1/n}; $$
        \item[(iii)] the measure contraction property $\mathrm{MCP}(0,N)$ is satisfied: for all non-empty Borel sets $A,B \subset M$ and points $x,y\in M,$
        $$ \m(Z_t(x,B))\ge t^N\m(B) \quad \text{and} \quad \m(Z_t(A,y))\ge (1-t)^N\m(A),\quad \forall t\in[0,1]. $$
    \end{itemize}
\end{theorem}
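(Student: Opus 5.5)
I will prove the cycle of implications (i)$\Rightarrow$(ii)$\Rightarrow$(iii)$\Rightarrow$(i), following the sub-Riemannian template of Barilari--Rizzi.

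\emph{(i)$\Rightarrow$(ii).} This is immediate from Theorem~\ref{BMineq}. If (i) holds, then $\beta^>_t(A,B)\ge(1-t)^N$ and $\beta^<_t(A,B)\ge t^N$. The infima are taken over pairs off the cut locus, which is where the bounds in (i) are available. This causes no loss, since $\mathrm{Cut}(M)$ is negligible and the proof of Theorem~\ref{thm:bbl} only uses pairs $(x,T(x))$ with $T(x)\notin\mathrm{Cut}(x)$. On the diagonal, where $(x,x)\in\mathrm{Cut}(M)$ may occur, the static-set argument of the interpolation proof still gives what is needed. Substituting these bounds into Theorem~\ref{BMineq} yields (ii). To be careful, I would rerun Corollary~\ref{pmean0} with $f=(1-t)^{N-n}\chi_A$ and $g=t^{N-n}\chi_B$ in place of the infima, so that only pointwise bounds on $(A\times B)\setminus\mathrm{Cut}(M)$ are used.

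\emph{(ii)$\Rightarrow$(iii).} Take $A=\{x\}$, for which $\m(A)=0$. Then (ii) gives $\m(Z_t(x,B))^{1/n}\ge t^{N/n}\m(B)^{1/n}$, which is the first inequality of (iii). Taking $B=\{y\}$ gives the second. Since Theorem~\ref{BMineq} is stated for non-empty sets, singletons are admissible. I should double-check that nothing in Corollary~\ref{pmean0} degenerates when $\|f\|_1=0$. In fact the $p=+\infty$ case with $\|f\|_1=0$ is exactly the subcase treated there by the change of variables $\psi$, so it is covered.

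\emph{(iii)$\Rightarrow$(i).} This is the step requiring the definitions. Fix $(x,y)\notin\mathrm{Cut}(M)$ and $t\in(0,1]$; the case $t=0$ is trivial, since both sides vanish or the bound reads $\beta\ge 0$. Apply (iii) with $B=B^+_r(y)$ to get $\m(Z_t(x,B^+_r(y)))/\m(B^+_r(y))\ge t^N$. Letting $r\to 0$ and using definition~\eqref{defdis} gives $\beta^<_t(x,y)\ge t^N$. Here the limit exists by Lemma~\ref{lm:dc}, because $y\notin\mathrm{Cut}(x)$. Similarly, taking $A=B^-_r(x)$ gives $\beta^>_t(x,y)\ge(1-t)^N$. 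At $t=1$ for $\beta^>$, and at $t=0$ for $\beta^<$, both sides are $0$ (or the statement follows by continuity).

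The main subtlety is bookkeeping rather than any new estimate. First, (ii) must hold for degenerate sets, so the step (i)$\Rightarrow$(ii) must be run with pointwise bounds valid only off $\mathrm{Cut}(M)$. Second, measurability of $Z_t$ must be handled by outer measure, as in Theorem~\ref{BMineq}.
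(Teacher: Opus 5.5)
Your proof is correct and follows the same cycle (i)$\Rightarrow$(ii)$\Rightarrow$(iii)$\Rightarrow$(i) as the paper, using Theorem~\ref{BMineq} (via Corollary~\ref{pmean0}) for the first step and the definitions~\eqref{defdis} for the last. The differences are only refinements. First, you take $A=\{x\}$ directly, whereas the paper uses shrinking neighborhoods $A_r\to\{x\}$; your version avoids having to justify $\m(Z_t(A_r,B))\to\m(Z_t(x,B))$. Second, you use the pointwise choices $f=(1-t)^{N-n}\chi_A$ and $g=t^{N-n}\chi_B$, so only the bounds off $\mathrm{Cut}(M)$ are needed. By contrast, the infima in Theorem~\ref{BMineq} formally range over all of $A\times B$, including pairs in $\mathrm{Cut}(M)$ where (i) says nothing.
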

\begin{proof}
    The proof is direct. Plugging (i) into Theorem \ref{BMineq}, we immediately get (ii). 
    
    To see that (ii) implies (iii), we fix a point $x \in M$ and take a sequence of shrinking neighborhoods $A_r \to \{x\}$ as $r \to 0$. Since $\m(A_r) \to 0$, substituting $A_r$ into (ii) yields the forward contraction $\m(Z_t(x,B))\ge t^N\m(B)$. The backward contraction $\m(Z_t(A,y))\ge (1-t)^N\m(A)$ is obtained symmetrically by shrinking $B$ to $\{y\}$. 
    
    Finally, directly from the definition of the forward and backward distortion coefficients in \eqref{defdis}, (iii) implies (i), which completes the proof.
\end{proof}

\section{Randers sub-Finslerian Heisenberg groups}

This section is devoted to investigating a concrete example, sub-Finslerian Heisenberg group.
We define the left-invariant vector fields 
\begin{equation}
    X:=\partial_x-\frac{1}{2}y\partial_z,\quad Y:=\partial_y+\frac{1}{2}x\partial_z,\quad Z:=\partial_z,
\end{equation}
and let $\mathcal{D}:=\mathrm{span}\{X,Y\}$ denote the distribution on $\mathbb{R}^3$.
Endowed with the Randers type norm 
$F_{g,\beta}$ introduced in \eqref{rnorm}, $(\mathbb{H},\mathcal{D},F_{g,\beta})$ is a Randers sub-Finslerian Heisenberg group. Moreover, one can check that this $(\mathbb{H},\mathcal{D},F_{g,\beta})$ is forward ideal.

The geometric meaning of a Randers sub-Finslerian metric (as well as a Randers Finslerian metric)
can be elegantly illustrated by Zermelo's navigation problem. In this interpretation, the metric measures the travel time between two points, and the drift term $\beta$ represents
the ``wind'' which assists or opposes the motion. For computational simplicity, we restrict our attention to a constant drift term, 
\begin{equation}
    F_a(u)=\sqrt{u_1^2+u_2^2}+au_1,\quad \text{for } u=(u_1,u_2).
\end{equation}
Throughout this section, 
we fix $\m$ to be a smooth Lebesgue measure on $\mathbb{H}$.

As we remarked previously, Heisenberg groups equipped with general metric structures have been studied extensively. One interesting fact pointed out in \cite{BMRT1} is that the \emph{curvature exponent} $N$, which is defined as the smallest number for which the measure contraction property $\mathrm{MCP}(0,N)$ is satisfied, is at least $5$, with $N=5$ if and only if the structure is sub-Riemannian.
In this section, through an explicit computation for distortion coefficients of $(\mathbb{H},\mathcal{D}, F_a,\m)$, we establish their measure contraction property, echoing the research of Borza, Magnabosco, Rossi, and Tashiro \cite{BMRT1}.

The dual norm $F_a^*$ of $F_a$ on $\mathbb{R}^2$ is given by 
\begin{equation}
    F_a^*(\xi)=\frac{\sqrt{\xi_1^2+(1-a^2)\xi_2^2}-a\xi_1}{1-a^2}
\end{equation}
for $\xi=(\xi_1,\xi_2)\in (\mathbb{R}^2)^* \cong \mathbb{R}^2$  (see, for instance, \cite[Section 1.1.2]{BRS}). Then,
by Proposition~\ref{prop2.5} the Hamiltonian $H:T^*\mathbb{H}\to\mathbb{R}$
is given by 
\begin{equation}\label{eq3}
    H(\lambda):=\frac{1}{2}r^2=\frac{1}{2}\left(\frac{\sqrt{h_1^2+(1-a^2)h_2^2}-ah_1}{1-a^2}\right)^2,
\end{equation}
where $\lambda \in T^*\mathbb{H}$ is a covector, $r$ is a function of $\lambda$, and $h_1=\langle\lambda, X\rangle, h_2=\langle\lambda,Y\rangle$.
Equation \eqref{eq3} implies that 
\begin{equation}\label{eq4}
    (h_1-ar)^2+h_2^2=r^2.
\end{equation}
Via trigonometric substitution, we introduce the parameterization
\begin{equation}\label{ts}
    h_1=ar+r\cos\psi,\quad
    h_2=r\sin\psi,
\end{equation}
where the function $\psi(t)$ represents the momentum angle at time $t \ge 0$. Next, we aim to 
clarify how $\psi$ evolves over time. In the sub-Riemannian case, $\psi$ is linear with 
respect to $t$, but in the Randers case, the drift alters this behavior.
Since the Hamiltonian $H=\frac{1}{2}r^2$ is conserved along the Hamiltonian flow, we have $\dot{r} = \{H, r\} = 0$. 
Differentiating both sides of \eqref{eq4} with respect to $h_1$ and $h_2$ respectively, we obtain 
\begin{equation}
    \frac{\partial r}{\partial h_1}=\frac{\cos\psi}{1+a\cos\psi},\quad \frac{\partial r}{\partial h_2}=\frac{\sin\psi}{1+a\cos\psi}.
\end{equation}
Differentiating \eqref{ts} with respect to time, we have 
\begin{equation}\label{eq6}
    \dot{h}_1=-r\sin\psi\cdot\dot{\psi},\quad \dot{h}_2=r\cos\psi\cdot\dot{\psi}.
\end{equation}
On the other hand, the Pontryagin Maximum Principle along with the relation $[X,Y]=Z$ implies $\{h_1, h_2\} = \langle \lambda, [X,Y] \rangle = h_3$, yielding:
\begin{equation}\label{eq5}
    \begin{cases}
        \dot{h}_1=\{H,h_1\} = \frac{\partial H}{\partial h_2}\{h_2, h_1\} = -r h_3 \frac{\partial r}{\partial h_2};\\
        \dot{h}_2=\{H,h_2\} = \frac{\partial H}{\partial h_1}\{h_1, h_2\} = r h_3 \frac{\partial r}{\partial h_1};\\
        \dot{h}_3=0.
    \end{cases}
\end{equation}
Hence we set $\theta=h_3(0)$ for some constant $\theta \in\mathbb{R}$, and by combining \eqref{eq6} and \eqref{eq5}, we derive 
\begin{equation}
    \dot{\psi}=\frac{\theta}{1+a\cos\psi},
\end{equation}
which implies that $\psi(t)$ is a non-linear function of time due to the drift term, but it remains strictly monotonic with respect to $t$ (provided $\theta \neq 0$).

In local coordinates, we consider a geodesic $\gamma(t)=(x(t),y(t),z(t))$ on $\mathbb{H}$. The velocity of $\gamma$
satisfies
\begin{equation}
    \begin{cases}
        \dot{x}=\frac{\partial H}{\partial h_1}=\frac{r\cos\psi}{1+a\cos\psi};\\
        \dot{y}=\frac{\partial H}{\partial h_2}=\frac{r\sin\psi}{1+a\cos\psi};\\
        \dot{z}=\frac{1}{2}(x\dot{y}-y\dot{x}).
    \end{cases}
\end{equation}
Integrating over time $t$ via the substitution $dt = \frac{1+a\cos\psi}{\theta} d\psi$, and setting $\psi_0=\psi(0)$ as the initial 
angle, we obtain
\begin{align*}
    x(t)&=\int^{\psi(t)}_{\psi_0}\frac{r\cos\psi}{1+a\cos\psi}\left(\frac{1+a\cos\psi}{\theta}\right)d\psi=\frac{r}{\theta}(\sin\psi(t)-\sin\psi_0);\\
    y(t)&=\int^{\psi(t)}_{\psi_0}\frac{r\sin\psi}{1+a\cos\psi}\left(\frac{1+a\cos\psi}{\theta}\right)d\psi=\frac{r}{\theta}(\cos\psi_0-\cos\psi(t)).
\end{align*}
A similar computation shows that 
\begin{equation}
    z(t)=\int^t_0\frac{1}{2}(x\dot{y}-y\dot{x})d\tau=\frac{r^2}{2\theta^2}[\psi(t)-\psi_0-\sin(\psi(t)-\psi_0)].
\end{equation}
Then the exponential map from the origin is given by $\exp^t_0(r,\theta,\psi_0)=(x(t),y(t),z(t))$. In the case  
where $\theta=0$, the expressions for $x(t), y(t), z(t)$ should be understood by taking the limit $\theta\to 0$. 
Similarly, the reverse exponential map $\overline{\exp}^t_0(r,\theta,\psi_0)=(x',y',z')$ associated with the reverse Hamiltonian given by $\bar{H}(\lambda):=H(-\lambda)$
takes the same expressions as $(x(t),y(t),z(t))$, except that we replace $\psi(t)$ with $\psi'(t)$ satisfying 
\begin{equation}
    \dot{\psi}'=\frac{\theta}{1-a\cos\psi'}.
\end{equation}
Here $\psi'_0:=\psi'(0)$, and $\psi'(t)$ denotes the momentum angle of the reverse geodesic.

To maintain the continuous flow of the main narrative, the lengthy but direct computation of the 
Jacobian determinant for the exponential map is deferred to Appendix C. The explicit result is given as 
follows.

\begin{lemma}\label{dcr}
    The exponential map of the Randers sub-Finslerian Heisenberg group $(\mathbb{H},\mathcal{D}, F_a,\m)$ satisfies 
    \begin{equation}
        \det(D_{(r,\theta,\psi_0)}\exp^t_0)=\frac{4r^3t}{\theta^4(1+a\cos\psi(t))}\sin\left(\frac{\psi(t)-\psi_0}{2}\right)\left(\frac{\psi(t)-\psi_0}{2}\cos\left(\frac{\psi(t)-\psi_0}{2}\right)-\sin\left(\frac{\psi(t)-\psi_0}{2}\right)\right),
    \end{equation}
    admitting a smooth extension at $\theta=0$, given by 
    \begin{equation}
        \det(D_{(r,0,\psi_0)}\exp^t_0)=-\frac{r^3t^5}{12(1+a\cos\psi_0)^5}.
    \end{equation}
\end{lemma}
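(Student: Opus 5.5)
Since the statement is an explicit Jacobian formula, the plan is a direct computation. We use the closed-form exponential map in the coordinates $(r,\theta,\psi_0)$ on $T_0^*\mathbb{H}$. Both $r$ and $\theta$ are constants of motion, and $\psi(t)$ is determined implicitly by $\psi(t)+a\sin\psi(t)=\psi_0+a\sin\psi_0+\theta t$. This follows from integrating $\dot\psi(1+a\cos\psi)=\theta$. Write $\Delta:=\psi(t)-\psi_0$. Then
\begin{equation}
x=\frac{r}{\theta}(\sin\psi(t)-\sin\psi_0),\quad y=\frac{r}{\theta}(\cos\psi_0-\cos\psi(t)),\quad z=\frac{r^2}{2\theta^2}(\Delta-\sin\Delta).
\end{equation}
The first step is to differentiate the implicit relation to obtain the partial derivatives of $\psi(t)$:
\begin{equation}
\partial_\theta\psi(t)=\frac{t}{1+a\cos\psi(t)},\qquad \partial_{\psi_0}\psi(t)=\frac{1+a\cos\psi_0}{1+a\cos\psi(t)},\qquad \partial_r\psi(t)=0.
\end{equation}

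Next we write the $3\times3$ Jacobian matrix with respect to $(r,\theta,\psi_0)$. The $r$-column is simple: it equals $(x/r,\,y/r,\,2z/r)$ by homogeneity. In the $\theta$- and $\psi_0$-columns, every occurrence of $\psi(t)$ contributes its chain-rule factor from the previous display. It is convenient to rotate the planar coordinates by the angle $(\psi(t)+\psi_0)/2$ and to use the half-angle identities $\sin\psi(t)-\sin\psi_0=2\cos\frac{\psi(t)+\psi_0}{2}\sin\frac{\Delta}{2}$ and its cosine analogue, which isolate $\sin(\Delta/2)$. We then perform column operations. First, subtract a suitable multiple of the $\theta$-column from the $\psi_0$-column; this cancels the factor $1+a\cos\psi_0$ and leaves pure $\partial_{\psi_0}$ derivatives at fixed $\psi(t)$. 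The common factor $t/(1+a\cos\psi(t))$ then factors out of the $\theta$-column, which produces the prefactor $t/(1+a\cos\psi(t))$. After these operations the determinant reduces to the same $3\times 3$ trigonometric determinant as in the sub-Riemannian Heisenberg case, with $\theta t$ replaced by $\Delta$. That determinant is known to equal $\frac{4r^3}{\theta^4}\sin\frac{\Delta}{2}\bigl(\frac{\Delta}{2}\cos\frac{\Delta}{2}-\sin\frac{\Delta}{2}\bigr)$, up to the normalization of $\m$. We will verify this by expanding along the $r$-column.

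I expect the main obstacle to be organizing the cancellation of the $a$-dependent factors. Naively, factors of $(1+a\cos\psi_0)$ and $(1+a\cos\psi(t))$ appear scattered across the entries, and the column-reduction step has to be chosen carefully so that only a single factor $1/(1+a\cos\psi(t))$ survives. The reduction I will try first is $C_{\psi_0}\mapsto C_{\psi_0}-\frac{1+a\cos\psi_0}{t}C_\theta$ (with $C_{\psi_0}$, $C_\theta$ the respective Jacobian columns), together with keeping track of the explicit $\theta$-dependence outside $\psi(t)$, namely the powers $\theta^{-1}$ and $\theta^{-2}$. These explicit powers contribute extra terms proportional to the $r$-column, and those terms drop out by column operations.

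For the $\theta=0$ extension, we expand in $\theta$. From the implicit relation, $\Delta=\frac{\theta t}{1+a\cos\psi_0}+O(\theta^2)$. We then use $\sin\frac{\Delta}{2}\bigl(\frac{\Delta}{2}\cos\frac{\Delta}{2}-\sin\frac{\Delta}{2}\bigr)=-\frac{\Delta^4}{48}+O(\Delta^6)$ together with $\cos\psi(t)\to\cos\psi_0$. This gives
\begin{equation}
\frac{4r^3t}{\theta^4(1+a\cos\psi_0)}\cdot\Bigl(-\frac{\theta^4t^4}{48(1+a\cos\psi_0)^4}\Bigr)=-\frac{r^3t^5}{12(1+a\cos\psi_0)^5}.
\end{equation}
Smoothness of the extension follows because the expression is an analytic function of $\Delta$ divided by $\theta^4$, where $\Delta/\theta$ is smooth in $\theta$ by the implicit function theorem and the ratio has a removable singularity at $\theta=0$.
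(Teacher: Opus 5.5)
Your proposal is correct, and it organizes the same direct Jacobian computation differently from the paper's Appendix C.

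\textbf{How the paper does it.} Write $M=1+a\cos\psi(t)$, $L=1+a\cos\psi_0$, $\Delta=\psi(t)-\psi_0$ and $K=1-\cos\Delta$. The paper performs only the operation $C_\theta\mapsto C_\theta+\frac{r}{\theta}C_r$. It then factors out $\frac1\theta$, $\frac{rt}{\theta M}$, $\frac{r}{\theta M}$ and $\frac r\theta$, and expands the remaining matrix along the $z$-row. The drift survives in the cofactors $C_{31}=M\sin\Delta$, $C_{32}=-K(L+M)$ and $C_{33}=K$. It collapses to the single factor $M$ only at the end, when the $L$-terms cancel.

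\textbf{How your route differs.} You add the operation $C_{\psi_0}\mapsto C_{\psi_0}-\frac{L}{t}C_\theta$. You also use the fact that the explicit $\theta$-derivative of $(x,y,z)$ at fixed $\psi$ equals $-\frac r\theta C_r$. Together these remove all chain-rule terms before any determinant is evaluated, giving
$\det=\frac{t}{M}\det\bigl[C_r,\ \partial_\psi(x,y,z),\ \partial_{\psi_0}(x,y,z)|_{\psi}\bigr]$,
where the remaining determinant does not depend on $a$. This turns the paper's informal remark that ``the only modification is replacing $t\theta/2$ by $(\psi-\psi_0)/2$'' into an actual reduction to the sub-Riemannian determinant. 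Your polar rotation by $\frac{\psi+\psi_0}{2}$ then evaluates that determinant in a few lines. One checks it equals $\frac{2r^3}{\theta^4}\sin\frac{\Delta}{2}\bigl(\Delta\cos\frac{\Delta}{2}-2\sin\frac{\Delta}{2}\bigr)$, as needed.

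\textbf{Points to fix when writing it up.}
\begin{itemize}
\item Dividing by $t$ requires $t\neq0$; at $t=0$ both sides vanish, so state this case separately.
\item The rotation must be a pointwise left-multiplication of the Jacobian matrix by a determinant-one matrix. It is not a change of coordinates, since the angle depends on the parameters.
\item Drop the phrase ``up to the normalization of $\mathfrak{m}$''. A coordinate Jacobian does not involve $\mathfrak{m}$.
\end{itemize}

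Your Taylor expansion at $\theta=0$, with the observation that $\Delta/\theta$ is smooth (implicit function theorem plus Hadamard's lemma), also supplies the derivation of the $\theta=0$ value. The paper states that value but does not derive it.
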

Via Lemma~\ref{lm:dc}, the expression for the distortion coefficients, we derive the explicit 
expression for the distortion coefficients of the Randers sub-Finslerian Heisenberg group.
\begin{lemma}[Randers sub-Finslerian Heisenberg distortion coefficients]\label{rsfdis}
    Let $(\mathbb{H},\mathcal{D}, F_a,\m)$ be a Randers sub-Finslerian Heisenberg group and $q\not\in\mathrm{Cut}(0)$. Then 
    \begin{align*}
            \beta_t^<(0,q)&=t\frac{1+a\cos\psi(1)}{1+a\cos\psi(t)}\frac{\sin(\frac{\psi(t)-\psi_0}{2})\left(\frac{\psi(t)-\psi_0}{2}\cos(\frac{\psi(t)-\psi_0}{2})-\sin(\frac{\psi(t)-\psi_0}{2})\right)}{\sin(\frac{\psi(1)-\psi_0}{2})\left(\frac{\psi(1)-\psi_0}{2}\cos(\frac{\psi(1)-\psi_0}{2})-\sin(\frac{\psi(1)-\psi_0}{2})\right)}\quad \forall t\in[0,1];\\
            \beta^>_t(0,q)&=(1-t)\frac{1-a\cos\psi'(1)}{1-a\cos\psi'(1-t)}\frac{\sin(\frac{\psi'(1-t)-\psi'_0}{2})\left(\frac{\psi'(1-t)-\psi'_0}{2}\cos(\frac{\psi'(1-t)-\psi'_0}{2})-\sin(\frac{\psi'(1-t)-\psi'_0}{2})\right)}{\sin(\frac{\psi'(1)-\psi'_0}{2})\left(\frac{\psi'(1)-\psi'_0}{2}\cos(\frac{\psi'(1)-\psi'_0}{2})-\sin(\frac{\psi'(1)-\psi'_0}{2})\right)}\quad \forall t\in[0,1],
    \end{align*}
    where $\psi_0, \psi(1)$ only depend on the initial covector of the unique geodesic joining $0$ and $q$.
\end{lemma}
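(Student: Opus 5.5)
The plan is to compute the two coefficients directly from the exponential map and its Jacobian, rather than through the Jacobi matrices of Lemma~\ref{lm:dc}. For the forward coefficient, since $q\notin\mathrm{Cut}(0)$, the map $\exp^1_0$ is a local diffeomorphism near the initial covector $\lambda_0$ of the geodesic to $q$. In the proof of Lemma~\ref{lm:dc}, $\beta^<_t(0,q)$ is the ratio of the pullbacks $(\exp^t_0)^*\m|_{\lambda_0}$ and $(\exp^1_0)^*\m|_{\lambda_0}$. Because $\m$ is Lebesgue, with constant density, both pullbacks can be evaluated in any coordinates on $T^*_0\mathbb{H}\setminus\ann(\mathcal{D})$. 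I would use the coordinates $(r,\theta,\psi_0)$ of Section 6. This gives
\begin{equation}
\beta^<_t(0,q)=\frac{\det(D_{(r,\theta,\psi_0)}\exp^t_0)}{\det(D_{(r,\theta,\psi_0)}\exp^1_0)}.
\end{equation}
The change of variables from Darboux coordinates to $(r,\theta,\psi_0)$ contributes the same factor to numerator and denominator, so it cancels. Substituting Lemma~\ref{dcr}, the common prefactor $4r^3/\theta^4$ cancels, the $t$ in the numerator survives, and the factors $(1+a\cos\psi(\cdot))^{-1}$ give the stated quotient $\frac{1+a\cos\psi(1)}{1+a\cos\psi(t)}$. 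The $\sin(\cdot)\bigl(\tfrac{\cdot}{2}\cos\tfrac{\cdot}{2}-\sin\tfrac{\cdot}{2}\bigr)$ terms appear exactly as claimed. For $\theta=0$ the formula is read as a limit, consistent with the smooth extension in Lemma~\ref{dcr}. At $t=0$ both sides vanish.

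For the backward coefficient I would argue symmetrically with the reversed structure $\bar F$, whose Hamiltonian is $\bar H(\lambda)=H(-\lambda)$. By the left-invariance of $F_a$, of the frame and of Lebesgue measure, I may translate so that the base point of the reverse exponential is $0$. This reduces $\beta^>_t(x,q)$ to a quantity at the origin, and the statement itself is written at $0$. As in the second half of the proof of Lemma~\ref{lm:dc},
\begin{equation}
\beta^>_t=\lim_{r\to0}\frac{\int(\overline{\exp}^{1-t})^*\m}{\int(\overline{\exp}^{1})^*\m}=\frac{\det D\overline{\exp}^{1-t}_0}{\det D\overline{\exp}^1_0}.
\end{equation}
The reverse Hamiltonian replaces $a$ by $-a$: indeed $\bar F(u)=F(-u)=|u|-au_1$, so the dual norm and the angle equation change by $a\mapsto -a$, giving $\dot\psi'=\theta/(1-a\cos\psi')$. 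The computation of Appendix C therefore carries over verbatim, yielding Lemma~\ref{dcr} with $a\to-a$ and $\psi\to\psi'$. Evaluating at times $1-t$ and $1$ gives the displayed formula, including the factor $1-t$.

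The main obstacle is justifying that the coordinate Jacobian ratio really equals the limit in \eqref{defdis}, that is, that small balls $B^+_r(q)$ pull back to neighborhoods shrinking to $\lambda_0$ on which $\exp^t_0$ is injective. This rests on $q\notin\mathrm{Cut}(0)$, the uniqueness of the geodesic, and continuity of the inverse of $\exp^1_0$; I would invoke Lemma~\ref{lm:dc} for this. A second point to check is that $(r,\theta,\psi_0)$ is a legitimate chart near $\lambda_0$ and that its Jacobian against a Darboux frame is nonzero, so the cancellation is valid. This holds since $r>0$ off $\ann(\mathcal{D})$ and $\psi_0$ is a regular angle coordinate by \eqref{ts}.
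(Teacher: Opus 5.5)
Your proposal is correct and is essentially the paper's argument: the paper obtains $\beta^<_t(0,q)$ as the ratio of the pulled-back measures $(\exp^t_0)^*\m$ and $(\exp^1_0)^*\m$ at $\lambda_0$, as in Lemma~\ref{lm:dc}, evaluated through the Jacobian of Lemma~\ref{dcr}. It obtains $\beta^>_t(0,q)$ the same way from the reverse structure $\bar F=F_{-a}$, with angle $\psi'$ evaluated at times $1-t$ and $1$. Your explicit left-translation to base the reverse exponential at the origin, and the cancellation of the chart change to $(r,\theta,\psi_0)$, only spell out what the paper leaves implicit. Strictly, the ratio should be of $|\det|$, but the sign is constant on $(0,1]$ because $\gamma$ has no conjugate points there.
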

By Lemma~\ref{dcr}, for $q\notin \mathrm{Cut}(0)$, one has $0 < \left|\frac{\psi(1)-\psi_0}{2}\right| < \pi$. Therefore
the formulas in Lemma~\ref{rsfdis} should be understood in this domain, in which case $\beta^<_t(0,q), \beta^>_t(0,q) > 0$ for all $t\in(0,1)$.

When $a=0$, the drift term vanishes and the structure reduces to the sub-Riemannian 
Heisenberg group. In such a case, the distortion coefficient is given by 
\begin{equation}
    \beta_t(0,q):=\beta_t^<(0,q)=\beta_{1-t}^>(0,q)=t\frac{\sin(\frac{t\theta}{2})}{\sin(\frac{\theta}{2})}\frac{\sin(\frac{t\theta}{2})-\frac{t\theta}{2}\cos(\frac{t\theta}{2})}{\sin(\frac{\theta}{2})-\frac{\theta}{2}\cos(\frac{\theta}{2})},\quad \forall t\in[0,1].
\end{equation}
The sharp bound for the Heisenberg distortion (see \cite[Lemma 50]{BR}) claims that if $q\not\in\mathrm{Cut}(0)$,
\begin{equation}\label{SRdc}
    \beta_t(0,q)\ge t^N,\quad \forall t\in[0,1], \text{ with } N\ge 5.
\end{equation}
By left invariance, this holds for any pair of points $(q_0,q)\in\mathbb{H}\times\mathbb{H}$ 
with $q\not\in \mathrm{Cut}(q_0)$. Therefore, sub-Riemannian Heisenberg groups satisfy 
$\mathrm{MCP}(0,5)$. In the present paper, we show that for any $a$ with $0 < |a| < 1$, Randers 
sub-Finslerian Heisenberg groups satisfy $\mathrm{MCP}(0,N)$ for some $N=N(a)>5$, 
thereby providing a concrete special case of the general results of 
Borza--Magnabosco--Rossi--Tashiro \cite{BMRT1}.
To this end, we prove the following key estimate.
 
\begin{proposition}\label{611}
Let $(\mathbb{H},\mathcal{D}, F_a,\m)$ be a Randers sub-Finslerian Heisenberg group with $a\in (-1,1)$. Then there exists an exponent $N>5$, depending only on $a$, such that for all points $q_0,q\in\mathbb{H}$ with $q\notin \mathrm{Cut}(q_0)$, the following estimates hold for all $t\in[0,1]$:
\begin{align*}
    \beta_t^<(q_0,q) &\ge t^N,\\
    \beta_t^>(q_0,q) &\ge (1-t)^N.
\end{align*}
In fact, one may choose
\[
N=\frac{1+|a|}{1-|a|}\left(\frac{2\pi|a|}{1-|a|}+4\right)+1,
\]
which is strictly greater than $5$ when $a\neq 0$.
\end{proposition}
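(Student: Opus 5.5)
First I reduce to $q_0=0$. The vector fields $X,Y$ and the norm $F_a$ (constant coefficients in the left-invariant frame) are left-invariant, and the Lebesgue measure is the Haar measure. Hence left translations are isometries preserving $\m$, $Z_t$ and cut loci. So $\beta^{<}_t(q_0,q)=\beta^<_t(0,q_0^{-1}q)$, and similarly for $\beta^>$.

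Next I reduce the backward estimate to the forward one. The reverse structure is the Randers Heisenberg group with $a$ replaced by $-a$, and the formula of Lemma~\ref{rsfdis} for $\beta^>_t$ is exactly the forward formula for $-a$ evaluated at $1-t$. Since the proposed $N$ depends only on $|a|$, it suffices to prove
\[
\beta^<_t(0,q)\ge t^N \quad\text{for all } q\notin\mathrm{Cut}(0),\ t\in[0,1],\ |a|<1 .
\]

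For the forward estimate I use Lemma~\ref{rsfdis}. Write $\phi(t):=\frac{\psi(t)-\psi_0}{2}$ and $g(\phi):=\sin\phi\,(\phi\cos\phi-\sin\phi)$, so that
\[
\beta^<_t=t\,\frac{1+a\cos\psi(1)}{1+a\cos\psi(t)}\,\frac{g(\phi(t))}{g(\phi(1))},
\]
where $\phi$ is monotone with $|\phi(1)|<\pi$. Note that $g$ is even and $-g(\phi)\sim\phi^4/3$ near $0$.

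\begin{itemize}
\item[(a)] The prefactor is at least $\frac{1-|a|}{1+|a|}$.
\item[(b)] For the ratio of $g$'s I use the sub-Riemannian sharp bound \eqref{SRdc}. It says $t\,g(t\phi_1)/g(\phi_1)\ge t^5$ for $|\phi_1|<\pi$, that is, $-g(s)/s^4$ is nonincreasing in $|s|$ on $[0,\pi)$. Hence
\[
\frac{g(\phi(t))}{g(\phi(1))}\ge \Bigl(\frac{\phi(t)}{\phi(1)}\Bigr)^4 .
\]
\item[(c)] I compare $\phi(t)/\phi(1)$ with $t$. From $\dot\psi=\theta/(1+a\cos\psi)$, the speed $|\dot\psi|$ lies between $|\theta|/(1+|a|)$ and $|\theta|/(1-|a|)$, so $\phi(t)/\phi(1)\ge \frac{1-|a|}{1+|a|}\,t$. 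This only gives a multiplicative constant, not a power of $t$.
\end{itemize}

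So I refine (c) using $t\mapsto\int_0^t(1+a\cos\psi)\,ds$. Inverting the ODE, $t=\int_{\psi_0}^{\psi(t)}\frac{1+a\cos\psi}{\theta}\,d\psi$. Therefore
\[
\frac{t}{\text{the same integral up to }\psi(1)}=\frac{2\phi(t)+a(\sin\psi(t)-\sin\psi_0)}{2\phi(1)+a(\sin\psi(1)-\sin\psi_0)} .
\]
Then I bound $\log(\phi(t)/\phi(1))\ge C\log t$ with $C$ controlled by the total angle swept, $|2\phi(1)|<2\pi$. The ratio of the derivatives of $\log\phi$ and $\log t$ is bounded by $\frac{1+|a|}{1-|a|}$ times a term involving $|a|\cdot$(angle)$/(1-|a|)$, which produces the factor $\frac{2\pi|a|}{1-|a|}+4$. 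I also need the constant prefactor from (a) absorbed. The remaining extra factor $t$ accounts for the $+1$ in $N$, giving $\beta^<_t\ge t\cdot t^{N-1}$.

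The main obstacle is exactly this step: turning the nonuniform angular speed into a power-law bound $\phi(t)/\phi(1)\ge t^{\alpha}$, uniformly in $\theta$ and $\psi_0$, and absorbing the prefactor of (a) into a power of $t$. Near $t=1$ the prefactor is close to $1$, and I will handle it via a logarithmic-derivative comparison rather than crude constants. I will also treat the limiting case $\theta=0$ separately, using the extension in Lemma~\ref{dcr}, where $\beta^<_t=t^5$ exactly.
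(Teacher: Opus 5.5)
Your reductions are fine: left translation to $q_0=0$, the backward bound as the forward bound for $F_{-a}$ at time $1-t$, $\beta^<_t=t^5$ when $\theta=0$, and evenness in $\theta$. Step (b) via \eqref{SRdc} is also correct: it is exactly the monotonicity of $G(x)/x^4$, $G(x)=\sin x(\sin x-x\cos x)$, which the paper proves by hand as $G'/G\le 4/x$.

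\textbf{The gap.} The gap is the step you flag as the main obstacle. It is not carried out, and your account of where $\frac{2\pi|a|}{1-|a|}$ comes from is wrong, so your bookkeeping as written does not produce the stated $N$.
\begin{itemize}
\item \emph{Comparison of $\phi(t)/\phi(1)$ with $t$.} This involves no angle at all. Take $\theta>0$, $x=\phi(t)$, $x_1=\phi(1)\in(0,\pi)$, and $t=I_a(x)/I_a(x_1)$ with $I_a(x)=2x+a(\sin(2x+\psi_0)-\sin\psi_0)$. Then $I_a'(x)\ge 2(1-|a|)$ and $0<I_a(x)\le 2x(1+|a|)$. Hence $\log x-C\log I_a(x)$ is nonincreasing for $C=\frac{1+|a|}{1-|a|}$, so $x/x_1\ge t^{C}$, and (b) turns this into $t^{4C}$.
\item \emph{The prefactor.} The angle enters only through $h(x_1)/h(x)$, where $h(x)=1+a\cos(2x+\psi_0)$. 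You never convert this into a power of $t$. We have $|h'/h|\le\frac{2|a|}{1-|a|}$, while $I_a'/I_a\ge\frac1x\frac{1-|a|}{1+|a|}>\frac1\pi\frac{1-|a|}{1+|a|}$ on $(0,\pi)$. Therefore $-\log h-D\log I_a$ is nonincreasing for $D=\frac{1+|a|}{1-|a|}\cdot\frac{2\pi|a|}{1-|a|}$, which gives $h(x_1)/h(x)\ge t^{D}$.
\end{itemize}
Multiplying the pieces gives $\beta^<_t\ge t^{1+4C+D}$, and $1+4C+D$ is exactly the stated $N$. If instead the angle term came from the $\phi$-comparison and were then raised to the fourth power by (b), with the prefactor still unabsorbed, you would not land on this $N$.

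\textbf{Relation to the paper.} With this fix your argument is the paper's proof in factored form. The paper shows in one computation that $f=G/(I_a^{s}h)$ is nonincreasing on $(0,\pi)$, using $f'/f\le\frac4x-\frac sx\frac{1-|a|}{1+|a|}+\frac{2|a|}{1-|a|}$. That $f$ is precisely the product of your three monotone factors $G/x^4$, $x^4/I_a^{4C}$ and $1/(hI_a^{D})$, with $s=4C+D$. Your version has the mild advantage of isolating the $a$-independent factor and citing the sub-Riemannian estimate instead of reproving $G'/G\le 4/x$.
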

\begin{proof}
We only prove the estimate for $\beta_t^<(0,q)$, as the proof for
$\beta_t^>(0,q)$ is analogous. Let
$\gamma(t)=\exp_0^t(r,\theta,\psi_0)$
be the unique geodesic joining $0$ to $q$. When $\theta=0$, Lemma~\ref{dcr}
together with Lemma~\ref{rsfdis} yields
$\beta_t^<(0,q)=t^5$.
Hence, it remains to consider the case $\theta\neq 0$. Without loss of
generality, we assume $\theta>0$; the case $\theta<0$ follows by the same
argument. In this case,
 $\psi(t)$ is strictly increasing in $t$, 
and therefore $
\frac{\psi(1)-\psi_0}{2}\in(0,\pi).$

    We begin by introducing the following convenient notation.
    Since $\dot{\psi}=\frac{\theta}{1+a\cos\psi}$, integrating it yields 
    \begin{equation}
        t=\frac{1}{\theta}\int^{\psi(t)}_{\psi_0} (1+a\cos\psi) d\psi=\frac{1}{\theta}(\psi(t)-\psi_0+a(\sin\psi(t)-\sin\psi_0)).
    \end{equation}
    Set $x=\frac{\psi(t)-\psi_0}{2}$ and let 
    \begin{equation}
        I_a(x)=2x+a(\sin(2x+\psi_0)-\sin\psi_0).
    \end{equation}
    Then $t=\frac{I_a(x)}{I_a(x_1)}$ where $x_1 = \frac{\psi(1)-\psi_0}{2}$, and 
    \begin{equation}
        \beta^<_t(0,q)=t\frac{1+a\cos\psi(1)}{1+a\cos\psi(t)}\frac{\sin x}{\sin x_1}\left(\frac{\sin x-x\cos x}{\sin x_1-x_1\cos x_1}\right).
    \end{equation} 
    To establish our result, we construct the following function 
    \begin{equation}
        f(x)=\frac{g(x)}{I_a^s(x)h(x)},
    \end{equation}
    where $g(x)=\sin x(\sin x-x\cos x)$, $h(x)=1+a\cos(2x+\psi_0)$, and $s\in\mathbb{R}$ is a parameter to be determined later. Observing that $\beta^<_t(0,q)\ge t^{s+1}$ for all $q\notin \Cut{0}$ and all $t\in [0,1]$ is equivalent to $f$
    being non-increasing on $(0,\pi)$, we analyze the sign of its logarithmic derivative.
    Taking the logarithm and computing the derivative, we obtain 
    \begin{align}\label{eq61}
    \frac{f'(x)}{f(x)} 
    &= \frac{g'(x)}{g(x)} - s\frac{I'_a(x)}{I_a(x)} - \frac{h'(x)}{h(x)} \notag\\
    &= \frac{\tfrac{1}{2}\sin(2x)-x\cos(2x)}{\sin^2 x-\tfrac{1}{2}x\sin(2x)}
     - s\frac{2+2a\cos(2x+\psi_0)}{2x+a(\sin(2x+\psi_0)-\sin\psi_0)}
     + \frac{2a\sin(2x+\psi_0)}{1+a\cos(2x+\psi_0)}.
    \end{align}
    First, we show that $s > 4$ whenever $a \neq 0$.
    As a matter of fact, we fix $a\neq 0$ and assume that $q=\exp^t_0(r,\theta,\psi_0)\in \mathbb{H}\setminus\Cut{0}$ is such that the initial momentum angle $\psi_0$ of the unique geodesic joining $0$ to $q$ additionally satisfies $a\sin\psi_0>0$. Taking the Taylor expansion of $\frac{f'(x)}{f(x)}$ near $x=0,$ we obtain  
    \begin{equation}
        \frac{f'(x)}{f(x)}=\frac{4-s}{x}+\frac{(s+2)a\sin\psi_0}{1+a\cos\psi_0}+O(x).
    \end{equation}
    When $s\le 4$, for $x$ sufficiently close to $0$, one can check that $\frac{f'(x)}{f(x)}>0$. Therefore, $\beta^<_t(0,q)\ge t^{5}$ does not hold for all $q\notin\Cut{0}$ unless $a=0$.

    To determine an explicit sufficient value of $N$, we consider the following estimate.
    Taking the Taylor expansion of $\frac{g'(x)}{g(x)}$ near $x=0$, it can be shown that $\frac{g'(x)}{g(x)}\le \frac{4}{x}$ for $x\in (0,\pi)$.
    Let us prove this fact. Letting $H(x)=xg'(x)-4g(x)$ and taking derivatives, we obtain
    \begin{align*}
        H'(x)&=xg''(x)-3g'(x)=2x^2\sin(2x)-\frac{3}{2}\sin(2x)+3x\cos(2x);\\
        H''(x)&=xg'''(x)-2g''(x)=2x(2x\cos(2x)-\sin(2x)).
    \end{align*}
    There exists a unique $x_0\in(\pi/2,\pi)$ such that
    \[
    2x_0\cos(2x_0)-\sin(2x_0)=0.
    \]
    Indeed, the function $2x\cos(2x)-\sin(2x)$ is strictly decreasing on $(0,\pi/2)$ and strictly increasing on $(\pi/2,\pi)$, and it changes its sign across $(\pi/2,\pi)$. Therefore, $H''(x)\le 0$ on $(0,x_0)$ and $H''(x)\ge 0$ on $(x_0,\pi)$. Hence $H'$ is decreasing on $(0,x_0)$ and increasing on $(x_0,\pi)$, so the minimum of $H'$ on $[0,\pi]$ is attained at $x_0$.
    Since
    \[
    H'(0)=0,\qquad H'(\pi)=3\pi>0,
    \]
    $H'$ has exactly one zero in $(x_0,\pi)$. Thus $H$ decreases first and then increases on $(0,\pi)$, so its maximum on $[0,\pi]$ is attained at one of the endpoints. Since
    \begin{equation}
        H(0)=0,\qquad
        H(\pi)=-\pi^2<0,
    \end{equation}
    we conclude that $H(x)\le 0$, which yields the estimate $\frac{g'(x)}{g(x)}\le \frac{4}{x}$ for $x\in (0,\pi)$.

    It remains to bound the last two terms in equation \eqref{eq61}.
    By the mean value theorem, we have $|\sin(2x+\psi_0)-\sin\psi_0|\le 2x$. Therefore, $I_a(x) \le 2x(1+|a|)$.
    Additionally, $I_a'(x) \ge 2(1-|a|)$. Thus, we have 
    \begin{equation}
        -s\frac{I'_a(x)}{I_a(x)} \le -\frac{s}{x}\left(\frac{1-|a|}{1+|a|}\right) \quad \text{for } x\in (0,\pi).
    \end{equation}
    Furthermore, bounding the last term yields $\frac{2a\sin(2x+\psi_0)}{1+a\cos(2x+\psi_0)} \le \frac{2|a|}{1-|a|}$.
    Combining these with $\frac{g'(x)}{g(x)}\le \frac{4}{x}$, we derive an upper bound for the logarithmic derivative:
    \begin{equation}
        \frac{f'(x)}{f(x)} \le \frac{4}{x} - \frac{s}{x}\left(\frac{1-|a|}{1+|a|}\right) + \frac{2|a|}{1-|a|} = \frac{1}{x}\left(4 - s\left(\frac{1-|a|}{1+|a|}\right) + \frac{2|a|x}{1-|a|}\right).
    \end{equation}
    Hence if we take  
    \begin{equation}
        s \ge \frac{1+|a|}{1-|a|}\left(\frac{2\pi|a|}{1-|a|}+4\right),
    \end{equation}
    then $\frac{f'(x)}{f(x)}$ is non-positive for all $x \in (0,\pi)$, and consequently $f(x)$ is a decreasing function. Since $x \le x_1$, we have $f(x) \ge f(x_1)$. 
    Expanding this inequality and using the fact that $t = \frac{I_a(x)}{I_a(x_1)}$, we obtain $\beta^<_t(0,q) \ge t^{N}$ with $N=s+1$, completing our proof.
\end{proof}

The following results follow immediately from Theorem~\ref{interpolation} and Theorem~\ref{BMineq}, together with Proposition~\ref{611}.
\begin{corollary}
    Let $(\mathbb{H},F_a,\m)$ be a Randers sub-Finslerian Heisenberg group. Then for a given $a\in (-1,1)$ and any $\mu_0\in\mathcal{P}^{ac}_c(\mathbb{H})$ and $\mu_1\in \mathcal{P}^{ac}_c(\mathbb{H})$, letting $\mu_t=(T_t)_{\sharp}\mu_0=\rho_t m$
    be the unique Wasserstein geodesic connecting $\mu_0$ with $\mu_1$, we have 
    \begin{equation}
        \frac{1}{\rho_t(T_t(x))^{1/3}}\ge \frac{(1-t)^{N/3}}{\rho_0(x)^{1/3}}+\frac{t^{N/3}}{\rho_1(T_1(x))^{1/3}}, \quad \mu_0\text{-}\mathrm{a.e.}\; \forall t\in [0,1],
    \end{equation}
    where $N \ge \frac{1+|a|}{1-|a|}\left(\frac{2\pi|a|}{1-|a|}+4\right)+1$.
\end{corollary}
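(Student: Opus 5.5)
The corollary is Theorem~\ref{interpolation} specialized to $\mathbb{H}$, with the distortion coefficients bounded below by powers of $t$ and $1-t$ via Proposition~\ref{611}. So the plan is to verify the hypotheses of Theorem~\ref{interpolation}, plug in those lower bounds, and treat the static set separately.

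\textbf{Step 1: hypotheses of Theorem~\ref{interpolation}.} We have $n=3$, and $(\mathbb{H},\mathcal{D},F_a)$ is forward ideal, as noted at the start of this section. Given $\mu_0,\mu_1\in\mathcal{P}^{ac}_c(\mathbb{H})$, Theorems~\ref{ot} and \ref{ot2} supply the unique optimal map $T$, the intermediate maps $T_t$, and the Wasserstein geodesic $\mu_t=\rho_t\m$. Theorem~\ref{interpolation} then gives, for $\mu_0$-a.e.\ $x$,
\begin{equation}
\frac{1}{\rho_t(T_t(x))^{1/3}}\ge \frac{\beta^>_t(x,T(x))^{1/3}}{\rho_0(x)^{1/3}}+\frac{\beta^<_t(x,T(x))^{1/3}}{\rho_1(T(x))^{1/3}}.
\end{equation}

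\textbf{Step 2: moving set.} By the proof of Theorem~\ref{interpolation}, for $\mu_0$-a.e.\ $x\in\mathcal{M}^\varphi$ we have $T(x)\notin\mathrm{Cut}(x)$. Proposition~\ref{611}, applied with $q_0=x$ and $q=T(x)$ (this is where left invariance enters), gives
\begin{equation}
\beta^<_t(x,T(x))\ge t^{N_0},\qquad \beta^>_t(x,T(x))\ge(1-t)^{N_0},\qquad N_0:=\frac{1+|a|}{1-|a|}\Big(\frac{2\pi|a|}{1-|a|}+4\Big)+1.
\end{equation}
For any $N\ge N_0$ and $t\in[0,1]$ we have $t^{N_0}\ge t^N$ and $(1-t)^{N_0}\ge(1-t)^N$. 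Taking cube roots and substituting into the display of Step~1 yields the claim on $\mathcal{M}^\varphi$.

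\textbf{Step 3: static set.} For $\mu_0$-a.e.\ $x\in\mathcal{S}^\varphi$ we have $T_t(x)=x$ and $\rho_t(x)=\rho_0(x)=\rho_1(x)$. It therefore suffices to check
\begin{equation}
(1-t)^{N/3}+t^{N/3}\le 1,
\end{equation}
which holds because $N/3\ge 1$ (indeed $N\ge 5$). Here the diagonal coefficients in Theorem~\ref{interpolation} are not needed; the elementary inequality settles the case directly.

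\textbf{Main obstacle.} The only delicate point is justifying that Proposition~\ref{611} applies at the pairs $(x,T(x))$. This requires the a.e.\ statement $T(x)\notin\mathrm{Cut}(x)$ on the moving set, which comes from Theorem~\ref{jest} as used in the proof of Theorem~\ref{interpolation}. Once that is in place, the rest is monotonicity of powers.
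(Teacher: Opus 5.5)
Your proposal is correct and follows the paper's route: the paper derives the corollary directly from Theorem~\ref{interpolation} combined with the lower bounds $\beta^<_t\ge t^N$ and $\beta^>_t\ge (1-t)^N$ of Proposition~\ref{611}, which is exactly your Step~2. Your separate handling of the static set through the elementary inequality $(1-t)^{N/3}+t^{N/3}\le 1$ (valid since $N\ge 5>3$) fills in a detail the paper leaves implicit, because Proposition~\ref{611} only covers pairs with $q\notin\mathrm{Cut}(q_0)$ and so does not apply on the diagonal.
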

\begin{corollary}
    Let $(\mathbb{H},F_a,\m)$ be a Randers sub-Finslerian Heisenberg group. For a given $a\in (-1,1)$ and all non-empty Borel sets $A,B\subset \mathbb{H}$, we have 
    \begin{equation}
        m(Z_t(A,B))^{1/3}\ge (1-t)^{N/3}m(A)^{1/3}+t^{N/3}m(B)^{1/3},\quad \forall t\in [0,1],
    \end{equation}
    where $N \ge \frac{1+|a|}{1-|a|}\left(\frac{2\pi|a|}{1-|a|}+4\right)+1$.
    Hence for such $a$, $(\mathbb{H},F_a,m)$ satisfies $\mathrm{MCP}(0,N)$.
\end{corollary}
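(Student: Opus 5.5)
This corollary is a direct specialization, and the plan is to combine Theorem~\ref{BMineq} with the lower bounds of Proposition~\ref{611}. Fix $a\in(-1,1)$ and let $N$ be any number at least $N_0(a):=\frac{1+|a|}{1-|a|}\left(\frac{2\pi|a|}{1-|a|}+4\right)+1$.

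\emph{Step 1: the bounds hold for every such $N$.} Proposition~\ref{611} gives $\beta^<_t(q_0,q)\ge t^{N_0}$ and $\beta^>_t(q_0,q)\ge (1-t)^{N_0}$ for all $q\notin\mathrm{Cut}(q_0)$. Since $t,1-t\in[0,1]$, the powers $t^{N}$ and $(1-t)^{N}$ are non-increasing in the exponent. Hence the same bounds hold with $N$ in place of $N_0$.

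\emph{Step 2: pass to infima over $A\times B$.} The infima in Theorem~\ref{BMineq} run over all of $A\times B$, while Proposition~\ref{611} only covers pairs outside $\mathrm{Cut}(\mathbb{H})$. Two routes are available.
\begin{itemize}
\item[(a)] One can observe that the proof of Theorem~\ref{BMineq}, via Corollary~\ref{pmean0} and Theorem~\ref{thm:bbl}, only tests the hypothesis on $(A\times B)\setminus\mathrm{Cut}(M)$. Running that argument directly with $f=(1-t)^{N-3}\chi_A$, $g=t^{N-3}\chi_B$ and $h=\chi_{Z_t(A,B)}$, one has
\[
(1-t)^3f(x)/\beta^>_t(x,y)\le 1 \quad\text{and}\quad t^3 g(y)/\beta^<_t(x,y)\le 1
\]
off the cut locus, so the $p=\infty$ hypothesis holds.
\item[(b)] For pairs on the diagonal, one can use the defining limits in \eqref{defdis} together with the MCP-type behaviour.
\end{itemize}
I will use route (a), since it avoids any discussion of $\beta$ on the cut locus.

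\emph{Step 3: conclude the inequality and the MCP.} Corollary~\ref{pmean0} then gives
\[
\m(Z_t(A,B))\ge \big((1-t)^{N/3}\m(A)^{1/3}+t^{N/3}\m(B)^{1/3}\big)^3,
\]
with $n=3$; taking cube roots yields the claimed inequality. The endpoints $t=0,1$ are trivial, since $Z_0(A,B)\supseteq A$ and $Z_1(A,B)\supseteq B$. For the MCP, the equivalence theorem at the end of Section~5 states that (i)$\Rightarrow$(iii). Condition (i), with $n=3$, $N\ge 3$ and the required forward ideal structure, is exactly Step 1. This gives $\mathrm{MCP}(0,N)$.

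The only delicate point is Step 2, namely checking that cut-locus pairs never enter the argument. Route (a) resolves this: the forward ideal assumption makes $\mathrm{Cut}(x)$ $\m$-null, and the transport plan concentrates off $\mathrm{Cut}(\mathbb{H})$ by Theorem~\ref{jest}.
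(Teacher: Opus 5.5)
Your proposal is correct and follows essentially the paper's route, which just combines Proposition~\ref{611} with Theorem~\ref{BMineq} and the MCP equivalence; your Step~2(a), rerunning Corollary~\ref{pmean0} with $f=(1-t)^{N-3}\chi_A$ and $g=t^{N-3}\chi_B$, is a good way to handle a point the paper's ``follows immediately'' glosses over, namely that the infima in Theorem~\ref{BMineq} range over all of $A\times B$, including cut-locus pairs. One minor correction: your closing claim that the transport plan concentrates off $\mathrm{Cut}(\mathbb{H})$ fails on the static set, where $(x,T(x))=(x,x)$ lies in the cut locus (the constant curve has an abnormal lift because $\mathcal{D}$ has rank $2<3$), but this is not load-bearing, since Corollary~\ref{pmean0} only requires its hypothesis off $\mathrm{Cut}(M)$.
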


\begin{remark}
    The estimate obtained in Proposition~\ref{611} is generally not sharp. 
Therefore, the quantity 
\begin{equation}
\frac{1+|a|}{1-|a|}\left(\frac{2\pi|a|}{1-|a|}+4\right)+1
\end{equation}
should be regarded as an explicit upper bound for the curvature 
exponent of $(\mathbb{H},F_a,\mathfrak{m}).$ Although we do not 
derive an explicit formula for the curvature exponent itself, the 
proof of Proposition~\ref{611} shows that determining it is equivalent 
to finding the smallest value of $s$ such that the logarithmic 
derivative in~\eqref{eq61} is non-positive for every 
$x\in (0,\pi).$ The corresponding curvature exponent is then 
given by $N=s+1.$ In particular, $N$ depends continuously on 
the parameter $a$, which is consistent with 
\cite[Theorem~4.7]{BMRT1}, where the continuity of the curvature 
exponent with respect to sub-Finslerian metrics under the 
so-called $C^k$-strong topology (introduced in 
\cite[Definition~4.1]{BMRT1}) was established.

\end{remark}

\section{Further discussions}
As mentioned earlier, classical curvature-dimension conditions generally fail to hold in sub-Finslerian geometry. Hence, developing a new framework for synthetic curvature bounds is highly desirable. In this concluding section, we briefly highlight two recent approaches that have made significant progress in this direction.

The first unifying framework was proposed by Barilari, Mondino, and Rizzi \cite{BMR}. Building upon an idea introduced by Villani in \cite[Remark 14.23]{Vi}, they replace the classical curvature parameter $K$ with a generalized distortion coefficient $\beta$, which plays the role of curvature, thereby establishing a unified synthetic curvature theory. Our analysis of the Randers sub-Finslerian Heisenberg group provides a concrete class of non-sub-Riemannian examples that naturally fit into their theoretical framework.

Another notable approach is the Quasi Curvature-Dimension (QCD) condition introduced by Milman \cite{Mil}, which is motivated by interpolation inequalities established on ideal sub-Riemannian manifolds. The QCD condition is a relaxation of the classical CD condition, introducing a ``slack'' coefficient $Q \ge 1$ (which reduces to the classical CD condition when $Q=1$). Therefore, it is weaker than CD but stronger than the Measure Contraction Property (MCP). In fact, $\mathrm{MCP}(K,N)$ implies $\mathrm{QCD}(2^{N-n},K,N)$, where $n$ denotes the topological dimension of $M$ \cite[Proposition 2.4]{Mil}. 

Furthermore, Milman utilizes needle decomposition techniques to derive several functional inequalities—such as the Poincar\'e and log-Sobolev inequalities—for a certain class of sub-Riemannian manifolds satisfying the QCD condition. Thanks to the measure contraction property established for Randers sub-Finslerian Heisenberg groups in this paper, we expect that similar results and functional inequalities can be further extended to these non-sub-Riemannian settings.

\appendix
\section{Discussion on Conjugate Points}

This appendix is devoted to discussing conjugate points in sub-Finslerian geometry. As in the usual case, 
the conjugate points are those points where exponential maps fail to have maximal rank. Since there is 
no Chern connection or flag curvature on sub-Finslerian manifolds, the classical index form induced from the second variation 
of the length functional is not available in our case. The index form plays an important role in studying conjugate points in 
Riemannian or Finslerian geometry. Thanks to the pioneering work of Agrachev and Sarychev concerning the theory of conjugate points 
on general control systems (see e.g. \cite[Chapter 8]{Agr},\cite{Sa}), a generalized index form and techniques have been developed and have been shown effective in sub-Riemannian 
geometry. Adapting these, we will extend some well-known conclusions in Riemannian or Finslerian geometry to sub-Finslerian geometry, 
and prove Theorem~\ref{conjugatepoint}. It should be noted that, in this section, 
we do not require the manifold to be forward ideal.

Firstly, we introduce the second variation formula of the energy functional.
\begin{lemma}\label{lmA1}
    Let $\gamma_u:[0,1]\to M$ be a normal geodesic joining $x$ and $y$ that contains no abnormal segments, satisfying 
    \begin{equation}\label{eq:1}
        \langle \lambda, D_uE_{x}(\cdot) \rangle = D_uJ(\cdot)
    \end{equation}
    for some $\lambda\in T^*_yM.$ Then we have 
    \begin{equation}
        \text{Hess}_u(J|_{E^{-1}_{x}(y)})(v)= D^2_uJ(v,v) - \langle\lambda,D^2_uE_{x}(v,v)\rangle,
    \end{equation}
    where $D^2_uE_{x}(v,v)$ is exactly the second variation of the end-point map
     (see the explicit expression in Proposition~\ref{var_endpoint}).
\end{lemma}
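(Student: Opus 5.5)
The plan is the standard Lagrange multiplier computation for the second variation of a functional restricted to a level set of a submersion. Since $\gamma_u$ contains no abnormal segments, in particular it is not abnormal, so by Proposition~\ref{lagmul} the differential $D_uE_x$ is surjective onto $T_yM$. The fiber $E_x^{-1}(y)$ is then, near $u$, a smooth (Hilbert) submanifold of $L^2([0,1];\mathbb{R}^k)$ with tangent space $\ker D_uE_x$, by the implicit function theorem. Condition \eqref{eq:1} says exactly that $u$ is a critical point of $J|_{E_x^{-1}(y)}$, so the Hessian of the restriction at $u$ is intrinsically defined on $\ker D_uE_x$. It can be computed along any curve in the fiber through $u$ with prescribed velocity $v$.

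Fix $v\in\ker D_uE_x$ and choose a $C^2$ curve $s\mapsto w(s)\in E_x^{-1}(y)$ with $w(0)=u$ and $w'(0)=v$. Such a curve exists by the submersion property: take $w(s)=u+sv+\eta(s)$ with $\eta(s)$ in a fixed finite-dimensional complement of $\ker D_uE_x$ and $\eta(s)=O(s^2)$. Differentiate the identity $E_x(w(s))=y$ twice at $s=0$ in local coordinates around $y$:
\[
D_uE_x(w''(0))+D^2_uE_x(v,v)=0 .
\]
Next, expand $J(w(s))$ to second order:
\[
\frac{d^2}{ds^2}\Big|_{s=0}J(w(s))=D^2_uJ(v,v)+D_uJ(w''(0)).
\]
Using \eqref{eq:1} on $w''(0)$ gives $D_uJ(w''(0))=\langle\lambda,D_uE_x(w''(0))\rangle=-\langle\lambda,D^2_uE_x(v,v)\rangle$. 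This yields the stated formula.

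Two points need care, and I expect them to be the main (mild) obstacle.

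First, the chart dependence. The second derivative $D^2_uE_x$ is only intrinsic on $\ker D_uE_x$ when paired with a covector annihilating nothing in particular. Here I would note that on $\ker D_uE_x$ the chart-dependent first-order correction term vanishes, so $D^2_uE_x(v,v)\in T_yM$ is well defined there. This matches the Volterra expression of Proposition~\ref{var_endpoint}.

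Second, regularity of $J$. In the sub-Finslerian case $F^2$ is only $C^1$ at $u=0$, so $J$ may fail to be twice differentiable wherever the control vanishes. Since $\gamma_u$ is a normal geodesic of constant positive speed, $F(u(t))$ is a nonzero constant, and $u(t)$ stays in a compact set away from $0$ in $\mathbb{R}^k$. Hence $J$ is $C^2$ on an $L^\infty$-neighborhood of $u$. I would therefore carry out the computation for $v\in L^\infty$ and extend to $L^2$ by density and continuity of both sides as quadratic forms. Alternatively, I would simply interpret $D^2_uJ(v,v)=\int_0^1 \mathrm{Hess}_{u(t)}(\tfrac12F^2)(v(t),v(t))\,dt$, which is a bounded quadratic form on $L^2$ because the fundamental tensor is bounded on compact sets away from the origin.
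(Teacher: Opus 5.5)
Your proposal is correct and follows the paper's own proof. In both, one chooses a variation $u_s$ inside $E_x^{-1}(y)$ with $\dot u_0=v$, differentiates $E_x(u_s)\equiv y$ twice to get $D_uE_x(\ddot u_0)=-D^2_uE_x(v,v)$, and substitutes this into $\frac{d^2}{ds^2}J(u_s)=D^2_uJ(v,v)+D_uJ(\ddot u_0)$ using the multiplier identity. Your extra remarks fill in points the paper leaves implicit: such a curve exists because $D_uE_x$ is surjective, and $J$ is $C^2$ only on an $L^\infty$-neighbourhood of $u$, which works because $u$ stays away from $0$.
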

\begin{proof}
    Working in a local coordinate chart around $y$, let us choose a smooth one-parameter variation $u_s$ such that 
    $u_0 = u$ and $E_{x}(u_s) \equiv y.$ Differentiating the constraint identity gives
    \begin{equation}\label{2}
        \begin{aligned}
            &D_{u_s}E_{x}(\dot{u}_s)=0;\\
            &D^2_{u_s}E_{x}(\dot{u}_s,\dot{u}_s)+D_{u_s}E_{x}(\ddot{u}_s )=0.
        \end{aligned}
    \end{equation}
    Evaluating the second variation at $s=0$ with $v = \dot{u}_0$, we obtain
    \begin{equation}
        \begin{aligned}
            \text{Hess}_u(J|_{E^{-1}_{x}(y)})(v)&:=\frac{d^2}{ds^2}\Big|_{s=0}J(u_s)\\
            &=D^2_uJ(v,v)+D_uJ(\ddot{u}_0)\\
            &=D^2_uJ(v,v)+\langle \lambda, D_uE_{x}(\ddot{u}_0)\rangle \quad\quad \text{(using formula \eqref{eq:1})}\\
            &=D^2_uJ(v,v)-\langle \lambda, D^2_uE_{x}(v,v) \rangle \quad\quad \text{(using formula \eqref{2})}.
        \end{aligned}
    \end{equation}
    This completes the proof.
\end{proof}

As in Riemannian geometry, the degeneracy of the Hessian of the energy functional implies the appearance of conjugate points 
in sub-Finslerian geometry. To this end, we introduce the following reparametrization. Set $u^s(t):=su(st)$, which is 
the control of $\gamma_{u^s}(t)=\gamma_u(st).$ Then the corresponding Lagrangian multiplier is given by 
$\lambda^s=s(P_{s,1})^*\lambda\in T^*_{\gamma_u(s)}M,$ where $P_{s,t}$ denotes the flow of the non-autonomous horizontal vector 
field $X_u(q)=u^iX_i$, where $\{X_i\}_{i=1}^k$ denotes a local frame for the horizontal distribution $\mathcal{D}$, and $X_u$ 
represents the horizontal vector field associated with the control $u$.

\begin{proposition}
    Let $s\in(0,1]$ and let $\gamma_u:[0,1]\to M$ be a normal geodesic containing no abnormal segments. Then $\gamma_u(s)$ is conjugate to $\gamma_u(0)$ if and only if $\text{Hess}_{u^s}(J|_{E^{-1}_{q_0}(\gamma_u(s))})$ is a degenerate quadratic form, where $q_0=\gamma_u(0).$
\end{proposition}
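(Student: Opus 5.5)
We prove the two directions by identifying the kernel of the Hessian with vertical Jacobi fields, following the Agrachev--Sarychev scheme. First we reduce to $s=1$. The reparametrized control $u^s$ generates the normal geodesic $t\mapsto\gamma_u(st)$, with Lagrange multiplier $\lambda^s$. Moreover $\gamma_u(s)=\exp_{q_0}(s\lambda_0)$ is conjugate to $q_0$ along $\gamma_u$ if and only if $\gamma_{u^s}(1)$ is conjugate to $q_0$ along $\gamma_{u^s}$; this uses the homogeneity $e^{t\vec H}(c\lambda)=c\,e^{ct\vec H}(\lambda)$ for $c>0$, which holds because $H$ is fiberwise $2$-homogeneous (Proposition~\ref{prop2.5}). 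We may therefore assume $s=1$, write $u$ for the control and $y=\gamma_u(1)$, and study $Q(v):=D^2_uJ(v,v)-\langle\lambda,D^2_uE_{q_0}(v,v)\rangle$ on $\ker D_uE_{q_0}$, using Lemma~\ref{lmA1}. Since $\gamma_u$ has no abnormal segments, $D_uE_{q_0}$ is surjective, so $E^{-1}_{q_0}(y)$ is a Hilbert submanifold near $u$. The Hessian is then the well-defined quadratic form $Q$ on $\ker D_uE_{q_0}$, and degeneracy means there is $v\neq0$ in $\ker D_uE_{q_0}$ with $Q(v,w)=0$ for all $w\in\ker D_uE_{q_0}$.

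Next we compute the polarized form explicitly via Proposition~\ref{var_endpoint}. Write $\lambda_t=(P_{t,1})^*\lambda$ and $h_i(\xi)=\langle\xi,X_i\rangle$. Strong convexity of $F^2$ gives $D^2_uJ(v,w)=\int_0^1 g_{u(t)}(v(t),w(t))\,dt$, where $g_u$ is the fundamental tensor of $F$ on $\mathbb{R}^k$, which is positive definite. The second term is the double integral of $\langle\lambda,[\,(P_{\tau,1})_*X_{v(\tau)},(P_{t,1})_*X_{w(t)}]\rangle$. By the Lagrange condition (N), $u(t)=\ell^{-1}(\sigma^*\lambda_t)$, so the pointwise Legendre relation identifies $g_{u(t)}$ with the inverse of the fiber Hessian of $H$ restricted to the horizontal directions, i.e.\ $g_{u(t)}=(\partial^2_{h}H(\lambda_t))^{-1}$.

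The stationarity condition $Q(v,\cdot)=0$ on $\ker D_uE_{q_0}$ is handled with a Lagrange multiplier: there exists $\eta\in T^*_yM$ with $Q(v,w)=\langle\eta,D_uE_{q_0}(w)\rangle$ for all $w$. Writing this out pointwise in $t$, we look for a curve $\xi(t)\in T_{\lambda_t}(T^*M)$ such that $g_{u(t)}(v(t),\cdot)$ equals the pairing of $\xi(t)$ with the horizontal fields. The aim is to show that $\xi$ solves the linearized Hamiltonian equation $\dot\xi=0$ in the sense of Section~3, with $\pi_*\xi(0)=0$ (since $v$ starts at $q_0$) and $\pi_*\xi(1)=0$ (since $v\in\ker D_uE_{q_0}$). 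Equivalently, $\xi(t)=e^{t\vec H}_*\xi(0)$ is a nonzero Jacobi field with $\xi(0)\in\mathcal{V}_{\lambda_0}$ and $\xi(1)\in\mathcal{V}_{\lambda_1}$, i.e.\ $e^{\vec H}_*\mathcal{V}_{\lambda_0}\cap\mathcal{V}_{\lambda_1}\neq\{0\}$. By Lemma~\ref{cjlm} this is exactly conjugacy. For the converse, given a nonzero Jacobi field vertical at both ends, we set $v(t):=\ell$-derivative data read off from it, namely $v(t)=d_{\lambda_t}(\partial_hH)\,\xi(t)$, the linearization of the control map $\lambda\mapsto\ell^{-1}(\sigma^*\lambda)$. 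We then check $v\in\ker D_uE_{q_0}$ and $Q(v,\cdot)=0$ by reversing the computation. This direction also requires $v\neq0$: if $v\equiv0$, then $\xi$ would be a nonzero vertical field annihilating the distribution along an arc, and this would produce an abnormal lift of a segment, which is excluded by hypothesis.

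The main obstacle is the middle step, namely passing from the integral identity $Q(v,\cdot)=\langle\eta,D_uE(\cdot)\rangle$ to the pointwise Jacobi equation with the correct Finslerian weight $g_{u(t)}$. Our first approach is to differentiate the family of normal extremals $e^{t\vec H}(\lambda_0+\varepsilon\eta_0)$ and use that $\vec H$ is smooth near $\lambda_t\notin\ann(\mathcal{D})$. This should identify $D^2J-\langle\lambda,D^2E\rangle$ with the second derivative of $J-\langle\lambda,E\rangle$ along the exponential map, as in \cite[Chapter 8]{Agr}. That identification uses only smoothness and fiber-convexity of $H$ off the annihilator, and we will verify that every step there goes through without quadraticity of $H$.
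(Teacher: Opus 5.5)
Your skeleton matches the paper's: reduce to $s=1$, take a Lagrange multiplier $\eta$ for a degenerate direction, use surjectivity of $D_uE_{q_0}$, and check nontriviality. Your abnormal-lift argument for $v\neq 0$ is the dual of the paper's step ``$u'=0\Rightarrow\xi'=0$''. But the step that actually proves the equivalence is only announced. You state as an aim that a pair $(v,\eta)$ with $D_uE_{q_0}(v)=0$ and $Q(v,\cdot)=\langle\eta,D_uE_{q_0}(\cdot)\rangle$ is the control part of a Jacobi field vertical at both ends. For the converse you say the identities follow by reversing the computation. Neither computation is done, and you flag it yourself as the main obstacle.

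The explicit route you start on also fails in the paper's generality. Here $F_p$ may vary with $p$, so $J(u)=\frac12\int_0^1F_{\gamma_u(t)}(u(t))^2\,dt$, and $D^2_uJ(v,w)$ picks up terms from the first and second variations of the trajectory. The formula $D^2_uJ(v,w)=\int_0^1 g_{u(t)}(v(t),w(t))\,dt$ holds when $F_p$ is independent of $p$, not in general. A pointwise equation with weight $g_{u(t)}$ alone would miss those terms.

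The paper avoids any pointwise derivation. It works with the critical set $C=\{(u,\xi,x): D_uJ=\langle\xi,D_uE_{q_0}(\cdot)\rangle,\ E_{q_0}(u)=x\}$. This set is parametrized by initial covectors through normal extremals, so $\exp_{q_0}$ becomes the projection $(u,\xi,x)\mapsto x$. Its tangent space is the solution set of the linearized system $Q(u')=\langle\xi',D_uE_{q_0}(\cdot)\rangle$, $D_uE_{q_0}(u')=x'$.
\begin{itemize}
\item A kernel vector of $d\exp$ is then a nonzero $(u',\xi',0)\in TC$. Automatically $u'\in\ker D_uE_{q_0}$ and $Q(u')$ vanishes on $\ker D_uE_{q_0}$. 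Moreover $u'\neq0$: otherwise $\langle\xi',D_uE_{q_0}(\cdot)\rangle=0$, and surjectivity forces $\xi'=0$.
\item Conversely, a degenerate direction $u'$ produces $\xi'$, because a functional vanishing on $\ker D_uE_{q_0}$ factors through the surjection $D_uE_{q_0}$. This gives a nonzero element of $TC$ over $x'=0$.
\end{itemize}

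Your suggested fix, differentiating $e^{t\vec H}(\lambda_0+\varepsilon\eta_0)$, is precisely the computation of $TC$ along this parametrization. Differentiating the identity $D_{u_\varepsilon}J=\langle\lambda^\varepsilon_1,D_{u_\varepsilon}E_{q_0}(\cdot)\rangle$, with $\lambda^\varepsilon_1=e^{\vec H}(\lambda_0+\varepsilon\eta_0)$, gives the linearized system with no explicit formula for $D^2_uJ$. That settles ``conjugate $\Rightarrow$ degenerate''.

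It yields only one inclusion, though. For ``degenerate $\Rightarrow$ conjugate'' you also need every solution of the linearized system to come from such a variation, i.e.\ $TC$ must be the full solution space. The paper takes this from the Agrachev--Sarychev framework when it writes down $TC$; you would have to supply or cite it as well.
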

\begin{proof}
    Without loss of generality, we only prove the case for $s=1$. The general case for $s \in (0,1)$ follows directly from the reparametrization $u^s$. Let $q_0 = \gamma_u(0)$ be the starting point and $x = \gamma_u(1)$ be the endpoint. We choose local coordinates around $x$ such that $\lambda=(\xi,x)$ in $T^*M.$ It is sufficient to consider the exponential map restricted to the critical manifold:
    \begin{equation}
        C=\{(u,\xi,x) \mid D_uJ-\langle \xi, D_uE_{q_0}(\cdot)\rangle =0, E_{q_0}(u)=x\}.
    \end{equation}
    Then the tangent space of $C$ at $(u,\xi,x)$ is given by the linearized equations:
    \begin{equation}
        T_{(u,\xi,x)}C=\left\{(u',\xi',x') \mathrel{}\middle|\mathrel{} \begin{cases}
            D^2_uJ(u',\cdot)-\langle \xi, D^2_uE_{q_0}(u',\cdot)\rangle - \langle \xi', D_uE_{q_0}(\cdot)\rangle =0\\
            D_uE_{q_0}(u')=x'
        \end{cases}\right\}.
    \end{equation}
    We introduce the following linear operators:
    \begin{equation}
        \begin{aligned}
            Q(u')&=D^2_uJ(u',\cdot)-\langle \xi, D^2_uE_{q_0}(u',\cdot)\rangle;\\
            B(\xi')&=\langle \xi', D_uE_{q_0}(\cdot)\rangle.
        \end{aligned}
    \end{equation}
    When $x=\gamma_u(1)$ is conjugate to $q_0$, the differential of the exponential map
    \begin{equation}
        \begin{aligned}
            d\exp:T_{(u,\xi,x)}C &\to T_xM\\
            (u',\xi',x') &\mapsto x'
        \end{aligned}
    \end{equation}
    is degenerate. This implies there exists a non-trivial tangent vector $(u',\xi',0) \in T_{(u,\xi,x)}C$. 
    Since $x'=0$, we immediately have $D_uE_{q_0}(u') = 0$, meaning $u' \in \ker D_uE_{q_0}$. 
    
    Since the geodesic contains no abnormal segments, the differential $D_uE_{q_0}$ is not degenerate. 
    If $u'=0$, the first equation in the tangent space would yield $\langle \xi', D_uE_{q_0}(\cdot)\rangle = 0$, 
    which implies $\xi'=0$ by surjectivity.
     This contradicts the fact that $(u',\xi',0)$ is a non-trivial vector. Thus, we must have $u' \neq 0$.
Furthermore, the first equation of the tangent space yields the operator identity $Q(u') - B(\xi') = 0$. 
    When evaluating this identity on any test vector $v \in \ker D_uE_{q_0}$, the term $B(\xi')(v) = \langle \xi', D_uE_{q_0}(v)\rangle$ vanishes identically. Thus, we obtain $Q(u')(v) = 0$ for all $v \in \ker D_uE_{q_0}$. 
    Since $u' \in \ker D_uE_{q_0}$ is non-zero, this precisely means that the quadratic form $Q$, which indeed coincides with the Hessian of the restricted energy functional by Lemma~\ref{lmA1}, is degenerate on $\ker D_uE_{q_0}$. The opposite implication is analogous, which concludes the proof.
\end{proof}

Next, we will show that conjugate points are isolated provided that the manifold contains no abnormal segments along the geodesic. This good phenomenon indeed arises from the convexity of the Hamiltonian and the absence of abnormal segments.
\begin{proposition}[See \cite{Agr}, Corollary 8.51]\label{prop:1}
    Let $\gamma:[0,1]\to M$ be a normal geodesic that does not contain abnormal segments. Then the conjugate points to $\gamma(0)$ along $\gamma(t)$ are isolated, i.e., the set 
    \begin{equation}
        T_c:=\{t>0 \mid \gamma(t)\text{ is conjugate to }\gamma(0)\},
    \end{equation}
    is discrete.
\end{proposition}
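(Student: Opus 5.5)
We would prove the discreteness of $T_c$ by the standard Agrachev argument that conjugate times are zeros of an analytic-like family of quadratic forms whose nullity can only jump at isolated points. The first step is to express, for each $s\in(0,1]$, the Hessian $Q_s:=\mathrm{Hess}_{u^s}(J|_{E^{-1}_{q_0}(\gamma(s))})$ through the reparametrization $u^s(t)=su(st)$. After this rescaling all the $Q_s$ become restrictions of one fixed quadratic form to the nested family of subspaces $\ker D_uE_{q_0}\cap L^2([0,s];\mathbb{R}^k)$ (controls supported on $[0,s]$). By the preceding Proposition, $s\in T_c$ exactly when $Q_s$ is degenerate.

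The second step exploits convexity. Since $F_p^2$ is strongly convex, the term $D^2_uJ(v,v)=\int_0^1 d^2F^2(u(t))(v(t),v(t))\,dt$ is an $L^2$-coercive form, comparable to $\|v\|^2_{L^2}$. By Proposition~\ref{var_endpoint}, the term $\langle\lambda,D^2_uE_{q_0}(v,v)\rangle$ is a double integral of a bounded kernel over $\{s'\le t\}$, hence a compact (Hilbert--Schmidt) perturbation. Therefore each $Q_s$ has the form ``coercive plus compact'' on a Hilbert space. Its kernel is finite dimensional, and its negative index $\operatorname{ind}^-Q_s$ is finite and monotone nondecreasing in $s$, because the domains are nested.

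The third step is the key one and follows \cite[Chapter 8]{Agr}. We show that at a degenerate time $\bar s$,
\[
\operatorname{ind}^-Q_{\bar s+\varepsilon}\ \ge\ \operatorname{ind}^-Q_{\bar s}+\dim\ker Q_{\bar s}
\]
for small $\varepsilon>0$, while the index is locally constant away from degenerate times. Thus the index jumps by at least one at each conjugate time. Since on $[0,1]$ the index of $Q_1$ is finite, only finitely many jumps can occur in any compact subinterval, and $T_c$ is discrete.

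To get the strict jump, take $v\in\ker Q_{\bar s}$, extended by zero past $\bar s$. The assumption that there are no abnormal segments, applied to $\gamma|_{[\bar s,\bar s+\varepsilon]}$, ensures that $D_uE$ restricted to controls supported on $[\bar s,\bar s+\varepsilon]$ is nondegenerate relative to the kernel constraint. This lets us correct $v$ by a small control $w$ supported on $[\bar s,\bar s+\varepsilon]$ so that $v+w$ stays in the kernel of the endpoint differential, and $Q_{\bar s+\varepsilon}(v+\delta w)<0$ for a suitable sign of $\delta$. The kernel element $v$ cannot stay a null direction, since otherwise its extension by zero would satisfy the Jacobi equation with zero data on an interval. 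By uniqueness for the Hamiltonian (Jacobi) system, that would force $v$ to be the variation of an abnormal segment on $[\bar s,\bar s+\varepsilon]$. The same abnormal-segment exclusion applied near $0$ gives $\operatorname{ind}^-Q_s=0$ and nondegeneracy for small $s$, so $T_c$ does not accumulate at $0$ either.

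The main obstacle is this strict-jump step: making rigorous that a null vector of $Q_{\bar s}$ produces a negative direction for $Q_{\bar s+\varepsilon}$. It requires converting ``no abnormal segments'' into a quantitative Legendre/Goh-type nondegeneracy on short intervals. I would first try Agrachev's argument using the strong Legendre condition, which holds here by the strong convexity of $\frac12F^2$.
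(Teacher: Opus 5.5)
Your Morse-index strategy is a genuinely different route from the paper's, and it can be made to work, but the strict-jump step as you wrote it does not work.

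\emph{Comparison of the two routes.} The paper derives the proposition from Theorem~\ref{thm:A1}, which is a local, finite-dimensional argument in the Lagrange Grassmannian. Near a conjugate time $t_0$, the curve $e^{-t\vec{H}}_*\mathcal{V}_t$ is the graph of a symmetric matrix $S_t$, and fibrewise convexity of $H$ makes $t\mapsto S_t$ monotone. So conjugate times accumulating at $t_0$ give a common kernel vector $\bar p$ of $S_t$ on a whole interval. The vertical Jacobi field generated by $(\bar p,0)$ then satisfies $B(t)p(t)=0$, i.e.\ it is an abnormal lift.

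You argue globally on the control side: $Q_s$ is coercive plus compact, its index is finite and nondecreasing, and it jumps at each conjugate time. This costs more functional analysis. When $F_p$ depends on $p$, $D^2_uJ$ also contains base-point terms, but these are compact and harmless. In return it proves more: $\#(T_c\cap(0,1))\le\operatorname{ind}^-Q_1<\infty$, and accumulation from either side or at $0$ is excluded at once. The monotonicity argument, by contrast, is written for $t_i\downarrow t_0$ and must be rerun with the opposite sign normalization of $S_{t_0}$ for left accumulation. The decisive input is the same in both routes: a Jacobi field that is vertical, with vanishing control variation, on an interval is an abnormal lift.

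\emph{The failing step.} The extension by zero of $v\in\ker Q_{\bar s}$ already lies in $\ker D_uE_{q_0}$. So requiring $v+w$ to stay in the kernel forces $w\in\ker D_uE_{q_0}$, and for such $w$ supported in $[\bar s,\bar s+\varepsilon]$ one has $Q_{\bar s+\varepsilon}(v,w)=0$. To see this, realize the two variations by a concatenation $a(\epsilon_1)\# b(\epsilon_2)$, which is possible since there are no abnormal segments. Here $a(\epsilon_1)$ is a control on $[0,\bar s]$ whose trajectory ends at $\gamma(\bar s)$, and $b(\epsilon_2)$ is a control on $[\bar s,\bar s+\varepsilon]$ ending at $\gamma(\bar s+\varepsilon)$. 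Then $J$ is a sum of a function of $\epsilon_1$ and a function of $\epsilon_2$. The mixed derivative of $J$, which computes the Hessian at a critical point, therefore vanishes.

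Hence $Q(v+\delta w)=\delta^2Q(w)>0$ for small $\varepsilon$, because $Q$ is positive on controls supported on short intervals. A negative direction must also modify the control on $[0,\bar s]$.

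\emph{The fix.} It uses what you already wrote. If $v$ were in $\ker Q_{\bar s+\varepsilon}$, a Lagrange multiplier would produce a Jacobi field on $[0,\bar s+\varepsilon]$ that is vertical with zero control variation on $[\bar s,\bar s+\varepsilon]$. That is an abnormal lift of this segment unless $v=0$. So no nonzero element of $\ker Q_{\bar s}$ survives in $\ker Q_{\bar s+\varepsilon}$.

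Now apply the elementary lemma: if $V\subset W$ and $\ker(Q|_V)\cap\ker(Q|_W)=\{0\}$, then $\operatorname{ind}^-(Q|_W)\ge\operatorname{ind}^-(Q|_V)+\dim\ker(Q|_V)$. To prove it, pair a basis of $\ker(Q|_V)$ with vectors $w_j\in W$ chosen $Q$-orthogonal to a maximal negative subspace of $Q|_V$. This yields the jump, with the $w_j$ taken from the whole constraint space rather than from controls supported in $[\bar s,\bar s+\varepsilon]$.

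No Legendre/Goh-type estimate on short intervals is needed. Strong Legendre enters only through finiteness of the index.
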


This proposition is a direct corollary of the following theorem.
\begin{theorem}\label{thm:A1}
    Let $\gamma:[0,1]\to M$ be a normal extremal containing no abnormal segments. Assume there exists $t_0>0$ such that $t_i\downarrow t_0$ with $\gamma(t_i)$ conjugate to $\gamma(0)$ along $\gamma$. Then there exists $\varepsilon>0$ such that 
    \begin{itemize}
        \item for all $\tau\in[t_0,t_0+\varepsilon],$ $\gamma(\tau)$ is conjugate to $\gamma(0);$
        \item $\gamma|_{[t_0,t_0+\varepsilon]}$ is an abnormal extremal path.
    \end{itemize}
\end{theorem}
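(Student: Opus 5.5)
The plan is to follow the Agrachev--Sarychev argument for sub-Riemannian extremals (\cite[Chapter 8]{Agr}). It uses only the Hamiltonian formalism of Section 3, namely Lemma~\ref{cjlm}, together with the convexity $B(t)\ge 0$ in the Jacobi equation \eqref{jeq}.

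\textbf{Step 1: passing to a limit of kernels.} By Lemma~\ref{cjlm}, $\gamma(t_i)$ is conjugate to $\gamma(0)$ exactly when there is a nonzero $\xi_i\in\mathcal{V}_{\lambda_0}$ with $e^{t_i\vec H}_*\xi_i\in\mathcal{V}_{\lambda(t_i)}$. Normalize $|\xi_i|=1$ in a fixed Euclidean structure on $\mathcal{V}_{\lambda_0}$. Pass to a subsequence $\xi_i\to\xi$, and consider the subspaces $K_i=\{\xi\in\mathcal{V}_{\lambda_0}: e^{t_i\vec H}_*\xi\in\mathcal{V}_{\lambda(t_i)}\}$. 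Since the Grassmannian is compact, after a further subsequence the $K_i$ converge to a nonzero limit subspace $K\subset \mathcal V_{\lambda_0}$. By continuity, $K$ is contained in the kernel at $t_0$.

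\textbf{Step 2: the key derivative condition.} Write the Jacobi field $\mathcal{J}(t)=e^{t\vec H}_*\xi$ in a Darboux frame, so that its horizontal part $N_\xi(t)$ satisfies $N_\xi(t_i)=0$ for the approximating $\xi_i$. By Rolle/mean-value arguments, the difference quotients $(N_{\xi_i}(t_i)-N_{\xi_i}(t_0))/(t_i-t_0)$ should give $\dot N_\xi(t_0)=B(t_0)M_\xi(t_0)=0$; here $M_\xi(t_0)=0$ is impossible since $\mathcal{J}(t_0)\neq 0$. Consequently the vertical vector $\mathcal{J}(t_0)\in\mathcal{V}_{\lambda(t_0)}$ lies in $\ker B(t_0)=\ker d^2_{\lambda(t_0)}H|_{\mathcal V}$. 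Because $H$ is $2$-homogeneous and strongly convex in the $\mathcal{D}^*$ directions, this kernel is exactly $\ann(\mathcal{D})$ at $\gamma(t_0)$. So the covector $\mathcal{J}(t_0)$ annihilates $\mathcal{D}_{\gamma(t_0)}$.

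\textbf{Step 3: propagating to an interval.} Iterate with higher derivatives, i.e.\ a Taylor expansion of $N_{\xi_i}$ at $t_0$ with infinitely many zeros accumulating from the right. The aim is to show that the whole jet of $N_\xi$ at $t_0$ vanishes. This gives $\langle \mathcal{J}(t_0), (P_{t_0,\tau})^*\text{-transported } X_{j}\rangle\equiv 0$ for the brackets generated along the flow. Equivalently, the curve $\eta(\tau):=(P_{t_0,\tau}^{-1})^*\mathcal{J}(t_0)$ satisfies condition (A) of the Pontryagin Maximum Principle on $[t_0,t_0+\varepsilon]$, i.e.\ $\gamma|_{[t_0,t_0+\varepsilon]}$ is abnormal.

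There is an alternative route if infinite order is awkward. Use the index-form characterization from the preceding Proposition: the quadratic form $\text{Hess}_{u^\tau}(J|_{E^{-1}})$ has a Morse index and nullity that are semicontinuous in $\tau$. Infinitely many degeneracies accumulating at $t_0$ force the Legendre (Goh) condition to degenerate on a whole interval. By the standard Agrachev--Sarychev dichotomy, this happens only along an abnormal arc, and conjugacy then holds for all $\tau\in[t_0,t_0+\varepsilon]$. The first bullet, conjugacy of every $\gamma(\tau)$, follows once the fixed nonzero vertical vector $\xi\in K$ stays vertical along the abnormal stretch. This holds because on that stretch the Jacobi field coincides with the abnormal lift, which is vertical and is transported by the flow of $X_u$ alone.

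\textbf{Main obstacle.} The hard step is Step 3, upgrading accumulation of conjugate times to an actual abnormal segment. Analyticity is unavailable, since $H$ is only smooth off $\ann(\mathcal D)$, so the infinite-order-vanishing argument may fail. I would therefore attempt the quadratic-form route first, adapting \cite[Theorem 8.50]{Agr} essentially verbatim. That argument uses only the convexity of $H$ on the fibers and the structure of $D^2E$ given in Proposition~\ref{var_endpoint}, so it should transfer to the sub-Finslerian setting.
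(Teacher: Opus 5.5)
There is a genuine gap, and it is exactly where you locate the difficulty. Step 3 is the whole content of the theorem, and neither of your routes proves it.

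\begin{itemize}
\item The jet route fails for the reason you give yourself: vanishing of the full jet of $N_\xi$ at $t_0$ does not give $N_\xi\equiv 0$ on an interval in the smooth category.
\item The index-form route only appeals to ``semicontinuity of the Morse index'', ``degeneration of the Goh condition'' and an ``Agrachev--Sarychev dichotomy''. None of these supplies a mechanism that actually produces an interval.
\item Step 2 is not justified as written. Each $\xi_i$ lies in the kernel at $t_i$, not at $t_0$, so in general $N_{\xi_i}(t_0)\neq 0$. Your difference quotient therefore equals $-N_{\xi_i}(t_0)/(t_i-t_0)$, about which you know nothing; a Rolle argument needs two zeros of the same function.
\item Your derivation of the first bullet is circular. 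The Jacobi field starting from $\xi\in K$ stays vertical only if the abnormal lift on $[t_0,t_0+\varepsilon]$ starts precisely at $\mj(t_0)$, and that is what Step 3 was supposed to establish.
\end{itemize}

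The missing idea is the \emph{monotonicity of the Jacobi curve} $L(t):=e^{-t\vec{H}}_*\mathcal{V}_{\lambda(t)}$ in the Lagrange Grassmannian of $T_{\lambda_0}(T^*M)$. This is the integrated, not merely infinitesimal, form of the fiberwise convexity $B\ge 0$ you invoke. The argument runs as follows.
\begin{enumerate}
\item By \cite[Lemma 8.48]{Agr}, take Darboux coordinates $(p,x)$ in which $\mathcal{V}_{\lambda_0}=\{x=0\}$ and $L(t_0)=\{x=S_{t_0}p\}$, with $S_{t_0}=\mathrm{diag}(0_k,I_{n-k})\ge 0$.
\item For $t\in[t_0,t_0+\delta)$ one still has $L(t)=\{x=S_tp\}$ with $S_t$ symmetric. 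Convexity of $H$ gives $p^T\dot S_tp=\omega(\xi_t,\dot\xi_t)\ge 0$ for $\xi_t=(p,S_tp)$.
\item A \emph{single} conjugate time $t_1=t_i<t_0+\delta$ now suffices. Pick $\bar p\neq 0$ with $S_{t_1}\bar p=0$. Then for $t\in[t_0,t_1]$,
\[
0\le \bar p^TS_{t_0}\bar p\le \bar p^TS_t\bar p\le \bar p^TS_{t_1}\bar p=0 .
\]
Since $S_t\ge S_{t_0}\ge 0$, this forces $S_t\bar p=0$ on the whole interval.
\item Hence the one Jacobi field $e^{t\vec{H}}_*(\bar p,0)$ is vertical on $[t_0,t_1]$. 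This gives conjugacy of every $\gamma(\tau)$ there.
\item Writing this Jacobi field as $(p(t),x(t))$ in a Darboux moving frame, we have $x\equiv 0$ on $[t_0,t_1]$. The Jacobi equation then gives $B(t)p(t)=\dot x(t)=0$, so the nonzero covector $p(t)$ lies in $\ann(\mathcal{D}_{\gamma(t)})$. This is the abnormal lift.
\end{enumerate}

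Your Step 2 is the first-order shadow of this. In these coordinates it can be repaired from $p_i^TS_{t_0}p_i\ge 0$, $\dot S_{t_0}\ge 0$ and a Taylor expansion of $S_{t_i}p_i=0$. But first-order information at $t_0$ cannot be propagated to an interval without the monotonicity. Once you have the monotonicity, you need no limits of kernels, no jets, and no second-variation theory.
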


\begin{proof}
    Let $\lambda_t$ be the normal lift of $\gamma,$ and $\mathcal{V}_t$ denote the vertical subspace of $T_{\lambda_t}T^*M.$
    Suppose $\gamma(t_0)$ is conjugate to $\gamma(0),$ which implies 
    $e^{-t_0\vec{H}}_*\mathcal{V}_{t_0}\cap \mathcal{V}_0$ is non-trivial.
    Assume that $\dim(e^{-t_0\vec{H}}_*\mathcal{V}_{t_0}\cap \mathcal{V}_0)=k$ for some integer $k>0.$ Since both $\mathcal{V}_0$ and $e^{-t_0\vec{H}}_*\mathcal{V}_{t_0}$ are Lagrangian subspaces, there exists a Darboux basis $E_1,...,E_n,F_1,...,F_n$ of the symplectic vector space $T_{\lambda_0}(T^*M)$ such that (see \cite{Agr}, Lemma 8.48)
    \begin{align}
        e^{-t_0\vec{H}}_*\mathcal{V}_{t_0}&=\text{span}\{E_1,...,E_k,E_{k+1}+F_{k+1},...,E_n+F_n\};\\
        \mathcal{V}_0&=\text{span}\{E_1,...,E_n\}.
    \end{align}      
    We now work in the local coordinates $(p,x)\in\mathbb{R}^n\times\mathbb{R}^n$ introduced by this Darboux basis. In
     this setting, the subspace defined by the equation $\{x=0\}$ coincides with our vertical space $\mathcal{V}_0$. Meanwhile, we identify its natural Lagrangian complement defined by the equation $\{p=0\}$ as the algebraic horizontal space. By definition, we can express $e^{-t_0\vec{H}}_*\mathcal{V}_{t_0}$ as a graph parameterized by $p$:
    \begin{equation}
        e^{-t_0\vec{H}}_*\mathcal{V}_{t_0}=\left\{(p, x)\in \mathbb{R}^n\times\mathbb{R}^n \mathrel{}\middle|\mathrel{} x=S_{t_0}p, S_{t_0}=\begin{pmatrix} 0_k &0\\0& I_{n-k} \end{pmatrix} \right\}.
    \end{equation}
    Since transversal intersection is an open condition, any $n$-dimensional subspace sufficiently close to $e^{-t_0\vec{H}}_*\mathcal{V}_{t_0}$ 
    remains transversal to the horizontal space $\{p=0\}$. Hence, for $t>t_0$ close to $t_0$, 
    the evolving Lagrangian subspace $e^{-t\vec{H}}_*\mathcal{V}_{t}$ can still be represented 
    as a graph: 
    \begin{equation}
        e^{-t\vec{H}}_*\mathcal{V}_{t}=\{(p,S_tp) \mid p\in \mathbb{R}^n\},
    \end{equation}
    where $S_t$ is a symmetric matrix. In fact, since it is a Lagrangian subspace, the symplectic form 
    vanishes:
    \begin{equation}
        \omega((p_1,S_tp_1),(p_2,S_tp_2))=p^T_1S_tp_2-p_2^TS_tp_1=p_1^T(S_t-S_t^T)p_2=0.
    \end{equation}

    On the other hand, for a fixed $p$, set $\xi_t=(p, S_t p) \in e^{-t\vec{H}}_*\mathcal{V}_{t}$.
     By the nonnegativity of the Hamiltonian, the quadratic form associated with $S_t$ is non-decreasing: 
\begin{equation}
    0\le \omega(\xi_t,\dot{\xi}_t)=\omega((p,S_tp),(0,\dot{S}_tp))=p^T\dot{S}_tp.
\end{equation}
    This fact implies that if there exists a sequence $t_i\downarrow t_0$ conjugate to $\gamma(0),$ then for some $t_1>t_0$, there exists a non-zero vector $\bar p$ such that $S_{t_1}\bar p=0.$ Consequently, we have
    \begin{equation}
        \bar{p}^TS_{t_0}\bar p\le \bar{p}^TS_t\bar p\le\bar{p}^TS_{t_1}\bar{p}=0.
    \end{equation}
    This forces $S_t \bar{p} \equiv 0$ for all $t\in [t_0,t_1].$ Therefore, any $\gamma(t)$ on this interval is conjugate to $\gamma(0).$

    Now we construct an abnormal lift of $\gamma|_{[t_0,t_1]}$. Consider the specific initial tangent 
    vector $\xi_0 = (\bar{p}, 0)^T \in \mathcal{V}_0$. We define the corresponding Jacobi field along 
    the extremal as $J(t) := e^{t\vec{H}}_* \xi_0$, and the vanishing spatial component, i.e. $x(0)=0$,
    makes $J(t)$ a strictly vertical Jacobi field. 
    Recalling that
    \begin{equation}
        \begin{pmatrix} \dot{p} \\ \dot{x} \end{pmatrix} =
        \begin{pmatrix}
            -A &-R\\B & A^T
        \end{pmatrix}
        \begin{pmatrix}
            p(t)\\ 0
        \end{pmatrix},
    \end{equation}
    we obtain $\dot{x}(t) = B(t)p(t) = 0$. Via the explicit expression \eqref{abr}, 
    the kernel of $B(t)$ coincides exactly with the annihilator 
    $\text{Ann}(\mathcal{D}_{\gamma(t)})$. Hence, this $p(t)$ 
    forms an abnormal extremal path lifting $\gamma|_{[t_0,t_1]}$, which completes the proof.
\end{proof}

For a fixed $s \in (0,1]$, we define a norm on the control space:
\begin{equation}
    \|v\|_{s}:=\left(\int^1_0\frac{1}{2}\frac{\partial^2F^2}{\partial u^i\partial u^j}\Big|_{u^s}(v,v)dt\right)^{1/2}.
\end{equation}
To study the degeneracy of the restricted Hessian of the energy functional, we introduce the following notation:
\begin{equation}
    \alpha(s):=\inf\{\|v\|^2_{s}-\langle\lambda^s,D^2_{u^s}E_{q_0}(v,v)\rangle \mid \|v\|^2_s=1, v\in\ker D_{u^s}E_{q_0}\}.
\end{equation}
Obviously, $\alpha(s)$ denotes the minimal eigenvalue of the restricted Hessian $\text{Hess}_{u^s}(J|_{E^{-1}_{q_0}(\gamma_u(s))})$. Conjugate points appear exactly when $\alpha(s)$ equals $0$. We have the following properties.

\begin{lemma}\label{lm:1}
    Assume that a normal extremal trajectory $\gamma_u:[0,1]\to M$ contains no abnormal segments. Set $q_0=\gamma_u(0)$. Then $\alpha(s)$ is continuous on $(0,1]$ and has the following properties:
    \begin{itemize}
        \item[(1)] $\alpha(0):=\lim\limits_{s\to 0^+}\alpha(s)=1$;
        \item[(2)] $\alpha(s)=0$ implies that $\text{Hess}_{u^s}(J|_{E^{-1}_{q_0}(\gamma_u(s))})$ is degenerate;
        \item[(3)] $\alpha(s)$ is monotone decreasing;
        \item[(4)] if $\alpha(\bar s)=0$ for some $\bar s>0,$ then $\alpha(s)<0 $ for all $s>\bar s.$
    \end{itemize}
\end{lemma}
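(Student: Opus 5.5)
The strategy is to follow the Agrachev--Sarychev analysis of the restricted Hessian, using the rescaled family $u^s(t)=su(st)$. The key point is that after rescaling, the second variation term $\langle\lambda^s,D^2_{u^s}E_{q_0}(v,v)\rangle$ becomes a quadratic form in $v$ supported on a shrinking time interval. Concretely, I will substitute $v(t)=w(st)$ (with a suitable normalization) and change variables in the double integral of Proposition~\ref{var_endpoint}. This rewrites $\alpha(s)$ as the infimum of $\|w\|^2-\langle\lambda,D^2_uE_{q_0}(w,w)\rangle$ over $w\in L^2([0,1];\mathbb{R}^k)$ supported in $[0,s]$ and lying in $\ker D_uE_{q_0}$. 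Here the norm is induced by the fundamental tensor of $F^2$ along $u$, which is uniformly positive definite on the compact set of values of $u$ away from the annihilator. Once this reformulation is established, most of the properties follow.

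\emph{Monotonicity (3).} The admissible class grows as $s$ increases, since functions supported in $[0,s]$ are also supported in $[0,s']$ for $s'>s$. Hence $\alpha$ is non-increasing.

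\emph{Continuity on $(0,1]$.} The quadratic form $Q$ is a compact perturbation of the coercive form $\|\cdot\|^2$, because $D^2E$ has a Volterra-type kernel with continuous integrand. The constraint spaces $\ker D_{u^s}E_{q_0}$ vary continuously in $s$, since $D_{u^s}E_{q_0}$ is surjective for every $s>0$ by the absence of abnormal segments. Together these give continuity of the minimal eigenvalue. I would make this explicit by fixing a bounded right inverse of the differential and projecting test functions between nearby kernels.

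\emph{Property (1).} I will estimate $|\langle\lambda,D^2_uE(w,w)\rangle|\le C\|w\|_{L^1(0,s)}^2\le Cs\|w\|^2$ for $w$ supported in $[0,s]$. This gives $\alpha(s)\ge 1-Cs\to1$. The upper bound $\alpha(s)\le 1$ holds because the kernel is infinite dimensional, so one can choose $w$ nearly orthogonal to the Volterra part.

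\emph{Property (2).} Since $Q$ is coercive plus compact, a vanishing infimum is attained. A minimizer of a nonnegative form at level $0$ lies in its kernel, so the form is degenerate.

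\emph{Property (4).} This is the main obstacle. Let $w_0$ be a minimizer at $\bar s$, so $Q(w_0)=0$, $w_0\neq0$, supported in $[0,\bar s]$. For $s>\bar s$, $w_0$ remains admissible and gives $\alpha(s)\le0$. To get strict negativity I argue by contradiction. If $\alpha(s)=0$, then $w_0$ would be a minimizer for the larger problem. Its Euler--Lagrange equation on $(\bar s,s)$, where $w_0\equiv0$, then forces the associated Jacobi field (equivalently, the covector $\xi'$ from the tangent space to $C$) to satisfy $\langle\xi'(t),X_i(\gamma(t))\rangle=0$ for all $t\in(\bar s,s)$. This makes $\gamma|_{[\bar s,s]}$ abnormal, unless $\xi'\equiv0$, in which case uniqueness of solutions makes $w_0$ trivial. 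Either alternative is a contradiction. This is the same mechanism as in Theorem~\ref{thm:A1}: a vertical Jacobi field in $\ker B(t)$ produces an abnormal lift. The delicate part is justifying the Lagrange-multiplier form of the Euler--Lagrange equation on the constrained space, which I would handle via surjectivity of $D_uE$ restricted to controls supported in $[0,s]$.
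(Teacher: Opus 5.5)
Your proposal is correct, but it is organized differently from the paper's proof, most substantially in (4).

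\textbf{How the paper argues.} The paper works throughout with the rescaled controls $u^s$ and the compact operators $Q_s$. Property (1) comes from $Q_s\to 0$, and (3) from the explicit rescale-and-extend-by-zero map $v\mapsto\hat v$. Property (4) is obtained indirectly. Monotonicity forces $\alpha\equiv 0$ on $[\bar s,s]$. Then (2) and the Hessian--conjugacy proposition make every $\gamma_u(t)$, $t\in[\bar s,s]$, conjugate to $q_0$. This contradicts the discreteness of conjugate times (Proposition~\ref{prop:1}, i.e.\ Theorem~\ref{thm:A1}).

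\textbf{How you argue.} You undo the rescaling once and for all, so $\alpha(s)$ becomes the infimum of a single $s$-independent functional over the nested spaces $\ker D_uE_{q_0}\cap L^2([0,s];\mathbb{R}^k)$. This makes (3) immediate; it is the paper's $\hat v$ construction in unscaled form. It also gives (1) with the explicit rate $|\alpha(s)-1|=O(s)$.

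For (4) you argue directly. The normalized minimizer $w_0$ at $\bar s$ lies in the kernel of the Hessian on $[0,s]$. Testing the Lagrange-multiplier identity against controls supported in $(\bar s,s)$ shows that $(P_{t,1})^*\xi'$ annihilates $\mathcal{D}_{\gamma_u(t)}$ there. This is an abnormal lift of $\gamma_u|_{[\bar s,s]}$ unless $\xi'=0$. In that case the associated Jacobi field has zero endpoint data, so it vanishes identically and $w_0=0$.

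\textbf{The step you flagged as delicate.} It is easy. $D_uE_{q_0}$ has finite-dimensional range, so any continuous functional vanishing on its kernel factors through it; surjectivity is needed only for uniqueness of $\xi'$. Moreover, the mixed second derivative of $E_{q_0}$ in the directions $(w_0,v)$ vanishes, because the first-order displacement produced by $w_0$ at time $\bar s$ is already zero. Hence no correction term enters the covector.

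\textbf{What each route buys.} Yours is self-contained. It inlines the ``vertical Jacobi field in $\ker B$ is abnormal'' mechanism of Theorem~\ref{thm:A1}, instead of passing through discreteness of conjugate points and the Hessian--conjugacy equivalence on a whole interval. The paper's route is shorter once the appendix's earlier results are available.
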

\begin{proof}
    We follow the standard argument in \cite[Lemma 8.55]{Agr}. We construct a bounded compact linear operator $Q_s$ on the control space from the second variation of the endpoint map such that 
    \begin{equation}
        \langle (I-Q_s)v,v\rangle_s:=\|v\|^2_s-\langle\lambda^s, D^2_{u^s}E_{q_0}(v,v)\rangle.
    \end{equation}
    Let us first prove the continuity of $\alpha(s)$. From the explicit formula for the second variation of the endpoint map (c.f. Proposition~\ref{var_endpoint}), the mapping $s\mapsto Q_s$ is continuous in the operator norm topology. On the other hand, since there are no abnormal segments, the differential $D_{u^s}E_{q_0}$ is surjective. Therefore, its kernel $\ker D_{u^s}E_{q_0}$ forms a closed subspace of constant codimension. We can find a continuous family of isometries $\phi_s:\ker D_{u^s}E_{q_0}\to \mathcal{H}$ onto a fixed Hilbert space $\mathcal{H}$, such that 
    \begin{equation}
        \alpha(s)=1-\sup\{\langle \phi_s\circ Q_s \circ \phi^{-1}_s w,w\rangle_{\mathcal{H}} \mid \|w\|_{\mathcal{H}}=1, w\in \mathcal{H}\},
    \end{equation}
    where $\tilde{Q}_s=\phi_s\circ Q_s \circ \phi^{-1}_s$ is a continuous one-parameter family of symmetric and compact operators on $\mathcal{H}$. The supremum term is exactly the maximal eigenvalue of $\tilde{Q}_s$, which depends continuously on $s$ since $\tilde{Q}_s$ is continuous in the norm topology (see \cite[V Theorem 4.10]{kato}). Therefore, $\alpha(s)$ is continuous.

    For the first claim (1), by a standard rescaling, we have
    \begin{equation}
        D^2_{u^s}E_{q_0}(v,v)=\iint_{0\le \tau\le t\le 1}[(P^{u^s}_{\tau,1})_*X_{v(\tau)},(P^{u^s}_{t,1})_*X_{v(t)}]|_{\gamma_u(s)}d\tau dt,
    \end{equation}
    and $\lambda^s=s(P^u_{s,1})^*\lambda$. It directly follows that $Q_s\to 0$ when $s\to 0^+,$ hence $\tilde{Q}_s\to 0.$ Thus $\alpha(0) = 1$.

    Now let us show that the infimum in the definition of $\alpha(s)$ is attained, which serves as a foundation for properties (2) and (3). For a fixed $s$, choose a minimizing sequence $v_n \in \ker D_{u^s}E_{q_0}$ such that $\|v_n\|_s = 1$ and
    \begin{equation}
        \|v_n\|^2_s-\langle Q_s(v_n),v_n\rangle_s\to \alpha(s), \quad \text{as } n\to \infty.
    \end{equation}
    Up to the extraction of a subsequence, we can find a limit $\bar v$ such that $v_n\rightharpoonup \bar v$ weakly in $L^2.$ Since $D_{u^s}E_{q_0}$ is a continuous linear operator, the weak limit satisfies $\bar v\in \ker D_{u^s}E_{q_0}$. Moreover, by the lower semi-continuity of the norm with respect to weak convergence, we have $\|\bar v\|_s\le 1.$ By the compactness of $Q_s$, it maps weakly convergent sequences to strongly convergent sequences, yielding $\langle Q_s(v_n),v_n\rangle_s \to \langle Q_s(\bar v),\bar v\rangle_s$. Thus
    \begin{equation}
        \left\|\frac{\bar v}{\|\bar v\|_s}\right\|^2_s-\frac{1}{\|\bar v\|^2_s}\langle Q_s(\bar v),\bar v\rangle_s \le \alpha(s) + \left(\frac{1}{\|\bar v\|^2_s}-1\right)(\alpha(s)-1).
    \end{equation}
    Since $\alpha(s) \le 1$, if $\|\bar{v}\|_s < 1$, the right-hand side would be strictly less than $\alpha(s)$, which contradicts the definition of $\alpha(s)$ as an infimum. Hence, we must have $\|\bar v\|_s = 1$, meaning $\alpha(s)$ is attained at $\bar v$.

    To prove the second claim (2), if $\alpha(s)=0$, since the infimum is attained at some normalized vector $\bar v \neq 0$, 
    $I-Q_s$ is a positive semi-definite operator, which implies $(I-Q_s)\bar v = 0$. This means $I-Q_s$ is degenerate, and thus $\text{Hess}_{u^s}(J|_{E^{-1}_{q_0}(\gamma_u(s))})$ is degenerate.

    For the third claim (3), fixing $0 < s \le s'\le 1$ and $v\in\ker D_{u^s}E_{q_0}$ with $\|v\|_s=1$, we set 
    \begin{equation}
        \hat{v}(t):=\begin{cases}\sqrt{\frac{s'}{s}}v\left(\frac{s'}{s}t\right), & 0\le t\le \frac{s}{s'}\\
            0, & \frac{s}{s'}< t\le 1.
        \end{cases}
    \end{equation} 
    Then one can verify that $\|\hat{v}\|_{s'}=1$ and $\hat{v}\in \ker D_{u^{s'}}E_{q_0}$. Furthermore, a direct change of variables yields $\langle\lambda^s,D^2_{u^s}E_{q_0}(v,v)\rangle=\langle\lambda^{s'},D^2_{u^{s'}}E_{q_0}(\hat{v},\hat{v})\rangle.$ This implies $\alpha(s')$ is bounded above by the value at $\hat{v}$, which equals $\alpha(s)$, which conclude that $\alpha(s)$ is monotone decreasing on $(0,1]$.

    For the last claim (4), since $\alpha(s)$ is monotone decreasing, if $\alpha(\bar s)=0$, then $\alpha(s) \le 0$ for all $s > \bar s$. Suppose by contradiction that $\alpha(s) = 0$ for some $s > \bar s$. By monotonicity, $\alpha$ would be identically zero on the entire interval $[\bar s, s]$. This would imply that every point $\gamma_u(t)$ for $t \in [\bar s, s]$ is conjugate to $\gamma_u(0)$, which directly contradicts Proposition~\ref{prop:1} . Therefore, we must have $\alpha(s) < 0$ for all $s > \bar s$.
\end{proof}

Theorem~\ref{conjugatepoint} can be immediately concluded from the above discussion. We now give the proof.
\begin{proof}[Proof of Theorem~\ref{conjugatepoint}]
    We proceed by contradiction. Assume there exist $s,s'\in[0,1]$ such that $\gamma(s')$ 
    is conjugate to $\gamma(s)$ with $|s'-s|<1$. Restricting our attention to the segment $\gamma|_{[s,s']}$ and 
    applying (4) of Lemma~\ref{lm:1}, we deduce that the Hessian of the energy functional $J$ 
    possesses a negative eigenvalue. Therefore, we can construct a variation joining $\gamma(s)$ to $\gamma(s')$ with a strictly shorter length, 
    contradicting the assumption that $\gamma$ is a minimizing geodesic.
\end{proof}

\section{Positivity}
Here we show a crucial lemma in the proof of Theorem~\ref{jest}, which makes it possible 
to apply Minkowski's determinant theorem.

\begin{proposition}\label{propB1}
    Suppose $x,y\in M$ and there exists a unique geodesic connecting $x$ and $y$ that contains no abnormal segments, 
    given by 
    $\gamma(t)=\exp_x(td_x\phi)$, where $\phi$ is twice differentiable at $x$ and satisfies
    \begin{equation}
        \frac{1}{2}d^2_{SF}(x,y)=-\phi(x),\quad \text{and}\quad \frac{1}{2}d_{SF}^2(z,y)\ge-\phi(z),\quad \forall z\in M.
    \end{equation}
    Then for all $s\in (0,1)$ we have:
        \begin{itemize}
            \item[(a)] $\det(N^{\mathrm{V}}_0(t)^{-1})>0$ for all $t\in (0,1)$;
            \item[(b)] $N^{\mathrm{V}}_0(t)^{-1}N^{\mathrm{V}}_s(t)N^{\mathrm{V}}_s(0)^{-1}\ge 0$ for all $t\in(0,s]$;
            \item[(c)] $N^{\mathrm{V}}_0(t)^{-1}N(t)\ge 0$ for all $t\in(0,1)$.
        \end{itemize}
    Furthermore, if $\gamma(1)$ is not conjugate to $\gamma(0)$, the above properties hold for all $s\in(0,1]$
    and $t\in (0,1].$
\end{proposition}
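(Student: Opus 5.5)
The plan is to follow Barilari--Rizzi (Lemma 29 in \cite{BR}) and phrase everything through the Riccati equation \eqref{riccati_eq} and the comparison Theorem~\ref{RcComthm}. The object to track is the "inverse" matrix $W(t):=N(t)M(t)^{-1}$, or equivalently $N^{-1}M$, wherever it is defined; for a Lagrangian family this matrix is symmetric. I will use three families:
\[
\mathbf{J}^{\mathrm V}_0(t),\qquad \mathbf{J}^{\mathrm V}_s(t),\qquad \mathbf{J}(t)\ \text{with}\ \mathbf{J}(0)=\begin{pmatrix} d^2_x\phi\\ I\end{pmatrix}.
\]
For each of them I will study $S=MN^{-1}$ where $N$ is invertible.

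\textbf{Step (a).} Theorem~\ref{conjugatepoint} shows that $\gamma(t)$ is not conjugate to $\gamma(0)$ for $t\in(0,1)$, since the curve is minimizing and has no abnormal segments. Hence $N^{\mathrm V}_0(t)$ is invertible there, and $\det N^{\mathrm V}_0$ has constant sign on $(0,1)$. To fix the sign I look at small $t$. From \eqref{jeq} we get $N^{\mathrm V}_0(t)=\int_0^t B$ plus higher order terms, with $B\ge0$. Because $B$ may be degenerate, I will instead use that $S_0(t):=M^{\mathrm V}_0N^{\mathrm V}_0(t)^{-1}$ satisfies $S_0(t)\to+\infty$ as $t\downarrow0$, in the sense of quadratic forms on all of $\mathbb R^n$. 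This uses the bracket-generating condition together with the absence of abnormal segments, exactly as in the sub-Riemannian case, since it only involves the Hamiltonian flow. From this, $\det N^{\mathrm V}_0(t)>0$ near $0$ in the frame normalization, and therefore on all of $(0,1)$.

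\textbf{Step (c).} The optimality condition $\frac12 d^2_{SF}(z,y)\ge-\phi(z)$ with equality at $x$, combined with smoothness of $d^2_{SF}(\cdot,y)$ along the geodesic away from $t=1$, gives a comparison at time $0^+$. The curve $\gamma|_{[t,1]}$ and the function $\frac12 d^2_{SF}(\cdot,y)$ produce $S_{(t)}$, the Riccati solution associated with $\mathbf J^{\mathrm V}_1$, and the support inequality gives $d^2_x\phi\ge -\text{Hess}$ at the start. Applying Theorem~\ref{RcComthm} with identical coefficients, I obtain $S_{\mathbf J}(t)\le S_0(t)$ on $(0,1)$. Equivalently, $N^{\mathrm V}_0(t)^{-1}N(t)$ is a difference of symmetric forms that is nonnegative: one writes $N^{\mathrm V}_0(t)^{-1}N(t)=N^{\mathrm V}_0(t)^{-1}\big(S_0(t)-S_{\mathbf J}(t)\big)\cdots$ using the Wronskian identity $\mathbf J_1^T\Omega\mathbf J_2=\text{const}$. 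I intend to use this symplectic-invariance identity: $\omega(\mathbf J^{\mathrm V}_0,\mathbf J)$ is constant and equals $N(0)=I$ at $t=0$. It gives
\[
N^{\mathrm V}_0(t)^{T}M(t)-M^{\mathrm V}_0(t)^{T}N(t)=I,
\]
which yields $N^{\mathrm V}_0(t)^{-1}N(t)=\big(S_0(t)-S_{\mathbf J}(t)\big)^{-1}$, so symmetry and positivity follow from the comparison.

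\textbf{Step (b).} The same Wronskian trick applied to $\mathbf J^{\mathrm V}_0$ and $\mathbf J^{\mathrm V}_s$ gives
\[
N^{\mathrm V}_0(t)^{-1}N^{\mathrm V}_s(t)N^{\mathrm V}_s(0)^{-1}=\big(S_0(t)-S_s(t)\big)^{-1},
\]
where the constant is identified at $t=0$, using Remark~\ref{rm25} for invertibility. It then suffices to show $S_0(t)\ge S_s(t)$ for $t\in(0,s)$. Near $t=s$ this holds because $S_s\to-\infty$ as $t\uparrow s$. I will propagate it backward on $(0,s)$ by the comparison theorem in reversed time, the key point being that no conjugate point of $\gamma(s)$ lies in between (Theorem~\ref{conjugatepoint}). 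The endpoint $t=s$ is handled by continuity, since the matrix vanishes there.

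The final claim follows because, when $\gamma(1)$ is not conjugate to $\gamma(0)$, all of the invertibilities above extend to $t=1$ and $s=1$. \textbf{The main obstacle} I expect is the blow-up $S_0(t)\to+\infty$ and $S_s(t)\to-\infty$ at the endpoints when $B$ is degenerate. This needs the sub-Finslerian analog of the small-time asymptotics, which relies on the absence of abnormal segments and the strong convexity of $H$ off $\mathrm{Ann}(\mathcal D)$.
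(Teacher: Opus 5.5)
\textbf{Steps (a) and (b).} These are essentially fine.

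In (a), the blow-up you list as the main obstacle is immediate. The matrix $W:=N^{\mathrm V}_0(M^{\mathrm V}_0)^{-1}=S_0^{-1}$ solves $\dot W=B+A^TW+WA+WRW$ with $W(0)=0$. Comparison with the zero solution gives $W\ge 0$ near $0$, and $W$ is invertible on $(0,1)$ by Theorem~\ref{conjugatepoint}. This is exactly the paper's argument.

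In (b), you use invertibility of $S_0-S_s$ on $(0,s)$ together with its sign at one end. This is a legitimate alternative to the paper's route, which reduces (b), via Lemma~\ref{jcblemma}, to monotonicity of $N^{\mathrm V}_0(t)^{-1}N^{\mathrm H}_0(t)$.

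Your Wronskian identities need fixing, though this does not affect positivity. At $t=0$ your left-hand side equals $-I$. With the correct sign one gets $S_0-S_{\mathbf J}=\big(N^{\mathrm V}_0(t)^{T}\big)^{-1}N(t)^{-1}$. So $N^{\mathrm V}_0(t)^{-1}N(t)$ is congruent, via $N^{\mathrm V}_0(t)^{-1}$, to $(S_0-S_{\mathbf J})^{-1}$, not equal to it; the same holds in (b).

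\textbf{The gap is in Step (c).} There are two problems.

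First, you compare $d^2_x\phi$ with a Hessian of $\tfrac12 d^2_{SF}(\cdot,y)$ at $x$, i.e.\ with $S_1(0)=M^{\mathrm V}_1(0)N^{\mathrm V}_1(0)^{-1}$. Nothing in the hypotheses gives $y\notin\mathrm{Cut}(x)$. The function $d^2_{SF}(\cdot,y)$ is smooth near $\gamma(t)$ only for $t\in(0,1)$; at $x$ it is merely semiconcave, and $N^{\mathrm V}_1(0)$ may be singular. Assuming otherwise is circular, since Theorem~\ref{jest} uses (a)--(c) for $s<1$ precisely to prove that $\gamma(1)$ is not conjugate to $\gamma(0)$.

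Second, the inequality you actually state, $S_{\mathbf J}\le S_0$, can only be started from the blow-up of $S_0$ at $0^+$. It holds only on the maximal interval where $N(t)$ is invertible. On that interval positivity is automatic, because $S_0-S_{\mathbf J}$ is invertible and starts positive, so the support inequality plays no role. The real content of (c) is that $N(t)$ does not degenerate before time $1$, and this genuinely needs the support condition. In $\mathbb R^n$, $N^{\mathrm V}_0(t)^{-1}N(t)=t^{-1}(I+t\,\mathrm{Hess}_x\phi)$, which fails to be nonnegative for $t$ near $1$ as soon as $\mathrm{Hess}_x\phi$ has an eigenvalue below $-1$.

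\textbf{The missing idea.} Move the support property from $y$ to the intermediate point $\gamma(s)$, where the distance is smooth.
\begin{itemize}
\item By the triangle inequality, $\frac{1}{2s}d^2_{SF}(z,\gamma(s))\ge\frac12 d^2_{SF}(z,y)-\frac{1-s}{2}d^2_{SF}(x,y)$, with equality at $z=x$.
\item Set $c_s:=\frac{1}{2s}d^2_{SF}(\cdot,\gamma(s))$. For $s<1$ it is smooth near $x$ (uniqueness, no abnormal segments, and no conjugacy by Theorem~\ref{conjugatepoint}). Combined with $\frac12 d^2_{SF}(\cdot,y)+\phi\ge 0$, the function $c_s+\phi$ attains its minimum at $x$.
\item Its Hessian inequality at $x$ reads $M(0)-M^{\mathrm V}_s(0)N^{\mathrm V}_s(0)^{-1}\ge 0$.
\item By Lemma~\ref{jcblemma}, the left-hand side equals $M(0)+N^{\mathrm V}_0(s)^{-1}N^{\mathrm H}_0(s)=N^{\mathrm V}_0(s)^{-1}N(s)$.
\end{itemize}
This is (c) at time $s$ directly, with no Riccati comparison and no a priori invertibility of $N$. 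The final clause follows by the same argument with $s=1$, once $\gamma(1)$ is assumed non-conjugate to $\gamma(0)$.
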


Since the proof relies entirely on the Hamiltonian framework and is independent of the choice of directions, 
the procedure in the sub-Riemannian case can be directly adapted to the sub-Finslerian setting. Here we provide a sketch of 
the proof and highlight the main ideas.

\begin{proof}[Sketch of proof]
    We first prove (a). Since no point $\gamma(t)$ is conjugate to $\gamma(0)$ for $t\in(0,1)$, it suffices to 
    show that for small $t\in(0,1)$, $\det(N^{\mathrm{V}}_0(t)^{-1})>0$. To this end, by choosing a fixed Darboux moving frame $E_i,F_i$ ($i=1,\dots,n$) along the 
    normal extremal $\lambda(t)$ with $\lambda(0)=d_x\phi$, and setting $W(t)=N^{\mathrm{V}}_0(t)M^{\mathrm{V}}_0(t)^{-1}$, we obtain the Riccati equation:
    \begin{equation}
        \dot{W}=WRW+A^TW+WA+B,\quad W(0)=0.
    \end{equation}
    Applying the matrix Riccati comparison theorem (see \cite[Lemma A.4]{BR16}), we deduce that $W(t)\ge 0$ for small $t>0$.
    In addition, since $M^{\mathrm{V}}_0(0)=I_n$ and $N^{\mathrm{V}}_0(t)$ is non-degenerate for $t\in(0,1)$, we have $\det W(t)>0$
    for small $t>0$, yielding (a).

    To prove (b), by Lemma~\ref{jcblemma}, one can check that (b) is equivalent to showing that $S(t):=N^{\mathrm{V}}_0(t)^{-1}N^{\mathrm{H}}_0(t)$
    is non-increasing. This fact indeed holds due to the nonnegativity of Hamiltonian, and we refer to the proof in \cite[Lemma 74]{BR}, as the argument remains valid in our sub-Finslerian context.

    To prove (c), we note the following fact (see the proof in \cite[Claim 2.4]{RiemIneq}, whose approach can be carried out here):
    \begin{equation}\label{eq:B1}
        \frac{1}{2s}d^2_{SF}(z,\gamma(s))\ge \frac{1}{2}d^2_{SF}(z,y)-\frac{1-s}{2}d^2_{SF}(x,y), \quad \forall z\in M, s \in(0,1].
    \end{equation}

    Combining the inequality above with the condition that $\frac{1}{2}d^2_{SF}(z,y)+\phi(z)\ge 0$ for any $z\in M$, 
    with equality when $z=x$, we find that there exists a constant $C=C(s,x,y)$ such that 
    \begin{equation}
        \frac{1}{2s}d^2_{SF}(z,\gamma(s))+\phi(z)\ge C,\quad \forall z\in M, s\in(0,1),
    \end{equation}
    with equality when $z=x$.
    Therefore, the function $z\mapsto \frac{1}{2s}d^2_{SF}(z,\gamma(s))$ has a critical point at $z=x$, and its well-defined Hessian satisfies 
    \begin{equation}
        \mathrm{Hess}\left(\frac{1}{2s}d^2_{SF}(z,\gamma(s))+\phi(z)\right)\Big|_{z=x}\ge 0.
    \end{equation}
    Since $c_s(z):=\frac{1}{2s}d^2_{SF}(z,\gamma(s))$ is smooth in a neighborhood of $x$, we have 
    \begin{equation}
        \pi\circ e^{s\vec{H}}(d_z(-c_s))=\gamma(s),
    \end{equation}
    implying that $e^{s\vec{H}}_* (d^2_{x}(-c_s)) \subset \ker \pi_*$. Thus, the $n$-tuple 
    of Jacobi fields $e^{t\vec{H}}_*(-c_s)(X(0))$ satisfies  
    \begin{equation}\label{eq:B3}
        e^{t\vec{H}}_*\circ d^2_{x}(-c_s)(X(0))=E(t)\cdot M^v_s(t)N^v_s(0)^{-1}+F(t)\cdot N^v_s(t)N^v_s(0)^{-1},\quad t\in[0,1].
    \end{equation}
    On the other hand, via the definition of Jacobi matrix $\mathbf{J}(t)=\begin{pmatrix}
        M(t)\\ N(t)
    \end{pmatrix},$ we have
    \begin{equation}\label{eq:B4}
        e^{t\vec{H}}_*\circ d^2_{x}\phi(X(0))=E(t)\cdot M(t)+F(t)\cdot N(t),\quad t\in[0,1].
    \end{equation}
    Evaluating \eqref{eq:B3} and \eqref{eq:B4} at $t=0$, we have 
    \begin{equation}
        \begin{aligned}
            &d^2_x(-c_s)(X(0))=E(0)\cdot M^v_s(0)N^v_s(0)^{-1}+F(0)\cdot N^v_s(0)N^v_s(0)^{-1}\\
            &d^2_{x}\phi(X(0))=E(0)\cdot M(0)+F(0)\cdot N(0).
        \end{aligned}
    \end{equation}
    The non-negativity of the Hessian reads
    \begin{equation}
        0\le M(0)-M^v_s(0)N^v_s(0)^{-1}=M(0)+S(s).
    \end{equation}
    Via Lemma~\ref{jcblemma}, one can check that the above inequality actually implies (c), completing our proof.
\end{proof}

\section{Jacobian determinant of the Randers sub-Finslerian Heisenberg exponential map}

For the sake of completeness, we include the explicit computation concerning the Jacobian determinant of the Randers 
sub-Finslerian Heisenberg exponential map here, proving Lemma~\ref{dcr}. 
The derivation is essentially the same as that for the 
sub-Riemannian Heisenberg exponential map, the only modification being the replacement of $t\theta/2$ by $\frac{\psi-\psi_0}{2}$.

We consider the exponential map from the parameter space $(r, \theta, \psi_0)$ to the Heisenberg group. Let $\Delta \psi = \psi - \psi_0$, where $\psi$ is implicitly determined by the relation:
\begin{equation}
    \psi-\psi_0+a(\sin\psi-\sin\psi_0)=\theta t.
\end{equation}
For convenience, we introduce the following notations:
\begin{gather*}
    M=1+a\cos\psi, \quad L=1+a\cos\psi_0; \\
    \Delta S=\sin \psi-\sin\psi_0, \quad \Delta C=\cos\psi_0-\cos\psi, \quad K=1-\cos\Delta\psi.
\end{gather*}
Utilizing the implicit derivatives $\frac{\partial \psi}{\partial \theta}=\frac{t}{M}$ and $\frac{\partial\psi}{\partial\psi_0}=\frac{L}{M}$, the Jacobi matrix $\mathbf{J}=\frac{\partial (x,y,z)}{\partial (r,\theta,\psi_0)}$ is given by 
\begin{equation}
\mathbf{J}
=
\begin{pmatrix}
\dfrac{\Delta S}{\theta}
&
-\dfrac{r\Delta S}{\theta^2} + \dfrac{r t \cos\psi}{\theta M}
&
\dfrac{r(L\cos\psi - M\cos\psi_0)}{\theta M}
\\[1.2em]

\dfrac{\Delta C}{\theta}
&
-\dfrac{r\Delta C}{\theta^2} + \dfrac{r t \sin\psi}{\theta M}
&
\dfrac{r(L\sin\psi - M\sin\psi_0)}{\theta M}
\\[1.2em]

\dfrac{r(\Delta\psi-\sin\Delta\psi)}{\theta^2}
&
-\dfrac{r^2(\Delta\psi-\sin\Delta\psi)}{\theta^3} + \dfrac{r^2 t K}{2\theta^2 M}
&
\dfrac{r^2 K(L-M)}{2\theta^2 M}
\end{pmatrix}.
\end{equation}
Adding $\frac{r}{\theta}$ times the first column to the second column to eliminate the leading terms in the second column, we obtain
\begin{equation}
\mathbf{J}
=
\begin{pmatrix}
\dfrac{\Delta S}{\theta}
&
\dfrac{r t \cos\psi}{\theta M}
&
\dfrac{r(L\cos\psi - M\cos\psi_0)}{\theta M}
\\[1.2em]

\dfrac{\Delta C}{\theta}
&
\dfrac{r t \sin\psi}{\theta M}
&
\dfrac{r(L\sin\psi - M\sin\psi_0)}{\theta M}
\\[1.2em]

\dfrac{r(\Delta\psi-\sin\Delta\psi)}{\theta^2}
&
\dfrac{r^2 t K}{2\theta^2 M}
&
\dfrac{r^2 K(L-M)}{2\theta^2 M}
\end{pmatrix}.
\end{equation}
By extracting the common factor $\frac{1}{\theta}$ from the first column, $\frac{rt}{\theta M}$ from the second column, 
$\frac{r}{\theta M}$ from the third column, and finally pulling out $\frac{r}{\theta}$ from the third row, 
the determinant of the Jacobian becomes
\begin{equation}
    \det \mathbf{J} = \frac{r^3 t}{\theta^4M^2} \det A, 
\end{equation}
where the reduced matrix $A$ is defined as
\begin{equation}
    A :=
\begin{pmatrix}
\Delta S
&
\cos\psi
&
L\cos\psi - M\cos\psi_0
\\[1em]

\Delta C
&
\sin\psi
&
L\sin\psi - M\sin\psi_0
\\[1em]

\Delta\psi-\sin\Delta\psi
&
\dfrac{1}{2}K
&
\dfrac{1}{2}K(L-M)
\end{pmatrix}.
\end{equation}
To evaluate $\det A$, we expand along the third row, yielding
\begin{equation}\label{detA}
    \det A=(\Delta \psi-\sin\Delta \psi)C_{31} + \frac{1}{2}K C_{32} + \frac{1}{2}K(L-M)C_{33},
\end{equation}
where $C_{ij}$ represent the respective cofactors. Computing these minors explicitly, we obtain:
\begin{align*}
    C_{31} &= \det\begin{pmatrix}
        \cos\psi & L\cos\psi-M\cos\psi_0\\
        \sin\psi & L\sin\psi-M\sin\psi_0
    \end{pmatrix}
    = M\sin\Delta \psi; \\[1em]
    C_{32} &= -\det\begin{pmatrix}
        \Delta S &L\cos\psi-M\cos\psi_0\\
        \Delta C &L\sin\psi-M\sin\psi_0
    \end{pmatrix}
    = -K(L+M); \\[1em]
    C_{33} &= \det\begin{pmatrix}
        \Delta S &\cos\psi\\
        \Delta C&\sin\psi
    \end{pmatrix}
    = K.
\end{align*}
Substituting these minors back into \eqref{detA} gives
\begin{align}
    \det A &= (\Delta\psi - \sin\Delta\psi)(M\sin\Delta\psi) - \frac{1}{2}K^2(L+M) + \frac{1}{2}K^2(L-M) \notag \\
    &= M(\Delta\psi\sin\Delta\psi - \sin^2\Delta\psi) - MK^2 \notag \\
    &= M\big(\Delta\psi\sin\Delta\psi - (\sin^2\Delta\psi + K^2)\big).
\end{align}
Recalling that $K = 1 - \cos\Delta\psi$, we expand $K^2$ and utilize the identity $\sin^2\Delta\psi + \cos^2\Delta\psi = 1$, concluding that
\begin{equation*}
    \sin^2\Delta\psi + K^2 = \sin^2\Delta\psi + (1 - \cos\Delta\psi)^2 = 2 - 2\cos\Delta\psi.
\end{equation*}
By substituting this identity back, the expression further simplifies exactly to
\begin{equation}
    \det A = -M\big(2 - 2\cos\Delta \psi - \Delta\psi\sin\Delta\psi\big)=4M\sin\frac{\Delta\psi}{2}\big(\Delta \psi\cos\frac{\Delta\psi}{2}-\sin\frac{\Delta \psi}{2}\big).
\end{equation}
Combining the above computations, we obtain the formula for the Jacobian determinant, thereby proving Lemma~\ref{dcr}.

\medskip

\
\noindent{\sc Acknowledgment.}
The author is grateful to Professor Shin-ichi Ohta for his guidance, valuable comments, and support during this work, and to Kenshiro Tashiro for helpful discussions and suggestions. This work was supported by the China Scholarship Council under Grant No. 202506270114.

\phantomsection


\begin{thebibliography}{99}
    \bibitem{Agr}
Agrachev A, Barilari D, Boscain U.
A comprehensive introduction to sub-Riemannian geometry.
Cambridge University Press, 2019.

\bibitem{Ab}
Alabdulsada L M.
Sub-Randers metrics.
arXiv:2606.21922, 2026.

\bibitem{AK}
Alabdulsada L M, Kozma L.
Hopf-Rinow theorem of sub-Finslerian geometry.
arXiv:2301.13438, 2023.

\bibitem{Bac}
Bacher K.
On Borell-Brascamp-Lieb inequalities on metric measure spaces.
Potential Anal., 2010, 33(1): 1--15.

\bibitem{BCS}
Bao D, Chern S S, Shen Z.
An Introduction to Riemann-Finsler Geometry.
Springer, 2000.

\bibitem{BRS}
Bao D, Robles C, Shen Z.
Zermelo navigation on Riemannian manifolds.
J. Differential Geom., 2004, 66(3): 377--435.

\bibitem{BMR}
Barilari D, Mondino A, Rizzi L.
Unified synthetic Ricci curvature lower bounds for Riemannian and sub-Riemannian structures.
Mem. Amer. Math. Soc., 2026, 317(1613): viii+145 pp.

\bibitem{BR16}
Barilari D, Rizzi L.
Comparison theorems for conjugate points in sub-Riemannian geometry.
ESAIM Control Optim. Calc. Var., 2016, 22(2): 439--472.

\bibitem{BR}
Barilari D, Rizzi L.
Sub-Riemannian interpolation inequalities.
Invent. Math., 2019, 215(3): 977--1038.

\bibitem{BR2}
Barilari D, Rizzi L.
Bakry-Émery curvature and model spaces in sub-Riemannian geometry.
Math. Ann., 2020, 377(1): 435--482.

\bibitem{BMRT24}
Borza S, Magnabosco M, Rossi T, Tashiro K.
Measure contraction property and curvature-dimension condition on sub-Finsler Heisenberg groups.
arXiv:2402.14779, 2024.

\bibitem{BMRT1}
Borza S, Magnabosco M, Rossi T, Tashiro K.
Curvature exponent of sub-Finsler Heisenberg groups.
SIAM J. Math. Anal., 2025, 57(4): 3561--3586.

\bibitem{Borza}
Borza S, Rigoni C, Zoghlami O.
Hausdorff dimension and failure of synthetic curvature bounds in the sub-Lorentzian Heisenberg group.
arXiv:2509.06563, 2025.

\bibitem{BT23}
Borza S, Tashiro K.
Measure contraction property, curvature exponent and geodesic dimension of sub-Finsler $\ell^p$-Heisenberg groups.
arXiv:2305.16722, 2023.

\bibitem{CJT}
Chitour Y, Jean F, Trélat E.
Genericity results for singular curves.
J. Differential Geom., 2006, 73(1): 45--73.

\bibitem{CR}
Cannarsa P, Rifford L.
Semiconcavity results for optimal control problems admitting no singular minimizing controls.
Ann. Inst. H. Poincaré Anal. Non Linéaire, 2008, 25(4): 773--802.

\bibitem{RiemIneq}
Cordero-Erausquin D, McCann R J, Schmuckenschläger M.
A Riemannian interpolation inequality à la Borell, Brascamp and Lieb.
Invent. Math., 2001, 146(2): 219--257.

\bibitem{FF}
Fathi A, Figalli A.
Optimal transportation on non-compact manifolds.
Israel J. Math., 2010, 175(1): 1--59.

\bibitem{FR}
Figalli A, Rifford L.
Mass transportation on sub-Riemannian manifolds.
Geom. Funct. Anal., 2010, 20(1): 124--159.

\bibitem{Jea}
Jean F.
Control of nonholonomic systems: from sub-Riemannian geometry to motion planning.
Springer, 2014.

\bibitem{Jui2}
Juillet N.
Geometric inequalities and generalized Ricci bounds in the Heisenberg group.
Int. Math. Res. Not., 2009, 2009(13): 2347--2373.

\bibitem{Jui}
Juillet N.
Sub-Riemannian structures do not satisfy Riemannian Brunn-Minkowski inequalities.
Rev. Mat. Iberoam., 2021, 37(1): 177--188.

\bibitem{kato}
Kato T.
Perturbation theory for linear operators.
Springer, 1966.

\bibitem{LV}
Lott J, Villani C.
Ricci curvature for metric-measure spaces via optimal transport.
Ann. of Math., 2009, 169(3): 903--991.

\bibitem{MR}
Magnabosco M, Rossi T.
Failure of the curvature-dimension condition in sub-Finsler manifolds.
arXiv:2307.01820, 2023.

\bibitem{Mil}
Milman E.
The quasi curvature-dimension condition with applications to sub-Riemannian manifolds.
Comm. Pure Appl. Math., 2021, 74(12): 2628--2674.

\bibitem{MC}
Monti R, Serra Cassano F.
Surface measures in Carnot-Carathéodory spaces.
Calc. Var. Partial Differential Equations, 2001, 13(3): 339--376.

\bibitem{Oh0}
Ohta S.
Comparison Finsler Geometry.
Springer, 2021.

\bibitem{Oh2}
Ohta S.
On the measure contraction property of metric measure spaces.
Comment. Math. Helv., 2007, 82(4): 805--828.

\bibitem{Ohta}
Ohta S.
Finsler interpolation inequalities.
Calc. Var. Partial Differential Equations, 2009, 36(2): 211--249.

\bibitem{Ohta14}
Ohta S.
On the curvature and heat flow on Hamiltonian systems.
Anal. Geom. Metr. Spaces, 2014, 2(1): 81--114.

\bibitem{Rf0}
Rifford L.
Ricci curvature in Carnot groups.
Math. Control Relat. Fields, 2013, 3(4): 467--487.

\bibitem{Rf}
Rifford L.
Sub-Riemannian geometry and optimal transport.
Springer, 2014.

\bibitem{Rizzi2}
Rizzi L.
Measure contraction properties of Carnot groups.
Calc. Var. Partial Differential Equations, 2016, 55(3): 60.

\bibitem{RS23}
Rizzi L, Stefani G.
Failure of curvature-dimension conditions on sub-Riemannian manifolds via tangent isometries.
J. Funct. Anal., 2023, 285(9): 110099.

\bibitem{Sa}
Sarychev A V.
The index of the second variation of a control system.
Math. USSR Sb., 1982, 41(3): 383--401.

\bibitem{St2}
Sturm K-T.
On the geometry of metric measure spaces.
Acta Math., 2006, 196(1): 65--131.

\bibitem{St1}
Sturm K-T.
On the geometry of metric measure spaces II.
Acta Math., 2006, 196(2): 133--177.

\bibitem{Vi}
Villani C.
Optimal transport: old and new.
Springer, 2008.

\bibitem{YAD}
Yan Z, An H, Deng S.
The geometry of irreversible sub-Finslerian spaces.
Sci. China Math., 2025, 68(5): 1151--1176.

\bibitem{ZL}
Zelenko I, Li C.
Differential geometry of curves in Lagrange Grassmannians with given Young diagram.
Differential Geom. Appl., 2009, 27(6): 723--742.

\end{thebibliography}
\end{document}